\setlist[enumerate]{leftmargin=7mm,topsep=0pt,itemsep=-1ex,partopsep=1ex,parsep=1ex,label=\rm{(\roman*)}}
\setlist[enumerate,2]{leftmargin=7mm,topsep=0pt,itemsep=-1ex,partopsep=1ex,parsep=1ex,label=\rm{(\alph*)}}
\setlist[enumerate]{leftmargin=7mm,topsep=0pt,itemsep=-1ex,partopsep=1ex,parsep=1ex,label=\rm{(\alph*)}}
\setlist[itemize]{leftmargin=5mm,topsep=0pt,itemsep=-1ex,partopsep=1ex,parsep=1ex,label=\raisebox{0.25ex}{\tiny$\bullet$}}
\setlist[itemize,2]{leftmargin=5mm,topsep=0pt,itemsep=-1ex,partopsep=1ex,parsep=1ex,label=\raisebox{0.25ex}{--}}
 \newcommand{\eq}[1][r]
   {\ar@<-3pt>@{-}[#1]
    \ar@<-1pt>@{}[#1]|<{}="gauche"
    \ar@<+0pt>@{}[#1]|-{}="milieu"
    \ar@<+1pt>@{}[#1]|>{}="droite"
    \ar@/^2pt/@{-}"gauche";"milieu"
    \ar@/_2pt/@{-}"milieu";"droite"}
\tikzset{>=stealth}
\tikzset{link/.style={column sep=1.8cm,row sep=0.23cm}}
\tikzset{link2/.style={column sep=0.4cm,row sep=0.1cm}} % short exact sequences
\tikzset{map/.style={row sep=0em, column sep=0em}}
\tikzset{c/.style={every coordinate/.try}}
\renewcommand{\to}{\longrightarrow}
\newcommand{\rat}{\dashrightarrow}
\newcommand{\iso}{\simeq}
\def\dashmapsto{\mapstochar\dashrightarrow}
\newcommand{\I}{\ensuremath{\mathrm{I}}\xspace}
\newcommand{\II}{\ensuremath{\mathrm{II}}\xspace}
\newcommand{\III}{\ensuremath{\mathrm{III}}\xspace}
\newcommand{\IV}{\ensuremath{\mathrm{IV}}\xspace}
\DeclareMathOperator{\Bir}{Bir}
\DeclareMathOperator{\GL}{GL}
\DeclareMathOperator{\SL}{SL}
\DeclareMathOperator{\PGL}{PGL}
\DeclareMathOperator{\PSU}{PSU}
\DeclareMathOperator{\SO}{SO}
\DeclareMathOperator{\Gal}{Gal}
\DeclareMathOperator{\Spec}{Spec}
\DeclareMathOperator{\Autz}{\mathrm{Aut}^{\circ}}
\DeclareMathOperator{\Aut}{\mathrm{Aut}}
\newcommand{\N}{\ensuremath{\mathbb{N}}}
\newcommand{\Z}{\ensuremath{\mathbb{Z}}}
\newcommand{\Q}{\ensuremath{\mathbb{Q}}}
\newcommand{\F}{\ensuremath{\mathbb{F}}}
\newcommand{\R}{\ensuremath{\mathbb{R}}}
\newcommand{\C}{\ensuremath{\mathbb{C}}}
\newcommand{\p}{\ensuremath{\mathbb{P}}}
\renewcommand{\P}{\ensuremath{\mathbb{P}}}
\newcommand{\A}{\ensuremath{\mathbb{A}}}
\newcommand{\G}{\ensuremath{\mathbb{G}}}
\newcommand{\SSS}{\ensuremath{\mathbb{S}^1}}
\renewcommand{\H}{{\mathrm{H}}}
\newcommand{\Id}{{\mathrm{Id}}}
\newcommand{\Pic}{{\mathrm{Pic}}}
\newcommand{\NS}{{\mathrm{NS}}}
\newcommand{\NE}{{\mathrm{NE}}}
\renewcommand{\O}{\mathcal{O}}
\newcommand{\QQ}{\mathcal{Q}}
\newcommand{\RR}{\mathcal{R}}
\newcommand{\TT}{\mathcal{T}}
\newcommand{\V}{\mathcal{V}}
\newcommand{\U}{\mathcal{U}}
\renewcommand{\SS}{\mathcal{S}}
\newcommand{\W}{\mathcal{W}}
\newcommand{\PP}{\mathcal{P}}
\newcommand{\FF}{\mathcal{F}}
\newcommand{\OFa}{\mathcal{O}_{\mathbb{F}_a}}
\newcommand{\s}[1]{s_{#1}}
\renewcommand{\k}{\mathbf{k}}
\newcommand{\K}{\mathbf{K}}
\newcommand{\bk}{\overline{\mathbf{k}}}
\newcommand{\fR}{\mathfrak{R}}
\newtheorem{theorem}{Theorem}[section]
\newtheorem*{theorem*}{Theorem}
\newtheorem{corollary}[theorem]{Corollary}
\newtheorem{lemma}[theorem]{Lemma}
\newtheorem{proposition}[theorem]{Proposition}
\theoremstyle{definition}
\newtheorem{definition}[theorem]{Definition}
\newtheorem{remark}[theorem]{Remark}
\newtheorem{example}[theorem]{Example}
\author{Ronan Terpereau}
\address{Univ. Lille, CNRS, UMR 8524 - Laboratoire Paul Painlev\'e, F-59000 Lille, France}
\email{ronan.terpereau@univ-lille.fr}
\author{Susanna Zimmermann}
\address{Universit\'e Paris-Saclay, CNRS, UMR 8628 - Laboratoire de math\'ematiques d’Orsay, 91405 Orsay, France}
\email{susanna.zimmermann@universite-paris-saclay.fr}
\subjclass[2020]{14E07, 14J50, 14L99, 20G40}
\thanks{
Both authors were supported by the ANR Project FIBALGA ANR-18-CE40-0003-01.
This work received partial support from the French ``Investissements d\textquoteright Avenir'' program and from project ISITE-BFC (contract ANR-lS-IDEX-OOOB). 
During this project, S.Z. was supported the Project \'Etoiles montantes of the R\'egion Pays de la Loire, the Centre Henri Lebesque, by the ERC StG Saphidir 101076412 and the Institut Universitaire de France.}
\begin{document}
\title[real forms of Mori fiber spaces with many symmetries]{real forms of Mori fiber spaces with many symmetries}
%\date{\today}

\begin{abstract}
We determine the rational real forms of the complex Mori fiber spaces for which the identity component of the automorphism group is a maximal connected algebraic subgroup of $\mathrm{Bir}(\mathbb{P}_{\mathbb{C}}^{3})$. This yields a list of maximal connected algebraic subgroup of $\mathrm{Bir}(\mathbb{P}_{\mathbb{R}}^{3})$.
We furthermore determine the equivariant Sarkisov links starting from these rational real forms. 
This article is the first step towards classifying all the maximal connected algebraic subgroups of $\mathrm{Bir}(\mathbb{P}_{\mathbb{R}}^{3})$.
\end{abstract}

\maketitle

\tableofcontents

\section{Introduction}

\subsection{What is known}
Consider an algebraic group $G$ acting regularly and faithfully on a variety $Z$. When we replace $Z$ with a variety $X$ that is birationally equivalent to $Z$, the biregular $G$-action on $Z$ becomes a birational $G$-action on $X$. Consequently, $G$ can be viewed as a subgroup of $\Bir(X)$, and we refer to it as an algebraic subgroup of $\Bir(X)$; see  Section~\ref{ss:algebraic groups} for a precise definition. Classifying the algebraic subgroups of $\Bir(X)$ for a given low-dimensional variety $X$ can be quite challenging. In the present article we focus on the case where $X$ is rational of dimension $2$ or $3$.

Enriques classified in \cite{enriques1893sui} the maximal connected algebraic subgroups of $\Bir(\p^2_\C)$, showing that they are conjugate to $\Aut(\p_\C^2)$, $\Autz(\p^1_\C\times\p^1_\C)$, and $\Aut(\F_{n,\C})$ with $n\geq2$ (see \cite{Ume82b} for a modern treatment). In fact, this result holds over any algebraically closed field $\k$. 
Using methods from the Minimal Model Program for smooth projective surfaces, this list follows from the fact that any connected algebraic subgroup of $\Bir(\p^2_\k)$ is conjugate to a subgroup of $\Autz(S)$, where $S$ is a rational minimal smooth projective surface over $\k$. 

A list of the maximal connected algebraic subgroups of $\Bir(\P_{\C}^{3})$ was obtained in the 1980's by Umemura \cite{Ume80,Ume82a,Ume82b,Ume85,Ume88} and Mukai-Umemura \cite{MU83}. More precisely, Umemura proved that any connected algebraic subgroup of $\Bir(\P_{\C}^{3})$ is contained into a maximal one, that any maximal connected algebraic subgroup of $\Bir(\P_{\C}^{3})$ is conjugate to some $\Autz(X)$, where $X$ is a minimal smooth rational projective complex threefold (a smooth projective variety $X$ is called \emph{minimal} if any birational morphism $X \to X'$ with $X'$ smooth is an isomorphism), and he gave a list of all the minimal smooth rational projective complex threefolds $X$ such that $\Autz(X)$ is a maximal connected algebraic subgroup of $\Bir(\P_{\C}^{3})$.

Blanc-Fanelli-Terpereau studied in \cite{BFT23,BFT22} the connected algebraic groups acting on Mori fibrations $X \to Y$, over an algebraically closed field, with $X$ a rational threefold and $Y$ a surface or a curve. For such Mori fiber spaces, they considered the identity components of their automorphism groups and explore their equivariant birational geometry. In the end, this study allowed them to determine the maximal connected algebraic subgroups of $\Bir(\P_{\k}^{3})$, recovering most of the classification results of Umemura when $\k=\C$.
Their main results can be summarized as follows (the notation will be specified in Section  \ref{sec: Notation for the MFS}).

\begin{theorem}{\emph{(\cite[Theorems E]{BFT22}, see also \cite[Theorem (2.1)]{Ume85})}}  
\label{th: list of MFS corresponding to max alg subgroups of Cr3}
Let $Z$ be a rational projective threefold over an algebraically closed field $\k$ of characteristic zero. Then there is an $\Autz(Z)$-equivariant birational map $ Z \dashrightarrow Z$, where $X$ is one of the following Mori fiber spaces:  
\begin{center}
\scalebox{0.9}{
\begin{tabular}{lllrclll}
$\hypertarget{th:D_a}{(a)}$& A decomposable &$\p^1$-bundle & $\FF_a^{b,c}$&\hspace{-0.3cm}$\longrightarrow$& \hspace{-0.2cm}$\F_a$& with $a,b\ge 0$, $a\not=1$, $c\in \Z$, and \\
&&&&&& $(a,b,c)=(0,1,-1)$; or\\
&&&&&& $a=0$, $c \neq 1$, $b \geq 2$, $b\ge \lvert c\rvert $; or\\
&&&&&& $-a<c<a(b-1)$; or\\
&&&&&& $b=c=0$.\\
$\hypertarget{tth:D_b}{(b)}$& A decomposable &$\p^1$-bundle &$\PP_b$&\hspace{-0.3cm}$\longrightarrow$& \hspace{-0.2cm}$\p^2$& for some $b \geq 2$.\\
$\hypertarget{tth:D_c}{(c)}$& An Umemura &$\p^1$-bundle &$\U_a^{b,c}$&\hspace{-0.3cm}$\longrightarrow$& \hspace{-0.2cm}$\F_a$& for some $a,b\ge 1, c\ge 2$ with\\
&&&&&& $c<b$ if $a=1$; and\\
&&&&&& $c-2<ab$  and $c-2 \neq a(b-1)$ if $a \geq 2$.\\
$\hypertarget{tth:D_d}{(d)}$ &A Schwarzenberger\!\! &$\p^1$-bundle &$\SS_b$&\hspace{-0.3cm}$\longrightarrow$& \hspace{-0.2cm}$\p^2$& for some $b=1$ or $b\ge 3$.\\
$\hypertarget{tth:D_e}{(e)}$ &A &$\p^1$-bundle &$\V_{b}$&\hspace{-0.3cm}$\longrightarrow$& \hspace{-0.2cm}$\p^2$& for some $b\ge 3$.\\
$\hypertarget{tth:D_W}{(f)}$& A singular &$\p^1$-fibration &$\W_b$&\hspace{-0.3cm}$\longrightarrow$& \hspace{-0.2cm}$\P(1,1,2)$& for some $b \geq 2$.\\
$\hypertarget{tth:D_RR}{(g)}$ &A decomposable &$\p^2$-bundle &$\RR_{(m,n)}$&\hspace{-0.3cm}$\longrightarrow$& \hspace{-0.2cm}$\p^1$& for some $m \geq n\ge 0$, \\
&&&&&& with $(m,n) \neq (1,0)$ and\\ 
&&&&&&  $m=n$ or $m>2n$.\\
$\hypertarget{tth:D_QQg}{(h)}$ &An Umemura &quadric fibration  &$\QQ_g$&\hspace{-0.3cm}$\longrightarrow$& \hspace{-0.2cm}$\p^1$& for some homogeneous\\ 
&&&&&&    polynomial $g \in \k[u_0,u_1]$ of\\
&&&&&&    even degree with at least \\ 
&&&&&& four roots of odd multiplicity.\\
$\hypertarget{tth:D_bP3}{(i)}$& The &projective space & \multicolumn{3}{c}{$\P^3$.}  &\\
$\hypertarget{tth:D_bQ3}{(j)}$& The smooth & quadric & \multicolumn{3}{c}{$Q_3\subseteq \P^4$.}  &\\
$\hypertarget{tth:D_bP1112}{(k)}$& The weighted &projective space & \multicolumn{3}{c}{$\P(1,1,1,2)$.}  &\\
$\hypertarget{tth:D_bP1123}{(l)}$& The weighted &projective space & \multicolumn{3}{c}{$\P(1,1,2,3)$.} &\\
$\hypertarget{tth:D_Fano}{(m)}$& \multicolumn{6}{l}{A rational $\Q$-factorial Fano threefold of Picard rank $1$ with terminal singularities}\\
&\multicolumn{6}{l}{{not isomorphic to any of the cases \hyperlink{th:D_bP3}{$(i)$}-\hyperlink{th:D_bQ3}{$(j)$}-\hyperlink{th:D_bP1112}{$(k)$}-\hyperlink{th:D_bP1123}{$(l)$}.}}
\end{tabular} } \vspace{-2.7mm}
\end{center}
Moreover, in cases  \hyperlink{th:D_a}{$(a)$}-\hyperlink{th:D_bP1123}{$(l)$}, each $\Autz(X)$ is a maximal connected algebraic subgroup of $\Bir(\P_{\k}^3)$, and the equivariant Sarkisov links between these Mori fibrations are given in \cite[Theorem F]{BFT22}.
\end{theorem}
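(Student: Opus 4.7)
The plan is to reduce, via an equivariant Minimal Model Program (MMP), to the case of a Mori fiber space whose total space has large connected automorphism group, and then to identify which such fibrations fail to be equivariantly contractible to a simpler model. Setting $G := \Autz(Z)$, I would first apply a $G$-equivariant resolution of singularities to replace $Z$ by a smooth projective $G$-threefold. Because $G$ is connected, it acts trivially on the N\'eron--Severi group, so every step of the usual MMP (extremal contractions, flips, divisorial contractions) is automatically $G$-equivariant. Running the MMP therefore produces a $G$-equivariant birational map $Z \dashrightarrow X$, where $\pi\colon X \to Y$ is a $G$-Mori fiber space with $X$ terminal and $\Q$-factorial, and the $G$-action descends to make $\pi$ equivariant. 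Rationality of $Z$ transfers to $X$, and $\Autz(Z)$ embeds into $\Autz(X)$.

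The classification of such pairs $(X,\pi)$ then splits according to $\dim Y$. If $\dim Y = 0$, then $X$ is a rational $\Q$-factorial Fano threefold of Picard rank $1$ with terminal singularities; I would separate the four explicit models $\p^3$, $Q_3$, $\p(1,1,1,2)$, $\p(1,1,2,3)$ of cases \hyperlink{tth:D_bP3}{$(i)$}--\hyperlink{tth:D_bP1123}{$(l)$} from the generic class \hyperlink{tth:D_Fano}{$(m)$}. If $\dim Y = 1$, rationality forces $Y \iso \p^1$, a general fiber is a del Pezzo surface carrying a faithful positive-dimensional connected group action, and analysing the possible bundle structures yields the $\p^2$-bundles \hyperlink{tth:D_RR}{$(g)$} and the Umemura quadric fibrations \hyperlink{tth:D_QQg}{$(h)$}. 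If $\dim Y = 2$, a $G$-equivariant surface MMP on $Y$ first reduces to $Y \in \{\F_a,\ \p^2,\ \p(1,1,2)\}$, and the resulting conic bundles $\pi$ are then classified into the families \hyperlink{th:D_a}{$(a)$}--\hyperlink{tth:D_W}{$(f)$}.

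For the ``moreover'' statement, one has to show that each $\Autz(X)$ appearing in cases \hyperlink{th:D_a}{$(a)$}--\hyperlink{tth:D_bP1123}{$(l)$} is maximal among connected algebraic subgroups of $\Bir(\p^3_\k)$. The standard strategy is to enumerate all $\Autz(X)$-equivariant Sarkisov links starting from $X$, which is precisely the content of \cite[Theorem F]{BFT22}. If every such link either stays within the list and induces an isomorphism on identity components of the automorphism groups, or strictly shrinks $\Autz$, then $\Autz(X)$ cannot be properly contained in any $\Autz(X')$ for another rational Mori fiber space $X'$, and hence is maximal in $\Bir(\p^3_\k)$.

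The main obstacle is the $\p^1$-fibration classification in cases \hyperlink{th:D_a}{$(a)$}--\hyperlink{tth:D_W}{$(f)$}. One must classify all decomposable $\p^1$-bundles $\FF_a^{b,c}$ and $\PP_b$, the Umemura bundles $\U_a^{b,c}$, the Schwarzenberger bundles $\SS_b$, the sporadic family $\V_b$, and the singular model $\W_b$; compute $\Autz$ for each; and then run through every equivariant Sarkisov link to discard those models whose automorphism group can be strictly enlarged after a birational modification. The intricate numerical conditions on the triples $(a,b,c)$ in \hyperlink{th:D_a}{$(a)$}, and the various constraints in \hyperlink{tth:D_c}{$(c)$}--\hyperlink{tth:D_W}{$(f)$}, are precisely the traces of this pruning process, and getting them exactly right is by far the most delicate part of the argument.
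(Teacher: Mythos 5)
This theorem is not proved in the paper you are reading: it is imported verbatim from \cite[Theorem E]{BFT22} (see also \cite[Theorem (2.1)]{Ume85}), and the present article only recalls the supporting machinery (Weil regularization, equivariance of the MMP for connected groups, the equivariant Sarkisov program) in Section~\ref{sec: preliminaries}. Your sketch is a faithful outline of the strategy actually carried out in \cite{BFT23,BFT22}: regularize, resolve, run a $G$-equivariant MMP (your observation that a connected group acts trivially on $\NE$ is exactly Proposition~\ref{prop: MMP is G-eq}), split by $\dim Y$, classify the $\P^1$-bundles and fibrations with large $\Autz$, and prune via equivariant Sarkisov links. So the approach is the right one, but as written it is a roadmap rather than a proof: the entire content of the theorem lies in the classification of $\P^1$-bundles over $\F_a$, $\P^2$ and $\P(1,1,2)$ whose automorphism group surjects onto that of the base, and in the exhaustive computation of all equivariant links that produces the numerical conditions on $(a,b,c)$, $(m,n)$, etc. — none of which your argument supplies. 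One logical point deserves attention in the maximality step: from ``no equivariant Sarkisov link starting from $X$ strictly enlarges $\Autz$'' you may only conclude that $\Autz(X)$ is not properly contained in $\Autz(X')$ for $X'$ another Mori fiber space reached by a chain of links. To conclude maximality in $\Bir(\P_\k^3)$ you must also know that \emph{any} connected algebraic subgroup $G'\supsetneq\Autz(X)$ of $\Bir(\P_\k^3)$ is itself conjugate into $\Autz(X')$ for some Mori fiber space $X'$, and that the conjugating map is $G'$-equivariant and factors into $\Autz(X)$-equivariant links; this requires the regularization theorem and the equivariant Sarkisov program applied to the hypothetical larger group (Theorems~\ref{th alg subg of Cr3 are aut of Mori fib} and~\ref{thm:G-equiv SP}), which your proposal uses for the initial reduction but does not invoke here.
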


\begin{remark}\label{rk: case of Y5 and X12}
When $\k=\C$, it follows from the work of Umemura and Mukai that the case \hyperlink{th:D_Fano}{$(m)$} in Theorem \ref{th: list of MFS corresponding to max alg subgroups of Cr3} can be replaced by 
\begin{align*}
\scalebox{0.9}{
\hypertarget{th:D_Fano_mbis}{(n)} \text{ The quintic del Pezzo threefold $Y_{5,\C}$ or the Mukai–Umemura Fano threefold $X_{12,\C}^{\mathrm{MU}}$.} }
\end{align*}
The automorphism groups $\Aut_\C(Y_{5,\C}) \simeq \PGL_{2,\C}$ and $\Aut_\C(X_{12,\C}^{\mathrm{MU}})  \simeq \PGL_{2,\C}$ are furthermore maximal connected algebraic subgroups of $\Bir(\P_{\C}^3)$, and there are no nontrivial equivariant Sarkisov links starting from $Y_{5,\C}$ or $X_{12,\C}^{\mathrm{MU}}$ (see Proposition \ref{prop:Y5-X12-no-links}).
\end{remark}

\subsection{What we do}
This article constitutes the first step towards extending the previous classification of the connected algebraic subgroups of $\Bir(\P_\k^3)$ to the case $\k=\R$. For the sake of completeness, the $2$-dimensional case is treated in Section \ref{sec: algebraic subgroup of Bir(P2)}, employing the classical Minimal Model Program (MMP) approach.
Subsequently, our attention shifts to the $3$-dimensional case, where we adopt a distinct strategy to obtain a (possibly incomplete) list of maximal connected algebraic subgroups of $\Bir(\P_\R^3)$.
Indeed, instead of using the MMP over $\R$ to reduce the study of the connected algebraic subgroups of $\Bir(\P_\R^3)$ to the study of automorphism groups of real rational Mori fiber spaces, we determine the rational real forms of the complex threefolds enumerated in Theorem~\ref{th: list of MFS corresponding to max alg subgroups of Cr3}. 
Remarkably, the identity component of the automorphism group of each such real form yields a maximal connected algebraic subgroup of $\Bir(\P_\R^3)$; see Corollary \ref{cor:maximality-extension}.

The process of determining these rational real forms begins by determining the full automorphism group of the complex threefolds enumerated in Theorem~\ref{th: list of MFS corresponding to max alg subgroups of Cr3} (this is done in Section \ref{sec: Notation for the MFS}). We then compute for each case the first Galois cohomology set with value in the automorphism group, which parametrizes the isomorphism classes of real forms.
What makes this approach possible is the fact that all families enumerated in Theorem~\ref{th: list of MFS corresponding to max alg subgroups of Cr3}, with the exception of family $\hypertarget{tth:D_h}{(h)}$, are actually defined over $\mathbb{Q}$. Therefore, in almost every case, there exists a canonical real structure, corresponding to the trivial form, which enables us to define the first set of Galois cohomology.
The case of family $\hypertarget{tth:D_h}{(h)}$, which is subtler, is addressed in Section \ref{sec: real forms of Umemura quadric fibrations}, where we determine all the real forms in this case.
It then remains to determine which real forms are rational, which we do not fully accomplish for the family $\hypertarget{th:D_h}{(h)}$. All of this is done in Sections \ref{sec: Case of decomposable projective bundles}-\ref{sec: Case of rank 1 Fano threefolds}. The proofs of our main results follow in Section \ref{sec: proof of Theorem list of some real MFS}. To the extent possible, we work over an arbitrary field of characteristic zero; a restriction to the field of real numbers $\R$ is only made when computations are significantly simplified.

Finally, in Section \ref{sec: equiva Sark links in dim 3}, we determine all equivariant birational maps starting from the rational real forms obtained. Notably, we did not find in the existing literature this type of study concerning the classical Fano threefolds $Y_5$ and $X_{12}$, over arbitrary fields of characteristic zero.

\smallskip

This new approach via computation of rational real forms is much simpler than the one via MMP, but it potentially yields an incomplete list of maximal connected algebraic subgroups of $\Bir(\p^3_\R)$. The reason for this is that there might exist connected algebraic subgroups of $\Bir(\p^3_\R)$ whose complexification is not maximal in $\Bir(\p^3_\C)$. Moreover, in the case of the family $\hypertarget{th:D_h}{(h)}$, there are several real forms for which we were not able to determine their rationality (see Section \ref{sec: real forms of Umemura quadric fibrations}).

\subsection{Statement of our main results}\label{ss:main results}
Our first main result, which serves as an appetizer before tackling the $3$-dimensional case over $\R$, consists of handling the $2$-dimensional case over an arbitrary perfect field.
Let us mention that a classification of the infinite algebraic subgroups of the Cremona group of rank two was obtained over $\R$ by Robayo-Zimmermann in \cite{RZ18} and over an arbitrary perfect base fields by Schneider-Zimmermann in \cite{SZ20}.

\begin{proposition}[{see Section  \ref{sec: algebraic subgroup of Bir(P2)} for the proof}] \label{prop:classification in dimension 2}
Let $\k$ be a perfect field of arbitrary characteristic.
Let $G$ be a connected algebraic subgroup of $\Bir(\p^2_\k)$. Then there exists a $G$-equivariant birational map $\psi\colon \p^2_\k\rat X$ such that $\psi G\psi^{-1}\subseteq\Autz(X)$ and $X$ is one of the following surfaces:
\begin{enumerate}[$(i)$]
\item\label{item: i for prop1} the projective plane $\p^2_\k$;
\item\label{item: ii for prop1} a Hirzebruch surface $\F_n$ with $n \in \N_{\geq 0} \setminus \{ 1\}$; 
\item\label{item: iii for prop1} the Weil restriction $\fR_{\K/\k}(\p^1_\K)$, where $\k \hookrightarrow \K$ is a quadratic field extension; or
\item\label{item: iv for prop1} a rational del Pezzo surface of degree $6$ with Picard rank $1$. 
\end{enumerate}
Moreover, all the above groups are maximal connected algebraic subgroups of $\Bir(\p^2_\k)$ and the corresponding conjugacy classes in $\Bir(\p^2_\k)$ are pairwise disjoint. 
\end{proposition}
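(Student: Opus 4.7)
The plan is to run a $G$-equivariant minimal model program for surfaces over the perfect field $\k$, following the strategy of \cite{SZ20}. I would first apply Weil's regularization theorem to find a smooth projective surface $S/\k$ birational to $\p^2_\k$ on which $G$ acts biregularly, so that, after conjugation in $\Bir(\p^2_\k)$, one has $G \subseteq \Autz(S)$. Since $G$ is connected, any finite set of curves on $S_{\bk}$ permuted by $G$ must be fixed pointwise by $G$; in particular, every extremal contraction appearing in the MMP over $\k$ is automatically $G$-equivariant. The MMP thus terminates at a $G$-Mori fibration $\pi\colon X \to Y$ with $X$ a smooth projective surface over $\k$ birational to $\p^2_\k$.

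Next, by Iskovskikh's classification of minimal geometrically rational surfaces over a perfect field, $X$ is either a del Pezzo surface of Picard rank $1$ or a conic bundle over $\p^1_\k$ of relative Picard rank $1$. In the conic bundle case, if $\pi$ has no singular fibers then it is a $\p^1$-bundle, hence by Grothendieck's splitting theorem over $\p^1_\k$ isomorphic to some Hirzebruch surface $\F_n$ (with $n \neq 1$ by minimality), which is case (ii). If instead $\pi$ has singular fibers, the connectedness of $G$ forces it to fix each Galois orbit of singular fibers componentwise, and then an explicit chain of $\Autz(X)$-equivariant elementary Sarkisov links, as in \cite{SZ20}, reduces the situation to a Hirzebruch surface or to the non-split quadric $\fR_{\K/\k}(\p^1_\K)$.

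In the del Pezzo case, I would eliminate the degrees not appearing in the statement. For degrees $1$ through $5$, $\Autz(X_{\bk})$ is already trivial, so $G$ itself is trivial and we may take $X = \p^2_\k$. Degree $7$ with Picard rank $1$ is impossible, since the three $(-1)$-curves on the blow-up of $\p^2_{\bk}$ in two points form an $A_3$-chain with a distinguished middle curve which must then be Galois-invariant. Degree $8$ with Picard rank $1$ is a non-split form of $\p^1 \times \p^1$, classified via $H^1(\Gal(\bk/\k), \Aut((\p^1\times\p^1)_{\bk}))$, and yields exactly the Weil restrictions $\fR_{\K/\k}(\p^1_\K)$ --- case (iii). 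Degree $9$ with Picard rank $1$ is a Brauer--Severi surface, of which only the split one, $\p^2_\k$, is $\k$-rational --- case (i). Degree $6$ with Picard rank $1$ is case (iv).

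Finally, each listed $\Autz(X)$ is maximal because $X$ is already $\Autz(X)$-equivariantly minimal: any strictly larger connected algebraic subgroup of $\Bir(\p^2_\k)$ would, by the same regularization-plus-MMP argument, produce a strictly different minimal model, a contradiction. Pairwise non-conjugacy follows from distinguishing invariants: the rank of $\Pic(X)$, the base of the Mori fibration (a point versus $\p^1_\k$), the degree $K_X^2$, and, separating case (ii) with $n=0$ from case (iii), whether the geometric $\p^1$-bundle structure is already defined over $\k$ or only after a quadratic extension. The most delicate step in the whole argument is the equivariant simplification of conic bundles with singular fibers, which requires careful bookkeeping of Galois orbits and the classification of elementary Sarkisov links over a perfect field.
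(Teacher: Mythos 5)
Your reduction to a two\-dimensional Mori fiber space and the ensuing case analysis track the paper's proof closely: it likewise regularizes, runs a $G$-equivariant MMP, splits into the cases $B=\Spec(\k)$ and $B\simeq\p^1_\k$, kills degrees $5$ and $7$, identifies degree $8$ of rank one with $\fR_{\K/\k}(\p^1_\K)$ and degree $9$ with $\p^2_\k$ via Ch\^atelet, and disposes of the conic bundles with singular fibres by contracting them back onto $\fR_{\K/\k}(\p^1_\K)$ or $\p^2_\k$ (citing \cite{Sch19} for the list of Mori conic fibrations rather than carrying out an explicit chain of elementary links). Your exclusion of degrees $\le 4$ via finiteness of $\Aut(X_{\bk})$ is a harmless variant of the paper's appeal to Iskovskikh's bound $\mathrm{K}_X^2\ge 5$.

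The genuine gap is in your final paragraph. The assertion that ``$X$ is already $\Autz(X)$-equivariantly minimal, so a strictly larger group would produce a strictly different minimal model, a contradiction'' is not an argument: minimal models of rational surfaces over $\k$ are far from unique ($\p^2_\k$, $\F_0$, $\F_2$, \dots\ are all minimal), and there is no contradiction in a group acting regularly on two non-isomorphic Mori fiber spaces. What maximality actually requires is this: if $\Autz(X)\subsetneq G'$, regularize $G'$ on a Mori fiber space $X'$; the resulting birational map $X\dashrightarrow X'$ is $\Autz(X)$-equivariant, hence by the equivariant Sarkisov program (Theorem~\ref{thm:G-equiv SP}) a composition of $\Autz(X)$-equivariant Sarkisov links; one must then check, for each $X$ on the list, that no such link exists (or that every such link leads to a strictly smaller group or an isomorphic situation). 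This verification is the bulk of the paper's proof and rests on the orbit structure of $\Autz(X)$ together with Lemma~\ref{lem:SP-dP}: the torus of a rank-one degree-$6$ del Pezzo surface fixes only the six vertices of the hexagon of $(-1)$-curves, $\PGL_{3,\k}$ and $\fR_{\K/\k}(\PGL_{2,\K})$ fix no closed point, and $\Aut(\F_n)$ for $n\ge 2$ has only the $(-n)$-section and its complement as orbits, leaving no admissible centre for a link of type I or II. The same omission undermines your non-conjugacy claim: $\rho(X)$, $\mathrm{K}_X^2$ and the base of the fibration are invariants of the surface, not of the conjugacy class of the subgroup of $\Bir(\p^2_\k)$, so they only become usable once one knows that every equivariant birational map between the listed models is an isomorphism — which is again the no-links statement; alternatively one can argue that the listed groups are pairwise non-isomorphic as algebraic $\k$-groups (comparing dimensions and $\k$-forms), but that is a different argument from the one you sketch.
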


\begin{remark}\item
\begin{itemize}
\item Cases \ref{item: i for prop1}  and \ref{item: ii for prop1} always occur, while Cases \ref{item: iii for prop1} and \ref{item: iv for prop1} may not occur (depending on the base field $\k$). For instance, if $\k=\bk$, then Cases \ref{item: iii for prop1} and \ref{item: iv for prop1} do not occur.  
\item In Case \ref{item: iii for prop1}, we have $\fR_{\K_1/\k}(\p^1_{\K_1}) \simeq \fR_{\K_2/\k}(\p^1_{\K_2})$ is and only if the fields $\K_1$ and $\K_2$ are $\k$-isomorphic; see \cite[Lemma 3.2 (3)]{SZ20}.
\end{itemize}
\end{remark}

In particular, when the base field is $\R$, Proposition \ref{prop:classification in dimension 2} specializes as follows:

\begin{corollary}[{see Section  \ref{sec: algebraic subgroup of Bir(P2)} for the proof}]\label{cor:classification in dimension 2 for the real numbers}
Let $G$ be a connected algebraic subgroup of $\Bir(\p^2_\R)$. Then there exists a $G$-equivariant birational map $\psi\colon \p^2_\R\rat X$ such that $\psi G\psi^{-1}\subseteq\Autz(X)$ and $X$ is one of the following surfaces:
\begin{enumerate}[$(i)$]
\item\label{mas:1} the projective plane $\p^2_\R$;
\item\label{mas:2} a Hirzebruch surface $\F_n$ with $n \in \N_{\geq 0} \setminus \{ 1\}$; or
\item\label{mas:3} the Weil restriction $\fR_{\C/\R}(\p^1_\C)$, which is the only nontrivial rational real form of $\P_\C^1 \times \P_\C^1$.
\end{enumerate}
Moreover, \ref{mas:1}--\ref{mas:3} are maximal connected algebraic subgroups in $\Bir(\p^2_\R)$ and
the corresponding conjugacy classes in $\Bir(\p^2_\R)$ are pairwise disjoint. 
\end{corollary}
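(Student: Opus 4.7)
The plan is to specialize Proposition~\ref{prop:classification in dimension 2} to $\k=\R$ and verify what each of its four cases yields over the real numbers. Cases \ref{item: i for prop1} and \ref{item: ii for prop1} directly produce \ref{mas:1} and \ref{mas:2}. For case \ref{item: iii for prop1}, the only nontrivial quadratic field extension of $\R$ is $\C/\R$, so the only Weil restriction that appears is $X=\fR_{\C/\R}(\P^1_\C)$; by the defining property of Weil restriction, $X\times_\R\C\simeq \P^1_\C\times\P^1_\C$, which gives \ref{mas:3}. The pairwise disjointness of conjugacy classes asserted in Proposition~\ref{prop:classification in dimension 2} then ensures that $X$ is not $\R$-isomorphic to $\F_0=\P^1_\R\times\P^1_\R$, so it is genuinely a nontrivial rational real form of $\P^1_\C\times\P^1_\C$.

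The crux is to exclude case \ref{item: iv for prop1}, namely to show that no rational del Pezzo surface $S$ of degree $6$ with Picard rank $1$ exists over $\R$. On the complexification $S_\C$, the six $(-1)$-curves $E_1,E_2,E_3$ and $L_{ij}=H-E_i-E_j$ form a hexagon with intersection graph a $6$-cycle whose symmetry group is the dihedral group $D_6$, and the Galois action induced by $\Gal(\C/\R)$ acts on this hexagon via some involution $\sigma\in D_6$. The Picard rank of $S$ over $\R$ then equals
\[
\rk\bigl(\Pic(S_\C)^{\sigma}\bigr)=\tfrac{1}{2}\bigl(4+\mathrm{tr}(\sigma)\bigr).
\]
A direct case-by-case computation in the basis $H,E_1,E_2,E_3$ yields $\mathrm{tr}(\sigma)\in\{4,2,0\}$, according to whether $\sigma$ is the identity, a central symmetry or a vertex-reflection, or an edge-reflection of the hexagon. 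In every situation the invariant rank is at least $2$, which excludes case \ref{item: iv for prop1} over $\R$.

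Finally, the maximality of each listed $\Autz(X)$ as a connected algebraic subgroup of $\Bir(\P^2_\R)$, as well as the pairwise disjointness of their conjugacy classes, follows immediately from the corresponding statements in Proposition~\ref{prop:classification in dimension 2}. The only step requiring actual work is the exclusion of case \ref{item: iv for prop1} via the Galois-action computation on the hexagon of $(-1)$-curves; all other steps are routine specializations of the general proposition to the base field $\R$.
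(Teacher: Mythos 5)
Your proof is correct and follows the same overall strategy as the paper: specialize Proposition~\ref{prop:classification in dimension 2} to $\k=\R$, observe that $\C/\R$ is the only quadratic extension, and rule out case~\ref{item: iv for prop1}. The one place where you diverge is precisely that last step: the paper disposes of it in a single clause by citing Robayo--Zimmermann \cite[Lemma 3.1]{RZ18} for the nonexistence of real del Pezzo surfaces of degree $6$ with Picard rank $1$, whereas you reprove this from scratch via the trace of the Galois involution on the hexagon of $(-1)$-curves. Your computation is sound: the six $(-1)$-curves span $\Pic(S_\C)\otimes\Q$, so the Galois action is determined by an involution of the hexagon graph, and the trace values $4$, $2$, $2$, $0$ for the identity, the central symmetry, a vertex-reflection, and an edge-reflection respectively all give $\rk\bigl(\Pic(S_\C)^\sigma\bigr)\geq 2$; since the cokernel of $\Pic(S)\hookrightarrow\Pic(S_\C)^\Gamma$ is torsion, this forces $\rho(S)\geq 2$. (It is worth making that last comparison of ranks explicit, since the Picard rank in the statement is taken over $\R$, not the rank of the Galois-invariant lattice.) The trade-off is transparency versus brevity: your argument is self-contained and makes visible exactly why degree $6$ fails over $\R$, at the cost of a case analysis the paper outsources to the literature. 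Everything else — the identification of $\fR_{\C/\R}(\P^1_\C)$ as the unique nontrivial rational real form via disjointness of conjugacy classes, and the transfer of maximality and disjointness from the general proposition — matches the paper's reasoning.
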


\begin{remark}
It follows from Corollary \ref{cor:classification in dimension 2 for the real numbers} that if $\Autz(X)$ is a maximal connected algebraic subgroup of $\Bir(\p^2_\R)$, then $\Autz(X_\C)$ is a maximal connected algebraic subgroup of $\Bir(\p^2_\C)$. We do not know if this persists in dimension $3$.
\end{remark}

\medskip

Let us now move on to the $3$-dimensional case. 
As previously explained, we are able to provide a (possibly incomplete) list of maximal connected algebraic subgroups of $\Bir(\P_\R^3)$ simply by computing the rational real forms of the complex Mori fiber spaces obtained by Blanc-Fanelli-Terpereau (see Theorem \ref{th: list of MFS corresponding to max alg subgroups of Cr3}). 
For the sake of completeness, we actually compute the rational real forms of all complex Mori fiber spaces enumerated in Theorem \ref{th: list of MFS corresponding to max alg subgroups of Cr3}, considering all parameters rather than just those appearing in \emph{loc.cit.} Our main results in the $3$-dimensional case are Theorems \ref{th: first cases, over k, dim 3}, \ref{th: second cases, over R, dim 3}, and \ref{th: third cases Qg, over R, dim 3} and Corollary \ref{cor: max subrgoups Cr3 real}.

\begin{theorem}[{see Section  \ref{sec: proof of Theorem list of some real MFS} for the proof}]\label{th: first cases, over k, dim 3}
Let $\k$ be a field of characteristic zero.
Let $X$ be a Mori threefold over $\bk$ that belongs to one of the families $\hypertarget{tth:D_a}{(a)}$ with parameter $a \geq 1$, $\hypertarget{tth:D_b}{(b)}$, $\hypertarget{tth:D_c}{(c)}$, $\hypertarget{tth:D_e}{(e)}$, $\hypertarget{tth:D_f}{(f)}$, $\hypertarget{tth:D_g}{(g)}$, or $\hypertarget{tth:D_i}{(i)}$ from Theorem \ref{th: list of MFS corresponding to max alg subgroups of Cr3}. 
Then the trivial $\k$-form is the only rational $\k$-form of $X$. 
Moreover, for each of those families except  $\hypertarget{tth:D_f}{(f)}$, the trivial $\k$-form is the only $\k$-form of $X$ with a $\k$-point.  
\end{theorem}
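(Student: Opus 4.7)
The plan is to apply the standard parametrisation of $\k$-forms by Galois cohomology. Each variety $X$ in families $(a)$ (with $a\geq 1$), $(b)$, $(c)$, $(e)$, $(f)$, $(g)$, $(i)$ is defined over $\Q$, so the trivial form $X_{\k}$ exists over $\k$ and the set of isomorphism classes of $\k$-forms of $X$ is in bijection with the pointed set $H^1(\Gal(\bk/\k), \Aut_{\bk}(X))$. The full automorphism groups have been computed in Section~\ref{sec: Notation for the MFS} and this is the essential input. The second assertion of the theorem (only the trivial form carries a $\k$-point) is strictly stronger than the first in characteristic zero, since any $\k$-rational variety automatically has a $\k$-point; hence for the families $(a)$, $(b)$, $(c)$, $(e)$, $(g)$, $(i)$ it suffices to prove the $\k$-point statement, and this implies the rationality claim.

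I would treat these six families one by one, in each case showing that every non-trivial Galois cohomology class yields a form without a $\k$-point. For $(i)$, the $\k$-forms of $\p^{3}$ are Severi--Brauer threefolds and the assertion is Châtelet's theorem. For the decomposable projective bundles $(a)$, $(b)$, $(g)$, the Mori fibration is preserved by $\Aut_{\bk}(X)$, so every $\k$-form is a twisted projective bundle over a $\k$-form of the base ($\F_{a}$ with $a\geq 1$, $\p^{2}$, or $\p^{1}$). An inflation--restriction argument applied to the short exact sequence describing $\Autz$ as an extension of $\Autz$ of the base by a diagonalisable group acting by scaling the summands reduces the computation of $H^{1}$ to a well-understood classical piece, and I would identify the non-trivial classes with Severi--Brauer-type twists, which have no $\k$-points. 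Families $(c)$ and $(e)$ are handled similarly: the automorphism group still preserves the bundle structure, so the question reduces to a combination of a base-form obstruction and a Brauer-type obstruction on the fibres; in $(e)$ the $\p^{1}$-bundle $\V_{b}\to\p^{2}$ has $\Autz$ isogenous to $\PGL_{3}$ and the cohomology set is identified with $3$-torsion in the Brauer group of $\k$, with the same conclusion.

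Family $(f)$ is the exceptional case: the base $\P(1,1,2)$ has non-trivial $\k$-forms and non-trivial $\k$-forms of $\W_{b}$ may carry $\k$-points, so the previous strategy breaks down. Here I would pass to a smooth $\Autz$-equivariant resolution and view $\W_{b}$ as a conic bundle over a smooth $\k$-form of $\P(1,1,2)$; for every non-trivial Galois cohomology class I would exhibit a non-zero unramified Brauer class on the resolution, thereby obstructing $\k$-rationality. The \emph{main obstacle} I anticipate is the uniform analysis of the Galois cohomology in families $(c)$ and $(e)$, where $\Autz$ sits in a non-split non-abelian extension and the long exact cohomology sequence must be analysed carefully in order to enumerate all twisted forms and test each for $\k$-points; a secondary obstacle is the non-rationality argument in $(f)$, which must succeed even in the presence of $\k$-points.
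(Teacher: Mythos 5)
Your general strategy---classify $\k$-forms by $\H^1(\Gamma,\Aut(X_{\bk}))$, use the $\Aut$-equivariance of the Mori fibration to push forms of $X$ to forms of the base, and note that for all families except $(f)$ the $\k$-point statement implies the rationality statement---is the paper's (Propositions \ref{prop: k-forms of PPb}, \ref{prop: k forms of Rmn}, \ref{prop: k-forms of Fabc with a>1}, \ref{prop: k-forms of Umemura bundles}, Corollary \ref{cor: k-forms of Vb}, and Ch\^atelet's theorem for $(i)$). A minor imprecision: the fibre-preserving groups are not diagonalisable but of the form $U\rtimes\G_{m}$, $U\rtimes\G_{m}^{2}$ or $U\rtimes\GL_{2}$ with $U$ unipotent, and the vanishing of their $\H^1$ uses $\H^1(\Gamma,U)=\ast$ in characteristic zero together with Hilbert 90. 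The genuine error is in family $(e)$: for $b\ge 2$ the group $\Autz(\V_b)$ is \emph{not} isogenous to $\PGL_3$, and its $\H^1$ is not the $3$-torsion of $\Br(\k)$. Indeed $\Aut(\V_b)$ fixes the point $[0:1:0]\in\P^2$, so its image in $\PGL_3$ is the stabiliser of a point; every form of the base induced by a form of $\V_b$ therefore carries a canonical $\k$-point and is trivial by Ch\^atelet, and no Severi--Brauer twist of the base can occur. The paper instead lifts any descent datum through the $\Aut$-equivariant blow-up $\U_{1}^{b,2}\to\V_b$ and reduces to the Umemura-bundle case; your route, as stated, rests on a false description of the group and would have to be redone.

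For family $(f)$ your plan is both heavier than necessary and incomplete. The paper's argument (Proposition \ref{prop: k-forms of Wb}) is elementary: a nontrivial $\k$-form of $\P(1,1,2)_{\bk}$ has exactly one $\k$-point (the singular point), so a \emph{rational} form of $\W_b$, whose $\k$-points are Zariski dense, must fibre over the trivial form $\P(1,1,2)_{\k}$; the fibre of $\H^1(\Gamma,\Aut(\W_{b,\bk}))\to\H^1(\Gamma,\Aut(\P(1,1,2)_{\bk}))$ over the distinguished point is then a singleton because $\Aut(\W_b)_{\P(1,1,2)}\simeq U\rtimes\G_{m}$. Your proposal to exhibit unramified Brauer classes on an equivariant resolution is speculative (you have not verified that such classes exist), is not needed for forms lying over a nontrivial base form (their $\k$-points are contained in a single fibre, hence not dense, so non-rationality is immediate), and does not address the forms lying over the trivial base form, which is exactly where the cohomological computation of the fibre-preserving part is required. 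As written, the treatment of $(e)$ and $(f)$ does not constitute a proof.
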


\begin{remark}
A description of the automorphism groups for all the families considered in Theorem \ref{th: first cases, over k, dim 3}, over any base field $\k$ of characteristic zero, can be found in Section \ref{sec: Notation for the MFS}.
\end{remark}

In the following result, we set $\k=\mathbb{R}$ and determine all real forms, as well as the identity component of their automorphism groups, for the complex Mori threefolds within families $\hypertarget{tth:D_a}{(a)}$ with parameter $a=0$, $\hypertarget{tth:D_d}{(d)}$, $\hypertarget{tth:D_j}{(j)}$, $\hypertarget{tth:D_k}{(k)}$, $\hypertarget{tth:D_l}{(l)}$, and $\hypertarget{tth:D_n}{(n)}$. This encompasses all remaining cases except for family \hypertarget{tth:D_h}{(h)}, which will be addressed separately in Theorem \ref{th: third cases Qg, over R, dim 3}.

\begin{theorem}[{see Section  \ref{sec: proof of Theorem list of some real MFS} for the proof}]\label{th: second cases, over R, dim 3}\item
\begin{enumerate}[$(i)$]
\item\label{item 0 th2} If $X=(\P_\C^1)^3$ $($family $\hypertarget{tth:D_a}{(a)}$ with $a=b=c=0)$, then $X$ has six non-isomorphic real forms, two rational real forms and four real forms without real points, which are
\[
Z_{p,q,r}:=(\fR_{\C/\R}(\P_\C^1))^p \times (\P_\R^1)^q \times C^r, \ \text{with}\ p,q,r \in \N_{\geq 0} \ \text{satisfying}\ 2p+q+r=3.
\]
$($Here we denote by $C$ the nontrivial real form of $\P_\C^1$.$)$
Moreover, either $r=0$ and then $Z_{p,q,r}$ is rational or $r>0$ and then $Z_{p,q,r}$ has no real points. 
The identity component of the automorphism group of $Z_{p,q,r}$ is given by
\[
\Autz(Z_{p,q,r}) \simeq (\fR_{\C/\R}(\PGL_{2,\C}))^p \times (\PGL_{2,\R})^q \times (\SO_{3,\R})^r.
\]

\smallskip

\item\label{item 1 th2}  Let $X=\FF_{0,\C}^{b,c}$ with $b \geq 0$ and $(b,c)\neq(0,0)$ $($family $\hypertarget{tth:D_a}{(a)})$.
\begin{itemize}
\item If $b \neq |c|$, then $\FF_{0,\C}^{b,c}$ has no nontrivial real form with a real point.
\item If $b=|c|$, then $\FF_{0,\C}^{b,c}$ has exactly two non-isomorphic real forms with a real point, namely the trivial real form $\FF_{0,\R}^{b,c}$  and the real form $\mathcal{G}_b$ if $b=-c$ resp. $\mathcal{H}_b$ if $b=c$ $($with the notation of Definition \ref{def: Gb and Hb}$)$, corresponding to the real structures given by \eqref{eq: real structures for Gb and Hb} in Proposition \ref{prop: k-forms of Fabc with a=0}.
\end{itemize}
Moreover, if $Z$ is a real form of $\FF_{0,\C}^{b,c}$, then either $Z$ is rational or $Z$ has no real points, and the identity components of the automorphism groups of the nontrivial real forms $\mathcal{G}_b$ and $\mathcal{H}_b$ are described in Remark \ref{rk: auto group of Gb and Hb}.

\smallskip

\item\label{item 2 th2} If $X=\SS_{1,\C}$ $($family $\hypertarget{tth:D_d}{(d)}$ with $b=1)$, then $X$ has three non-isomorphic real forms:
\begin{itemize}
\item The trivial real form $\SS_{1,\R}$, which is rational, and such that $\Autz(\SS_{1,\R}) \simeq \PGL_{3,\R}$.
\item A nontrivial real form $\widetilde{\SS}_{1,\R}$, which is also rational, and such that  $\Autz(\widetilde{\SS}_{1,\R}) \simeq \PSU(1,2)$.
\item A nontrivial real form $\widehat{\SS}_{1,\R}$, which has no real points, and such that  $\Autz(\widehat{\SS}_{1,\R}) \simeq \PSU(3)$.
\end{itemize}
\vspace{-1mm}
These three real forms are described explicitly, in terms of real structures, in Example \ref{ex: real structures for S1}.

\smallskip

\item\label{item 3 th2} If $X=\SS_{b,\C}$ with $b \geq 2$ $($family $\hypertarget{tth:D_d}{(d)})$, then $X$ has two non-isomorphic real forms: 
\begin{itemize}
\item The trivial real form $\SS_{b,\R}$, which is rational, and such that $\Aut(\SS_{b,\R}) \simeq \PGL_{2,\R}$.
\item A nontrivial real form $\widetilde{\SS}_{b,\R}$, which is rational if $b$ is odd and has no real points if $b$ is even, and such that  $\Aut(\widetilde{\SS}_{b,\R}) \simeq \SO_{3,\R}$.
\end{itemize}
\vspace{-1mm}
The real structure corresponding to the real form $\widetilde{\SS}_{b,\R}$ is given in Example \ref{ex: real structures for Sb}. In particular, for $b$ odd, it is shown that $\widetilde{\SS}_{b,\R}$ is the total space of a $\P^1$-bundle over $\P_\R^2$.

\smallskip

\item\label{item 4 th2} If $X =Q_3$ $($family $\hypertarget{tth:D_j}{(j)})$, then $X$ has three non-isomorphic real forms: $Q^{3,2}$ and $Q^{4,1}$, which are both rational and with automorphism groups the indefinite special orthogonal groups $\SO(3,2)$ and $\SO(4,1)$ respectively, and $Q^{5,0}$, which has no real points. $($Here we denote by $Q^{r,s}$ a real quadric hypersurface in $\P_\R^4$ given by a real quadratic form of signature $(r,s)$.$)$ 

\smallskip

\item\label{item 5 th2}  If $X=\P(1,1,1,2)_\C$ $($family $\hypertarget{tth:D_k}{(k)})$ or $X=\P(1,1,2,3)_\C$ $($family $\hypertarget{tth:D_l}{(l)})$, then the trivial real form is the unique real form of $X$, up to isomorphism.

\smallskip

\item\label{item 6 th2} If $X$ is one of the two complex Fano threefolds $Y_{5,\C}$ or $X_{12,\C}^{\mathrm{MU}}$ $($family $\hypertarget{tth:D_n}{(n)})$, then $X$ has exactly two non-isomorphic real forms and both are rational with automorphism groups isomorphic to $\PGL_{2,\R}$, for the trivial real form, and $\SO_{3,\R}$, for the nontrivial real form.
\end{enumerate}

\smallskip

In particular, in each case, a real form of $X$ is either rational or has no real points.
\end{theorem}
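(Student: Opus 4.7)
The overall strategy is to work case by case using the Galois cohomological classification of real forms: since the varieties in question all admit a canonical real structure (because they are already defined over $\Q$, as noted in the introduction), the real forms of $X_\C$ are in bijection with the pointed set $H^1(\Gal(\C/\R), \Aut_\C(X_\C))$. For each complex Mori threefold listed, the plan is to first identify $\Aut_\C(X_\C)$ explicitly (using the descriptions already assembled in Section \ref{sec: Notation for the MFS}), then compute $H^1$, then realize each cohomology class by an explicit real structure $\sigma$, and finally determine $\Autz$ of the twisted form together with its rationality status. The twisted automorphism group is obtained as the fixed points of the action $g \mapsto \sigma \circ g \circ \sigma^{-1}$ composed with conjugation by the cocycle, i.e.\ $\Autz(Z) \simeq (\Autz(X_\C))^{\sigma}$.

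For case \ref{item 0 th2}, the key input is that $\Aut_\C((\P^1_\C)^3) = \PGL_2(\C)^3 \rtimes S_3$, and a cocycle with values there decomposes into a choice of $S_3$-orbit structure on the three factors (giving the partition $2p+q+r=3$) and, for each orbit, a cocycle with values in $\PGL_2$ or $\fR_{\C/\R}\PGL_{2,\C}$; using $H^1(\Gal, \PGL_{2,\C})$ classifies forms of $\P^1_\C$ as $\P^1_\R$ or $C$, and Shapiro's lemma handles the paired factors, giving the products $Z_{p,q,r}$. For case \ref{item 1 th2}, one expands the automorphism group of $\FF_{0,\C}^{b,c}$ as an extension involving the torus of the bundle and the base $\P^1 \times \P^1$, and observes that the $S_2$ swap on the base is only an automorphism of the whole threefold when $b=\lvert c\rvert$, yielding the extra nontrivial forms $\mathcal{G}_b$ and $\mathcal{H}_b$. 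Cases \ref{item 2 th2}, \ref{item 3 th2} and \ref{item 6 th2} come down to $H^1(\Gal, \PGL_n)$ computations for $n=2,3$, where the real forms of $\PGL_{3,\C}$ yield the trio $\PGL_{3,\R}, \PSU(1,2), \PSU(3)$, and the real forms of $\PGL_{2,\C}$ yield the pair $\PGL_{2,\R}, \SO_{3,\R}$; the nontrivial twist is then constructed by tracing through the explicit defining data of the Schwarzenberger bundle resp.\ $Y_5$ and $X_{12}^{\mathrm{MU}}$ (for the latter, using their construction from the standard $\PGL_2$-representation on homogeneous forms). Case \ref{item 4 th2} is the classical classification of nondegenerate real quadratic forms in $5$ variables by signature. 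For case \ref{item 5 th2}, the automorphism group of the weighted projective spaces is connected (so $H^1$ is trivial by Hilbert 90 in its appropriate form for connected solvable groups, or one checks directly that the form is determined by the real structure on the Cox ring), giving only the trivial form.

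Rationality is then handled separately per case. The trivial forms and products thereof are rational by inspection. For \ref{item 3 th2} with $b$ odd, one argues that $\widetilde{\SS}_{b,\R}$ carries a $\P^1$-bundle structure over $\P^2_\R$ with odd first Chern number, hence admits a section (by parity/Brauer considerations) and is thus rational. For \ref{item 2 th2} rationality of $\widetilde{\SS}_{1,\R}$ follows by exhibiting an explicit birational map from $\P^3_\R$ using the fact that $\PSU(1,2)$ has a dense orbit with a rational open part. For \ref{item 6 th2}, rationality of the nontrivial forms of $Y_5$ and $X_{12}^{\mathrm{MU}}$ will be obtained by constructing $\SO_{3,\R}$-equivariant Sarkisov links to a rational model, which is also consistent with the equivariant links handled in Section \ref{sec: equiva Sark links in dim 3}. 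When the twisted form has no real points (e.g.\ $Q^{5,0}$, $\widehat{\SS}_{1,\R}$, or $\widetilde{\SS}_{b,\R}$ for $b$ even), this is read off from the explicit defining equations/real structure and gives the dichotomy in the final sentence of the theorem.

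The main obstacle is computing $H^1$ of the full (not merely identity component) automorphism group for the non-simple cases, namely \ref{item 0 th2} and \ref{item 1 th2}: one must handle the nonabelian extension $1 \to \Autz(X_\C) \to \Aut_\C(X_\C) \to \pi_0 \to 1$ via the twisting exact sequence, which requires showing that certain outer twists actually lift to real forms of the total variety (and are distinct from inner twists). A secondary difficulty is proving rationality of $\widetilde{\SS}_{b,\R}$ for odd $b \geq 3$, where the existence of a rational section of the associated $\P^1$-bundle must be established carefully rather than by an obvious construction; this will require the analysis of the Schwarzenberger bundle structure over the nontrivial real form of $\P^2_\C$, which is itself $\P^2_\R$.
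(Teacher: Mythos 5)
Your overall framework---classifying the real forms via $\H^1(\Gamma,\Aut_\C(X_\C))$ for the canonical real structure, case by case, and then settling rationality separately---is exactly the paper's, and most of your case analysis (the $S_3$-orbit decomposition for $(\P_\C^1)^3$, the swap of the two rulings for $\FF_{0}^{b,\pm b}$, the signature classification for $Q_3$, the identification of $\H^1(\Gamma,\Aut_{\mathrm{gr}}(\PGL_{3,\C}))$ resp.\ $\H^1(\Gamma,\PGL_{2,\C})$ for the Schwarzenberger bundles and the Fano threefolds) matches Lemma \ref{lem: real forms of (P1)n} and Propositions \ref{prop: k-forms of Fabc with a=0}, \ref{prop: real forms of S1}, \ref{prop: real forms of Sb}, \ref{prop: real forms of Q3}, \ref{prop: real forms of Y5 and X12}. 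However, two of your proposed arguments do not go through. The most serious concerns the nontrivial real forms of $Y_{5,\C}$ and $X_{12,\C}^{\mathrm{MU}}$: you plan to prove their rationality ``by constructing $\SO_{3,\R}$-equivariant Sarkisov links to a rational model''. No such links exist: Proposition \ref{prop:Y5-X12-no-links} shows that there are no nontrivial $\Aut(X)$-equivariant Sarkisov links starting from any form of these Fano threefolds, and if an equivariant birational map to a rational Mori fiber space with larger automorphism group existed, $\SO_{3,\R}$ would fail to be maximal in $\Bir(\P_\R^3)$, contradicting Corollary \ref{cor: max subrgoups Cr3 real}. The paper instead imports rationality from \cite{KP23a} (both forms of $Y_5$ are rational; a form of $X_{12}^{\mathrm{MU}}$ is rational iff it has a real point) together with \cite{MJT20} for the existence of real points. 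A similar caveat applies to $\widetilde{\SS}_{1,\R}$: ``a dense orbit with a rational open part'' is not an argument, since homogeneity does not yield rationality over $\R$ for free; the paper observes that the $\Gamma$-action swaps the two extremal rays of $\NE(\SS_{1,\C})$, so $\Pic(\widetilde{\SS}_{1,\R})\simeq\Z$, and then applies \cite[Theorem 1.2~(ii)]{KP23b}.

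Second, for the weighted projective spaces your primary justification---``the automorphism group is connected, so $\H^1$ is trivial by Hilbert 90 in its appropriate form for connected solvable groups''---is incorrect: these automorphism groups are connected but not solvable (they surject onto $\PGL_3$), and connectedness alone never forces $\H^1$ to vanish (the set $\H^1(\Gamma,\PGL_{2,\C})$ has two elements). What actually makes the computation work is that $\H^1(\Gamma,\PGL_{3,\C})$ is trivial (there is no nonsplit degree-$3$ central simple algebra over $\R$) and that the kernel of the map to $\PGL_3$ is unipotent-by-torus, with trivial $\H^1$; the paper packages this in Proposition \ref{prop: real forms of WPS} by identifying $\Aut(\P(1,1,1,2)_\C)$ with $\Aut(\PP_{2,\C})$ and $\Aut(\P(1,1,2,3)_\C)$ with $\Aut(\RR_{(3,1),\C})$ as $\Gamma$-groups and invoking Remarks \ref{rk: real forms of Pb} and \ref{rk: real forms of Rmn}. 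Your remaining deviations are harmless: for $\widetilde{\SS}_{b,\R}$ with $b$ odd the parity/Brauer argument you sketch is essentially what the paper does (Example \ref{ex: real structures for Sb} lifts the real structure to a $\Gamma$-linearization of the rank-$2$ Schwarzenberger bundle, so $\widetilde{\SS}_{b,\R}\to\P_\R^2$ is Zariski-locally trivial), while the main proof of Proposition \ref{prop: real forms of Sb} uses the real-point criterion of \cite{MJT20} combined with the conic-bundle rationality criterion of Lemma \ref{lem:FJSVV22}.
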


\begin{remark}\item
\begin{itemize}
\item A description of the automorphism groups of the trivial real forms of the families considered in Theorem \ref{th: second cases, over R, dim 3} can be found in Section \ref{sec: Notation for the MFS}.
\item  A description of the automorphism groups of weighted projective spaces can be found in \cite[Section 8]{AA89}.
\item In general, the group schemes $\Aut(Z)$ and $\Autz(Z)$ say nothing about the rationality of $Z$ or the existence of real points; see for instance the case  of the nontrivial real form $Z=\widetilde{\SS}_{b,\R}$ of $\mathcal S_{b,\C}$ in Theorem \ref{th: second cases, over R, dim 3}.
\item It turns out that all the rational real forms obtained in Theorem \ref{th: second cases, over R, dim 3} are real Mori fiber spaces.
\end{itemize}
\end{remark}

It remains to determine the (rational) real forms of the family \hypertarget{tth:D_h}{(h)}, which stands as the only family among those listed in Theorem \ref{th: list of MFS corresponding to max alg subgroups of Cr3} whose members are not necessarily defined over $\Q$ in all cases (depending on the choice of $g \in \C[u_0,u_1]$). This is done in the next result:

\begin{theorem}[{see Section  \ref{sec: proof of Theorem list of some real MFS} for the proof}]\label{th: third cases Qg, over R, dim 3}
Let $g \in \C[u_0,u_1]$ be a homogeneous polynomial, which is not a square and is of degree $2n$ for some $n \in \N_{\geq 1}$.
\smallskip
\begin{enumerate}[$(i)$]
\item\label{QQgR:1} The complex threefold $\QQ_{g,\C}$ admits a real form if and only if there exist $\lambda \in \C^*$ and $\varphi=\begin{bmatrix}
a & b \\ c & d
\end{bmatrix} \in \SL_{2}(\C)$ such that $g':=\lambda (g \circ \varphi) \in \R[u_0,u_1]$, in which case $\QQ_{g,\C} \simeq \QQ_{g',\C}$.

\smallskip

\item\label{QQgR:2}  The algebraic group $\Aut(\QQ_g)$  fits into a short exact sequence
\begin{equation*}
1 \to \Aut(\QQ_g)_{\P^1} \to \Aut(\QQ_g) \to F \to 1,
\end{equation*} 
where $F$ is an algebraic subgroup of $\PGL_{2,\C}$ and $\Aut(\QQ_g)_{\P^1} \simeq \PGL_{2} \times \Z/2\Z$.
Moreover, if $g$ has at least three distinct roots, then $F$ is finite. If $g$ has exactly two distinct roots, then $F$ is isomorphic to either $\G_{m,\C} \rtimes \Z/2\Z$ $($when the two roots have the same multiplicities$)$ or $\G_{m,\C}$ $($when the two roots have different multiplicities$)$.

\smallskip

\item\label{QQgR:3}  The number of non-isomorphic real forms of $\QQ_{g,\C}$ is given in the following table:
\smallskip
{\small
\[
\begin{array}{|l|c|c|c|c|c|c|}
\hline 
\text{Subgroup } F \subseteq \PGL_{2,\C}   &  \multicolumn{3}{c|}{n \text{ even\ $(n \geq 2)$\ \ \ }} & \multicolumn{3}{c|}{n \text{ odd\ $(n \geq 3)$\ \ \ }} \\
\hline 
& \text{rational}& \text{\ \ \ ?\ \ \ }& \text{w/o real points}&\text{rational}& \text{\ \ \ ?\ \ \ }& \text{w/o real points}\\

\hline 
\hline 
  \G_{m,\C}      & 1&1& &1&1&   \\
                    \hline
                       \G_{m,\C} \rtimes \Z/2\Z      & 2&2& &2& 2&    \\
                    \hline
A_l,\ l\ge 1,\ l \text{ odd}    & 2&2&&2&2& \\
  \hline
 A_l,\ l\ge 2,\ l \text{ even}  & 4&4&&2&2& \\
   \hline
D_l,\ l\ge 3,\ l \text{ odd}   & 4&4&&2&2&\\
  \hline
 D_l,\ l\ge 2,\ l \text{ even}  & 6&6&4&3&3& 4 \\
   \hline
        E_6       & 2&2&4&1&1&4    \\
          \hline
              E_7    &4 &4&4&2&2&4   \\
                \hline
                    E_8      & 2&2&4&1&1&4   \\
                    \hline
\end{array} 
\]}

\smallskip

\noindent The column labeled ``rational'' indicates the number of real forms known to be rational $($for any choice of $g)$, the column labeled  ``w/o real points'' indicates the number of real forms known to be without real points $($for any choice of $g)$, and the column labeled  ``?'' indicates the number of real forms for which conclusions regarding rationality or the existence of real points are not known for an arbitrary $g$.

\smallskip

\item\label{QQgR:4} Let $Z$ be a real form of $\QQ_{g,\C}$, then $\Autz(Z)$ is isomorphic to\\[-19pt]
\begin{itemize}
\item $\PGL_{2,\R} \times \G_{m,\R}$ if $F$ is infinite and $Z$ is one of the real forms $\QQ_{g,\C}$ labeled  ``rational''; 
\item  $\SO_{3,\R} \times \G_{m,\R}$ if $F$ is infinite and $Z$ is one of the real forms $\QQ_{g,\C}$  labeled  ``?'';
\item $\PGL_{2,\R}$ if $F$ is finite and $Z$ is one of the real forms $\QQ_{g,\C}$ labeled  ``rational''; and
\item  $\SO_{3,\R}$ if $F$ is finite and $Z$ is one of the real forms $\QQ_{g,\C}$  labeled  ``?''.
\end{itemize}
\end{enumerate}
\end{theorem}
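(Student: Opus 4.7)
For part (i), the plan is to translate the existence of a real form of $\QQ_{g,\C}$ into the existence of a real structure, that is, an antiregular involution $\sigma$. Since the Mori fibration $\QQ_{g,\C}\to\P^1_\C$ is canonical, $\sigma$ descends to a real structure on $\P^1_\C$, and after composing with a suitable element of $\PGL_{2,\C}$ I may assume it is the standard one. The induced isomorphism $\QQ_{g,\C}\simeq\QQ_{\bar{g},\C}$ then translates, via the isomorphism criterion for Umemura quadric fibrations (two forms $g$ and $g'$ define isomorphic fibrations iff $g' = \lambda(g\circ\varphi)$ for some $\lambda\in\C^*$ and $\varphi\in\SL_2(\C)$), into the stated condition.

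For part (ii), I would use the uniqueness of the Mori fibration to obtain a homomorphism $\Aut(\QQ_{g,\C})\to\PGL_{2,\C}$ with image $F$. The kernel acts fiberwise; since a general fiber is $\P^1\times\P^1$, this kernel is $\PGL_{2,\C}\times\Z/2\Z$ (diagonal action together with the ruling swap). The subgroup $F$ is the projective stabilizer in $\PGL_{2,\C}$ of the divisor $\{g=0\}\subset\P^1_\C$: when $g$ has at least three distinct roots, $F$ fixes three or more points and is therefore finite; when $g$ has exactly two distinct roots, $F$ is $\G_{m,\C}\rtimes\Z/2\Z$ or $\G_{m,\C}$ according to whether the two multiplicities agree.

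For parts (iii) and (iv), I plan to classify real forms via $H^1(\Gamma,\Aut(\QQ_{g,\C}))$ with $\Gamma=\Gal(\C/\R)$, using the long exact sequence attached to the extension in (ii). The kernel contributes $H^1(\Gamma,\PGL_{2,\C})\times H^1(\Gamma,\Z/2\Z)\simeq(\Z/2\Z)^2$, the four classes being the trivial one, an $\SO_{3,\R}$-fiber twist, a ruling-swap twist, and their combination. For each $F$ I would compute $H^1(\Gamma,F)$ and the twisted $F$-action on these four classes; the parity of $n$ enters precisely here, because the cocycle encoding the ruling swap carries a sign that depends on $n\bmod 2$, altering how $F$ permutes the four classes. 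A case-by-case analysis along the ADE list then yields the tabulated counts. Rationality of the classes labelled ``rational'' would be established by producing an explicit real section of the twisted conic bundle or a birational model to $\P^3_\R$, while classes labelled ``without real points'' correspond to cocycles whose real locus is easily seen to be empty. For (iv), twisting $\Autz(\QQ_{g,\C})\simeq\PGL_{2,\C}$ (respectively $\PGL_{2,\C}\times\G_{m,\C}$ when $F$ is infinite) by the cocycle defining a real form $Z$ gives $\PGL_{2,\R}$ (respectively $\PGL_{2,\R}\times\G_{m,\R}$) for the real-pointed classes, and $\SO_{3,\R}$ (respectively $\SO_{3,\R}\times\G_{m,\R}$) for the classes without real points.

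The main obstacle I anticipate lies in part (iii): the Galois-cohomological bookkeeping of how $F$ acts on $H^1(\Gamma,\Aut(\QQ_g)_{\P^1})$ as a function of the parity of $n$, together with the fiber classes in $H^1(\Gamma,F)$ for each ADE subgroup, and then the separate geometric argument needed to decide rationality (or empty real locus) for each resulting form. The inability to do this uniformly in $g$ for certain classes is precisely what the ``?'' column in the table reflects.
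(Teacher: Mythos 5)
Your plan is essentially the paper's proof: part (i) via the isomorphism criterion $\QQ_{g,\C}\simeq\QQ_{g',\C}\Leftrightarrow g'=\lambda(g\circ\varphi)$ (Lemma \ref{lem: iso classes for Qg}), part (ii) via equivariance of the Mori fibration deduced from the cone of curves, and parts (iii)--(iv) via the exact sequence in Galois cohomology attached to $1\to\Aut(\QQ_g)_{\P^1}\to\Aut(\QQ_g)\to F\to 1$, with the four kernel classes in $\H^1(\Gamma,\Aut(\QQ_g)_{\P^1})\simeq(\Z/2\Z)^2$, the sets $\H^1(\Gamma,F)$ computed case by case along the ADE list, the parity of $n$ governing whether the extension splits (the $\SL_2$-action on $\O_{\P^1}(n)$ descends to $\PGL_2$ exactly when $n$ is even, and for $n$ odd lifts of $F$ pick up the sign $x_3\mapsto -x_3$, which is what identifies $\mu_1$ with $\mu_2$ and $\mu_3$ with $\mu_4$), rationality read off from an explicit affine chart $\A_\R^3$, and empty real locus for the forms lying over the class $[h]$. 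The one step you pass over too quickly is in (i): a real structure only gives $\overline{g}=\lambda(g\circ\varphi)$, and producing an actual real representative $g'\in\R[u_0,u_1]$ in the $(\C^*\times\SL_2(\C))$-orbit is not formal --- the paper first normalizes $g$ so that the relation forces $\varphi$ to be diagonal with $|a|=1$, and then rescales by a square root of $a$ to exhibit $g'$.
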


\begin{remark}\item
\begin{itemize}
\item When $F$ is finite, explicit equations for the real forms of $\QQ_{g,\C}$ labeled  ``rational'' or ``?'' can be obtained through the classical invariant theory of the finite subgroups of $\SL_{2,\C}$; see Section \ref{sec: gi with invariant theory} for details.
\item When $F$ is infinite, explicit equations for the real forms of $\QQ_{g,\C}$ can be found in Section \ref{ss:g has two roots}.
\item It turns out that all the real forms obtained in Theorem \ref{th: third cases Qg, over R, dim 3} are real Mori fiber spaces.
\end{itemize}
\end{remark}

\begin{corollary}[{see Section  \ref{sec: proof of Theorem list of some real MFS} for the proof}]\label{cor: max subrgoups Cr3 real}
Let $Z$ be a rational real projective threefold. If $\Autz(Z_\C)$ is a maximal connected algebraic subgroup of $\Bir(\P_\C^3)$, then $\Autz(Z)$ is a maximal connected algebraic subgroup of $\Bir(\P_\R^3)$. 
Moreover, $Z_\C$ is one of the complex Mori fiber spaces listed in Theorem \ref{th: list of MFS corresponding to max alg subgroups of Cr3} if and only if $Z$ is one of the rational real Mori fiber spaces listed in Theorems \ref{th: first cases, over k, dim 3}, \ref{th: second cases, over R, dim 3}, and \ref{th: third cases Qg, over R, dim 3} $($with parameters as in Theorem \ref{th: list of MFS corresponding to max alg subgroups of Cr3}$)$. 
\end{corollary}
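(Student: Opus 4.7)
The plan is to deduce the corollary directly from the three main theorems together with the general ``maximality extension'' principle already recorded as Corollary \ref{cor:maximality-extension}, rather than to carry out any new classification.

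For the first assertion, I would use that for a smooth projective variety $Z$ over $\R$ the identity component of the automorphism group scheme commutes with base change, i.e.\ $\Autz(Z)_\C = \Autz(Z_\C)$. If $H \subseteq \Bir(\P_\R^3)$ is a connected algebraic subgroup containing $\Autz(Z)$, then base changing to $\C$ gives
\[
\Autz(Z_\C) \;=\; \Autz(Z)_\C \;\subseteq\; H_\C \;\subseteq\; \Bir(\P_\C^3).
\]
Maximality of $\Autz(Z_\C)$ forces $H_\C = \Autz(Z_\C)$, and by Galois descent $H = \Autz(Z)$. This is essentially the content of Corollary \ref{cor:maximality-extension}, which I would simply invoke.

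For the second assertion, the $\Leftarrow$ direction is tautological: each $Z$ appearing in Theorems \ref{th: first cases, over k, dim 3}, \ref{th: second cases, over R, dim 3}, and \ref{th: third cases Qg, over R, dim 3} is by construction a rational real form of one of the threefolds in Theorem \ref{th: list of MFS corresponding to max alg subgroups of Cr3}, and the parameter ranges of those theorems contain the range appearing in Theorem \ref{th: list of MFS corresponding to max alg subgroups of Cr3}. For the $\Rightarrow$ direction, suppose $Z$ is a rational real projective threefold with $Z_\C$ on the list of Theorem \ref{th: list of MFS corresponding to max alg subgroups of Cr3}. Then $Z$ is by definition a real form of $Z_\C$, and I would proceed family by family through the cases \hyperlink{th:D_a}{$(a)$}--\hyperlink{tth:D_Fano}{$(m)$} (resp.\ \hyperlink{th:D_Fano_mbis}{$(n)$}) and invoke the corresponding item of Theorem \ref{th: first cases, over k, dim 3} (families with no nontrivial rational real form), Theorem \ref{th: second cases, over R, dim 3} (families admitting nontrivial rational real forms, excluding $(h)$), or Theorem \ref{th: third cases Qg, over R, dim 3} (the Umemura quadric fibrations $\QQ_g$). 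In each case these theorems list precisely the rational real forms, so $Z$ must be isomorphic to one of the real Mori fiber spaces listed there.

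No substantive obstacle remains at this stage, since the serious work—computing automorphism groups of the complex models, carrying out the Galois cohomology classifications, and deciding rationality—has been packaged into the three theorems and into Corollary \ref{cor:maximality-extension}. The only care needed is in matching the parameter ranges of Theorem \ref{th: list of MFS corresponding to max alg subgroups of Cr3} with the broader parametrizations in Theorems \ref{th: first cases, over k, dim 3}--\ref{th: third cases Qg, over R, dim 3}, and in verifying that each rational real form identified there is itself a real Mori fiber space, as recorded in the remarks immediately following those statements.
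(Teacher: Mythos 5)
Your proposal is correct and matches the paper's own (very terse) proof: the first assertion is exactly Corollary \ref{cor:maximality-extension} applied with $\k_1=\R$, $\k_2=\C$, and the second is read off case by case from Theorems \ref{th: first cases, over k, dim 3}, \ref{th: second cases, over R, dim 3}, and \ref{th: third cases Qg, over R, dim 3}. Your direct sketch of the maximality-extension step (base change $H_\C\supseteq\Autz(Z)_\C=\Autz(Z_\C)$, then descend) is a slightly cleaner variant of the paper's contrapositive argument, but since you ultimately just invoke the corollary, the route is the same.
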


\begin{remark}
We do not know if there exist rational real Mori fiber spaces $Z$ such that $\Autz(Z)$ is a maximal connected algebraic subgroup of $\Bir(\P_\R^3)$ but $\Autz(Z_\C)$ is not a maximal connected algebraic subgroup of $\Bir(\P_\C^3)$.
Answering this question will be the subject of a next article.
\end{remark}

Finally, we determine the equivariant Sarkisov links starting from one of the rational real Mori fiber spaces listed in Theorems \ref{th: first cases, over k, dim 3}, \ref{th: second cases, over R, dim 3}, and \ref{th: third cases Qg, over R, dim 3}. Let us note that the equivariant Sarkisov links starting from the trivial real forms of the complex Mori fiber spaces enumerated in Theorem \ref{th: list of MFS corresponding to max alg subgroups of Cr3} are described in \cite[Theorem F]{BFT22}. (The base field in \textit{loc.~cit.} is assumed to be of characteristic zero and algebraically closed; however, one can check that \cite[Theorem F]{BFT22} is indeed valid over any characteristic zero field.) Consequently, we are left with determining the equivariant Sarkisov links starting from the nontrivial rational real Mori fiber spaces listed in Theorems \ref{th: second cases, over R, dim 3} and \ref{th: third cases Qg, over R, dim 3}.

\begin{theorem}[{see Section  \ref{sec: proof of Theorem list of some real MFS} for the proof}]\label{th: eq Sarkisov links between real MFS}\item 
\begin{enumerate}[$(i)$]
\item\label{th: eq Sarkisov links between real MFS i} Let $X$ be one of the rank $1$ Fano threefolds $\widetilde{\SS}_{1,\R}$, $Q^{3,2}$, $Q^{4,1}$, or a real form of $Y_{5,\C}$ or $X_{12,\C}^{\rm{MU}}$. Then there are no nontrivial $\Autz(X)$-equivariant Sarkisov links starting from $X$.

\item\label{th: eq Sarkisov links between real MFS ii} There is one $\Autz(X)$-equivariant Sarkisov link of type IV starting from $X=\fR_{\C/\R}(\P_\C^1) \times \P_\R^1$ $($induced by the projections on both factors$)$, and there are two $\Autz(X)$-equivariant Sarkisov links of type IV starting from $X=\P_\R^1 \times \P_\R^1 \times \P_\R^1$ $($induced by the three natural projections on $\P_\R^1 \times \P_\R^1)$.

\item\label{th: eq Sarkisov links between real MFS iii} The only $\Aut(\mathcal G_1)$-equivariant link starting from $\mathcal G_1$ is a divisorial contraction $\psi \colon \mathcal G_1\to  Z$, where $ Z$ is isomorphic to the singular quadric hypersurface $Q^{1,3}$ in $\P_\R^4$. 
The only nontrivial Sarkisov link starting from $Z \simeq Q^{1,3}$ is the blow-up of the singular point $\psi\colon \mathcal G_1\to Z$. Moreover, $\psi  \Aut(\mathcal G_1) \psi^{-1} =\Aut(Q^{1,3})$.
\item\label{th: eq Sarkisov links between real MFS iv} If $b\geq2$, then there are no nontrivial $\Autz(\mathcal G_b)$-equivariant Sarkisov links starting from $\mathcal G_b$.
\item\label{th: eq Sarkisov links between real MFS v}  There is an $\Autz(\mathcal{H}_1)$-equivariant birational morphism $\delta\colon \mathcal{H}_1 \to \P_\R^3$ such that $\delta \Autz(\mathcal{H}_1) \delta^{-1}\newline \subsetneq \Autz(\P_\R^3)$.
\item\label{th: eq Sarkisov links between real MFS vi} If $b \geq 2$, then there are no nontrivial $\Autz(\mathcal{H}_b)$-equivariant Sarkisov links starting from $\mathcal{H}_b$.

\item\label{th: eq Sarkisov links between real MFS vii} Let $b\geq3$ be an odd integer. 
There is a birational involution $\varphi\colon \tilde{\mathcal S}_{b,\R} \dashrightarrow \tilde{\mathcal S}_{b,\R}$, which is a type II equivariant link such that $\varphi \Autz(\tilde{\mathcal S}_{b,\R}) \varphi^{-1}=\Autz(\tilde{\mathcal S}_{b,\R})$. Moreover, $\varphi$ is the unique equivariant Sarkisov link starting from $\tilde{\mathcal S}_{b,\R}$.

\item\label{th: eq Sarkisov links between real MFS viii} Let $g \in \R[u_0,u_1]$ be a homogeneous polynomial, which is not a square and is of degree $2n$ for some $n \in \N_{\geq 1}$. Let $h\in\R[u_0,u_1]$ be a polynomial, irreducible over $\R$.
Let $\pi\colon U_g \to \P_\R^1$ be a real form of $\pi_\C\colon \QQ_{g,\C} \to \P_\C^1$.
Then the birational map defined by
\[
\psi_h\colon U_g\rat U_{gh^2},\ [x_0:x_1:x_2:x_3;u_0:u_1]\mapsto[hx_0:hx_1:hx_2:x_3;u_0:u_1]
\] 
is a Sarkisov link of type II.
Moreover, $\psi_h^{-1} \Autz(U_{gh^2}) \psi_h \subseteq \Autz(U_g)$ with equality if and only if either $g$ has at least three $($complex$)$ roots or $gh^2$ has exactly two $($complex$)$ roots.
If furthermore $g$ has at least three distinct $($complex$)$ roots, then there are no other equivariant Sarkisov links starting from $U_g$.
\end{enumerate}
\end{theorem}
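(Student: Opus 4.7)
The plan is to leverage the known classification of equivariant Sarkisov links in the complex setting, namely \cite[Theorem F]{BFT22} together with Proposition~\ref{prop:Y5-X12-no-links}, via Galois descent. Indeed, any $\Autz(Z)$-equivariant Sarkisov link over $\R$ complexifies to an $\Autz(Z_\C)$-equivariant Sarkisov link over $\C$; conversely, a complex link $\varphi_\C\colon Z_\C\dashrightarrow Z_\C'$ descends to $\R$ precisely when it commutes with the real structure $\sigma$ defining $Z$, in which case the target $Z_\C'$ inherits a real structure and yields a link between real forms. For each rational real form $Z$ appearing in Theorems~\ref{th: second cases, over R, dim 3} and~\ref{th: third cases Qg, over R, dim 3}, I would thus enumerate all complex equivariant links starting from $Z_\C$ and test whether $\sigma$ either preserves each one individually or permutes them in a compatible way.

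Part~\ref{th: eq Sarkisov links between real MFS i} is then immediate: for $\widetilde{\mathcal S}_{1,\R}$, $Q^{3,2}$, $Q^{4,1}$, and the real forms of $Y_{5,\C}$ and $X_{12,\C}^{\mathrm{MU}}$, the complexifications admit no nontrivial equivariant Sarkisov links by Proposition~\ref{prop:Y5-X12-no-links} and \cite[Theorem F]{BFT22}. For Part~\ref{th: eq Sarkisov links between real MFS ii}, the only equivariant links starting from $(\P_\C^1)^3$ are the three projections to $(\P_\C^1)^2$; I would determine which are $\sigma$-equivariant by inspecting how $\sigma$ permutes the factors, giving three descending links in the totally split case $(\P_\R^1)^3$ and exactly one in the case $\fR_{\C/\R}(\P_\C^1)\times\P_\R^1$.

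For Parts~\ref{th: eq Sarkisov links between real MFS iii}--\ref{th: eq Sarkisov links between real MFS vi}, I would describe the natural invariant sections of the $\p^1$-bundles $\FF_0^{b,\pm b}$ and trace their behaviour under the real structures defining $\mathcal G_b$ and $\mathcal H_b$ given in Proposition~\ref{prop: k-forms of Fabc with a=0}. The contraction in~\ref{th: eq Sarkisov links between real MFS iii} is the descent of the complex contraction of the negative section of $\FF_0^{1,-1}\to \F_0$, composed with the natural morphism to $\P_\C^4$; I would identify the real image as a singular quadric of signature $(1,3)$ by computing the invariant quadratic form directly. In~\ref{th: eq Sarkisov links between real MFS v}, the morphism $\mathcal H_1\to\P_\R^3$ is the descent of the classical complex birational morphism $\FF_0^{1,1}\to\P_\C^3$, and its $\sigma$-equivariance would be checked by a direct coordinate computation. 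For~\ref{th: eq Sarkisov links between real MFS iv} and~\ref{th: eq Sarkisov links between real MFS vi} with $b\ge 2$, I would show that the remaining invariant sections of $\FF_0^{b,\pm b}$ either fail to be $\sigma$-stable or do not induce morphisms to a real Mori fiber space, so no link descends.

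For Part~\ref{th: eq Sarkisov links between real MFS vii}, the Schwarzenberger bundle $\SS_{b,\C}$ with $b\ge 3$ admits an equivariant birational involution arising from the double cover/conic bundle structure recorded in \cite[Theorem F]{BFT22}, and I would verify that for $b$ odd this involution commutes with the real structure of $\widetilde{\SS}_{b,\R}$ from Example~\ref{ex: real structures for Sb}. Part~\ref{th: eq Sarkisov links between real MFS viii} is more computational: the map $\psi_h$ is defined explicitly, its inverse multiplies the first three coordinates by $1/h$, and the Sarkisov diagram is obtained by blowing up the (conjugate pair of) singular fibers over the roots of $h$ and blowing down two rulings. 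The group inclusion $\psi_h^{-1}\Autz(U_{gh^2})\psi_h\subseteq\Autz(U_g)$ and the criterion for equality then follow from tracking the quotient $F\subseteq \PGL_{2,\C}$ of Theorem~\ref{th: third cases Qg, over R, dim 3}\ref{QQgR:2} under the substitution $g\rightsquigarrow gh^2$; the final rigidity statement reduces to the complex classification. The main obstacle I anticipate is the case-by-case descent analysis in~\ref{th: eq Sarkisov links between real MFS iii}--\ref{th: eq Sarkisov links between real MFS vi}: identifying the \emph{target} real Mori fiber space of each descending link --- for instance computing the precise signature of the singular quadric in~\ref{th: eq Sarkisov links between real MFS iii} --- requires explicit coordinate work, and one must moreover rule out, case by case, the existence of additional equivariant links over $\R$ that would not be forced by the complex classification.
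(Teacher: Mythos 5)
Your overall plan --- complexify, consult the complex classification, and test $\sigma$-equivariance --- is the natural first idea, and you correctly flag its weak point at the end; but that weak point is not a residual technicality, it is where most of the actual proof lives, and the framework as stated would give wrong answers. The correspondence between real and complex equivariant links is not ``a complex link descends iff it commutes with the real structure'': a real link need not complexify to a single complex link. Two concrete failures. First, in part~\ref{th: eq Sarkisov links between real MFS ii}, the fibration $\fR_{\C/\R}(\P_\C^1)\times\P_\R^1\to\P_\R^1$ is a real Mori fibration whose complexification $(\P_\C^1)^3\to\P_\C^1$ has relative Picard rank $2$ and is therefore not a complex Mori fibration at all; enumerating the three complex projections to $(\P_\C^1)^2$ and testing $\sigma$-equivariance would never produce the type IV link of the statement (and your count of ``three'' links from $(\P_\R^1)^3$ disagrees with the statement's ``two'': starting from one of the three fibration structures there are only two others to link to). The paper instead argues directly over $\R$: for the homogeneous cases it observes (Lemma~\ref{lem:link homogeneous}) that transitivity of $\Autz(X_\C)$ on $X_\C$ forces every equivariant birational map to be an isomorphism, and then reads off the type IV links from the $\Gamma$-invariant extremal contractions. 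Second, and more seriously, a real divisorial contraction may contract an irreducible real divisor whose complexification has two $\Gamma$-conjugate components; such a link is invisible to the complex classification. Excluding it is exactly what uniqueness requires in parts~\ref{th: eq Sarkisov links between real MFS iii}, \ref{th: eq Sarkisov links between real MFS iv}, \ref{th: eq Sarkisov links between real MFS vi} and above all \ref{th: eq Sarkisov links between real MFS vii}: for $\tilde{\mathcal S}_{b,\R}$ the paper must consider a potential contraction onto the curve $\gamma_0$ (which has no real points) whose exceptional divisor splits over $\C$, decompose it into two successive complex contractions via Lemma~\ref{lem:contractions}, and rule it out by a non-tangency computation. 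Your proposal names this as ``the main obstacle'' but supplies no mechanism for it; the needed tools are the $\Autz(X_{\bk})$-orbit structure (which $\Gamma$ permutes, Remark~\ref{rk:Gamma respects aut-action}) to constrain the possible centers, the invariant cone $\NE(X_{\bk})^{\Gamma}$, and Lemmas~\ref{lem:no links over bk} and~\ref{lem:contractions}.

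A smaller gap: for part~\ref{th: eq Sarkisov links between real MFS i} the nonexistence of complex links from $Y_{5,\C}$ and $X_{12,\C}^{\mathrm{MU}}$ is not contained in \cite[Theorem F]{BFT22} (these are precisely the cases excluded from that theorem); Proposition~\ref{prop:Y5-X12-no-links} is proved in this paper by a direct analysis of the Mukai--Umemura orbit decomposition and the intersection computation $K_Y\cdot\tilde f=1>0$, followed by descent. Citing the proposition as a stated result is legitimate, but the complex input is itself part of what must be established, not something to be quoted from the literature. Your sketches for parts~\ref{th: eq Sarkisov links between real MFS iii}, \ref{th: eq Sarkisov links between real MFS v} and \ref{th: eq Sarkisov links between real MFS viii} (the signature $(1,3)$ computation, the morphism $\mathcal H_1\to\P_\R^3$, and the explicit link $\psi_h$) do match the paper's constructions for the existence halves of those statements.
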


\begin{remark}\item
\begin{itemize}
\item The singular quadric threefold $Q^{1,3}$ is a real Mori fiber space, but its complexification $Q_{\C}^{1,3}$ is not a complex Mori fiber space (see Remark \ref{rk: case of Q13}). It provides an example of a real Mori fiber space $Z$ for which $\Autz(Z)$ is a maximal connected algebraic subgroup of $\Bir(\P_\R^3)$, yet $Z_{\C}$ does not appear among the complex Mori fiber spaces listed in Theorem \ref{th: list of MFS corresponding to max alg subgroups of Cr3} (even though $\Autz(Z_{\C})$ is a maximal connected algebraic subgroup of $\Bir(\P_\C^3)$).

\item Assume that $g$ (as in Theorem \ref{th: eq Sarkisov links between real MFS} \ref{th: eq Sarkisov links between real MFS viii}) has exactly two distinct $($complex$)$ roots.  Let $\pi\colon U_g \to \P_\R^1$ be a real form of $\pi_\C\colon \QQ_{g,\C} \to \P_\C^1$ with $U_g$ rational. Then $\Autz(U_g)$ is not a maximal connected algebraic subgroup of $\Bir(\P_\R^3)$; see Corollary \ref{cor: exactly two roots}.
\end{itemize}
\end{remark}

\subsection*{Acknowledgments}
We would like to thank 
Olivier Benoist, Michel Brion, Jean-Louis Colliot-Th\'el\`ene, Adrien Dubouloz, Alexander Duncan, Andrea Fanelli, Enrica Floris, Lena Ji, Fr\'ed\'eric Mangolte, Christian Urech, Isabel Vogt and Egor Yasinsky for interesting discussions related to this work.

\section{Preliminaries}\label{sec: preliminaries}

\subsection{Notation}
In this article, we work over a fixed field $\k$ of characteristic zero (unless stated otherwise) with algebraic closure $\bk$.
We denote by $\Gamma=\Gal(\bk/\k)$ the absolute Galois group of $\k$.
To the extent possible, we do not assume that $\k=\R$, except when this assumption considerably simplifies the situation. We refer to \cite{Ser02} for a general reference on Galois cohomology.

A \emph{variety} is a separated scheme of finite type over $\k$ that is geometrically integral. 
An \emph{algebraic group} is a group scheme of finite type over $\k$. 
By an algebraic subgroup, we always mean a closed subgroup scheme. 
The identity component of an algebraic group $G$ is the connected component containing the identity element, denoted as $G^0$; this is a normal algebraic subgroup of $G$, and the quotient $G/G^0$ is a finite group scheme. 
We denote by $\G_{m,\k}$ (or simply by $\G_m$ when no confusion is possible regarding the base field) the multiplicative group scheme over $\k$.

Let $\K/\k$ be a field extension. We denote by $X_{\K}=X\times_{\Spec(\k)}\Spec(\K)$ the variety over $\K$ obtained from the variety $X$ over $\k$ via the base change $\Spec(\K) \to \Spec(\k)$.
Moreover, if $X$ is a group scheme over $\k$, then $X_{\K}$ is a group scheme over $\K$. 
If $Y$ is a variety resp. a group scheme, over $\K$ and $X$ is a variety (resp. a group scheme) over $\k$ such that
$X_{\K} \simeq Y$ as varieties resp. as group schemes, over $\K$, then $X$ is called \emph{$\k$-form} of $Y$. 
Recall that, if the variety $Y$ is quasi-projective and the extension $\K/\k$ is Galois, then there is a one-to-one correspondence between the isomorphism classes of $\k$-forms of $Y$ and the equivalence classes of descent data on $Y$ \cite[Proposition 2.6]{BS64} (see also \cite[\href{https://stacks.math.columbia.edu/tag/0CDQ}{Tag 0CDQ}]{SP24}).
In the particular case where the field extension $\k \hookrightarrow \K$ is $\R \hookrightarrow \C$, giving a descent datum on $Y$ is equivalent to giving a \emph{real structure} (i.e.~an antiregular involution) on $Y$.

Assume now that the field extension $\k \hookrightarrow \K$ is finite.
If $Z$ is a quasi-projective algebraic variety over $\K$ resp. an algebraic group over $\K$, then we denote by $\fR_{\K/\k}(Z)$ the quasi-projective algebraic variety over $\k$ resp. the algebraic group over $\k$, that represents the \emph{Weil restriction functor} with respect to the field extension $\k \hookrightarrow \K$ (see \cite[Section  7.6]{BLR90} for a detailed treatment of the Weil restriction functor).

All along the text, a \emph{$\P^n$-bundle} over a scheme is always assumed to be locally trivial for the Zariski topology; in particular, it is the projectivization of a rank $n+1$ vector bundle when working over a regular Noetherian scheme.

\subsection{Mori fibrations and Sarkisov links}\label{sec: Mori fibrations and Sarkisov links}
Throughout Section  \ref{sec: preliminaries} we work over a base field $\k$ of characteristic zero, unless explicitly stated otherwise.

\begin{definition}\label{def:MoriFibration} 
Let $\pi\colon X \to Y$ be a surjective morphism of normal projective varieties. Then $\pi$ is called a \emph{Mori fibration}, and the variety $X$ a \emph{Mori fiber space}, if the following conditions are satisfied:
\begin{enumerate}
\item\label{MoriFibrationDefa} $\pi_*(\O_X)=\O_Y$ and $\dim(Y) < \dim(X)$;
\item\label{MoriFibrationDefb} $X$ is $\Q$-factorial with terminal singularities;
\item\label{MoriFibrationDefc} the anticanonical sheaf $\omega_X^{\vee}=\O_X(-\mathrm{K}_X)$ is $\pi$-ample; and
\item\label{MoriFibrationDefd} the relative Picard rank $\rho(X/Y):= \dim_\R(N_1(X/Y))$ is one, where $N_1(X/Y)$ is the real vector space of $1$-cycles of $X$ that are contracted by $\pi$ modulo numerical equivalence.
\end{enumerate}
\end{definition}

\begin{remark}\label{rk: MMP in arbitrary dim}
If $X$ is a smooth projective variety of arbitrary dimension whose canonical divisor $\mathrm{K}_X$ is not pseudo-effective (i.e.~not contained in the closure of the cone spanned by classes of effective divisors), then we can run an MMP; see \cite[Corollary 1.3.3]{BCHM10} when $\k=\bk$ and \cite[Example 2.18]{KM98} for the extension to non-algebraically closed fields. This applies in particular when $X$ is uniruled (see  \cite[Theorem 0.2]{BDPP13}), in which case the output is always a Mori fibration.
\end{remark}

\begin{proposition}\label{prop: MMP is G-eq}
Let $G$ be a connected algebraic group.
Let $X_0$ be a smooth projective $G$-variety on which we can run an MMP. 
Then each step of the MMP is $G$-equivariant, and if the output is a Mori fibration $\pi\colon X \to Y$, then $G$ acts on $Y$ and $\pi$ is $G$-equivariant. 
\end{proposition}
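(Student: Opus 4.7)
The plan is to argue that, because $G$ is connected, every canonical construction attached to the birational geometry of $X_0$ is automatically $G$-stable, and therefore the MMP, which proceeds by such canonical constructions, is $G$-equivariant at each step.

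\textbf{Step 1: $G$ acts trivially on the Néron--Severi group.} First I would observe that the Néron--Severi group $N^1(X_0)_{\R}$ and its dual $N_1(X_0)_{\R}$ carry a continuous linear action of $G$. Because $G$ is connected and $N^1$ is a discrete (finitely generated) lattice modulo torsion, the action on $N^1(X_0)_{\R}$ (and hence on $N_1(X_0)_{\R}$) must be trivial. In particular, the cone of curves $\overline{NE}(X_0)$, the canonical class $K_{X_0}$, and every $K_{X_0}$-negative extremal ray $R$ is $G$-invariant as a subset of $N_1(X_0)_{\R}$.

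\textbf{Step 2: Each elementary step is $G$-equivariant.} Now I would run through the three possible outputs of an MMP step. The contraction $\mathrm{cont}_R\colon X_0\to X_1$ of a $K_{X_0}$-negative extremal ray $R$ is characterized uniquely (up to unique isomorphism) by the property that it contracts precisely the curves whose class lies in $R$, so the pullback $\mathrm{cont}_R\circ g\colon X_0\to X_1$ for $g\in G$ is another such contraction, hence coincides with $\mathrm{cont}_R$ up to an automorphism of $X_1$. This provides a homomorphism $G\to\mathrm{Aut}(X_1)$ making $\mathrm{cont}_R$ equivariant; since $G$ is connected, the argument in \cite{Bri17}-style yields an actual morphism of group schemes. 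The same uniqueness argument applies to small contractions and to their flips $X_0\dashrightarrow X_0^+$, since a flip is uniquely determined by the small contraction it resolves. Finally, for a Mori fibration $\pi\colon X\to Y$, the base $Y$ is recovered intrinsically as $\mathrm{Proj}\bigoplus_{m\geq 0}\pi_*\O_X(-mK_X)$ (or, equivalently, by the Stein factorization), and uniqueness again yields a unique $G$-action on $Y$ for which $\pi$ is equivariant; the condition $\pi_*\O_X=\O_Y$ then guarantees that the descended action is well defined.

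\textbf{Step 3: Induction.} Iterating Step 2 finitely many times along the MMP, I obtain a $G$-equivariant sequence $X_0\dashrightarrow X_1\dashrightarrow\cdots\dashrightarrow X$, terminating either with a minimal model or with a Mori fibration $\pi\colon X\to Y$. In the latter case, Step 2 furnishes a $G$-action on $Y$ for which $\pi$ is $G$-equivariant, which is the desired conclusion.

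\textbf{Main obstacle.} The only non-formal point is ensuring that the set-theoretic uniqueness of each contraction/flip upgrades to a genuine morphism of group schemes $G\to\mathrm{Aut}(X_i)$, as opposed to merely a rational or birational action. This is where connectedness of $G$ is essential: it lets me invoke rigidity results (e.g.\ \cite[Proposition 4.2.7]{Bri17}) to conclude that a birational $G$-action on a proper variety, which is regular on an open dense subset containing the generic point of each $G$-orbit closure, extends to a regular action whenever the variety is normal, as is the case along an MMP run on a $\mathbb{Q}$-factorial terminal variety.
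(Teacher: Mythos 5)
Your proposal is correct and follows essentially the same route as the paper: connectedness of $G$ forces the trivial action on $N_1(X_0)_{\R}$ and hence on the extremal rays, each extremal contraction is then $G$-equivariant by its universal property (Blanchard-type descent), and the flip is handled via the canonical $\mathrm{Proj}$ of the $G$-equivariant relative canonical algebra. The paper's proof is just a compressed version of your Steps 1--3, and your ``main obstacle'' is exactly what the equivariant-sheaf/$\mathrm{Proj}$ formulation resolves.
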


\begin{proof}
As the induced action of $G$ on the cone of effective curves $\NE(X_0)$ is trivial (since $G$ is connected), any contraction morphism $\phi\colon X_0 \to Y_0$ associated with an extremal ray of $\overline{\NE}(X_0)_{K_{X_0}<0}$ is $G$-equivariant. 
Moreover, the finite type $\O_{Y_0}$-algebra $\mathcal{A}:=\bigoplus_{m \geq 0}\phi_* \O_{X_0}(mK_{X_0})$ is canonically a \emph{$G$-equivariant sheaf} (see \cite[\href{https://stacks.math.columbia.edu/tag/03LE}{Tag 03LE}]{SP24} for the notion of equivariant sheaf), hence the variety $X^+:=\mathrm{Proj}(\mathcal{A})$ is endowed with a $G$-action and the birational map $X^+ \dashrightarrow X_0$ is $G$-equivariant.
\end{proof}

\smallskip

In \cite{Cor95, HM13}, a \emph{Sarkisov link} $\chi\colon X_1\rat X_2$ between two Mori fibrations $X_1/B_1$ and $X_2/B_2$ is one of the four types of birational maps between Mori fiber spaces as shown in Figure~\ref{fig:SarkisovTypes1}. 
Each morphism has relative Picard rank $1$, a morphism denoted by $div$ is a divisorial contraction, a morphism denoted by $fib$ is a Mori fiber space, and the horizontal dotted arrows are isomorphisms in codimension $1$. 

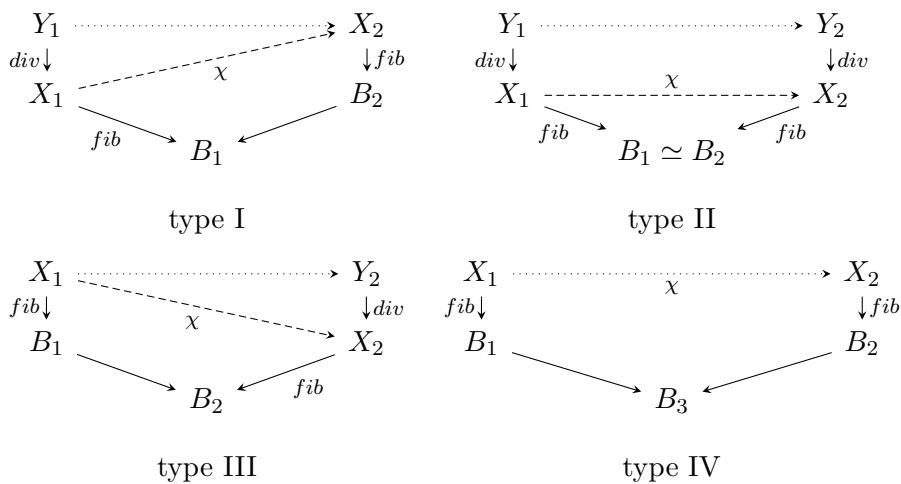
\begin{figure}[ht]
\[
{
\def\arraystretch{2.2}
\begin{array}{cc}
\begin{tikzcd}[ampersand replacement=\&,column sep=1.3cm,row sep=0.16cm]
Y_1\ar[dd,"div",swap]\ar[rr,dotted] \&\& X_2 \ar[dd,"fib"] \\ \\
X_1\ar[uurr,"\chi",dashed,swap]  \ar[dr,"fib",swap] \&  \& B_2 \ar[dl] \\
\& B_1 \&
\end{tikzcd}
&
\begin{tikzcd}[ampersand replacement=\&,column sep=.8cm,row sep=0.16cm]
 Y_1\ar[dd,"div",swap]\ar[rr,dotted]\& \& Y_2\ar[dd,"div"]\\ \\
X_1 \ar[rr,"\chi",dashed] \ar[dr,"fib",swap] \&  \& X_2 \ar[dl,"fib"] \\
\&  B_1\simeq B_2 \&
\end{tikzcd}
\\
\text{type }\I& \text{type }\II %&\II
\\
\begin{tikzcd}[ampersand replacement=\&,column sep=1.3cm,row sep=0.16cm]
X_1 \ar[ddrr,"\chi",dashed,swap] \ar[dd,"fib",swap] \ar[rr,dotted]\&\&Y_2\ar[dd,"div"] \\ \\
B_1 \ar[dr] \& \& X_2 \ar[dl,"fib"] \\
\&B_2 \&
\end{tikzcd}
&
\begin{tikzcd}[ampersand replacement=\&,column sep=1.7cm,row sep=0.16cm]
X_1 \ar[rr,"\chi", dotted,swap] \ar[dd,"fib",swap]  \&\& X_2 \ar[dd,"fib"] \\ \\
B_1 \ar[dr] \& \& B_2 \ar[dl] \\
\& B_3 \&
\end{tikzcd}
%&
\\
\text{type }\III & \text{type }\IV %&
\end{array}
}
\]
\caption{The four types of Sarkisov links.}\label{fig:Sarkisov links}
\label{fig:SarkisovTypes1}
\end{figure}

\begin{remark}\label{rmk:sarkisov-2-rays-game}
A Sarkisov link is represented by a directed commutative diagram, a so-called \emph{Sarkisov diagram}. This diagram corresponds to the $2$-rays game starting from a variety at the top of the diagram. For instance, a link of type I or II satisfies $\rho(Y_1/B_1)=2$, indicating that there are at most two extremal rays above $B_1$. Each of these rays corresponds to a contraction, one of which corresponds to the divisorial contraction $Y_1\to X_1$. The other contraction is either a small contraction, thereby inducing a flip, flop, or antiflip (as the start of the dotted arrow), or it is a divisorial contraction or a fibration. In the latter two cases, the dotted arrow is an isomorphism and the contraction is either $Y_2\to X_2$ or $X_2\to B_2$ (in the case of links of type II or I, respectively).
We can see the $2$-rays game on other diagrams in a similar way.
\end{remark}

\begin{remark}\label{Rem:UpToIso}\item
\begin{itemize}
\item The composition of a Sarkisov link with an isomorphism of Mori fibrations is again a Sarkisov link. In what follows we will always identify two such links. 
\item In dimension $2$,  isomorphisms in codimension $1$ are isomorphisms and the horizontal dotted arrows in the four diagrams are therefore isomorphisms.
\end{itemize}
\end{remark}

Our main reason for introducing the notion of Sarkisov link is the following statement.

\begin{theorem}\label{thm:SP}\emph{(factorization of birational maps into Sarkisov links)}
\begin{enumerate}
\item\label{SP:1} Assume that $\k$ is a perfect field. Then any birational map between $2$-dimensional Mori fiber spaces is a composition of Sarkisov links.
\item\label{SP:2} Assume that $\k$ is a field of characteristic zero. Then any birational map between Mori fiber spaces is a composition of Sarkisov links. 
\end{enumerate} 
\end{theorem}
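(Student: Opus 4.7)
My plan is to treat the two parts separately, since they rest on different levels of the Sarkisov program: part \ref{SP:1} is the surface version, while part \ref{SP:2} is the general version in characteristic zero. In both cases the strategy is to first establish the result over the algebraic closure $\bk$ using known results, and then descend to $\k$ using the canonical nature of the MMP choices in characteristic zero together with a Galois equivariance argument in the spirit of Proposition~\ref{prop: MMP is G-eq}.

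For part \ref{SP:1}, I would adapt the classical Iskovskikh argument. Given a birational map $\varphi\colon X_1\rat X_2$ between $2$-dimensional Mori fibrations over a perfect field $\k$, I would first resolve $\varphi$ by a common smooth projective surface $Z$ obtained from each $X_i$ by successive blowups of closed (possibly non-rational) points; this is available over any perfect field. Then I attach to $\varphi$ a Sarkisov degree $(\mu,\lambda,e)$ built from a suitable $X_2$-linear system pulled back to $X_1$, and apply the Noether--Fano inequality to produce an extremal ray of $\overline{\NE}(X_1)$ whose contraction initiates a Sarkisov link $X_1\rat X_1'$ strictly decreasing the Sarkisov degree. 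A standard termination argument on the lexicographic degree shows the process stops. Since the extremal ray is defined from intrinsic numerical data attached to $\varphi$, it is automatically stable under $\Gamma=\Gal(\bk/\k)$, and each link descends to $\k$.

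For part \ref{SP:2}, the starting point is the Sarkisov program of Corti \cite{Cor95} in dimension three and Hacon--McKernan \cite{HM13} in general, both of which yield a factorization over $\bk$. The main work is descent. I would base change $\varphi$ to $\bk$ and obtain a chain of Sarkisov links $X_{1,\bk}\rat W_1\rat\cdots\rat W_n\rat X_{2,\bk}$. Each link is determined, in the Hacon--McKernan algorithm, by a choice of extremal ray characterized canonically by the maximal log canonical threshold of the mobile pair $(X_{i,\bk},\frac1\mu\HH)$, where $\HH$ is the strict transform of a general ample linear system from the target. Because such canonical numerical invariants are Galois-equivariant, the extremal ray, and hence the associated divisorial contraction, small contraction, and flip (unique in characteristic zero) are all $\Gamma$-stable. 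Each intermediate $W_j$ thus inherits a canonical descent datum and is defined over $\k$ by effective descent for quasi-projective $\Q$-factorial varieties.

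The main obstacle will be the descent step in part \ref{SP:2}: showing that the choices made in the Sarkisov algorithm can be arranged $\Gamma$-equivariantly. Proposition~\ref{prop: MMP is G-eq} does not directly apply because $\Gamma$ is profinite rather than a connected algebraic group, so one cannot invoke triviality of the action on $\overline{\NE}$. The workaround is to exploit the uniqueness of log canonical models and of flips in characteristic zero, together with the fact that Galois permutes extremal rays in the Mori cone: once an extremal ray is selected canonically by its numerical invariants, the associated contraction is unique and hence automatically $\Gamma$-equivariant, which allows the entire factorization to be pieced together over $\k$.
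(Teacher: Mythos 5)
For part \ref{SP:1} your sketch amounts to reproving Iskovskikh's theorem over a perfect field. The paper simply cites \cite[Appendix]{Cor95} and \cite[Theorem 2.5]{Isk96}, which are already stated over perfect fields, so the argument is run directly over $\k$ and no Galois-descent gloss is needed; the remark at the end of your first paragraph about the extremal ray being ``automatically stable under $\Gamma$'' is superfluous (and, as explained below, would be problematic if it were actually needed).

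For part \ref{SP:2} there is a genuine gap in the descent step. You propose to factor $\varphi_{\bk}$ into Sarkisov links over $\bk$ and then descend each link, on the grounds that each extremal ray in the algorithm is ``characterized canonically'' by numerical invariants and is therefore $\Gamma$-stable. Neither Corti's algorithm nor the Hacon--McKernan one produces a canonical factorization: one chooses a maximal singularity, respectively a general path in the polytope of divisors, and different choices yield different chains of links. More importantly, even where numerical invariants do single out extremal data, they single out an entire $\Gamma$-orbit of rays of $\overline{\NE}(X_{i,\bk})$ --- all rays in a Galois orbit have the same invariants --- and the contraction of a single ray of that orbit is not defined over $\k$; what descends is the contraction of the whole orbit, which is in general not a step of the Sarkisov program over $\bk$. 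For the same reason the intermediate $\bk$-Mori fiber spaces in your chain need not carry descent data, and even when they do, the descended models need not be Mori fiber spaces over $\k$ (the relative Picard rank over $\k$ can differ from the one over $\bk$). The paper's route avoids this entirely: since in characteristic zero an MMP over $\k$ is the same thing as a $\Gamma$-equivariant MMP over $\bk$, one reruns the proof of \cite[Theorem 1.1]{HM13} in the $\Gamma$-equivariant category from the outset --- this is exactly \cite[Theorem 3.1]{LZ20} in dimension two, and the same adaptation works in any dimension --- rather than attempting to descend an a posteriori factorization obtained over $\bk$.
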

\begin{proof}
\ref{SP:1}: See \cite[Appendix]{Cor95} or \cite[Theorem 2.5]{Isk96}.\\
 \ref{SP:2}: 
If $\k$ is algebraically closed, the statement is \cite[Theorem 3.7]{Cor95} for the $3$-dimensional case and \cite[Theorem 1.1]{HM13} for the general case. Let $\k$ be a field of characteristic zero and $\Gamma:=\Gal(\bk/\k)$. Any MMP over $\k$ starting from a projective $\k$-variety $X$ corresponds to a $\Gamma$-equivariant MMP starting from $X_{\bk}$, and vice-versa. Using this, we can follow the proofs of \cite[Theorem 1.1]{HM13} and \cite[Theorem 3.1]{LZ20} to obtain the $\Gamma$-equivariant Sarkisov program; in fact \cite[Theorem 3.1]{LZ20} adapts the proof of \cite[Theorem 1.1]{HM13} for dimension two, but the adaption it can be done for any dimension. 
\end{proof}

Since we are mostly interested in Mori fiber spaces endowed with actions of connected algebraic groups, we will rather use the following statement, which is a refinement of Theorem \ref{thm:SP}.

\begin{theorem}\label{thm:G-equiv SP}\emph{(factorization of equivariant birational maps into equivariant Sarkisov links)}\\
Let $G$ be a connected algebraic group. Let $X_1/B_1$ and $X_2/B_2$ be two Mori fiber spaces endowed with a $G$-action. 
\begin{enumerate}
\item\label{G-equiv SP:1} Assume that $\k$ is a perfect field and that $\dim X_1=2$.
Then any $G$-equivariant birational map between $X_1$ and $X_2$ is a composition of $G$-equivariant Sarkisov links.
\item\label{G-equiv SP:2} Assume that $\k$ is a field of characteristic zero and that $\dim X_1\geq 2$. 
Then any $G$-equivariant birational map between $X_1$ and $X_2$ is a composition of $G$-equivariant Sarkisov links. 
\end{enumerate}
\end{theorem}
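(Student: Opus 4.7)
The plan is to follow the proof of Theorem~\ref{thm:SP} step by step while tracking the $G$-action, the main point being that a connected algebraic group automatically preserves numerical equivalence classes, and therefore every choice that the Sarkisov algorithm makes based on numerical data is automatically $G$-invariant.

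For case \ref{G-equiv SP:2}, I start by fixing a $G$-invariant very ample linear system $|D|$ on $X_2$: such a system exists because $G$ is connected (so acts trivially on $N^1(X_2)$) and a sufficiently high tensor power of $\O_{X_2}(D)$ admits a canonical $G$-linearization. Let $\mathcal{H}_1:=\chi_*^{-1}|D|$; it is a $G$-invariant mobile linear system on $X_1$. Feeding the pair $(X_1,\mathcal{H}_1)/B_1$ into the Hacon--McKernan Sarkisov algorithm \cite{HM13} produces a factorization of $\chi$ into Sarkisov links by iteratively running MMP with scaling. At each step, the extremal ray to be contracted or flipped is determined by a Sarkisov threshold that depends only on the numerical class of $\mathcal{H}_i$ and on $K_{X_i}$, hence is $G$-invariant. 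Proposition~\ref{prop: MMP is G-eq}, whose proof extends verbatim to the log setting of mobile pairs (the graded sheaves $\bigoplus_{m\geq 0}\phi_*\O_X(m(K_X+\Delta))$ are canonically $G$-linearized whenever $\Delta$ is $G$-invariant), then guarantees that the corresponding divisorial contractions and flips are $G$-equivariant, so that every link appearing in the resulting factorization is $G$-equivariant.

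Case \ref{G-equiv SP:1} is handled identically, using the Iskovskikh--Corti two-dimensional Sarkisov program in place of \cite{HM13}. It is in fact simpler, because in dimension two isomorphisms in codimension one are isomorphisms (Remark~\ref{Rem:UpToIso}), so no flips occur and every link reduces to a sequence of blow-ups and contractions of $G$-invariant subschemes, both of which are manifestly $G$-equivariant when the center is $G$-invariant.

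To handle a general base field $\k$ of characteristic zero I would run the entire construction over $\bk$ and then use Galois descent. Uniqueness of the Sarkisov thresholds forces $\Gamma=\Gal(\bk/\k)$ to preserve each intermediate variety and each link, and since $\Gamma$ commutes with $G_{\bk}$, a standard descent argument (exactly as in the proof of Theorem~\ref{thm:SP}\ref{SP:2}) produces a $G$-equivariant factorization defined over $\k$. The main obstacle is to verify that all the auxiliary choices in the Sarkisov algorithm can be made $G$-invariant and $\Gamma$-equivariant simultaneously; connectedness of $G$ makes this automatic at the numerical level, and the canonical $G$-linearizations of the relevant graded algebras propagate the equivariance through every flip and divisorial contraction.
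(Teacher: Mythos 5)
Your overall strategy is the right one, and at the level of the $G$-action your reasoning is sound: since $G$ is connected it acts trivially on $N_1$ and $\NS$, so every extremal contraction appearing in any run of the MMP is automatically $G$-equivariant (this is Proposition~\ref{prop: MMP is G-eq}), and the equivariance propagates through the Hacon--McKernan algorithm essentially for free. This is exactly the content of \cite[Theorem 1.2]{Flo20}, and the paper's own proof simply cites that result rather than re-deriving it: part \ref{G-equiv SP:2} is obtained by running the proof of \cite[Theorem 1.2]{Flo20} with Theorem~\ref{thm:SP}\ref{SP:2} substituted for \cite[Theorem 1.1]{HM13}, and part \ref{G-equiv SP:1} likewise with \cite[Theorem 3.1]{LZ20}. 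So you are reconstructing in sketch form the key input that the paper outsources.

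There is, however, a genuine gap in your Galois-descent paragraph. You claim that ``uniqueness of the Sarkisov thresholds forces $\Gamma$ to preserve each intermediate variety and each link.'' This is false: unlike $G$, the Galois group $\Gamma$ acts nontrivially on $N_1(X_{\bk})$ in general, so several Galois-conjugate extremal rays can realize the same threshold, and a geometric MMP over $\bk$ is free to contract just one of them, producing intermediate varieties that are not $\Gamma$-stable and hence not defined over $\k$ (think of contracting a single $(-1)$-curve on a degree $6$ del Pezzo surface of Picard rank $1$ over $\k$). Moreover a $\Gamma$-equivariant link typically decomposes over $\bk$ into several non-equivariant links, so one cannot first factor $\chi_{\bk}$ into geometric links and then regroup them afterwards. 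The correct order of operations is the one the paper uses: first establish the $\Gamma$-equivariant (equivalently, over-$\k$) Sarkisov program by building the equivariance into the algorithm --- contracting $\Gamma$-orbits of extremal rays, i.e.\ $K$-negative extremal faces of $\overline{\NE}(X_{\bk})^{\Gamma}$ --- which is Theorem~\ref{thm:SP}\ref{SP:2}, and only then observe that for a \emph{connected} $G$ every step of this $\Gamma$-equivariant program is automatically $G$-equivariant. With that reordering your argument closes up; as written, the descent step does not go through.
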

\begin{proof}
\ref{G-equiv SP:1}: 
We follow the proof of \cite[Theorem 1.2]{Flo20}, which uses Theorem~\ref{thm:SP}\ref{SP:2}, but instead of using \textit{loc.cit.} (which holds over algebraically closed fields), we use \cite[Theorem 3.1]{LZ20} (which adapts the proof of Theorem~\ref{thm:SP}\ref{SP:2} for surfaces over perfect fields).\\
\ref{G-equiv SP:2}: One can adapt the proof of \cite[Theorem 1.2]{Flo20} by using Theroem~\ref{thm:SP}\ref{SP:2} instead of \cite[Theorem 1.1]{HM13}.
\end{proof}

The next result will be useful in the proof of Proposition \ref{prop:classification in dimension 2} in Section  \ref{sec: algebraic subgroup of Bir(P2)}.

\begin{lemma}[{\cite[p.~250]{Cor95}}]\label{lem:SP-dP}
Let $Z$ be a surface on the top row of a Sarkisov diagram and $B$ the variety at the bottom row of the Sarkisov diagram. Then $-K_Z$ is relatively ample over $B$. 
In particular, if $B=\Spec(\k)$, then $Z$ is a del Pezzo surface. 
\end{lemma}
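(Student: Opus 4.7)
The plan is to exploit the underlying $2$-rays game of any Sarkisov diagram, as recalled in Remark~\ref{rmk:sarkisov-2-rays-game}. First I would observe that in all four types of Sarkisov diagrams of Figure~\ref{fig:SarkisovTypes1}, the surface $Z$ sitting on the top row maps by two distinct contractions onto varieties lying strictly between $Z$ and $B$: in types~\I{} and~\II{} these are the two divisorial contractions (or, after identifying $Z$ with the right-hand top variety via the isomorphism in codimension $1$, one divisorial contraction and one fibration), in type~\III{} they are a fibration and a divisorial contraction, and in type~\IV{} they are the two fibrations. In dimension $2$, the horizontal dotted arrow is an honest isomorphism (Remark~\ref{Rem:UpToIso}), so both contractions really start from $Z$.

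Next I would argue that $\rho(Z/B)=2$ and that the two contractions above correspond precisely to the two extremal rays of the relative Mori cone $\overline{\NE}(Z/B)$. This is the defining feature of a Sarkisov diagram as a $2$-rays game: each contraction in the diagram has relative Picard rank $1$, and the chain of contractions from $Z$ down to $B$ on either side drops the Picard rank by $1$ at each step.

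The core step is then to note that each of the two contractions from $Z$ is an MMP contraction in the classical sense, so it contracts a $K_Z$-negative extremal ray (a divisorial contraction onto a $\Q$-factorial terminal variety, or a Mori fibration: in either case $-K_Z$ is relatively ample for that contraction by Definition~\ref{def:MoriFibration}). Therefore both generators of the two-dimensional cone $\overline{\NE}(Z/B)$ lie in the half-space $\{K_Z<0\}$, so the whole cone does, and Kleiman's criterion for relative ampleness yields that $-K_Z$ is $\pi$-ample, where $\pi\colon Z\to B$ is the composed morphism.

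Finally, for the ``in particular'' clause, assume $B=\Spec(\k)$. Then $-K_Z$ is ample in the absolute sense. Being a surface with terminal singularities (a condition inherited from the Mori-theoretic nature of the diagram), $Z$ is smooth, since terminal surface singularities are smooth points. Hence $Z$ is a smooth projective surface with ample anticanonical divisor, i.e.~a del Pezzo surface. I expect no real obstacle beyond the bookkeeping of which arrows in each of the four diagrams actually start from $Z$; once that is pinned down the result is a one-line application of Kleiman's criterion.
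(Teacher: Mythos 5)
Your argument is correct and coincides with the paper's own proof: both establish that $\rho(Z/B)=2$ with exactly two extremal contractions over $B$, both of which are $K_Z$-negative, and then conclude by Kleiman's criterion. The extra details you supply (the case check over the four diagram types, and smoothness of terminal surface singularities for the del Pezzo conclusion) are consistent elaborations of the same route.
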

\begin{proof}
It follows from the definition of Sarkisov link that $\rho(Z/B)=2$ and that there are exactly two extremal contractions from $Z$ over $B$ (divisorial contractions and/or Mori fiber spaces), and so $\Pic(Z/B)_{\Q}=\Q f_1\oplus \Q f_2$, where $f_1,f_2$ are extremal curves in $Z$.  
Then $-K_{Z}\cdot f_i>0$ for $i=1,2$, and Kleinman's criterion implies that $-K_{Z}$ is relatively ample over $B$. 
\end{proof}

We finish this section with two observations concerning the equivariant Sarkisov program that will be useful in Section \ref{sec: equiva Sark links in dim 3}.

\begin{lemma}\label{lem:no links over bk}
The base field $\k$ is assumed to be of characteristic zero.
Let $\pi\colon X\to B$ be a Mori fibration such that $\pi_{\bk}\colon X_{\bk}\to B_{\bk}$ is a Mori fibration. If there are no $\Autz(X_{\bk})$-equivariant Sarkisov links of type I, II and III starting from $X_{\bk}\to B_{\bk}$, then there are no $\Autz(X)$-equivariant Sarkisov links of type I, II and III starting from $X\to B$. 
\end{lemma}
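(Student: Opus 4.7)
We argue by contraposition. Suppose there exists an $\Autz(X)$-equivariant Sarkisov link $\chi\colon X\dashrightarrow X_2$ over $\k$ of type \I, \II, or \III\ starting from $\pi$; the aim is to produce an $\Autz(X_\bk)$-equivariant Sarkisov link over $\bk$ of one of these three types starting from $\pi_\bk$, contradicting the hypothesis.

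First, since $(\Autz X)_\bk=\Autz(X_\bk)$ as group schemes, the base-changed birational map $\chi_\bk\colon X_\bk\dashrightarrow X_{2,\bk}$ is $\Autz(X_\bk)$-equivariant; moreover it is non-trivial, because a non-trivial Sarkisov link of type \I, \II, or \III\ always extracts or contracts at least one prime divisor without cancellation. Applying Theorem~\ref{thm:G-equiv SP}\ref{G-equiv SP:2}, the map $\chi_\bk$ decomposes as a finite composition $\chi_\bk=\psi_N\circ\cdots\circ\psi_1$ of $\Autz(X_\bk)$-equivariant Sarkisov links, whose first term $\psi_1$ starts from $\pi_\bk$.

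The heart of the argument is to show that the factorization can be arranged so that $\psi_1$ is of type \I, \II, or \III. The key observation is that the divisorial contraction appearing in the Sarkisov diagram of $\chi$ base-changes to an $\Autz(X_\bk)$-equivariant divisorial contraction $Y_\bk\to X_\bk$ (in types \I\ and \II) or $Y_\bk\to X_{2,\bk}$ (in type \III). Since links of type \IV\ are isomorphisms in codimension one and do not extract or contract any divisor, playing the relative 2-ray game above the base on $Y_\bk$ allows one to extract an $\Autz(X_\bk)$-equivariant Sarkisov link of type \I, \II, or \III\ starting from $\pi_\bk$, possibly after composing with preliminary type \IV\ links that do not affect the divisorial structure.

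I expect the main technical obstacle to be that the exceptional divisor of the relevant divisorial contraction may split into several $\Gal(\bk/\k)$-conjugate prime divisors after base change, so the top variety of the base-changed Sarkisov diagram may have relative Picard rank strictly greater than $2$ over $B_\bk$, violating the defining condition for a Sarkisov link. Handling this requires working with the $\Autz(X_\bk)$-invariant part of the Mori cone $\overline{\mathrm{NE}}(Y_\bk/B_\bk)$, selecting an $\Autz(X_\bk)$-equivariant extremal face (whose existence is guaranteed by the $\Autz(X)$-invariance of the original contraction over $\k$ combined with the connectedness of $\Autz(X_\bk)$), and running the equivariant 2-ray game associated to this face to produce the desired link of type \I, \II, or \III\ over $\bk$.
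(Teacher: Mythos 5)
There are two genuine gaps here. First, you invoke Theorem~\ref{thm:G-equiv SP}\ref{G-equiv SP:2} to factor $\chi_{\bk}\colon X_{\bk}\rat X_{2,\bk}$ into equivariant Sarkisov links, but that theorem requires both ends to be Mori fiber spaces over $\bk$, and the target $X_{2,\bk}\to B_{2,\bk}$ need not be one after base change: $\Q$-factoriality and the relative Picard rank condition can fail over $\bk$ even when they hold over $\k$ (the paper's own example $Q^{1,3}$, whose complexification is not $\Q$-factorial, shows this is a real phenomenon; see Remark~\ref{rk: case of Q13}). The paper's proof repairs exactly this point by first running an $\Autz(X_{\bk})$-equivariant MMP $\Psi\colon X'_{\bk}\rat Z$ over $B'_{\bk}$ and then factoring the composite $\Psi\circ\chi_{\bk}$, which does run between genuine $\bk$-Mori fiber spaces.

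Second, the step you yourself call the heart of the argument --- producing from $\chi$ an equivariant link of type \I, \II\ or \III\ over $\bk$ --- is precisely what is not carried out, and the proposed fix for the acknowledged obstacle does not work as stated. Since $\Autz(X_{\bk})$ is connected, it acts trivially on $N_1(Y_{\bk}/B_{\bk})$, so \emph{every} extremal face is ``$\Autz(X_{\bk})$-equivariant'' and your selection criterion selects nothing; the group for which the splitting of the exceptional divisor into conjugate components matters is $\Gal(\bk/\k)$, and a Galois-invariant face contraction descends to $\k$ instead of producing a link over $\bk$. Moreover, contracting a face of dimension at least $2$ is not a step of a $2$-ray game, so its output is not a Sarkisov diagram. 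The paper avoids constructing any link over $\bk$ at all: it uses the hypothesis to conclude that $\Theta=\Psi\circ\chi_{\bk}$ is an isomorphism, descends $\Theta$ to an isomorphism $\theta\colon X\to\hat Z$ over $\k$, and then compares relative Picard ranks ($1=\rho(X'/B')\le\rho(\hat Z/B')=1$) to deduce that $\chi$ is an isomorphism in codimension~$1$, hence not of type \I, \II\ or \III. If you wish to salvage your route, the relevant tool is Lemma~\ref{lem:contractions}, which splits the base-changed divisorial contraction into a chain of extremal divisorial contractions over $\bk$; but you would still need to show that the resulting $2$-ray game over $B_{\bk}$ terminates in a link of the right type, which is substantially more work than the paper's argument.
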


\begin{proof}
Let $\chi\colon X\rat X'$ be an $\Autz(X)$-equivariant Sarkisov link to a Mori fiber space $X'\to B'$. The following is summarized in the two diagrams below. 
We run an MMP $\Psi\colon X_{\bk}'\rat Z$ over $B_{\bk}'$ to a $\bk$-Mori fiber space $Z\to B'_{\bk}$. 
Then $\Theta:=\Psi\circ\chi_{\bk}\colon X_{\bk}\rat Z$ is an $\Aut(X_{\bk})$-equivariant birational map between $\bk$-Mori fiber spaces. 
It is the composition of $\Aut(X_{\bk})$-equivariant Sarkisov links by Theorem~\ref{thm:G-equiv SP}\ref{G-equiv SP:2}. 
By assumption, there are no $\Aut(X_{\bk})$-equivariant Sarkisov links of type I, II and III starting from $X_{\bk}$. Therefore, $\Theta$ is an isomorphism. 
In particular, if $\mu\colon \Gamma \times X_{\bk} \to X_{\bk},\ (\gamma,x) \mapsto \mu_\gamma(x)$ is a descent datum, then $\Gamma \times Z \to Z,\ (\gamma,z) \mapsto (\Theta \circ \mu_\gamma \circ \Theta^{-1})(z)$ is a descent datum on $Z$ corresponding to a $\k$-variety $\hat Z$.  
Denote by $\theta\colon X\to \hat Z$ the isomorphism (defined over $\k$) such that $\theta_{\bk}=\Theta$. 
Then $\psi:=\theta\circ\chi^{-1}$ is a birational map (defined over $\k$) and $\psi_{\bk}=\Psi$.
\[
{
\def\arraystretch{2.2}
\begin{array}{cc}
\begin{tikzcd}[ampersand replacement=\&,column sep=1.3cm,row sep=0.16cm]
X_{\bk}\ar[r,"\chi_{\bk}",dashed,swap]\ar[dd,"\pi_{\bk}"]\ar[rrr,bend left=20,"\Theta",dashed]    \&  X'_{\bk} \ar[dd]\ar[rr,dashed,"\Psi"] \&\& Z\ar[lldd] \\ \\
B_{\bk}\& B_{\bk}'\&\&
\end{tikzcd}
&\qquad
\begin{tikzcd}[ampersand replacement=\&,column sep=1.3cm,row sep=0.16cm]
X\ar[r,"\chi",dashed,swap]\ar[dd,"\pi"]\ar[rrr,bend left=20,"\theta",dashed]   \&  X' \ar[dd]\ar[rr,dashed,"\psi"] \&\& \hat Z\ar[lldd] \\ \\
B\& B'\&\&
\end{tikzcd}
\end{array}
}
\]
Note that $\psi$ is a birational contraction (its inverse does not contract any divisor), because $\Psi$ is a birational contraction. Then $1=\rho(X'/B')\leq \rho(\hat Z/B')=1$ implies that $\psi$ is an isomorphism in codimension $1$.  
Since $Z\to B_{\bk}'$ is a Mori fiber space, also $X'_{\bk}\to B'_{\bk}$ is a Mori fiber space. It now follows from our assumptions that $\chi_{\bk}$ and hence $\chi$ are isomorphisms in codimension $1$. 
\end{proof}

\begin{lemma}\label{lem:contractions}
The base field $\k$ is assumed to be of characteristic zero.
Let $\eta\colon Y\to X$ be a $K_Y$-extremal divisorial contraction between two terminal $\Q$-factorial $\k$-varieties contracting a divisor $E$ onto a curve $C\subset X$. Suppose that $E_{\bk}=E_1+\cdots +E_n$, where $E_1,\cdots, E_n$ are hypersurfaces in $Y_{\bk}$.
Then there exist birational morphisms $\eta_1\colon Y_{\bk}\to Z_1$ contracting $E_1$ and $\eta_{i+1}\colon Z_i\to Z_{i+1}$ contracting $\eta_i\cdots\eta_1(E_{i+1})$ for $i=1,\dots,n-1$ with $Z_n=X_{\bk}$ such that $\eta_{\bk}=\eta_n\circ\cdots\circ\eta_1$. 
Moreover, if $C$ is smooth, then each $\eta_i$ contracts $E_{i+1}$ onto a smooth curve isomorphic to $C_{\bk}$. 
\end{lemma}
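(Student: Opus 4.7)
The approach is to base-change to $\bk$ and run a relative minimal model program for $K_{Y_{\bk}}$ over $X_{\bk}$, decomposing $\eta_{\bk}$ into a chain of individual $K$-negative divisorial contractions, each collapsing one component $E_i$.

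First I would verify the preliminaries: $Y_{\bk}$ and $X_{\bk}$ remain $\Q$-factorial with terminal singularities (both properties descend under the flat base change from $\k$ to $\bk$), the divisor $-K_{Y_{\bk}}$ is $\eta_{\bk}$-ample (inherited from the $K_Y$-extremality of $\eta$ via Kleiman's criterion applied to the relative Picard rank one morphism $\eta$), and the Galois group $\Gamma=\Gal(\bk/\k)$ permutes the components $E_1,\dots,E_n$ transitively because $E$ is $\k$-irreducible.

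The central step is to compute the relative Picard rank of $\eta_{\bk}$. Since $\eta$ has relative Picard rank one over $\k$ with $\Pic(Y/X)$ generated by $[E]$, base change yields $\Pic(Y_{\bk}/X_{\bk})$ generated by the linearly independent classes $[E_1],\dots,[E_n]$, so $\rho(Y_{\bk}/X_{\bk})=n$. By the relative Cone Theorem applied to $\eta_{\bk}$, the cone $\overline{\NE}(Y_{\bk}/X_{\bk})$ is then a simplicial polyhedral cone with exactly $n$ extremal rays $R_1,\dots,R_n$, all $K_{Y_{\bk}}$-negative; after relabeling, each $R_i$ is generated by curves lying inside the single prime divisor $E_i$. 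By the Contraction Theorem I contract $R_1$ to obtain a divisorial contraction $\eta_1\colon Y_{\bk}\to Z_1$ collapsing precisely $E_1$; here $Z_1$ is again $\Q$-factorial terminal, and the induced morphism $Z_1\to X_{\bk}$ is a divisorial contraction of relative Picard rank $n-1$ with exceptional locus $\eta_1(E_2)+\cdots+\eta_1(E_n)$. Iterating $n$ times produces the asserted factorization $\eta_{\bk}=\eta_n\circ\cdots\circ\eta_1$.

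For the smoothness claim, when $C$ is smooth I would invoke the classification of extremal divisorial contractions to a smooth curve on a terminal threefold (Mori--Cutkosky): each such contraction realizes its exceptional divisor as a conic bundle (in fact a $\P^1$-bundle away from finitely many fibers) over its image curve, which is itself smooth. Applied to each step $\eta_i$, this yields a smooth image; since $\Gamma$ acts transitively on the $E_i$, the images of the individual $\eta_i$ are pairwise $\Gamma$-conjugate curves in the corresponding $Z_{i-1}$, hence are all isomorphic to $C_{\bk}$.

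The main obstacle is excluding small contractions from the MMP: this is controlled by the tight rank count, since lowering the relative Picard rank from $n$ to $0$ requires exactly $n$ divisorial steps (each dropping the rank by one), leaving no room for small contractions followed by flips, both of which preserve the relative Picard rank.
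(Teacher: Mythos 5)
Your strategy is genuinely different from the paper's, which avoids all global cone/Picard-rank bookkeeping: it picks a single contracted curve $f_1\subset E_1$, uses Galois-invariance of the $1$-cycle $f_1+\cdots+f_n$ to deduce $K_{Y_{\bk}}\cdot f_i<0$ for each $i$, contracts $f_1$, and propagates negativity to the next stage via the projection-formula computation $K_{Z_1}\cdot\eta_1(f_i)=K_{Y_{\bk}}\cdot(f_1+f_i)<0$. As written, your route has a genuine gap at its foundation: the claim that $\Q$-factoriality ``descends under the flat base change from $\k$ to $\bk$'' is false. $\Q$-factoriality is not a geometric property, and the paper itself records a counterexample in Remark \ref{rk: case of Q13} (the quadric $Q^{1,3}$ is $\Q$-factorial over $\R$ while $Q^{1,3}_{\C}$ is not). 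Neither $Y_{\bk}$ nor $X_{\bk}$ may be assumed $\Q$-factorial, and this undermines essentially all of your Picard-rank bookkeeping: the identity $\rho(Y_{\bk}/X_{\bk})=n$ (which needs the $E_i$ to be $\Q$-Cartier, linearly independent, and to span $N^1(Y_{\bk}/X_{\bk})$ modulo pullbacks — the last point requires $X_{\bk}$ $\Q$-factorial), the claim that each divisorial step has irreducible exceptional divisor and drops the rank by exactly one, and the identification of the output of the relative MMP with $X_{\bk}$ (a small birational morphism onto a non-$\Q$-factorial target need not be an isomorphism).

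Two further steps do not go through as stated even granting $\Q$-factoriality. First, $\rho(Y_{\bk}/X_{\bk})=n$ does not make $\overline{\NE}(Y_{\bk}/X_{\bk})$ simplicial with exactly $n$ rays in bijection with the $E_i$, and the rank-counting argument cannot exclude flips: precisely because small contractions and flips preserve the relative Picard rank, any number of them is compatible with the count dropping from $n$ to $0$. What is actually needed is to exhibit, at each stage, a $K$-negative extremal class whose contraction is divisorial and collapses the next $E_i$ — which is exactly what the paper's explicit curve computation supplies. Second, for the final assertion you invoke the Mori--Cutkosky classification to get smoothness of the intermediate image curves without ever using the hypothesis that $C$ is smooth (a warning sign, since the lemma includes that hypothesis for a reason), and $\Gamma$-conjugacy of the $E_i$ only makes the intermediate curves conjugate to one another, not isomorphic to $C_{\bk}$; one still has to argue that each intermediate curve maps isomorphically onto the smooth curve $C_{\bk}$, which in the paper follows from how the $\eta_i$ are constructed.
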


\begin{proof}
Since $\rho(Y/X)=1$, the hypersurface $E$ is irreducible, and $\Gamma$ permutes $E_1,\dots,E_n$. 
Let $f_1\subset E_1$ be any curve contracted onto a point by $\eta_{\bk}$ and such that the $\Gamma$-orbit of $f_1$ consists of curves $f_1,\dots,f_n$ with $f_i\subset E_i$, $i=1,\dots,n$.  
Then 
\[
0>K_{Y_{\bk}}\cdot(f_1+\cdots+f_n)=nK_{Y_{\bk}}\cdot f_i,\qquad \text{for}\ i=1,\dots,n.
\] 
In particular, there is a birational contraction $\eta_1\colon Y_{\bk}\to Z_1$ contracting $f_1$. Let $C_i:=\eta_1(f_i)$, $i=2,\dots,n$. Then $\eta_1^*C_i=f_1+f_i$. Then
\[
K_{Z_1}\cdot C_i=\eta_1^*K_{Z_1}\cdot \eta_1^*C_i=K_{Y_{\bk}}\cdot(f_1+f_i)=2K_{Y_{\bk}}\cdot f_i<0.
\]
In particular, there is a birational contraction $\eta_2\colon Z_1\to Z_2$ contracting $C_2$, and $Z_2$ is again terminal and $\Q$-factorial. 
Inductively, we show the existence of the contractions $\eta_3,\dots,\eta_n$ in the same way.   
Then the birational map $\eta_n\circ\cdots\circ\eta_1\colon Y_{\bk}\to Z_n$ contracts $E_{\bk}$ and hence corresponds to a birational map $\eta'\colon Y\to Y'$ contracting $E$. By the uniqueness of contraction of extremal rays, there is an isomorphism $\alpha\colon Y'\to X$ with $\alpha\circ\eta'=\eta$. 
The last claim follows from the construction of the $\eta_i$. 
\end{proof}

\subsection{Algebraic subgroups of \texorpdfstring{$\Bir(X)$}{Bir(X)} and regularization}\label{ss:algebraic groups}

\begin{definition}\label{def: rational action}
Let $X$ be a variety, and let $G$ be an algebraic group.
\begin{itemize}
\item We say that $G$ acts \emph{rationally} on $X$ if there exists a birational map 
\[\eta \colon G\times X\rat G\times X,\ (g,x)\dashmapsto(g,\eta_g(x))\] 
that restricts to an isomorphism $U\to V$ on dense open subsets $U,V\subseteq G\times X$, whose projections onto $G$ are surjective, and such that $\eta_{gh}=\eta_g\circ\eta_h$ for any $g,h\in G$. If moreover the kernel of the induced homomorphism $G(\k) \to \Bir(X),\ g \mapsto \eta_g$ is trivial (i.e.~if $G$ acts \emph{faithfully} on $X$), then $(G,\eta)$ is called an \emph{algebraic subgroup of $\Bir(X)$}.
\item If, in addition, $\eta$ is an automorphism of $G \times X$, then we way that $G$ acts \emph{regularly} on $X$ and that $(G,\eta)$ is an \emph{algebraic subgroup of $\Aut(X)$}.
\item
An algebraic subgroup $(G,\eta)$ of $\Bir(X)$ is called \emph{maximal} if it is maximal with respect to the inclusion among the algebraic subgroups of $\Bir(X)$, i.e.~for every commutative diagram
\[\xymatrix@R=4mm@C=2cm{
    G' \times X \ar@{-->}[r]^{\eta'}   & G' \times X  \\
    G \times X\ar@{-->}[r]^{\eta} \ar@{^{(}->}[u] & G \times X \ar@{^{(}->}[u]
  }\]  
induced by an inclusion of algebraic groups $\iota\colon G \hookrightarrow G'$, we have $G=G'$ and $\iota=\Id$.
\end{itemize}
\end{definition}

\begin{remark}\label{rmk:algebraic subgroup}\item
\begin{itemize}
\item When $\eta$ is clear from the context, we will often drop it and simply write that $G$ is an algebraic subgroup of $\Bir(X)$ or of $\Aut(X)$.
\item  Let $\Gamma:=\Gal(\bk/\k)$. The canonical $\Gamma$-action on $X_{\bk}$ yields a canonical $\Gamma$-action on $\Bir(X_{\bk})$ by conjugation, and there is a canonical bijection $\Bir(X) \simeq \Bir(X_{\bk})^{\Gamma}$. We will therefore identify $\Bir(X)$ with the subgroup $\Bir(X_{\bk})^{\Gamma}$ of $\Bir(X_{\bk})$ in the rest of this article.
\item If $G$ is an algebraic subgroup of $\Bir(X)$, then $G_{\bk}$ is an algebraic subgroup of $\Bir(X_{\bk})$ and 
$G(\k)=\Bir(X)\cap G_{\bk}(\bk)$.
\end{itemize}
\end{remark}

We recall that if $X$ is a projective variety, then the contravariant functor 
\[
\begin{array}{ccccc}
\underline{\Aut}_\k(X)& \colon &\{\k\text{-schemes}\} &\to &\{\text{Groups}\} \\
& &  S & \mapsto & \Aut_S(X \times_\k S)
\end{array} 
\]
is represented by a group scheme $\Aut(X)$ locally of finite type over $\k$ (see \cite[Theorem~3.7]{MO67}).
Also, the identity component $\Autz(X)$ is a geometrically irreducible algebraic group over $\k$ (see \cite[Theorem~2.4.1]{Bri17}).
In particular, $\Autz(X)$ is a connected algebraic subgroup of $\Bir(X)$.

\begin{theorem}  \label{th alg subg of Cr3 are aut of Mori fib}
Let $n \in \N_{\geq 1}$. For each connected algebraic subgroup $G$ of $\Bir(\P_\k^{n})$, there exists an $n$-dimensional rational Mori fiber space $X$ and a birational map $\varphi\colon\p^n_\k\rat X$ such that $\varphi G\varphi^{-1}\subseteq\Autz(X)$.
\end{theorem}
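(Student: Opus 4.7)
The plan is to combine an equivariant regularization step with the equivariant Minimal Model Program recalled in Section~\ref{sec: Mori fibrations and Sarkisov links}. Fix a connected algebraic subgroup $(G,\eta)$ of $\Bir(\P^n_\k)$.

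\textbf{Step 1 (Regularization).} First I would invoke a regularization theorem for rational actions of connected algebraic groups: there exist a smooth projective $\k$-variety $X_0$, a regular and faithful $G$-action on $X_0$, and a $G$-equivariant birational map $\varphi_0\colon \P^n_\k \dashrightarrow X_0$. Over an algebraically closed base field this is classical (Weil's regularization of a birational action of a connected algebraic group, followed by equivariant resolution of singularities and equivariant completion, which are available in characteristic zero). Since $G$ is connected and $\P^n_\k$ is geometrically integral, the same statement descends to $\k$ by taking a $\Gamma$-stable model; equivalently, one can apply the regularization to $G_{\bk}$ acting on $\P^n_{\bk}$ and check that the resulting $G_{\bk}$-variety carries a canonical descent datum because $\varphi_0$ is determined functorially by $\eta$. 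In particular, $X_0$ is $\k$-rational since $\P^n_\k$ is.

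\textbf{Step 2 ($G$-equivariant MMP).} Since $X_0$ is uniruled (being rational), its canonical divisor $K_{X_0}$ is not pseudo-effective by \cite[Theorem 0.2]{BDPP13}, and Remark~\ref{rk: MMP in arbitrary dim} allows one to run an MMP on $X_0$ whose output is a Mori fibration $\pi\colon X \to Y$. By Proposition~\ref{prop: MMP is G-eq}, every step of this MMP is $G$-equivariant, so $G$ acts regularly on $X$ and $\pi$ is $G$-equivariant. Composing the equivariant birational maps produced by each step with $\varphi_0$ yields a $G$-equivariant birational map $\varphi\colon \P^n_\k \dashrightarrow X$. The variety $X$ is $\k$-rational since $\P^n_\k$ is.

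\textbf{Step 3 (Landing inside $\Autz(X)$).} It remains to check that the morphism $G \to \Aut(X)$ induced by the regular $G$-action on $X$ factors through $\Autz(X)$. This follows from the fact that $G$ is connected: the image of $G$ in the locally of finite type group scheme $\Aut(X)$ is a connected algebraic subgroup containing the identity, hence contained in the identity component $\Autz(X)$ (see \cite[Theorem 2.4.1]{Bri17}). Since the action is faithful, we obtain $\varphi G \varphi^{-1} \subseteq \Autz(X)$, as required.

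\textbf{Main obstacle.} The substantive step is Step 1: producing a smooth projective $\k$-variety on which the rational $G$-action is regularized. All ingredients (Weil regularization, equivariant resolution, equivariant completion) are available in characteristic zero, but one must be careful to keep everything defined over the possibly non-closed field $\k$. The MMP step and the identification $G \subseteq \Autz(X)$ are then routine given the machinery already summarized in Section~\ref{sec: Mori fibrations and Sarkisov links}.
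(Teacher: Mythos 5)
Your overall strategy --- regularize the rational action on a smooth projective model, run a $G$-equivariant MMP (Remark~\ref{rk: MMP in arbitrary dim} and Proposition~\ref{prop: MMP is G-eq}), and use connectedness of $G$ to land in $\Autz(X)$ --- is exactly the strategy of the paper's proof, and your Steps~2 and~3 are correct as written.

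The soft spot is Step~1, precisely where you yourself locate the main obstacle. Your proposal to regularize $G_{\bk}$ on $\P^n_{\bk}$ and then descend, on the grounds that the model ``carries a canonical descent datum because $\varphi_0$ is determined functorially by $\eta$,'' does not hold up: the regularized model is only determined up to $G$-equivariant birational equivalence, and the subsequent choices (quasi-projective open subset, equivariant completion, equivariant resolution) are not canonical, so no descent datum comes for free. The paper avoids this entirely by quoting Weil's regularization theorem, which is valid over an arbitrary field \cite{Wei55}, and working over $\k$ throughout. Two further ingredients are needed before one can equivariantly complete and that you pass over: one must first note that $G$ is a \emph{linear} algebraic group (deduced in the paper from the analogous statement over $\bk$ in \cite{BFT22}), and one must invoke Sumihiro's theorem \cite{Sum75} to replace the regular model by a $G$-stable \emph{quasi-projective} open subset --- both are hypotheses of Sumihiro's equivariant completion theorem, which should moreover be applied before (not after) equivariant resolution so that the final model is both smooth and projective. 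With Weil's theorem cited over $\k$ and these points supplied, your argument coincides with the paper's.
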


\begin{proof}
The proof of this result is based on the proof of \cite[Theorem 2.4.4]{BFT22}, where the base field was assumed to be algebraically closed of characteristic zero.

Note that the connected algebraic subgroup $G_{\overline{\k}}$ of ${\Bir(\P_{\overline{\k}}^n)}$ is linear (\cite[Lemma 2.4.2]{BFT22}), and thus $G$ is linear (by \cite[Proposition 14.51~(6)]{GW10}).
According to the regularization theorem of Weil (valid over any field; see \cite[p.~375, Theorem]{Wei55}), the group $G$ is conjugated to an algebraic subgroup of $\Autz(X')$, where $X'$ is a rational variety. Replacing $X'$ by its smooth locus, we may moreover assume that $X'$ is smooth. Then, by a result of Sumihiro (\cite[Theorem 3.8]{Sum75}), $X'$ is covered by $G$-stable quasi-projective open subsets. Replacing $X'$ by one of these $G$-stable quasi-projective open subsets, we may assume that $X'$ is quasi-projective.
Taking a $G$-equivariant compactification (\cite[Theorem 2.5]{Sum75}), and then a $G$-equivariant resolution of singularities (see \cite[Proposition 3.9.1 and Theorem 3.36]{Kol07} or \cite[Theorem 8.11]{Vil14}), we may assume that the rational variety $X'$ is smooth and projective. 
Then we can run an MMP on $X'$ (see Remark \ref{rk: MMP in arbitrary dim}), which is $G$-equivariant as $G$ is connected, see Proposition \ref{prop: MMP is G-eq}. 
We obtain a Mori fiber space $X$, birational to $\P_{\k}^{n}$, on which $G$ acts faithfully and regularly.
Hence, there exists a birational map $\varphi\colon \P_\k^n \dashrightarrow X$ such that $\varphi G \varphi^{-1} \subseteq \Autz(X)$.
\end{proof}

\begin{remark}\label{rk: alg subg of Cr3 are aut of Mori fib in dim 2}
If $n=2$, then Theorem \ref{th alg subg of Cr3 are aut of Mori fib} holds over any perfect base field (not necessarily of characteristic zero). Indeed, the only place in the proof of Theorem \ref{th alg subg of Cr3 are aut of Mori fib} where we use the fact that $\mathrm{char}(\k)=0$ is for the existence of an equivariant resolution of singularities, but such a resolution always exist for a surface: it suffices to repeatedly alternate normalizing the surface with blowing-up singular points (see \cite{Lip78} or \cite{Art86}).
\end{remark}

\subsection{Algebraic tori of the Cremona group}\label{sec: algebraic tori of Bir(Pn)}
We are now more particularly interested in the case of algebraic tori of $\Bir(\P_\k^n)$. According to \cite[p.~508, Corollaire 4 (p. 522) and Corollaire 1 (p. 524)]{Dem77}, there are no tori of dimension greater than $n$ in $\Bir(\P_\k^n)$, and any $d$-dimensional split torus of $\Bir(\P_\k^n)$, with $d \in \{n-1,n\}$, is conjugate to a torus of $\Aut(\P_\k^n)$. More generally, for $n$-dimensional tori of $\Bir(\P_\k^n)$, we have the following result.

\begin{proposition}\label{prop:conjugate tori in Bir(Pn)}
Let $T_1$ and $T_2$ be two $n$-dimensional tori of $\Bir(\P_\k^n)$. 
Then $T_1 \simeq T_2$, as algebraic groups, if and only if $T_1$ and $T_2$ are conjugate in $\Bir(\P_\k^n)$. 
\end{proposition}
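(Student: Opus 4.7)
The ``if'' direction is immediate, so I focus on the converse. Fix a $\k$-group isomorphism $\alpha\colon T_1\xrightarrow{\sim}T_2$. The plan is to realize each $T_i$ as the open orbit of a birational toric model of $\P_\k^n$ and to transport $\alpha$ to a birational equivariant map between these models. Applying Theorem~\ref{th alg subg of Cr3 are aut of Mori fib} to each connected algebraic subgroup $T_i\subseteq\Bir(\P_\k^n)$ produces birational maps $\varphi_i\colon\P_\k^n\dashrightarrow X_i$ with $\varphi_i T_i\varphi_i^{-1}\subseteq\Autz(X_i)$, where $X_i$ is a rational $n$-dimensional Mori fiber space on which $T_i$ acts regularly and faithfully.

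Next I would show that $X_i$ is a $T_i$-toric variety whose open orbit carries a $\k$-rational base point. Faithfulness of the $T_i$-action together with $\dim T_i=\dim X_i$ guarantees the existence of a dense open orbit $U_i\subseteq X_i$: for a torus acting, the generic stabilizer is a subtorus that fixes a dense open subset, hence fixes all of $X_i$, hence is trivial by faithfulness, so the generic orbit has dimension $n$ and is open. Thus $U_i$ is a $T_i$-torsor over $\k$. Because $X_i$ is $\k$-rational and $\k$ is infinite, the $\k$-rational points of $X_i$ are dense and therefore meet the open subset $U_i$, giving $U_i(\k)\neq\emptyset$; any choice of $x_i\in U_i(\k)$ yields an equivariant $\k$-isomorphism $\iota_i\colon T_i\xrightarrow{\sim}U_i$, $t\mapsto t\cdot x_i$.

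To conclude, the composite $\iota_2\circ\alpha\circ\iota_1^{-1}\colon U_1\xrightarrow{\sim}U_2$ is $T_1$-equivariant (where $T_1$ acts on $X_2$ through $\alpha$), and since each $U_i$ is dense open in $X_i$, it extends to a birational map $\psi\colon X_1\dashrightarrow X_2$. Setting $\beta:=\varphi_2^{-1}\circ\psi\circ\varphi_1\in\Bir(\P_\k^n)$, the equivariance of $\psi$ yields $\beta\,t\,\beta^{-1}=\alpha(t)$ on a dense subset of $T_1$, and density forces $\beta T_1\beta^{-1}=T_2$ as algebraic subgroups of $\Bir(\P_\k^n)$. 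The main subtlety I foresee is verifying that the open $T_i$-orbit on the regularized Mori fiber space $X_i$ is defined over $\k$ and contains a $\k$-rational point, so as to be a trivial torsor; once this toric description over $\k$ is in place, the construction of the conjugating map $\beta$ is essentially formal.
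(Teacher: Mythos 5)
Your proof is correct and follows essentially the same route as the paper's: regularize each $T_i$ on a rational model, observe that the dense open orbit is a trivial $T_i$-torsor because the model is rational (so $\k$-points are Zariski dense and meet the open orbit), and extend the resulting equivariant isomorphism of open orbits to a birational conjugation. The only cosmetic difference is that you invoke the Mori-fiber-space regularization (Theorem~\ref{th alg subg of Cr3 are aut of Mori fib}) where the paper appeals directly to Weil's regularization theorem; this changes nothing in the argument.
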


\begin{proof}
$(\Leftarrow)$: It is clear that if $T_1$ and $T_2$ are conjugate in $\Bir(\P_\k^n)$, then they are isomorphic as algebraic groups. 

$(\Rightarrow)$: We now suppose that $T_1$ and $T_2$ are isomorphic as algebraic groups.
According to Weil's regularization theorem, there exists a rational threefold $X_i$ endowed with a faithful $T_i$-action which makes it a toric variety. Let $U_i$ be the dense open $T_i$-orbit of $X_i$ (which is necessarily isomorphic to $T_i$ itself as $X_i$ is rational). Then the equivariant isomorphism $U_1 \simeq U_2$, induced by the isomorphism $T_1 \simeq T_2$, yields a birational equivariant map $\psi\colon X_1  \dashrightarrow X_2$ such that $\psi T_1 \psi^{-1}=T_2$, and the result follows.
\end{proof}

Let us now focus on the case $\k=\R$. We first recall the classification of the real tori (Lemma \ref{lem: real tori}) and then prove that an $n$-dimensional torus of  of $\Bir(\P_\R^n)$ is never a maximal connected algebraic subgroup (Corollary \ref{cor: n-dim torus is not max over R}).

\begin{lemma}\label{lem: real tori}
Any real torus of dimension $d \in \N_{\geq 0}$ is isomorphic to 
\[
(\fR_{\C/\R}(\G_{m,\C}))^p \times (\SSS)^q \times \G_{m,\R}^r\  \text{ with } 2p+q+r=d,
\] 
where $\SSS:=\Spec(\R[x,y]/(x^2+y^2-1))$ is the unit circle and $\fR_{\C/\R}(\G_{m,\C})$ is the Weil restriction of $\G_{m,\C}$ with respect to the field extension $\R \hookrightarrow \C$. 
Moreover, since all tori of dimension at most two are rational, any real torus is rational.
\end{lemma}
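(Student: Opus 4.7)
My plan is to use the well-known anti-equivalence of categories between algebraic tori over $\R$ and finitely generated free $\Z$-modules equipped with a $\Gamma$-action, where $\Gamma := \Gal(\C/\R) \simeq \Z/2\Z$; this is given by the character lattice functor $T \mapsto X^*(T) := \Hom_{\mathrm{gp}}(T_\C, \G_{m,\C})$ with its natural $\Gamma$-action (a standard consequence of Galois descent). Under this correspondence, $\dim(T) = \mathrm{rk}_\Z(X^*(T))$, and products of tori correspond to direct sums of lattices, so classifying $d$-dimensional real tori up to isomorphism reduces to classifying $\Z$-free $\Z[\Z/2\Z]$-modules of $\Z$-rank $d$.

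The key algebraic input I will invoke is the classical classification of $\Z[\Z/2\Z]$-lattices: every finitely generated $\Z$-free $\Z[\Z/2\Z]$-module decomposes as a direct sum of indecomposable sublattices, and up to isomorphism there are exactly three indecomposable types, namely the trivial module $\Z$, the sign module $\Z^{-}$ (where the nontrivial element of $\Gamma$ acts by $-1$), and the regular module $\Z[\Gamma]$ of rank $2$. Next I will verify directly that these three building blocks correspond, respectively, to $\G_{m,\R}$, to $\SSS$, and to $\fR_{\C/\R}(\G_{m,\C})$. The first is immediate. For $\SSS = \Spec(\R[x,y]/(x^2+y^2-1))$, the change of variables $u = x+iy$, $v = x-iy$ gives $\SSS_\C \simeq \G_{m,\C}$ with complex conjugation acting as $u \mapsto u^{-1}$, which realizes the sign representation on the character lattice $\Z$. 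For the Weil restriction, $\fR_{\C/\R}(\G_{m,\C})_\C \simeq \G_{m,\C} \times \G_{m,\C}$ with $\Gamma$ swapping the two factors, so its character lattice is precisely $\Z[\Gamma]$.

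Together, these three observations yield the claimed decomposition $T \simeq (\fR_{\C/\R}(\G_{m,\C}))^p \times \SSS^q \times \G_{m,\R}^r$ with $2p + q + r = d$. For the rationality statement, it suffices to note that each of the three indecomposable factors is itself rational over $\R$: $\G_{m,\R}$ is open in $\A^1_\R$; $\SSS$ is rational via stereographic projection from the real point $(1,0)$; and $\fR_{\C/\R}(\G_{m,\C})$ is a Zariski-open subset of $\fR_{\C/\R}(\A^1_\C) \simeq \A^2_\R$. Since a product of $\k$-rational varieties is $\k$-rational, every real torus is rational.

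The only substantive step is the classification of $\Z[\Z/2\Z]$-lattices, which is a classical and well-documented result; the remaining verifications are essentially formal, so no serious obstacle is expected.
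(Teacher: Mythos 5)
Your proof is correct. The paper itself gives no argument for this lemma: it simply cites \cite{Vos98} and \cite{MJT18} for the classification and \cite{Vos77} for rationality. The route you take --- the anti-equivalence between real tori and $\Gamma$-lattices via the character group, the classical fact that the only indecomposable $\Z[\Z/2\Z]$-lattices are the trivial, sign, and regular modules, and the identification of these with $\G_{m,\R}$, $\SSS$, and $\fR_{\C/\R}(\G_{m,\C})$ respectively --- is exactly the standard argument underlying those references, and your verifications of the three building blocks (in particular that conjugation acts as $u \mapsto u^{-1}$ on $\SSS_\C \simeq \G_{m,\C}$) are accurate. The one genuine, if minor, divergence is in the rationality statement: the paper deduces it from Voskresenski\u{\i}'s theorem that \emph{every} torus of dimension at most two is rational, whereas you observe directly that each of the three indecomposable factors is rational ($\G_{m,\R}\subset\A^1_\R$, stereographic projection for $\SSS$, and $\fR_{\C/\R}(\G_{m,\C})$ open in $\A^2_\R$) and that a product of rational varieties is rational. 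Your version is more elementary and self-contained; the paper's citation buys generality it does not actually need here.
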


\begin{proof}
See \cite[Chapter 4, Section  10.1]{Vos98} or \cite[Lemma 1.5]{MJT18} for the first statement, and \cite[Theorem 4.74]{Vos77} for the second statement.
\end{proof}

\begin{remark}
It follows from the results recalled in this section that each isomorphism class of real tori of $\Bir(\P_\R^2)$ non-isomorphic to $\SSS$ yields a single conjugacy class in $\Bir(\P_\R^2)$. However, we will see in Section  \ref{sec: algebraic subgroup of Bir(P2)} that it is not true for real tori isomorphic to $\SSS$ (see Proposition \ref{prop: conjugacy classes of S1 in Bir(P2)}). 
\end{remark}

\begin{corollary} \label{cor: n-dim torus is not max over R}
Let $T \simeq \fR_{\C/\R}(\G_{m,\C})^p \times (\SSS)^q \times \G_{m,\R}^r$, with $2p+q+r=n \geq 1$, be a torus of $\Bir(\P_\R^n)$.  
Then  $T$ is conjugate to a torus of the real algebraic subgroup
\[
\Autz((\fR_{\C/\R}(\P_\C^1))^p \times (\P_\R^1)^{q+r}) \simeq (\fR_{\C/\R}(\PGL_{2,\C}))^p \times (\PGL_{2,\R})^{q+r}
\]
in $\Bir(\P_\R^n)$. In particular, an $n$-dimensional torus of $\Bir(\P_\R^n)$ is never a maximal connected algebraic subgroup of $\Bir(\P_\R^n)$.
\end{corollary}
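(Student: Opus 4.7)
The plan is to exhibit inside the connected real algebraic group $H := (\fR_{\C/\R}(\PGL_{2,\C}))^p \times (\PGL_{2,\R})^{q+r}$ an $n$-dimensional torus $T'$ that is isomorphic to $T$ as a real algebraic group, and then to conjugate $T$ onto $T'$ using Proposition~\ref{prop:conjugate tori in Bir(Pn)}. First I would record that $H \simeq \Autz(Y)$ for $Y := (\fR_{\C/\R}(\P_\C^1))^p \times (\P_\R^1)^{q+r}$ (an isomorphism already stated in the corollary), and that $Y$ is $\R$-rational: indeed $\fR_{\C/\R}(\P_\C^1)$ is rational by Corollary~\ref{cor:classification in dimension 2 for the real numbers}, and a product of $\R$-rational varieties is $\R$-rational. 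This ensures $H \subseteq \Bir(Y) \simeq \Bir(\P_\R^n)$ as algebraic subgroups.

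The central step is the construction of $T' \subseteq H$. In each of the $p$ factors $\fR_{\C/\R}(\PGL_{2,\C})$, I take $\fR_{\C/\R}(\G_{m,\C})$, obtained by applying the Weil restriction functor to the diagonal maximal torus $\G_{m,\C} \subseteq \PGL_{2,\C}$; this is a two-dimensional torus of the Weil restriction. Among the $q+r$ factors $\PGL_{2,\R}$, I use the classical fact that $\PGL_{2,\R}$ has exactly two conjugacy classes of maximal tori, namely the split torus $\G_{m,\R}$ (diagonal matrices modulo scalars) and the anisotropic torus $\PSO_{2,\R} \simeq \SSS$ (the stabilizer of a positive-definite quadratic form on $\R^2$). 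Choosing $\SSS$ in $q$ of these factors and $\G_{m,\R}$ in the remaining $r$ factors produces a torus $T' \subseteq H$ of dimension $2p+q+r = n$ with $T' \simeq T$ as real algebraic groups.

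Both $T$ and $T'$ are thus $n$-dimensional tori of $\Bir(\P_\R^n)$ which are isomorphic as real algebraic groups, so Proposition~\ref{prop:conjugate tori in Bir(Pn)} provides $\psi \in \Bir(\P_\R^n)$ with $\psi T \psi^{-1} = T' \subseteq H$, yielding the first assertion. For the non-maximality, the dimension count $\dim H = 3n > n = \dim T$ shows that $\psi T \psi^{-1}$ is properly contained in the connected algebraic subgroup $H$ of $\Bir(\P_\R^n)$, so $T$ cannot be maximal. I do not anticipate any substantial obstacle: the two points deserving verification are the classification of maximal tori in $\PGL_{2,\R}$ into the split and anisotropic classes, and the $\R$-rationality of $Y$; both are either classical or explicitly recorded earlier.
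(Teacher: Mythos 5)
Your proposal is correct and follows essentially the same route as the paper: exhibit the sub-tori $\fR_{\C/\R}(\G_{m,\C})\subseteq\fR_{\C/\R}(\PGL_{2,\C})$ and $\SSS,\G_{m,\R}\subseteq\PGL_{2,\R}$, invoke Proposition~\ref{prop:conjugate tori in Bir(Pn)} to conjugate $T$ onto their product, and conclude non-maximality from strict containment (the paper cites rationality of $\fR_{\C/\R}(\P_\C^1)$ via connectedness of its real locus rather than via Corollary~\ref{cor:classification in dimension 2 for the real numbers}, but both are valid). The only cosmetic omission is that the final ``in particular'' clause, which concerns an \emph{arbitrary} $n$-dimensional torus, additionally needs Lemma~\ref{lem: real tori} to guarantee that every such torus is of the stated form.
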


\begin{proof}
The $n$-dimensional variety $(\fR_{\C/\R}(\P_\C^1))^p \times (\P_\R^1)^{q+r}$ is rational (since the real locus of the smooth projective real surface $\fR_{\C/\R}(\P_\C^1)$ is non-empty and connected; see \cite[Theorem 4.4.8]{Man20}), $\Aut(\P_\R^1) \simeq \PGL_{2,\R}$ contains strict algebraic subgroups isomorphic to $\G_{m,\R}$ and $\SSS$, and $\Autz(\fR_{\C/\R}(\P_\C^1)) \simeq \fR_{\C/\R}(\PGL_{2,\C})$ contains strict algebraic subgroups isomorphic to $\fR_{\C/\R}(\G_{m,\C})$. On the other hand, by Proposition \ref{prop:conjugate tori in Bir(Pn)}, two $n$-dimensional tori of $\Bir(\P_\R^n)$ are conjugate if and only if they are isomorphic as real algebraic groups. Hence, $T$ is conjugate to a torus of $\Autz((\fR_{\C/\R}(\P_\C^1))^p \times (\P_\R^1)^{q+r})$.
The last sentence follows from Lemma \ref{lem: real tori} and from the fact that $T$ is a strict algebraic subgroup of $\Autz((\fR_{\C/\R}(\P_\C^1))^p \times (\P_\R^1)^{q+r})$.
\end{proof}

\begin{remark}
It follows from Corollary \ref{cor:classification in dimension 2 for the real numbers} that tori are not maximal algebraic subgroups of $\Bir(\P_\R^2)$. However, for $n \geq 3$ and $1 \leq d <n$, we do not know if a $d$-dimensional torus can be a maximal algebraic subgroup of $\Bir(\P_\R^n)$.
\end{remark}

\subsection{Maximal connected algebraic subgroups of \texorpdfstring{$\Bir(X)$}{Bir(X)} and base change}

\begin{proposition}\label{prop:aut_groups_and_base_change}
Let $\k_1,\k_2$ be two fields with $\k_1 \hookrightarrow  \k_2$.
Let $X$ be a projective variety over $\k_1$. 
Then $(\Aut_{\k_1}(X))_{\k_2} \iso \Aut_{\k_2}(X_{\k_2})$ as group schemes over $\k_2$. 
In particular, $(\Aut_{\k_1}^{\circ}(X))_{\k_2} \iso \Aut_{\k_2}^{\circ}(X_{\k_2})$ as connected group schemes over $\k_2$. 
\end{proposition}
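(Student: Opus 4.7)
The plan is to establish the first isomorphism via the functor of points, and then deduce the identity-component statement as a formal consequence using the fact, recalled in Section~\ref{ss:algebraic groups}, that $\Autz_{\k_1}(X)$ is geometrically irreducible.

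First, I would note that by \cite[Theorem~3.7]{MO67}, the projectivity of $X$ ensures that the functor $\underline{\Aut}_{\k_1}(X)$ is representable by a group scheme $\Aut_{\k_1}(X)$ locally of finite type over $\k_1$, and likewise $\underline{\Aut}_{\k_2}(X_{\k_2})$ is representable by $\Aut_{\k_2}(X_{\k_2})$. To identify their base change, it suffices, by Yoneda, to produce a natural isomorphism of the functors they represent on the category of $\k_2$-schemes. Given a $\k_2$-scheme $T$, viewed also as a $\k_1$-scheme via $T\to\Spec(\k_2)\to\Spec(\k_1)$, the group of $T$-points of $(\Aut_{\k_1}(X))_{\k_2}$ is
\[
(\Aut_{\k_1}(X))_{\k_2}(T)=\Aut_{\k_1}(X)(T)=\Aut_T(X\times_{\k_1}T),
\]
while the group of $T$-points of $\Aut_{\k_2}(X_{\k_2})$ is
\[
\Aut_{\k_2}(X_{\k_2})(T)=\Aut_T\bigl((X\times_{\k_1}\Spec(\k_2))\times_{\Spec(\k_2)}T\bigr).
\]
By associativity of fiber products, $(X\times_{\k_1}\Spec(\k_2))\times_{\Spec(\k_2)}T\simeq X\times_{\k_1}T$ canonically, and this isomorphism is functorial in $T$, so the two functors coincide.

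For the second claim, I would argue as follows. Since $\Autz_{\k_1}(X)$ is geometrically irreducible over $\k_1$, its base change $(\Autz_{\k_1}(X))_{\k_2}$ is irreducible, hence connected, over $\k_2$. Moreover, $\Autz_{\k_1}(X)$ is an open (and closed) subgroup scheme of $\Aut_{\k_1}(X)$ containing the identity section, and these properties are preserved by the flat base change $\Spec(\k_2)\to\Spec(\k_1)$. Under the identification of the first paragraph, $(\Autz_{\k_1}(X))_{\k_2}$ thus appears as a connected open subgroup scheme of $\Aut_{\k_2}(X_{\k_2})$ containing the identity. Since $\Autz_{\k_2}(X_{\k_2})$ is by definition the connected component of the identity of $\Aut_{\k_2}(X_{\k_2})$, the two agree.

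I do not foresee any genuine obstacle here: the argument is essentially bookkeeping with the Yoneda lemma and the base-change behaviour of connected components, and the only nontrivial input — representability of the automorphism functor and geometric irreducibility of its identity component — is already invoked in Section~\ref{ss:algebraic groups}. The one point to be mildly careful about is the passage from ``irreducible after base change'' to ``connected'', which is immediate from geometric irreducibility of $\Autz_{\k_1}(X)$ over $\k_1$.
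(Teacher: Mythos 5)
Your proposal is correct and follows essentially the same route as the paper: the first isomorphism is obtained by the identical Yoneda/functor-of-points argument resting on the associativity of fiber products, and the identity-component claim rests on the same key input (geometric irreducibility of $\Autz_{\k_1}(X)$ from \cite[Theorem~2.4.1]{Bri17}), which the paper simply cites while you spell out the deduction via open-and-closedness of the identity component. No gaps.
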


\begin{proof}
Let us start with an observation. Consider the functor 
\[H \colon \{\k_2\text{-schemes}\} \to \{\k_1\text{-schemes}\},\ (T \to \Spec(\k_2)) \mapsto  (T \to \Spec(\k_2)\to \Spec(\k_1)),\] where the morphism $\Spec(\k_2)\to \Spec(\k_1)$ is the one induced by the field inclusion $\k_1 \hookrightarrow \k_2$. It follows from the universal property of the fiber product (see \cite[\href{https://stacks.math.columbia.edu/tag/01JO}{Tag 01JO}]{SP24}) that if a functor $F\colon \{\k_1\text{-schemes}\} \to \{\text{Sets}\}$ is represented by a $\k_1$-scheme $S$, then the functor $F \circ H\colon \{\k_2\text{-schemes}\} \to \{\text{Sets}\}$ is represented by the $\k_2$-scheme $S_{\k_2}=S\times_{\k_1} \Spec(\k_2)$.

For $i \in \{1,2\}$, we denote by $F_i=\underline{\Aut}_{\k_i}(X)\colon \{\k_i\text{-schemes}\} \to \{\text{Groups}\}$ the functor represented by $\Aut_{\k_i}(X)$. 
By the observation above, the group scheme $(\Aut_{\k_1}(X))_{\k_2}$ represents the functor $F_1 \circ H$. Hence, $(\Aut_{\k_1}(X))_{\k_2} \iso \Aut_{\k_2}(X_{\k_2})$ as group schemes over $\k_2$ if and only if the group functors $F_2$ and $F_1 \circ H$ are isomorphic. 

Let $T \to \Spec(\k_2)$ be a $\k_2$-scheme. Then we have the following commutative diagram
\[\xymatrix@R=4mm@C=0.8cm{
    X_{\k_2} \times_{\k_2} T \ar[r] \ar[d]  & X_{\k_2} \ar[r] \ar[d] & X \ar[d] \\
    T \ar[r] & \Spec(\k_2)\ar[r]  & \Spec(\k_1)
  }
\]
whose two squares are Cartesian. In particular, by natural associativity of the fiber product, there is an isomorphism of 
schemes $X_{\k_2} \times_{\k_2} T  \simeq X \times_{\k_1} H(T)$, and so for each element $f \in \Aut_T(X_{\k_2} \times_{\k_2} T)$ we have a commutative diagram
\[
\xymatrix@R=2mm@C=0.8cm{
    &&&X_{\k_2} \times_{\k_2} T \eq[r] \ar[dd]_{\simeq}^f \ar[dl] & X \times_{\k_1} H(T)\ar[dd]_{\simeq} \\
   \Spec(\k_1) &\Spec(\k_2) \ar[l] &T \ar[l] && \\
   &&&X_{\k_2} \times_{\k_2} T \ar[lu] \eq[r] & X \times_{\k_1} H(T)
  }.
\] 
As a consequence, there is a group isomorphism
\[
\eta_T\colon \ F_2(T)=\Aut_T(X_{\k_2} \times_{\k_2} T) \simeq \Aut_{H(T)}(X \times_{\k_1} H(T))=(F_1 \circ H)(T). 
\] 
Moreover, one can check that, for every morphism of $\k_2$-schemes $f\colon T_1 \to T_2$, we have $\eta_{T_2} \circ F_2(f)=(F_1 \circ H)(f) \circ \eta_{T_1}$. Therefore the group functors $F_2$ and $F_1 \circ H$ are isomorphic, and so $(\Aut_{\k_1}(X))_{\k_2} \iso \Aut_{\k_2}(X_{\k_2})$ as group schemes over $\k_2$.  

Finally, the last statement follows from \cite[Theorem~2.4.1]{Bri17}. 
\end{proof}

\begin{remark}\label{rk:Gamma respects aut-action}
Let $X$ be a $\k$-variety. 
Then the Galois group $\Gamma=\Gal(\bk/\k)$ permutes the $\Autz(X_{\bk})$-orbits. 
Indeed, the $\Gamma$-action on $X_{\bk}$ induces a $\Gamma$-action on $\Autz(X_{\bk}) \simeq \Autz(X)_{\bk}$ such that, for every $\gamma \in \Gamma$, we have $\mu_\gamma(\Autz(X_{\bk}) \cdot p)=\Autz(X_{\bk})\cdot \mu_\gamma(p)$ for all $p\in X_{\bk}$. 
\end{remark}

The next result will be the key-ingredient to prove Corollary \ref{cor: max subrgoups Cr3 real}.

\begin{corollary}\label{cor:maximality-extension}
Let $\k_1,\k_2$ be two fields with $\k_1 \hookrightarrow  \k_2$.
Let $X$ be a projective variety over $\k_1$. 
If $\Autz_{\k_2}(X_{\k_2})$ is maximal in $\Bir(X_{\k_2})$, then $\Autz_{\k_1}(X)$ is maximal in $\Bir(X)$.  
\end{corollary}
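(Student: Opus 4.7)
The plan is to argue by contrapositive, transporting a non-maximality witness over $\k_1$ to one over $\k_2$ via base change, and then invoking Proposition~\ref{prop:aut_groups_and_base_change}.

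First I would assume, for a contradiction, that $\Autz_{\k_1}(X)$ is not maximal in $\Bir(X)$. By the maximality definition, this gives a connected algebraic subgroup $(G,\eta)$ of $\Bir(X)$ together with a strict closed immersion of algebraic groups $\iota\colon \Autz_{\k_1}(X)\hookrightarrow G$ such that the induced diagram of rational actions commutes. Since both groups are connected (and smooth in characteristic zero), strictness of $\iota$ forces $\dim_{\k_1} G > \dim_{\k_1}\Autz_{\k_1}(X)$.

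Next I would base change everything to $\k_2$. The rational action $\eta\colon G\times X\dashrightarrow G\times X$, which restricts to an isomorphism on dense open subsets $U\to V$ with surjective projections to $G$, base changes to $\eta_{\k_2}\colon G_{\k_2}\times X_{\k_2}\dashrightarrow G_{\k_2}\times X_{\k_2}$ restricting to $U_{\k_2}\to V_{\k_2}$; here I use that $X$ is geometrically integral so that $X_{\k_2}$ is a variety over $\k_2$ and the open subsets remain dense with surjective projections. Faithfulness is preserved since the kernel of the induced group scheme morphism is compatible with flat base change (and $\Spec(\k_2)\to\Spec(\k_1)$ is flat). Thus $(G_{\k_2},\eta_{\k_2})$ is an algebraic subgroup of $\Bir(X_{\k_2})$, and it remains connected because the identity component of a smooth algebraic group in characteristic zero is geometrically connected.

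Finally I would apply Proposition~\ref{prop:aut_groups_and_base_change} to identify $(\Autz_{\k_1}(X))_{\k_2}\simeq\Autz_{\k_2}(X_{\k_2})$ as connected algebraic subgroups of $\Bir(X_{\k_2})$. The base-changed inclusion $\iota_{\k_2}\colon \Autz_{\k_2}(X_{\k_2})\hookrightarrow G_{\k_2}$ is still strict because $\dim_{\k_2}G_{\k_2}=\dim_{\k_1}G>\dim_{\k_1}\Autz_{\k_1}(X)=\dim_{\k_2}\Autz_{\k_2}(X_{\k_2})$, contradicting the maximality of $\Autz_{\k_2}(X_{\k_2})$ in $\Bir(X_{\k_2})$.

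The only genuinely delicate point, and the one I would write out most carefully, is the verification that the base change of $(G,\eta)$ still defines a faithful algebraic subgroup of $\Bir(X_{\k_2})$ in the sense of Definition~\ref{def: rational action}; the preservation of connectedness and of the strict inclusion under base change is routine, and the identification with $\Autz_{\k_2}(X_{\k_2})$ is immediate from Proposition~\ref{prop:aut_groups_and_base_change}.
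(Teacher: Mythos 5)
Your argument reaches the correct conclusion, but it takes a genuinely different route from the paper, and the one step you yourself flag as delicate is also the one whose justification does not hold up as written. The paper likewise argues by contraposition, but instead of base changing the abstract rational action $(G,\eta)$ directly, it first \emph{regularizes} the larger group: by (the proof of) Theorem~\ref{th alg subg of Cr3 are aut of Mori fib} there is a projective variety $Y$ over $\k_1$ and a birational map $\varphi\colon X\rat Y$ with $\varphi\,\Autz_{\k_1}(X)\,\varphi^{-1}\subsetneq\Autz_{\k_1}(Y)$. After this reduction, the only objects that need to be base changed are automorphism group schemes of projective varieties and the birational map $\varphi$, and Proposition~\ref{prop:aut_groups_and_base_change} applies verbatim to both sides, giving $\varphi_{\k_2}\Autz_{\k_2}(X_{\k_2})\varphi_{\k_2}^{-1}\subsetneq\Autz_{\k_2}(Y_{\k_2})$ (strictness is preserved since base change along the faithfully flat map $\Spec(\k_2)\to\Spec(\k_1)$ reflects isomorphisms). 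What the paper's detour buys is precisely that one never has to prove that the base change of an ``algebraic subgroup of $\Bir(X)$'' in the sense of Definition~\ref{def: rational action} is again one.

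That is exactly where your version is thin. Your justification of faithfulness --- ``the kernel of the induced group scheme morphism is compatible with flat base change'' --- does not parse: $\Bir(X)$ is not a group scheme, so there is no morphism of group schemes whose kernel you can base change. Moreover, with the definition taken literally (injectivity of $G(\k_1)\to\Bir(X)$ on rational points), faithfulness is genuinely \emph{not} stable under field extension: the scheme-theoretic kernel of the action can be a nontrivial group scheme with no nontrivial $\k_1$-points that acquires points over $\k_2$ (already $\SL_{3,\Q}$ acting on $\P^2_{\Q}$ is ``faithful'' on $\Q$-points but not on $\C$-points), and in pathological cases the kernel could even be positive-dimensional, which would break your dimension count after passing to the quotient. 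The repair is to read faithfulness geometrically (triviality of the scheme-theoretic kernel, equivalently faithfulness over $\overline{\k_1}$), which is what Remark~\ref{rmk:algebraic subgroup} implicitly assumes and under which your base change step becomes routine; alternatively, simply regularize first as the paper does and the issue never arises. The remaining ingredients of your sketch --- geometric connectedness of $G_{\k_2}$, strictness of $\iota_{\k_2}$, and the identification $(\Autz_{\k_1}(X))_{\k_2}\simeq\Autz_{\k_2}(X_{\k_2})$ --- are fine.
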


\begin{proof}
We prove this statement by contraposition. If $\Autz_{\k_1}(X)$ is not maximal in $\Bir(X)$, then by Theorem \ref{th alg subg of Cr3 are aut of Mori fib} there exists a projective variety $Y$ over $\k_1$ and a birational map $\varphi\colon\ X \rat Y$ such that $\varphi \Autz_{\k_1}(X) \varphi^{-1} \subsetneq \Autz_{\k_1}(Y)$. 
Then Proposition~\ref{prop:aut_groups_and_base_change} yields
\[
\varphi_{\k_2} \Autz_{\k_2}(X_{\k_2}) \varphi_{\k_2}^{-1}  \simeq (\varphi \Autz_{\k_1}(X) \varphi^{-1})_{\k_2} \subsetneq (\Autz_{\k_1}(Y))_{\k_2} \simeq \Autz_{\k_2}(Y_{\k_2}),
\]
where $\varphi_{\k_2}=\varphi \times_{\k_1} \Id_{\Spec(\k_2)} \colon\ X_{\k_2} \rat Y_{\k_2}$ is the birational map induced by $\varphi$.
Thus $\Autz_{\k_2}(X_{\k_2})$ is not maximal in $\Bir(X_{\k_2})$, which finishes the proof.
\end{proof}

\begin{remark}
The converse of Corollary \ref{cor:maximality-extension} is not true in general. 
For instance, let $\k$ be a field for which there is a rational del Pezzo surface $X$ of degree $6$ and of Picard rank $1$. Then, according to Proposition \ref{prop:classification in dimension 2}, $\Autz_{\k}(X)$ is maximal in $\Bir(\P_{\k}^{2})$, while $\Autz_{\bk}(X_{\bk})\simeq\G_{m,\bk}^{2}$ is not maximal in $\Bir(\P_{\bk}^{2})$; see \cite[Section  4]{SZ20} for details on automorphism groups of del Pezzo surfaces of degree $6$.
\end{remark}

\section{Maximal connected algebraic subgroups of \texorpdfstring{$\Bir(\p^2)$}{Bir(P2)}} \label{sec: algebraic subgroup of Bir(P2)}
In this section we prove Proposition \ref{prop:classification in dimension 2} and Corollary \ref{cor:classification in dimension 2 for the real numbers}, and we then determine the conjugacy classes of the real algebraic tori of $\Bir(\P_\R^2)$. Recall that, given a perfect field $\k$, we denote its Galois group by $\Gamma:=\Gal(\bk/\k)$.

\smallskip

The following result is classic and will be useful to us several times later.

\begin{proposition}{\emph{(Ch\^atelet's theorem, see e.g. \cite[Proposition 4.5.10]{Poo17})}} \label{prop:chatelet}
Let $\k$ be an arbitrary field.
Let $X$ be a $\k$-form of $\P_{\bk}^n$ with $n \geq 1$. Then $X(\k)\neq \varnothing$ if and only if $X \simeq \P_{\k}^n$. In particular, the only rational $\k$-form of $\P_{\bk}^n$ is the trivial $\k$-form $\P_{\k}^n$ (up to isomorphism).
\end{proposition}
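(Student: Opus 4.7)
The ``if'' direction is immediate: the $\k$-point $[1:0:\cdots:0]$ shows $\P^n_\k(\k)\neq\varnothing$. For the ``only if'' direction, suppose $p\in X(\k)$. My strategy is to show that the hyperplane line bundle $\mathcal O(1)$ on $X_{\bk}$ descends to a line bundle $L$ on $X$, and then to use the complete linear system $|L|$ to produce the desired isomorphism $X\iso\P^n_\k$.

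First I would observe that $\Pic(X_{\bk})=\Z\cdot[\mathcal O(1)]$ and that the Galois group $\Gamma$ acts trivially on this group: the action is by $\pm 1$, but it must preserve the ample cone, so it acts by $+1$. Hence $[\mathcal O(1)]\in\Pic(X_{\bk})^{\Gamma}$, and the real question is whether this Galois-invariant class descends to an actual line bundle on $X$. To address this I would invoke the low-degree exact sequence of the Hochschild--Serre spectral sequence for the sheaf $\G_m$ (using Hilbert~90 to kill the $H^1(\Gamma,\bk^*)$ term):
\[
0\longrightarrow\Pic(X)\longrightarrow\Pic(X_{\bk})^{\Gamma}\longrightarrow\Br(\k)\longrightarrow\Br(X).
\]
The image of $[\mathcal O(1)]$ in $\Br(\k)$ is by definition the Brauer--Severi class of $X$, and its vanishing is equivalent to $\mathcal O(1)$ descending. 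Here the hypothesis $p\in X(\k)$ intervenes crucially: the morphism $p\colon\Spec(\k)\to X$ is a section of the structure morphism, so the pullback $\Br(\k)\to\Br(X)$ admits a retraction and is in particular injective. Therefore the Brauer--Severi class of $X$ vanishes and $\mathcal O(1)$ descends to some $L\in\Pic(X)$.

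By flat base change, $\dim_\k H^0(X,L)=\dim_{\bk}H^0(X_{\bk},\mathcal O(1))=n+1$, so the complete linear system $|L|$ defines a morphism $\varphi\colon X\to\P^n_\k$ whose base change $\varphi_{\bk}$ is the canonical isomorphism of $\P^n_{\bk}$ with itself; hence $\varphi$ is an isomorphism by faithfully flat descent. The ``In particular'' clause then follows, since every rational $\k$-variety has a $\k$-point (a dense open of $\P^n_\k$ contains $\k$-points whenever $\k$ is infinite; for finite $\k$ the Brauer group vanishes by Wedderburn, so every Brauer--Severi variety is already trivial). The only nontrivial step is the descent of $\mathcal O(1)$ via the Brauer group, and this is precisely where the existence of a $\k$-point is indispensable, as illustrated by pointless conics over $\Q$, whose Brauer--Severi class is nonzero.
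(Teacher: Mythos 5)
The paper does not actually prove this proposition: it is quoted as Ch\^atelet's theorem with a pointer to \cite[Proposition 4.5.10]{Poo17}, so there is no internal argument to compare against. Your proof is correct and is the standard Brauer-group argument for triviality of a Severi--Brauer variety with a rational point. Each step checks out: $\Gamma$ acts trivially on $\Pic(X_{\bk})\simeq\Z$ because it preserves the ample generator; the low-degree exact sequence
\[
0\to\Pic(X)\to\Pic(X_{\bk})^{\Gamma}\to\Br(\k)\to\Br(X)
\]
is valid here since $X$ is proper and geometrically integral (so $H^0(X_{\bk},\O_{X_{\bk}})^*=\bk^*$ and Hilbert~90 kills the $H^1$ term); a $\k$-point splits $\Br(\k)\to\Br(X)$, so the obstruction class of $[\mathcal O(1)]$ vanishes and the bundle descends; and global generation plus cohomology-and-base-change let you conclude that $|L|$ gives a morphism to $\P_\k^n$ that is an isomorphism because it is one after base change. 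One small remark: the identification of the boundary image of $[\mathcal O(1)]$ with ``the'' Brauer--Severi class of $X$ (defined via $H^1(\Gamma,\PGL_{n+1})\to\Br(\k)$) is a genuine compatibility statement rather than a definition, but your argument never actually uses it --- you only need that this image lies in $\ker(\Br(\k)\to\Br(X))=0$, which you establish directly. The deduction of the ``in particular'' clause (infinite fields via density of $\k$-points in a dense open, finite fields via Wedderburn) is also fine; for the paper's purposes ($\k$ of characteristic zero, or perfect and typically infinite) the first case already suffices.
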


Let us now prove Proposition \ref{prop:classification in dimension 2} and then Corollary \ref{cor:classification in dimension 2 for the real numbers}.

\begin{proof}[Proof of Proposition \ref{prop:classification in dimension 2}]
Let $\k$ be a perfect field of arbitrary characteristic.
Let $G$ be a connected algebraic subgroup of $\Bir(\P_{\k}^{2})$.
By Theorem~\ref{th alg subg of Cr3 are aut of Mori fib} and Remark \ref{rk: alg subg of Cr3 are aut of Mori fib in dim 2}, $G$ is conjugate to an algebraic subgroup of $\Autz(X)$, where $X$ is a $2$-dimensional rational Mori fiber space. Hence, to determine the conjugacy classes of connected algebraic subgroups of $\Bir(\P_{\k}^{2})$, it suffices to list the Mori fiber spaces $X/B$ up to $\Autz(X)$-equivariant birational maps. 
Moreover, since every equivariant birational map between Mori fiber spaces can be decomposed into equivariant Sarkisov links (see Theorem \ref{thm:G-equiv SP}), we only need to check for the existence of equivariant Sarkisov links starting from $X$.

\smallskip

\textbf{Case 1.} Suppose first that the basis $B$ of the Mori fibration is $\Spec(\k)$. Then $X$ is a smooth rational del Pezzo surface with $\rho(X)=1$ and \cite[Theorem 2.6]{Isk96} implies that $\mathrm{K}_X^2\geq 5$ (see also \cite[Proposition 2.9]{SZ20}). We now consider the different possibilities for $\mathrm{K}_X^2$.
\begin{itemize}
\item If $\mathrm{K}_X^2=5$, then $X$ has a finite automorphism group since $\Aut(X_{\bk}) \simeq \mathfrak{S}_5$ is the symmetric group of degree $5$ (see \cite[Theorem 8.5.6]{Dol12}). Hence $\mathrm{K}_X^2=5$ cannot occur.
\item If $\mathrm{K}_X^2=6$, then $\Autz(X)$ is a $2$-dimensional torus and $X_{\bk}$ has exactly six $(-1)$-curves that form a hexagon, all preserved by $\Autz(X)$ (see \cite[Section  4]{SZ20}).  
Moreover, the only $\Autz(X_{\bk})$-fixed points in $X_{\bk}$ are the six intersection points of the $(-1)$-curves, and so there are no $\Autz(X)$-equivariant Sarkisov links starting from $X$ by Lemma~\ref{lem:SP-dP}.
This yields Case \ref{item: iv for prop1} of Proposition \ref{prop:classification in dimension 2}.
\item If $\mathrm{K}_X^2=7$, then $X$ has Picard rank at least $2$; indeed, $X_{\bk}$ has three $(-1)$-curves, two of which are disjoint and the third one intersects the other two, and so $\NS(X)=\NS(X_{\bk})^\Gamma$ has rank at least $2$. Hence $\mathrm{K}_X^2=7$ cannot occur.
\item If $\mathrm{K}_X^2=8$, then $X\simeq \fR_{\K/\k}(\P_\K^1)$ for some quadratic field extension $\k \hookrightarrow \K$ (see \cite[Lemma 3.2]{SZ20}), and so $\Autz(X) \simeq \fR_{\K/\k}(\PGL_{2,\K})$ acts on $X$ with no fixed closed point. Consequently, there are no $\Autz(X)$-equivariant Sarkisov links starting from $X$. This yields Case \ref{item: iii for prop1} of Proposition \ref{prop:classification in dimension 2}.
\item If $\mathrm{K}_X^2=9$, then $X\simeq\p^2_\k$ by Ch\^atelet's theorem (Proposition \ref{prop:chatelet}), and so there are no $\Autz(X)$-equivariant Sarkisov links starting from $X$ since $\Autz(X) \simeq \PGL_{3,\k}$ acts on $X$ with no fixed closed point. This yields Case \ref{item: i for prop1} of Proposition \ref{prop:classification in dimension 2}.
\end{itemize}

\smallskip

\textbf{Case 2.} Suppose now that $B \simeq \p^1_\k$. Then $X\to B$ is a Mori conic fibration. According to \cite[Lemma 6.13]{Sch19}, we have the following possibilities for $X$.
\begin{itemize}
\item $X$ is a Hirzebruch surface $\F_n$ for some $n \in \N_{\geq 0}$. 
If $n=0$, then $\Autz(\F_0)$ acts transitively on $\F_0$. 
For $n\geq1$, we have $\Autz(\F_n)=\Aut(\F_n)$, and $\Aut(\F_n)$ acts on $\F_n$ with exactly two orbits: the $(-n)$-section and its complement. In particular, there are no $\Autz(\F_n)$-equivariant birational maps to another Mori fiber space, except when $n=1$, in which case the contraction of the $(-1)$-curve conjugates $\Aut(\F_1)$ to a strict algebraic subgroup of $\Autz(\P_\k^2)=\Aut(\P_\k^2)$.
This yields Case \ref{item: ii for prop1} of Proposition \ref{prop:classification in dimension 2}.
\item $X$ is a del Pezzo surface of degree $6$ obtained by blowing-up a point of degree $2$ on $X'=\fR_{\K/\k}(\P_\K^1)$, where $\k \hookrightarrow \K$ is some quadratic field extension. But then $\Autz(X)$ is conjugate to the strict subgroup of $\Autz(X')$ preserving the point of degree $2$ of $X'$ that was blown-up.
\item $X$ is a del Pezzo surface of degree $5$ obtained by blowing-up a point of degree $4$ on $\p^2_\k$. 
But this case is excluded for us since then $\Autz(X)$ is trivial.
\end{itemize}
This concludes the proof of Proposition \ref{prop:classification in dimension 2}.
\end{proof}

\begin{proof}[Proof of Corollary \ref{cor:classification in dimension 2 for the real numbers}]
As the only quadratic field extension of $\R$ is $\R \hookrightarrow \C$ (up to complex conjugation), and as there are no real del Pezzo surfaces of degree $6$ with Picard rank $1$ (see \cite[Lemma 3.1]{RZ18}), the result follows readily from Proposition \ref{prop:classification in dimension 2}.
\end{proof}

\smallskip

We now focus on algebraic tori of $\Bir(\P_\R^2)$.
According to Lemma \ref{lem: real tori}, a torus of $\Bir(\P_\R^2)$ is isomorphic to $\G_{m,\R}$, $\SSS$, $\G_{m,\R}^2$, $(\SSS)^2$, $\SSS\times\G_{m,\R}$ or $\fR_{\C/\R}(\G_{m,\C})$. For each type, there is a single conjugacy class (see Section  \ref{sec: algebraic tori of Bir(Pn)}) in $\Bir(\P_\R^2)$, except for $\SSS$, for which the following lemma gives the list of conjugacy classes.

\begin{proposition}\label{prop: conjugacy classes of S1 in Bir(P2)}
The unit circle $\SSS$ has the following two distinct conjugacy classes in $\Bir(\p_\R^2)$:
\begin{enumerate}
\item\label{S in dim 2:1} the conjugacy class of the natural embedding of $\SSS$ into $\PGL_{3,\R}=\Aut(\P_\R^2)$ fixing the line $(z=0)$ -- here we write $[x:y:z]$ for the coordinates on $\p_\R^2$ -- and the point $[0:0:1]$; and
\item\label{S in dim 2:2} the conjugacy class obtained by embedding $\SSS$ into $\PGL_{2,\R}\times\PGL_{2,\R}=\Autz(\p_\R^1\times\p_\R^1)$ diagonally.
\end{enumerate}
\end{proposition}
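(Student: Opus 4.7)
The plan is to apply Corollary~\ref{cor:classification in dimension 2 for the real numbers} combined with the equivariant Sarkisov program (Theorem~\ref{thm:G-equiv SP}) to reduce the classification to that of $\SSS$-subgroups of $\Autz(X)$, for $X$ ranging over $\p^2_\R$, the Hirzebruch surfaces $\F_n$ with $n\in\N_{\geq 0}\setminus\{1\}$, and $\fR_{\C/\R}(\p^1_\C)$, and then to identify which of these classes collapse in $\Bir(\p^2_\R)$ via equivariant birational maps. Since $\SSS$ is a one-dimensional compact real torus and any such inside a connected real algebraic group is conjugate to the standard maximal compact subgroup of a maximal torus, I expect a unique $\SSS$-class up to conjugacy in each of $\Autz(\p^2_\R)=\PGL_{3,\R}$, $\Autz(\F_n)$ for $n\geq 2$, and $\Autz(\fR_{\C/\R}(\p^1_\C))=\fR_{\C/\R}(\PGL_{2,\C})$, and exactly two classes modulo the factor-swap in $\Autz(\F_0)=\PGL_{2,\R}\times\PGL_{2,\R}$, namely the diagonal embedding (which is case (2)) and the single-factor embedding into one $\PGL_{2,\R}$.

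Next, I would collapse this list via explicit equivariant birational maps. For the single-factor $\SSS$ on $\F_0$, the map
\[
\phi\colon ([x_0{:}x_1],[y_0{:}y_1]) \dashmapsto [x_1(y_0^2-y_1^2) : 2x_1y_0y_1 : x_0(y_0^2+y_1^2)]
\]
intertwines the rotation on the second factor with the $\SO_{2,\R}$-rotation on $\p^2_\R$ around $[0{:}0{:}1]$ (up to the reparametrization $\theta \mapsto 2\theta$), placing this class into case (1). For $\SSS$ on $\fR_{\C/\R}(\p^1_\C)$, I would perform an equivariant Sarkisov link by blowing up one of its two real fixed points $(0,0)$ or $(\infty,\infty)$ and then contracting the Galois-conjugate pair of $(-1)$-curves obtained as the strict transforms of the two rulings through the blown-up point, landing on $\p^2_\R$. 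For each $\F_n$ with $n\geq 2$, iterated equivariant elementary transformations at Galois-conjugate pairs of complex $\SSS$-fixed points on the $(\pm n)$-sections shift $n$ by $\pm 2$, ultimately connecting $\F_n$ either to $\F_1$ (and hence, by blowing down its $(-1)$-section, to $\p^2_\R$ and case (1)) or to $\F_0$ with the diagonal action (case (2)), depending on the parity of $n$.

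Finally, to prove that cases (1) and (2) are genuinely distinct conjugacy classes, I would argue that every equivariant model in the $\mathrm{Bir}$-class of (2) has Picard rank two over $\R$, whereas case (1) contains the Picard-rank-one surface $\p^2_\R$. Starting from $\F_0$ with the diagonal $\SSS$-action, type I Sarkisov links are excluded because $\rho(\F_0)=2$, type III is excluded because $\F_0$ has no $(-1)$-curves to contract equivariantly, and only type II (elementary transformations) and type IV (ruling swap) remain, so every reachable model stays within Picard-rank-two $\P^1$-bundles over $\P^1_\R$; for $\F_n$ with $n\geq 2$ a similar argument applies since the $(-n)$-section satisfies $K\cdot \sigma_0 = n-2 \geq 0$ and is therefore not a $K$-negative extremal ray. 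The main obstacle in this plan will be the careful verification of the equivariant Sarkisov-link calculus for the Hirzebruch surfaces, in particular the parity bookkeeping through elementary transformations and the assertion that every link from a Picard-rank-two $\P^1$-bundle in class (2) produces another such bundle.
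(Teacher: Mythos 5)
Your overall strategy --- reduce to the models of Corollary~\ref{cor:classification in dimension 2 for the real numbers}, classify the copies of $\SSS$ in each $\Autz(X)$, then collapse or separate the resulting classes via the equivariant Sarkisov program --- is the same as the paper's, and your treatment of $\fR_{\C/\R}(\p^1_\C)$, of the descent $\F_n\rat\F_{n-2}$, and of the distinctness of the two classes agrees with the paper. But there is a genuine error in the $\F_0$ step: the single-factor copy of $\SSS$ in $\PGL_{2,\R}\times\{1\}$ is \emph{not} conjugate in $\Bir(\p^2_\R)$ to the standard $\SSS\subset\PGL_{3,\R}$; it belongs to class (2), not class (1). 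The paper conjugates it to the diagonal via the birational shear $\theta([x_1{:}x_2],[y_1{:}y_2])=([x_1y_1+x_2y_2:x_1y_2-x_2y_1],[y_1{:}y_2])$, which satisfies $\theta\circ(g,g)=(1,g)\circ\theta$; since by your own final paragraph classes (1) and (2) are disjoint, the single-factor class cannot also lie in class (1). Your map $\phi$ cannot establish what you want because it is not birational: it is given by the bidegree-$(1,2)$ linear system spanned by $x_1(y_0^2-y_1^2)$, $2x_1y_0y_1$, $x_0(y_0^2+y_1^2)$, whose self-intersection is $4$ and whose base locus consists of the two conjugate points $([1{:}0],[1{:}\pm i])$, so $\phi$ has degree $4-2=2$ onto $\p^2_\R$ (concretely, it identifies the fibres over $[x_0{:}x_1]$ and $[x_0{:}-x_1]$); the reparametrization $\theta\mapsto2\theta$ you allow yourself is exactly this degree-$2$ defect. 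That no birational conjugation exists at all can be seen from an invariant: for a generically free $\SSS$-action, the generic orbit is a torsor under $\SSS$ over the function field of the rational quotient, and its class is preserved by equivariant birational maps. For the single-factor action on $\F_0$ this torsor is trivial (the generic fibre of the first projection contains the real point $[1{:}0]$), whereas for the standard action on $\p^2_\R$ it is the conic $x^2+y^2=c$ over $\R(c)$, which is nontrivial because $c$ is not a sum of two squares in $\R(c)$.

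This error propagates into your parity bookkeeping for even $n$, where you would have to decide whether the action reached on $\F_0$ is ``diagonal'' or ``single-factor'' --- a distinction that matters in your (incorrect) assignment but is immaterial in the paper's, where all nontrivial copies of $\SSS$ in $\Autz(\F_0)$ end up conjugate to $(1,g)$. A further, smaller gap: your claim of ``exactly two classes modulo the factor swap'' in $\PGL_{2,\R}\times\PGL_{2,\R}$ overlooks the embeddings $z\mapsto(z^a,z^b)$ with $\gcd(a,b)=1$ and $a,b\geq 2$ absent but, e.g., $(a,b)=(1,2)$ present; these are not conjugate inside $\Aut(\F_0)$ to the diagonal or to a single factor. (The paper's proof is also terse on this point; it is repaired by monomial birational self-maps of $\SSS\times\SSS$, which realize $\GL_2(\Z)$ and carry any primitive $(a,b)$ to $(0,1)$.) The rest of your argument --- the link from $\fR_{\C/\R}(\p^1_\C)$ to $\p^2_\R$ by blowing up a real fixed point and contracting the conjugate pair of rulings, and the separation of the two classes by showing that no equivariant Sarkisov link from the $\F_0$-class reaches a Picard-rank-one model --- is sound and matches the paper.
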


\begin{proof}
Let $G$ be a connected algebraic subgroup of $\Bir(\p_\R^2)$ isomorphic to $\SSS$.
By Corollary \ref{cor:classification in dimension 2 for the real numbers}, we have the following possibilities for $G$.
\begin{itemize}
\item $X=\F_n$, with $n\geq 2$, and $G$ is conjugate to an algebraic subgroup of $\Aut(X)\simeq V_n\rtimes \GL_{2,\R}/\mu_n$\footnote{This is an old classical result, but we did not find an original reference. It is reproved in \cite[Remark 2.4.4]{BFT23} with details.}, where $V_n$ is a unipotent group of dimension $n+1$ and $\mu_n=\{ \lambda I_2, \ \lambda^n=1\} \subseteq \GL_{2,\R}$.
Conjugating $G$ by an element of $\Aut(X)$ if necessary, we can furthermore assume that $G$ is contained in $\GL_{2,\R}/\mu_n$.
In particular, $G$ preserves the two sections $s_{-n}\colon [x_0:x_1] \mapsto [0:1\ ;\ x_0:x_1]$ and $s_n \colon [x_0:x_1] \mapsto [1:0\ ;\ x_0:x_1]$ of the structure morphism $\F_n \to \P_\R^1$ (see Section~\ref{subsubsec: decomposable P1 bundles} for the notation). 
On the other hand, $G$ is not contained in $\Autz(X)_{\P_{\R}^{1}} \simeq V_n\rtimes \G_{m,\R}$, and so $G$ acts naturally on $\P_{\R}^{1}$ and  preserves therefore two $\Gamma$-conjugate fibers of the structure morphism $\F_n \to \P_\R^1$.
We pick two $\Gamma$-conjugate points on these fibers that are contained in the the section $s_{n}$. The elementary transformation $\F_n\rat\F_{n-2}$ not defined at these two points is $G$-equivariant. 
Continuing like this, we find a $G$-equivariant birational map $\F_n\rat\F_m$ over $\p_\R^1$ with $m\in\{0,1\}$. If $m=0$, then $G$ is conjugate to an algebraic subgroup of $\Autz(\P_\R^1 \times \P_\R^1)$, and if $m=1$, then $G$ is conjugate to an algebraic subgroup of $\Aut(\P_\R^2)$.

\item $X=\F_0$ and $G$ is conjugate to an algebraic subgroup of $\Autz(X)=\PGL_{2,\R} \times \PGL_{2,\R}$. Then $G$ preserves the two fibrations $p_1,p_2\colon \F_0 \to \P_\R^1$ (by Proposition \ref{prop: MMP is G-eq}) and acts nontrivially on both or one of them.
In particular, $G$ has no fixed real point, and so any equivariant Sarkisov link starting from $\F_0$ goes to $\F_2$.
Finally, if $G$ acts trivially on one of the two fibrations, then $G$ is contained in $\PGL_{2,\R} \times \{1\}$ or $\{1\} \times \PGL_{2,\R}$, and otherwise $G$ is conjugate to an algebraic subgroup of $\{(g,g)\ |\   g \in \PGL_{2,\R}\} \simeq \PGL_{2,\R}$. 
However, denoting by $\tau\colon \P_\R^1 \times \P_\R^1 \to \P_\R^1 \times \P_\R^1, \ (x,y) \mapsto (y,x)$, we see that $\tau \circ (g,1) \circ \tau=(1,g)$, hence the actions of $\SSS$ from the left and from the right are conjugate in $\Bir(\P_\R^2)$. 
Finally, consider the birational map
\[
\theta\colon\p^1_\R\times\p^1_\R\rat \p^1_\R\times\p^1_\R,\quad  ([x_1:x_2],[y_1:y_2])\dashmapsto ([x_1y_1+x_2y_2: x_1y_2-x_2y_1],[y_1:y_2]).
\] 
Then $\theta\circ(g,g)=(id, g)\circ \theta$ for any $g\in\SSS$.
This yields Case \ref{S in dim 2:2}.

\item $X=\fR_{\C/\R}(\p^1_\C)$ and $G$ is conjugate to an algebraic subgroup of $\Autz(X)=\fR_{\C/\R}(\PGL_{2,\C})$. 
The maximal tori of $\fR_{\C/\R}(\PGL_{2,\C})$ are all conjugate, and each of them is isomorphic to $\fR_{\C/\R}(\G_{m,\C})$. Since $\fR_{\C/\R}(\G_{m,\C})$ contains a unique subgroup isomorphic to $\SSS$, we deduce that $\Autz(X)$ contains a unique conjugacy class of tori isomorphic to $\SSS$. 
The surface $X$ has three $\SSS$-fixed points: one point whose residue field is $\C$ (corresponding to two $\Gamma$-conjugate points in $X_\C$) and two real points.
Blowing-up one of the two $\SSS$-fixed real points and then contracting the two disjoint $(-1)$-curves swapped by the $\Gamma$-action on $\p^1_\C \times \p^1_\C$ yields an $\SSS$-equivariant Sarkisov link to $\p_\R^2$.

\item $X=\P_\R^2$ and $G$ is conjugate to an algebraic subgroup of $\Aut(X)=\PGL_{3,\R}$. Let $g\in G\setminus\{1\}$; its characteristic polynomial in $\p(\R[t])$ is of degree $3$ and thus has a real root of multiplicity one (as $g \neq 1$). 
So, $g$ has a fixed real point $p\in\p^2_\R$, say $p=[0:0:1]$. Moreover, since $G$ is commutative and connected, it fixes $p$. 
Blowing up $p$, the group $G$ acts on $\F_1$ as a subgroup  of $\Aut(\F_1)=V_1\rtimes \GL_{2,\R}$ conjugate to a subgroup of $\GL_{2,\R}$. Conjugating $G$ by an element of $\Aut(X,p)$ if necessary, we can assume that $G$ is  a subgroup of $\GL_{2,\R}$. In particular, $G$ preserves the section $s_1$ (in the notation from above) of the structure morphism $\F_1 \to \P_{\R}^{1}$, and hence the line $\ell=[*:*:0]$ in $\p^2_\R$. The group $G$ is therefore conjugate to the image of the natural embedding of $\SSS$ into $\PGL_{3,\R}=\Aut(\P_\R^2)$ preserving the line $\ell$ and the point $p$.
Lastly, the only $G$-equivariant Sarkisov links starting from $\p^2_\R$ are the blow-up $\F_1\to\p^2_\R$ of the real point $p$ and the blow-up of the unique $G$-fixed point on $\ell$ with residue field $\C$ followed by the contraction of the strict transform of $\ell$, which gives an equivariant Sarkisov link to $\fR_{\C/\R}(\p^1_\C)$. This yields Case \ref{S in dim 2:1} of the proposition.
\end{itemize}
This concludes the proof of the proposition.
\end{proof}

\section{Mori fiber spaces of Theorem \ref{th: list of MFS corresponding to max alg subgroups of Cr3} and their automorphism groups}\label{sec: Notation for the MFS}

Until the end of this article, we work over an arbitrary base field $\k$ of characteristic zero, unless explicitly mentioned otherwise. We define the Mori fiber spaces enumerated in Theorem \ref{th: list of MFS corresponding to max alg subgroups of Cr3}  and determine their full automorphism groups (not just the identity component). This will be used in the next sections to compute their rational $\k$-forms through Galois cohomology.

\subsection{Family (a)} \label{subsubsec: decomposable P1 bundles}
Let $a,b,c\in\Z$.
The $a$-th Hirzebruch surface $\F_a$ can be defined as the quotient of $(\A^2\setminus \{0\})^2$ by the action of $\G_{m}^{2}$ given by
\[\begin{array}{ccc}
\G_{m}^{2} \times (\A^2\setminus \{0\})^2 & \to & (\A^2\setminus \{0\})^2\\
((\mu,\rho), (y_0,y_1,z_0,z_1))&\mapsto& (\mu\rho^{-a} y_0,\mu y_1,\rho z_0,\rho z_1)\end{array}\]
The class of $(y_0,y_1,z_0,z_1)$ will be written $[y_0:y_1;z_0:z_1]$. The projection 
\[\F_a\to\p^1, \ \ [y_0:y_1;z_0:z_1]\mapsto [z_0:z_1]\]
identifies $\F_a$ with $\P(\O_{\P^1}(a) \oplus \O_{\P^1})$ as a $\P^1$-bundle over $\P^1$.
 Also, we have $\F_a \simeq \F_{-a}$ via $[y_0:y_1;z_0:z_1] \mapsto [y_1:y_0;z_0:z_1]$, and most of the time so we will choose $a \geq 0$. 
The disjoint sections $\s{-a},\s{a}\subset \F_a$ given by $y_0=0$ and $y_1=0$ have self-intersection $-a$ and $a$ respectively. The fibers $f\subset \F_a$ given by $z_0=0$ and $z_1=0$ are linearly equivalent and of self-intersection $0$. We moreover have $\Pic(\F_a)=\Z f\bigoplus \Z \s{-a}= \Z f\bigoplus \Z \s{a}$, since $\s{a}\sim\s{-a}+af$.

We now define $\FF_a^{b,c}$ to be the quotient of $(\A^2\setminus \{0\})^3$ by the action of $\G_{m}^{3}$ given by
\[\begin{array}{ccc}
\G_{m}^{3}\times (\A^2\setminus \{0\})^3 & \to & (\A^2\setminus \{0\})^3\\
((\lambda,\mu,\rho), (x_0,x_1,y_0,y_1,z_0,z_1))&\mapsto& 
(\lambda\mu^{-b} x_0, \lambda\rho^{-c} x_1,\mu\rho^{-a} y_0,\mu y_1,\rho z_0,\rho z_1)\end{array}\]
The class of $(x_0,x_1,y_0,y_1,z_0,z_1)$ will be written $[x_0:x_1;y_0:y_1;z_0:z_1]$. The projection 
\[q\colon \FF_a^{b,c}\to \F_a, \ \ [x_0:x_1;y_0:y_1;z_0:z_1]\mapsto [y_0:y_1;z_0:z_1]\]
identifies $\FF_{a}^{b,c}$ with $\P(\OFa(b \s{a}) \oplus \OFa(c f))=\P(\OFa \oplus \OFa(-b\s{a}+cf))$
as a $\P^1$-bundle over $\F_a$.

Like for Hirzebruch surfaces, we have $\FF_{-a}^{b,c}\simeq\FF_a^{b,c}$. Moreover, the exchange of $x_0$ and $x_1$ yields an isomorphism $\FF_a^{b,c}\simeq\FF_a^{-b,-c}$. Hence, we may assume that $a,b\geq0$. Note that $\FF_a^{0,0}\simeq\P^1\times\F_a$. 
Let us mention that the algebraic group $\Autz(\FF_{a}^{b,c})$ is described in \cite[Section 3.1]{BFT23}; in particular, $\Autz(\FF_a^{b,c})$ maps onto $\Autz(\F_a)$.
We recall the following result as it will be useful for us to prove Propositions \ref{prop: aut group of Fabc}, \ref{prop: k-forms of Fabc with a>1}, and \ref{prop: k-forms of Fabc with a=0}.

\begin{lemma}
\label{lem: XabPic}
Let $X=\FF_{a}^{b,c}$ with $a,b \geq 0$ and $c \in \Z$.
\begin{enumerate}
\item\label{XabPicN1}
The group $\NS_\Q(X)$ is generated by $H_{x_0}:=(x_0=0) \simeq \F_a$,  $H_{y_0}:=(y_0=0) \simeq \F_{|c|}$, and $H_{z_0}:=(z_0=0) \simeq \F_b$. 
\item\label{XabPicN2}
The group $N_1^\Q(X) $ is generated by the curves $\ell_1:=H_{y_0} \cap H_{x_0}$, $\ell_2:=H_{z_0}\cap H_{x_0}$, and $\ell_3:=H_{z_0}\cap H_{y_0}$.
\item\label{XabPicN3}
The intersection form on $X$ is given by
\[\begin{array}{|c|ccc|}
\hline
& H_{z_0} & H_{y_0} & H_{x_0} \\
\hline
H_{z_0} & 0 & \ell_3 & \ell_2 \\
H_{y_0} & \ell_3 & -a\ell_3& \ell_1\\
H_{x_0} & \ell_2 & \ell_1& -b\ell_1+(c-ab)\ell_2\\
\hline\end{array}\ \ 
\begin{array}{|c|ccc|}
\hline
& H_{z_0} & H_{y_0} & H_{x_0}\\
\hline
\ell_1 & 1 & -a& c\\
\ell_2 & 0 & 1 & -b \\
\ell_3 & 0 & 0 & 1\\
\hline\end{array}\]
\item\label{XabPicN4}
Let $\ell_4=(x_1=y_0=0)$ that satisfies $\ell_4=\ell_1-c\ell_3$ in $N_1^\Q(X) $. 
The cone of effective curves $\NE(X)$ is generated by $\ell_1$, $\ell_2$, and $\ell_3$ if $c \leq 0$ and by $\ell_4$, $\ell_2$, and $\ell_3$ if $c>0$.
\item\label{XabPicN5}
The canonical divisor of $X$ is $\mathrm{K}_X=-(a(b+1)+2-c)H_{z_0}-(b+2)H_{y_0}-2 H_{x_0}$, thus $K_{X} \cdot \ell_1=a-c-2$,  $\mathrm{K}_X \cdot \ell_2=b-2$, $\mathrm{K}_X\cdot\ell_3=-2$, and $\mathrm{K}_X \cdot \ell_4=a+c-2$.
\item\label{XabPicN6} 
The structure morphism $q\colon\FF_a^{b,c}\to\F_a$ corresponds to the contraction of $\ell_3$.
\end{enumerate}
\end{lemma}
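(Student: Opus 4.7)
The variety $X = \FF_a^{b,c}$ is a smooth projective toric threefold: it is a geometric quotient of $(\A^2\setminus 0)^3$ by $\G_m^3$, and the six torus-invariant prime divisors are $H_{x_0},H_{x_1},H_{y_0},H_{y_1},H_{z_0},H_{z_1}$ (the vanishing loci of the six coordinates). Since $X\to \F_a$ is a $\P^1$-bundle, $\Pic(X)$ has rank three, and I expect three independent linear equivalences among the $H_\bullet$. I would obtain these directly from the weights of the $\G_m^3$-action by taking divisors of monomial sections: $\mathrm{div}(z_0/z_1)$ gives $H_{z_1}\sim H_{z_0}$; a rational section of weight $(0,1,-a)-(0,1,0)$ gives $H_{y_1}\sim H_{y_0}+aH_{z_0}$; and one of weight $(1,-b,0)-(1,0,-c)$ gives $H_{x_1}\sim H_{x_0}+bH_{y_0}-cH_{z_0}$. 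This already establishes that $\NS_\Q(X)$ is generated by $H_{x_0},H_{y_0},H_{z_0}$, proving (i) at the level of classes.

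\textbf{Identifications and intersections.} For the isomorphism types in (i), each $H_\bullet$ is itself a toric surface whose fan (obtained by slicing the fan of $X$) is that of the claimed Hirzebruch surface; concretely, restricting the quotient description to $z_0=0$, $y_0=0$, and $x_0=0$ yields $\F_b$, $\F_{|c|}$, and $\F_a$, respectively. For (ii), the curves $\ell_1,\ell_2,\ell_3$ are torus-invariant and correspond to three edges of the fan of $X$ meeting at a fixed point, so they span $N_1^{\Q}(X)$. The intersection entries in (iii) are computed by the standard toric recipe: two distinct $H_\bullet$'s meet transversely along one of the $\ell_i$, while each self-intersection $H_\bullet^2$ is determined by restricting to the corresponding Hirzebruch surface and recognizing $H_\bullet|_{H_\bullet}$ as the appropriate (negative) section; e.g.\ $H_{y_0}|_{H_{y_0}}$ is the section of self-intersection $-a$ on $\F_a$, whence $H_{y_0}^2=-a\ell_3$. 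The right-hand table in (iii) is the inverse matrix of this, i.e.\ it records the dual pairing $H_\bullet\cdot\ell_i$, which is read off immediately from the linear equivalences in the previous paragraph.

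\textbf{Cone, canonical divisor, and structure morphism.} For (iv), the cone $\NE(X)$ of a smooth projective toric variety is spanned by classes of torus-invariant curves; besides $\ell_1,\ell_2,\ell_3$, the other relevant invariant curve through the distinguished fixed point is $\ell_4=(x_1=y_0=0)$. The relation $\ell_4=\ell_1-c\ell_3$ is verified by pairing both sides with $H_{x_0},H_{y_0},H_{z_0}$ using (iii). Depending on the sign of $c$, exactly one of $\ell_1,\ell_4$ lies on an extremal ray of the cone, giving the two cases in (iv). For (v), the toric canonical formula $\mathrm{K}_X=-\sum_{\bullet} H_\bullet$, combined with the three linear equivalences from the first paragraph, yields the stated expression for $\mathrm{K}_X$ in the basis $(H_{x_0},H_{y_0},H_{z_0})$; intersecting with $\ell_1,\ell_2,\ell_3,\ell_4$ via (iii) gives the four intersection numbers. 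Finally, (vi) is immediate: by construction $q$ is a $\P^1$-bundle whose fiber over a point of $\F_a$ is parametrized by $[x_0:x_1]$, and the invariant fiber lying in both $H_{y_0}$ and $H_{z_0}$ is precisely $\ell_3$.

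\textbf{Main obstacle.} The only real difficulty is keeping sign conventions consistent across three different $\P^1$-bundle structures: whether $H_{x_0}$ is the section corresponding to the summand $\O_{\F_a}$ or to $\O_{\F_a}(-bs_a+cf)$; the orientation of $s_a\sim s_{-a}+af$ on $\F_a$; and the sign of $c$ in the twist on $\F_a$. Once these are fixed once and for all at the start, every assertion reduces to a short linear-algebra verification in the rank-$3$ Picard group and its dual lattice of curves, with the toric self-intersection computations on Hirzebruch surfaces as the only nontrivial input.
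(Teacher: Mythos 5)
Your toric strategy is sound and is essentially the computation that the paper outsources to \cite[Lemma 6.5.1]{BFT22} (the paper's own ``proof'' of this lemma is just that citation), so giving the argument explicitly is reasonable. Parts (i), (ii), (iv), (vi) and the general recipe for (iii) and (v) all go through as you describe.

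However, one of your three linear equivalences is wrong, and it is exactly the kind of bookkeeping slip you flag as the main obstacle. Under the identification $\NS(X)\cong\widehat{\G_m^3}=\Z^3$ sending a coordinate divisor to the weight of its coordinate, you have $[H_{x_0}]=(1,-b,0)$, $[H_{x_1}]=(1,0,-c)$, $[H_{y_0}]=(0,1,-a)$, $[H_{y_1}]=(0,1,0)$, $[H_{z_0}]=(0,0,1)$. The rational function $x_0y_1^{b}z_0^{-c}/x_1$ has weight zero, so the correct relation is $H_{x_1}\sim H_{x_0}+bH_{y_1}-cH_{z_0}=H_{x_0}+bH_{y_0}+(ab-c)H_{z_0}$; your stated relation $H_{x_1}\sim H_{x_0}+bH_{y_0}-cH_{z_0}$ is off by $ab\,H_{z_0}$ and only holds when $ab=0$ (check: $(0,b,-c)\neq b(0,1,-a)-c(0,0,1)=(0,b,-ab-c)$ unless $ab=0$). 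This is not cosmetic: taken at face value it yields $H_{x_0}^2=H_{x_0}\cdot(H_{x_1}-bH_{y_0}+cH_{z_0})=-b\ell_1+c\ell_2$ instead of $-b\ell_1+(c-ab)\ell_2$, hence $H_{x_0}\cdot\ell_1=ab+c$ instead of $c$ in the right-hand table, and $\mathrm{K}_X=-\sum H_\bullet=-(a+2-c)H_{z_0}-(b+2)H_{y_0}-2H_{x_0}$ instead of the stated $-(a(b+1)+2-c)H_{z_0}-(b+2)H_{y_0}-2H_{x_0}$. So the relation as written contradicts the very tables in (iii) and the formula in (v) that you claim to recover; with the corrected relation everything matches. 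One smaller point: for (iv) it is not enough to consider the single extra invariant curve $\ell_4$ through one fixed point; $\NE(X)$ is generated by all twelve torus-invariant curves $H_{x_i}\cap H_{y_j}$, $H_{x_i}\cap H_{z_j}$, $H_{y_i}\cap H_{z_j}$, and you should record that each of their classes (e.g.\ $H_{x_1}\cdot H_{z_0}=\ell_2+b\ell_3$, $H_{x_0}\cdot H_{y_1}=\ell_1+a\ell_2$, $H_{x_1}\cdot H_{y_1}=\ell_1+a\ell_2+(ab-c)\ell_3$) lies in the cone spanned by $\ell_1,\ell_2,\ell_3$ when $c\le 0$ and by $\ell_4,\ell_2,\ell_3$ when $c>0$ --- which they do, using $a,b\ge 0$.
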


\begin{proof}
The proof is the same as in the case over algebraically closed fields; see \cite[Lemma 6.5.1]{BFT22} for details.
\end{proof}

\begin{lemma}\label{lem: number of connected components}
Consider a short exact sequence of algebraic groups $1 \to G_1 \to G_2 \to G_3 \to 1$ over an arbitrary base field.
If $G_1$ is connected, then $G_2$ and $G_3$ have the same number of connected components.
\end{lemma}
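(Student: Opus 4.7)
Write $\pi\colon G_2\twoheadrightarrow G_3$ for the surjection in the short exact sequence, so that $\ker(\pi)=G_1$. The plan is to reduce to the standard fact that in an exact sequence with connected kernel, the identity components also fit into an exact sequence, and then to deduce the statement about the component groups $\pi_0(G_i):=G_i/G_i^0$ by passing to quotients.

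First I would observe that since $G_1$ is connected and contains the identity of $G_2$, we have $G_1\subseteq G_2^0$. Next, $\pi(G_2^0)$ is a connected subgroup of $G_3$ containing the identity, so $\pi(G_2^0)\subseteq G_3^0$. To see that equality holds, I would compute the quotient
\[
G_3/\pi(G_2^0)\;\cong\;(G_2/G_1)\big/(G_2^0/G_1)\;\cong\;G_2/G_2^0\;=\;\pi_0(G_2),
\]
which is an étale (in particular, discrete) group scheme. The inclusion $G_3^0/\pi(G_2^0)\hookrightarrow G_3/\pi(G_2^0)$ therefore realises $G_3^0/\pi(G_2^0)$ as a closed subscheme of a discrete scheme; since it is also connected as a quotient of $G_3^0$ by a closed normal subgroup, it must be trivial. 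Hence $\pi(G_2^0)=G_3^0$, and we obtain an induced short exact sequence $1\to G_1\to G_2^0\to G_3^0\to 1$.

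It then remains to pass to the component groups. The surjection $\pi$ descends to a surjective homomorphism $\bar\pi\colon \pi_0(G_2)\twoheadrightarrow \pi_0(G_3)$, which is well defined because $\pi(G_2^0)\subseteq G_3^0$. Its kernel is $\pi^{-1}(G_3^0)/G_2^0$; using the equality $\pi(G_2^0)=G_3^0$ just established together with $\ker(\pi)=G_1\subseteq G_2^0$, one gets $\pi^{-1}(G_3^0)=G_1\cdot G_2^0=G_2^0$, so $\bar\pi$ has trivial kernel. Thus $\bar\pi$ is an isomorphism of (finite) group schemes, and $G_2$ and $G_3$ have the same number of connected components.

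There is no real obstacle here; the only point that requires care is to justify, in the category of algebraic groups (group schemes of finite type over the given base field), that the quotients $G_3/\pi(G_2^0)$ and $G_2/G_2^0$ exist and agree, and that a connected étale group scheme is trivial. Both facts are standard and apply over any base field, so the argument works in the generality claimed.
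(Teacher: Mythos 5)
Your proof is correct, but it takes a genuinely different route from the one in the paper. The paper argues directly at the level of the sets of connected components: surjectivity of $G_2\to G_3$ shows that every component of $G_3$ is the image of some component of $G_2$, and if two distinct components $C,C'$ of $G_2$ were mapped onto the same component of $G_3$, one could translate so that $C=G_2^{\circ}$ and the common image is $G_3^{\circ}$; then some element of $C'$ maps to the identity of $G_3$, hence lies in $G_1\subseteq G_2^{\circ}=C$, a contradiction. That argument uses the group structure only through translation and avoids any appeal to the existence of quotient group schemes. You instead pass to the component groups $\pi_0(G_i)=G_i/G_i^{\circ}$, identify $G_3/\pi(G_2^{\circ})$ with $\pi_0(G_2)$ via the third isomorphism theorem to conclude $\pi(G_2^{\circ})=G_3^{\circ}$, and then check that $\pi_0(G_2)\to\pi_0(G_3)$ is an isomorphism. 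This requires more machinery (existence of quotients and the isomorphism theorems for group schemes of finite type over a field, together with the fact that a connected subgroup scheme of a finite \'etale group scheme is trivial), but it also delivers slightly more than the statement asks for: the exactness of $1\to G_1\to G_2^{\circ}\to G_3^{\circ}\to 1$ and an isomorphism of component group schemes rather than merely an equality of the numbers of components. Both arguments are valid over an arbitrary base field.
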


\begin{proof}
Since $G_2 \to G_3$ is onto, the number of connected components of $G_2$ is at least equal to the number of connected components of $G_3$.
Now assume that there exist two connected components $C$ and $C'$ of $G_2$ that are mapped onto the same connected component $H$ of $G_3$.
Multiplying $C$ and $C'$ by the inverse of an element of $C$ if necessary, we can assume that $C=G_2^\circ$ and $H=G_{3}^{\circ}$. But since $G_1$ is assumed to be connected, it is contained in $G_2^\circ=C$, which contradicts the fact that an element of $C'$ is mapped to $1_{G_3}$. Hence, only one connected component of $G_2$ is mapped onto a given connected component of $G_3$, which finishes the proof.
\end{proof}

\begin{proposition}\label{prop: aut group of Fabc}
Let $X=\FF_{a}^{b,c}$ with $a,b \geq 0$ and $c \in \Z$.
\begin{enumerate}
\item If $a=b=c=0$, then $X \simeq (\P^1)^3$ and $\Aut(X) \simeq (\PGL_2)^3 \rtimes S_3$, where $S_3$ is the symmetric group of order $3$ that acts naturally on the three copies of $\P^1$.
\item If $a=0$ and $b=|c| \neq 0$ or if $b=0$ and $a=|c| \neq 0$, then $\Aut(X) \simeq \Autz(X)  \rtimes \Z/2\Z$, where the generator of $\Z/2\Z$ can be taken as the involution $\sigma$ of $X$ defined by
\begin{itemize}
\item  $[x_0:x_1;y_0:y_1;z_0:z_1] \mapsto [x_1:x_0; z_0:z_1; y_0: y_1]$ if $a=0$ and $b=c$
\item  $[x_0:x_1;y_0:y_1;z_0:z_1] \mapsto [x_0:x_1; z_0:z_1; y_0: y_1]$ if $a=0$ and $b=-c$
\item  $[x_0:x_1;y_0:y_1;z_0:z_1] \mapsto [y_0:y_1; x_1:x_0; z_0: z_1]$ if $b=0$ and $a=c$
\item  $[x_0:x_1;y_0:y_1;z_0:z_1] \mapsto [y_0:y_1; x_0:x_1; z_0: z_1]$ if $b=0$ and $a=-c$
\end{itemize}
\smallskip
\item In all other cases, $\Aut(X)$ is connected.
\end{enumerate}
\end{proposition}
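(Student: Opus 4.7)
\textit{Proof plan.} The goal is to compute the finite quotient $\Aut(X)/\Autz(X)$, since $\Autz(X)$ is already described in \cite[Section 3.1]{BFT23}; by Lemma \ref{lem: number of connected components} this quotient is exactly the component group of $\Aut(X)$. Case (1) is immediate: when $a=b=c=0$, the $\G_m^3$-quotient decouples and $X\simeq(\p^1)^3$, whose automorphism group is the classical $(\PGL_2)^3\rtimes S_3$ with $S_3$ permuting the three projections.

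For Case (2) I would first exhibit each proposed involution $\sigma$ explicitly. A coordinate permutation on the Cox ring $(\A^2\setminus 0)^3$ does not in general descend to the $\G_m^3$-quotient $X$, but it does so once one composes it with an appropriate automorphism of the grading lattice $\Z^3$ that swaps the relevant factors of $\G_m^3$. In each of the four sub-cases this is a short equivariance check exploiting the specific numerical coincidence imposed on $(a,b,c)$. Observing that $\sigma$ exchanges two of the three prime divisors $H_{x_0}, H_{y_0}, H_{z_0}$---which no connected algebraic group action can do, since a connected group preserves each irreducible divisor---shows that $\sigma\notin\Autz(X)$ and thus yields the inclusion $\Autz(X)\rtimes\langle\sigma\rangle\subseteq\Aut(X)$.

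The main work, and the principal obstacle, is the matching upper bound: $|\Aut(X)/\Autz(X)|\leq 2$ in Case (2), and $|\Aut(X)/\Autz(X)|=1$ in Case (3). For this I use Lemma \ref{lem: XabPic}: the three prime divisors $H_{x_0}\simeq\F_a$, $H_{y_0}\simeq\F_{|c|}$, $H_{z_0}\simeq\F_b$ form a basis of $\NS_\Q(X)$ whose intersection pairings, pairings with $K_X$, and effective-cone positions are explicitly recorded. Any element of $\Aut(X)/\Autz(X)$ permutes the irreducible divisors and acts on $\NS(X)$ preserving this data, hence can only permute the $H_\star$ when the corresponding Hirzebruch surfaces are abstractly isomorphic and the induced permutation of the basis is compatible with the intersection form and $K_X$. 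A finite case analysis on $(a,b,|c|)$ shows that the only admissible non-identity permutations are precisely those realized by the involutions of Case (2) (or the full $S_3$ in Case (1)). The delicate portion is the parameter coincidences falling outside Cases (1) and (2)---for example $a=b$ with $c\neq 0$, or $b=|c|$ with $a\geq 1$---in which two of the $H_\star$ happen to be abstractly isomorphic as Hirzebruch surfaces yet the candidate swap is ruled out by the asymmetric intersection data of Lemma \ref{lem: XabPic}.
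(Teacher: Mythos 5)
Your lower bound is sound: the explicit involutions do descend to $X$, and since a connected group acts trivially on $\NS_{\Q}(X)$ while $\sigma$ visibly permutes divisor classes nontrivially, $\sigma\notin\Autz(X)$. The gap is in the upper bound. Your case analysis only bounds the \emph{image} of $\Aut(X)/\Autz(X)$ in $\GL(\NS_{\Q}(X))$; nowhere do you show that this representation of the component group is faithful, i.e.\ that an automorphism acting trivially on $\NS_{\Q}(X)$ already lies in $\Autz(X)$. This is not formal --- for a general smooth projective variety the kernel of $\Aut(X)\to\GL(\NS(X))$ need not be connected --- and here it is exactly the nontrivial content. The paper supplies it as follows: the invariance of the extremal ray $[\ell_3]$ (Lemma \ref{lem: XabPic}) makes $q\colon X\to\F_a$ equivariant, giving an exact sequence $1\to\Aut(X)_{\F_a}\to\Aut(X)\to H\to 1$ with $\Aut(X)_{\F_a}$ connected and $\Autz(\F_a)\subseteq H\subseteq\Aut(\F_a)$ by \cite[Remark 3.1.6 and Lemma 3.1.5]{BFT23}, after which Lemma \ref{lem: number of connected components} counts components; in the exceptional case $b=0$, $a=|c|\neq 0$, where $q$ need not be equivariant, it checks by hand that an automorphism preserving both contractions $q_2\colon X\to\F_{c}$ and $q_3\colon X\to\F_a$ lies in $\Autz(X)$. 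Your plan never invokes the connectedness of $\Aut(X)_{\F_a}$ or any substitute for it, so the upper bound does not close.

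A secondary imprecision: it is not automatic that a class in $\Aut(X)/\Autz(X)$ permutes the set $\{[H_{x_0}],[H_{y_0}],[H_{z_0}]\}$. These are a chosen basis of $\NS_{\Q}(X)$, not a canonically distinguished triple of classes (the remaining torus-invariant divisors $H_{x_1},H_{y_1},H_{z_1}$ give further effective classes, and which rays of the pseudo-effective cone are extremal depends on the signs of $a,b,c$). The objects that are genuinely permuted are the extremal rays of $\NE(X)$ together with their $K_X$-degrees and the types of the associated contractions --- exactly the data recorded in Lemma \ref{lem: XabPic}\ref{XabPicN4}\&\ref{XabPicN5} and used in the paper's proof --- so your finite case analysis would need to be reformulated in those terms before it can be carried out.
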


\begin{proof}
Assume first that $a=c=0$. If $b=0$, then $X \simeq (\P^1)^3$ and $\Aut(X) \simeq (\PGL_2)^3 \rtimes S_3$ (this can be checked for instance by looking at the induced action of $\Aut(X)$ on $\NS_\Q(X)$ and $N_1^\Q(X) $).
%see e.g. https://math.stackexchange.com/questions/680687/what-is-the-automorphism-group-of-mathbb-p1-times-mathbb-p1
If $b \neq 0$, then $X \simeq \F_b \times \P^1$ and $\Aut(X) \simeq \Aut(\F_b) \times \PGL_2$ is connected.  

Assume now that $b=0$. If $a= 0$ and $c \neq 0$, then $X \simeq \F_{c} \times \P^1$ and $\Aut(X) \simeq \Aut(\F_{c}) \times \PGL_2$ is connected. If $c= 0$ and $a \neq 0$, then $X \simeq \F_a \times \P^1$ and $\Aut(X) \simeq \Aut(\F_a) \times \PGL_2$ is connected. 
If $a=|c| \neq 0$, then Lemma \ref{lem: XabPic}\ref{XabPicN4}\&\ref{XabPicN5} implies that the involution $\sigma$ defined in the statement swaps the two extremal rays generated by $\ell_2$ and $\ell_3$ in $\NE(X)$.
Therefore, for every automorphism $\varphi \in \Aut(X)$, either $\varphi$ or $\varphi \circ \sigma$ stabilizes the two extremal rays generated by $\ell_2$ and $\ell_3$. On the other hand, using the global description of $X=\FF_{a}^{b,c}$ above and the description of $\Autz(X)$ given in \cite[Section 3.1]{BFT23}, one can check that an automorphism $\psi \in \Aut(X)$ that preserves the two fibrations $q_2\colon X \to \F_{c}$ and $q_3\colon X \to \F_a$ (corresponding to the contraction of the extremal rays generated by $\ell_2$ and $\ell_3$ respectively) belongs to $\Autz(X)$. We deduce that either $\varphi$ or $\varphi \circ \sigma$ belongs to $\Autz(X)$, and the result follows.

From now on, we assume that $b \neq 0$ and ($a \neq 0$ or $c \neq 0$), in which case the structure morphism $q\colon X \to \F_a$ is $\Aut(X)$-equivariant; indeed, it corresponds to the contraction of the extremal ray generated by $\ell_3$, and the class of $\ell_3$ is  $\Aut(X)$-invariant by Lemma \ref{lem: XabPic}\ref{XabPicN4}\&\ref{XabPicN5}. Hence, we have an exact sequence of algebraic groups
\[
1 \to \Aut(X)_{\F_a} \to \Aut(X) \to H \to 1,
\]
where $\Aut(X)_{\F_a}$ is connected by \cite[Remark 3.1.6]{BFT23} and $\Autz(\F_a) \subseteq H \subseteq \Aut(\F_a)$ by \cite[Lemma 3.1.5]{BFT23}. If $a \geq 1$, then $H =\Aut(\F_a)$ is connected, and so $\Aut(X)$ is also connected by Lemma \ref{lem: number of connected components}.
We now assume that $a=0$ (and $bc \neq 0$), in which case $\Aut(\F_a) \simeq (\PGL_2)^2 \rtimes \Z/2\Z$. 
If $|b|=|c|$, then the involution $\sigma$ defined above maps to the generator of $\Z/2\Z$ (given by $[y_0:y_1;z_0:z_1] \mapsto [z_0:z_1;y_0:y_1]$).
Hence $H=\Aut(\F_a)$, which implies that $\Aut(X)$ has two connected components (by Lemma \ref{lem: number of connected components}), and so $\Aut(X) \simeq \Autz(X)  \rtimes \Z/2\Z$. 
If $|b| \neq |c|$, then the fibers of $\mathrm{pr}_i\circ q \colon \FF_{0}^{b,c}\to \P^1$ are the non-isomorphic Hirzebruch surfaces $\F_b$ and $\F_{c}$ respectively, where $\mathrm{pr}_i\colon \P^1\times \P^1\to \P^1$ is the $i$-th projection (for $i=1,2$). 
Thus, every automorphism $ \psi \in \Aut(X)$ maps to $\Autz(\F_a)$, i.e.~$H=\Autz(\F_a)$ and $\Aut(X)$ is connected (by Lemma \ref{lem: number of connected components}).
\end{proof}

\subsection{Family (b)} \label{sec 4.2.2}
Let $b\in\Z$.
We denote by $\PP_b$ the quotient of $(\A^2\setminus \{0\})\times (\A^3\setminus \{0\})$ by the action of $\G_{m}^{2}$ defined by
\[\begin{array}{ccc}
\G_{m}^{2}\times (\A^2\setminus \{0\})\times (\A^3\setminus \{0\}) & \to & (\A^2\setminus \{0\})\times (\A^3\setminus \{0\})\\
((\mu,\rho), (y_0,y_1;z_0,z_1,z_2))&\mapsto& (\mu\rho^{-b} y_0,\mu y_1;\rho z_0,\rho z_1,\rho z_2)\end{array},\]
and we write $[y_0:y_1\ ;\ z_0:z_1:z_2]$ for the class of $(y_0,y_1,z_0,z_1,z_2)$. 
The projection 
\[q\colon \PP_b\to \P^2, \ \ [y_0:y_1\ ;\ z_0:z_1:z_2]\mapsto [z_0:z_1:z_2]\]
identifies $\PP_b$ with $\P(\O_{\P^2}(b) \oplus \O_{\P^2})$ as a $\P^1$-bundle over $\P^2$.
In particular, $\PP_{-b}\simeq\PP_b$ and so we may assume that $b\geq0$. 
Let us mention that $\Autz(\PP_b)$ is described in \cite[Section 4.1]{BFT23}; in particular, $\Autz(\PP_b)$ maps onto $\Aut(\P^2)$.

\begin{proposition}\label{prop: Aut of Pb}
Let $b \geq 0$. Then $\Aut(\PP_b)$ is connected.
\end{proposition}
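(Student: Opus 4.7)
The plan is to follow the template of Proposition \ref{prop: aut group of Fabc}. First, I handle the easy case $b=0$: here $\PP_0 \simeq \P^1 \times \P^2$ and, since $\P^1$ and $\P^2$ are non-isomorphic, we have $\Aut(\PP_0) \simeq \Aut(\P^1) \times \Aut(\P^2) \simeq \PGL_{2} \times \PGL_{3}$, which is connected.

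Suppose now that $b \geq 1$. The key step will be to show that the structure morphism $q \colon \PP_b \to \P^2$ is $\Aut(\PP_b)$-equivariant. Since $\rho(\PP_b) = 2$, the cone $\NE(\PP_b)$ has exactly two extremal rays. One is generated by a fiber of $q$, whose associated contraction is $q$ itself, of fiber type with two-dimensional target $\P^2$. The other is generated by a line inside the negative section $(y_1 = 0) \simeq \P^2$; since this section has normal bundle $\O_{\P^2}(-b)$ with $b \geq 1$, its contraction is divisorial, collapsing a copy of $\P^2$ to a point. The two extremal contractions being of different types (fiber-type versus divisorial), the class of a fiber of $q$ is $\Aut(\PP_b)$-invariant, and hence $q$ is $\Aut(\PP_b)$-equivariant.

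This equivariance produces a short exact sequence
\[
1 \to \Aut(\PP_b)_{\P^2} \to \Aut(\PP_b) \to H \to 1,
\]
where $\Aut(\PP_b)_{\P^2}$ is the subgroup of automorphisms preserving the fibers of $q$ and $H$ is an algebraic subgroup of $\Aut(\P^2) = \PGL_{3}$. The kernel $\Aut(\PP_b)_{\P^2}$ is connected by \cite[Remark 3.1.6]{BFT23}. Moreover, since $\Autz(\PP_b)$ already maps onto $\Aut(\P^2)$ (as recalled in Section \ref{sec 4.2.2} and in \cite[Section 4.1]{BFT23}), the image $H$ equals $\PGL_{3}$, which is connected. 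Applying Lemma \ref{lem: number of connected components} then yields that $\Aut(\PP_b)$ is connected.

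The main obstacle will be the careful identification of the second extremal ray and the verification that its contraction is indeed divisorial for $b \geq 1$; this can be done by a direct intersection-theoretic computation analogous to Lemma \ref{lem: XabPic}, and is essentially routine once the set-up is in place.
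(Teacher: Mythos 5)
Your proof is correct and follows essentially the same route as the paper: identify the two extremal rays of $\NE(\PP_b)$, deduce that the structure morphism $q$ is $\Aut(\PP_b)$-equivariant, and conclude via the short exact sequence together with Lemma \ref{lem: number of connected components}. The one genuine difference is how the two rays are distinguished: the paper uses the intersection numbers $K_{\PP_b}\cdot f=-2$ and $K_{\PP_b}\cdot\ell=b-3$, which forces it to treat $b=1$ separately (via $\PP_1\simeq\Aut(\P^3,p)$-equivariant blow-up of a point of $\P^3$), whereas your observation that one extremal contraction is of fiber type and the other is divisorial handles all $b\geq1$ uniformly — a small but real simplification, provided you do justify (as is standard for these toric/projective-bundle cases) that the second ray is indeed contractible with exceptional locus the negative section. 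One slip to fix: the connectedness of $\Aut(\PP_b)_{\P^2}$ is \cite[Remark 4.1.3]{BFT23} (the statement for $\P^1$-bundles over $\P^2$), not \cite[Remark 3.1.6]{BFT23}, which concerns the bundles $\FF_a^{b,c}$ over Hirzebruch surfaces.
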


\begin{proof}
If $b=0$, then $\PP_0 \simeq \P^1 \times \P^2$ and $\Aut(\PP_0) \simeq \PGL_2 \times \PGL_3$ is connected. 
If $b=1$, then $\PP_1$ is isomorphic to $\P^3$ blown-up at the point $p:=[0:0:0:1]$ (see \cite[Lemma 5.3.5]{BFT22}), and so $\Aut(\PP_1) \simeq \Aut(\P^3,p)$ is connected.
Assume now that $b \geq 2$.
A rather direct computation (see e.g.~the proof of \cite[Lemma 6.5.1]{BFT22} for a similar computation) gives us that the cone of effective curves of $\PP_b$ is generated by two curves $f$ (a fiber of $q$) and $\ell$ (a line contained in the section defined by $y_0=0$), and that $K_{\PP_b}.f=-2$ and $K_{\PP_b}.\ell=b-3$. Hence, $\Aut(\PP_b)$ stabilizes the two extremal rays of $\NE(\PP_b)$ generated by $f$ and $\ell$, from which it follows that the structure morphism $q\colon \PP_b \to \P^2$ is $\Aut(\PP_b)$-equivariant.
Moreover, by \cite[Lemma 4.1.2]{BFT23}, the induced homomorphism $\Aut(\PP_b) \to \Aut(\P^2)$ is onto.
Hence, we have an exact sequence of algebraic groups
\[
1 \to \Aut(\PP_b)_{\P^2} \to \Aut(\PP_b) \to \Aut(\P^2) \to 1,
\]
where $\Aut(\PP_b)_{\P^2}$ is connected by \cite[Remark 4.1.3]{BFT23}. 
By Lemma \ref{lem: number of connected components}, the algebraic group $\Aut(\PP_b)$ is therefore connected. 
\end{proof}

\subsection{Family (c)}
Let $a,b\ge 1$ and $c\ge 2$ such that $c=ak+2$ with $0\le k\le b$. 
We call \emph{Umemura $\p^1$-bundle} the $\P^1$-bundle $\U_{a}^{b,c} \to \F_a$ 
obtained by the gluing of two copies of $\F_b\times\A^1$  along $\F_b\times(\A^1 \setminus \{0\})$ by the automorphism $\nu\in \Aut(\F_b\times(\A^1 \setminus \{0\}))$ defined by
\[\begin{array}{ccl}\nu\colon([x_0:x_1\ ;\ y_0:y_1],z) &\mapsto &\left([x_0:x_1z^{c}+x_0 y_0^ky_1^{b-k}z^{c-1}\ ;\ y_0z^a:y_1],\frac{1}{z}\right),\\
&=&\left([x_0:x_1z^{c-ab}+x_0 y_0^ky_1^{b-k}z^{c-ab-1}\ ;\ y_0:y_1z^{-a}],\frac{1}{z}\right)\end{array}.\] 
The structure morphism  $q\colon \U_{a}^{b,c} \to \F_a$ sends $([x_0:x_1\ ;\ y_0:y_1],z)\in \F_b\times\A^1$ onto respectively 
$[y_0:y_1\ ;\ 1:z]\in \F_a$ and $[y_0:y_1\ ;\ z:1]\in \F_a$ on the two charts.
Let us mention that the algebraic group $\Autz(\U_{a}^{b,c})$ is described in \cite[Section 3.6]{BFT23}; in particular, $\Autz(\U_{a}^{b,c})$ maps onto $\Aut(\F_a)$.
In fact, the Umemura $\P^1$-bundles are the only $\P^1$-bundles over Hirzebruch surfaces, besides the decomposable $\P^1$-bundles introduced in Section  \ref{subsubsec: decomposable P1 bundles}, whose identity component of the automorphism group maps onto the identity component of the automorphism group of the basis (see \cite[Proposition 3.7.4]{BFT23}).

\begin{proposition}\label{prop: Aut of Uabc}
Let $a,b\ge 1$ and $c\ge 2$ such that $c=ak+2$ with $0\le k\le b$.
Then $\Aut(\U_{a}^{b,c})$ is connected.
\end{proposition}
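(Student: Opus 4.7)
The plan is to mimic the strategy used in the proof of Proposition~\ref{prop: Aut of Pb}. The key assertion to establish is that the structure morphism $q\colon \U_{a}^{b,c} \to \F_a$ is $\Aut(\U_{a}^{b,c})$-equivariant. Once we have this, combining it with the fact (cited from \cite[Section 3.6]{BFT23}) that $\Autz(\U_{a}^{b,c})$ maps onto $\Aut(\F_a)$ gives a short exact sequence of algebraic groups
\[
1 \to \Aut(\U_{a}^{b,c})_{\F_a} \to \Aut(\U_{a}^{b,c}) \to \Aut(\F_a) \to 1,
\]
and we would then conclude via Lemma~\ref{lem: number of connected components}: the kernel $\Aut(\U_{a}^{b,c})_{\F_a}$ is connected (this should follow from the explicit description of $\Autz(\U_{a}^{b,c})$ in \cite[Section 3.6]{BFT23}, in the same way that $\Aut(\PP_b)_{\P^2}$ is connected in \cite[Remark 4.1.3]{BFT23}), and since $a\geq 1$ the group $\Aut(\F_a)$ is connected (only $\F_0$ has a disconnected automorphism group). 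Hence $\Aut(\U_{a}^{b,c})$ is connected.

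The main obstacle is therefore showing that $q$ is preserved by every automorphism of $\U_{a}^{b,c}$. I would proceed by a Mori-theoretic argument analogous to Lemma~\ref{lem: XabPic}: compute a basis of $N_1(\U_{a}^{b,c})_\Q$, determine the extremal rays of $\NE(\U_{a}^{b,c})$ and the intersection numbers $K_{\U_{a}^{b,c}}\cdot \ell$ for each extremal curve $\ell$, and verify that the ray spanned by a fiber $f$ of $q$ can be singled out. A convenient set of test classes is $f$ (a fiber of $q$), together with the images in $\U_{a}^{b,c}$ of a fiber of $\F_b\to \p^1$ and of the negative section $s_{-b}\subset \F_b$ sitting inside the chart $\F_b\times\A^1$. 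Because $a\geq 1$ and $b\geq 1$, the intersection data should make the fiber class numerically distinguished from the other extremal classes, forcing $\Aut(\U_{a}^{b,c})$ to stabilize the ray $\R_{\geq 0}[f]$ and hence the contraction $q$.

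An alternative, and probably shorter, route is to exploit the rigidity of the Umemura bundle structure itself: the $\P^1$-bundle $\U_{a}^{b,c}\to\F_a$ is characterized (up to isomorphism) by the data $(a,b,c)$ via its associated rank-two vector bundle, which is not decomposable for the parameter range considered, so any automorphism must preserve the bundle projection. In either case, once the $\Aut$-equivariance of $q$ is in hand, the rest of the argument is formal via the short exact sequence and Lemma~\ref{lem: number of connected components}.
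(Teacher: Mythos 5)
Your proposal follows essentially the same route as the paper's proof: establish that the structure morphism $q$ is $\Aut(\U_{a}^{b,c})$-equivariant by identifying the extremal ray of the fiber class in $\NE(\U_{a}^{b,c})$ via its intersection with $K_{\U_{a}^{b,c}}$ (the paper cites \cite[Lemma 6.4.1]{BFT22} for the cone and the intersection numbers, which indeed single out $f$ since $K\cdot f=-2$ while the other extremal classes have $K$-intersection at least $-1$), and then conclude from the short exact sequence, the connectedness of $\Aut(\U_{a}^{b,c})_{\F_a}$ (\cite[Remark 3.6.4]{BFT23}), the connectedness of $\Aut(\F_a)$ for $a\geq 1$, and Lemma~\ref{lem: number of connected components}. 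The plan is correct and matches the paper's argument.
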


\begin{proof}
The proof is analogous to that of Proposition \ref{prop: Aut of Pb}.
The cone of effective curves and the intersection form on $X=\U_{a}^{b,c}$ are described in \cite[Lemma 6.4.1]{BFT22}: 
the cone $\NE(X)$ is generated by three curves $f,s,\ell$ if $c>2$ and by three curves $f,s,r$ if $c=2$, where the structure morphism $q\colon  X \to \F_a$ contracts the class of $f$. 
The intersection of these $1$-cycles with $\mathrm{K}_X$ is given by
\[
\mathrm{K}_X\cdot f=-2,\quad \mathrm{K}_X \cdot s=b-2,\quad \mathrm{K}_X\cdot \ell=a(k+1)\text{ if $c>2$},\quad \mathrm{K}_X\cdot r=a-2\text{ if $c=2$.}
\]
Consequently, $\Aut(X)$ preserves the extremal ray of $\NE(X)$ generated by $f$, and so $q$ is $\Aut(X)$-equivariant.
Since $a\geq1$,  the group $\Aut(\F_a)$ is connected and so the surjectivity of the induced homomorphism $\Aut(X) \to \Aut(\F_a)$ follows from \cite[Lemma 3.6.3]{BFT23}. 
The connectedness of $\Aut(X)_{\F_a}$ is given by \cite[Remark 3.6.4]{BFT23}.
The connectedness of $\Aut(X)$ then follows from Lemma \ref{lem: number of connected components}.
\end{proof}

\subsection{Family (d)} \label{sec: Family (d)}
Let $b\ge -1$ and let $\kappa$ be the $(2:1)$-cover defined by
\[
\begin{array}{rccc}
\kappa\colon~& \P^1\times\P^1 & \to &\P^2\\
& ([y_0:y_1],[z_0:z_1]) &\mapsto &[y_0 z_0:y_0 z_1+y_1 z_0:y_1z_1],\end{array}
\] 
whose ramification locus is the diagonal $\Delta\subseteq\P^1\times\P^1$ and whose branch locus is the smooth conic $C_0=\{ [X:Y:Z] \mid Y^2=4XZ\}\subseteq\P^2.$
The \emph{$b$-th Schwarzenberger $\P^1$-bundle} $\SS_b\to \P^2$ is the $\P^1$-bundle defined by \[q\colon \SS_b=\P(\kappa_* \O_{\P^1\times\P^1}(-b-1,0))\to \P^2.\]
Note that $\SS_b$ is the projectivization of the classical rank $2$ vector bundle $\kappa_* \O_{\P^1\times\P^1}(-b-1,0)$ introduced by Schwarzenberger in \cite{Sch61}. 
The shift by $-1$ in our notation, and therefore the fact that we take $b \geq -1$, comes from the fact that the preimage of a tangent line to $C_0$ by $\SS_b \to \P^2$ is isomorphic to $\F_b$ when $b \geq 1$ (see \cite[Lemma 4.2.5]{BFT23}).

\begin{remark}
From \cite[Proposition 7]{Sch61} (see also \cite[Corollary 4.2.2]{BFT23} and \cite[Example 2.1.8]{IP99}), 
we have 
\[\SS_{-1}\simeq\PP_1\simeq\p(\O_{\p^2}(1)\oplus\O_{\p^2}),\quad \SS_0\simeq\PP_0\simeq\p^1\times\p^2, \quad \text{and} \quad \SS_1 \simeq \P(T_{\P^2}) \simeq \PGL_3/B,
\] 
where $B$ is a Borel subgroup of $\PGL_3$. 
Recall that we have already determined $\Aut(\SS_{-1})$ and $\Aut(\SS_0)$ in Proposition~\ref{prop: Aut of Pb}.
\end{remark}

\begin{proposition}\label{prop: aut group of Sb}
Let $b \geq 1$.
\begin{enumerate}
\item If $b=1$, then $\Aut(\SS_1) \simeq \PGL_3 \rtimes \Z/2\Z$.
\item If $b \geq 2$, then $\Aut(\SS_b) \simeq \Aut(\P^2,C_0):=\{g \in \Aut(\P^2)\ |\ g(C_0)=(C_0)\} \simeq \PGL_2$. 
\end{enumerate}
\end{proposition}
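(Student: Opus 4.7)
The plan is to proceed in parallel with the structure used in Propositions \ref{prop: Aut of Pb} and \ref{prop: Aut of Uabc}: first I would show that the structure morphism $q\colon \SS_b \to \P^2$ is $\Aut(\SS_b)$-equivariant (at least when $b\geq 2$) by examining the cone of curves, then use the equivariance to produce a homomorphism $\Aut(\SS_b) \to \Aut(\P^2)$ and compute its image and kernel.

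More precisely, for $b \geq 2$, I would compute $\NE(\SS_b)$ and the intersection with $K_{\SS_b}$ as in \cite[Section 4.2]{BFT23}: the class of a fiber $f$ of $q$ spans an extremal ray that is distinguished from the other extremal ray(s) either numerically (different $K_{\SS_b}$-degree) or by the dimension of the corresponding contraction. Hence this ray is $\Aut(\SS_b)$-invariant, so $q$ is $\Aut(\SS_b)$-equivariant. This yields an exact sequence
\[
1 \to \Aut(\SS_b)_{\P^2} \to \Aut(\SS_b) \to H \to 1
\]
with $H \subseteq \Aut(\P^2)$. The image $H$ must preserve the conic $C_0$: indeed $C_0$ admits an intrinsic characterization in terms of $q$, namely it is the locus of points $p \in \P^2$ such that the preimage under $q$ of any line through $p$ tangent to $C_0$ at $p$ is isomorphic to $\F_b$ (this follows from \cite[Lemma 4.2.5]{BFT23} together with the fact that $b \geq 2$ distinguishes this Hirzebruch surface from those arising over non-tangent lines). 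Therefore $H \subseteq \Aut(\P^2, C_0) \simeq \PGL_2$. On the other hand, the inclusion $\Autz(\SS_b) \simeq \PGL_2 \hookrightarrow \Aut(\SS_b)$ maps surjectively onto $\Aut(\P^2, C_0)$ by \cite[Section 4.2]{BFT23}, which forces both that $H = \Aut(\P^2, C_0)$ and (by connectedness of $\PGL_2$ and Lemma \ref{lem: number of connected components}) that $\Aut(\SS_b)_{\P^2}$ must be trivial. This gives $\Aut(\SS_b) \simeq \PGL_2$, as desired.

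For $b=1$, the argument above fails because $\SS_1 \simeq \P(T_{\P^2}) \simeq \PGL_3/B$ is the full flag variety, which admits a second $\P^1$-fibration $\SS_1 \to (\P^2)^{\vee}$ (onto the dual projective plane), giving a second extremal ray of $\NE(\SS_1)$ numerically indistinguishable from the first. In this case I would invoke the classical identification $\Aut(\PGL_3/B) \simeq \PGL_3 \rtimes \Z/2\Z$, where the factor $\Z/2\Z$ is generated by the outer automorphism of $\PGL_3$ (duality $g \mapsto (g^{\top})^{-1}$), which descends to an involution of $\SS_1$ swapping the two $\P^1$-fibrations. Alternatively, one can argue directly that $\Aut(\SS_1)$ acts on the set of the two extremal contractions, giving a homomorphism to $\mathfrak{S}_2$; its kernel consists of automorphisms that preserve both fibrations $\SS_1 \to \P^2$ and $\SS_1 \to (\P^2)^{\vee}$, hence embeds into $\Aut(\P^2) \times \Aut((\P^2)^{\vee}) \simeq \PGL_3 \times \PGL_3$, and a direct check shows that this image is the diagonal copy of $\PGL_3$; surjectivity onto $\Z/2\Z$ is realized by the duality involution.

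The main obstacle will be the $b \geq 2$ case, specifically showing that the kernel $\Aut(\SS_b)_{\P^2}$ is trivial. The statement for $\Autz$ is given in \cite[Section 4.2]{BFT23}, but I would need to confirm that no disconnected component of $\Aut(\SS_b)$ lies in $\Aut(\SS_b)_{\P^2}$; this follows from Lemma \ref{lem: number of connected components} once the surjectivity of $\Aut(\SS_b) \to \Aut(\P^2, C_0)$ is combined with the connectedness of $\Aut(\P^2, C_0) \simeq \PGL_2$ and the triviality of $\Autz(\SS_b)_{\P^2}$. A secondary delicate point is the intrinsic characterization of $C_0$ from the $\P^1$-bundle structure, which I would prove by direct inspection of the Hirzebruch surfaces that arise as preimages of lines of different positions relative to $C_0$.
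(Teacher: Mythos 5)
Your overall architecture matches the paper's: for $b\geq 2$ you establish equivariance of $q\colon \SS_b\to\P^2$ from the extremal rays of $\NE(\SS_b)$, pass to the exact sequence $1 \to \Aut(\SS_b)_{\P^2} \to \Aut(\SS_b) \to H \to 1$, and show $H\subseteq\Aut(\P^2,C_0)$; for $b=1$ you invoke the flag-variety identification $\Aut(\PGL_3/B)\simeq\PGL_3\rtimes\Z/2\Z$, exactly as the paper does via \cite[Theorem 1]{Dem77}. The intrinsic characterization of $C_0$ via the isomorphism type of preimages of tangent lines is a reasonable way to justify $H\subseteq\Aut(\P^2,C_0)$.

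There is, however, a genuine gap in your treatment of the kernel for $b\geq 2$. From the surjectivity of $\Autz(\SS_b)\simeq\PGL_2$ onto $\Aut(\P^2,C_0)\simeq\PGL_2$ you can only conclude that $\Autz(\SS_b)\cap\Aut(\SS_b)_{\P^2}$ is trivial, hence that $K:=\Aut(\SS_b)_{\P^2}$ is a \emph{finite} normal subgroup meeting the identity component trivially. Lemma \ref{lem: number of connected components} cannot then be used to conclude $K=1$: that lemma takes connectedness of the kernel as a \emph{hypothesis}, and a finite kernel with $K^\circ=1$ is precisely the situation it does not address. Concretely, all of your stated premises --- surjectivity of $\Aut(\SS_b)\to\Aut(\P^2,C_0)$, connectedness of $\Aut(\P^2,C_0)$, and triviality of $\Autz(\SS_b)_{\P^2}$ --- would be equally satisfied if one had $\Aut(\SS_b)\simeq\PGL_2\times\Z/2\Z$ with the factor $\Z/2\Z$ acting fiberwise over $\P^2$; nothing in your argument excludes such a finite fiberwise group. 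What is actually needed is the statement that the \emph{full} group of automorphisms of $\SS_b$ over $\P^2$ (not merely its identity component) is trivial, which requires a direct computation with the Schwarzenberger bundle; this is exactly what the paper cites \cite[Lemma 4.2.5]{BFT23} for. The repair is therefore to invoke that lemma for the whole group $\Aut(\SS_b)_{\P^2}$ rather than attempting to deduce its triviality from the connected-components lemma.
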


\begin{proof}
If $b=1$, then $\SS_1 \simeq \PGL_3/B$ is a flag variety and \cite[Theorem 1]{Dem77} yields $\Aut(\SS_1)  \simeq \Aut_{\mathrm{gr}}(\PGL_3) \simeq \PGL_3 \rtimes \Z/2\Z$.
If $b\geq2$, then by \cite[Lemma 5.3.9]{BFT22} the effective cone $\NE(\SS_b)$ is generated by a fiber $f$ of the structure morphism $q\colon \SS_b\to\p^2$ and a certain curve $s_1$, and they satisfy $K_{\SS_b}.f=-2$ and$K_{\SS_b}.s_1=b-3$.
It follows that $q$ is $\Aut(\SS_b)$-equivariant, and so we have an exact sequence of algebraic groups
\[
1 \to \Aut(\SS_b)_{\P^2} \to \Aut(\SS_b) \to H \to 1,
\]
where $H \subseteq \Aut(\P^2)$ is the image of the homomorphism $\Aut(\SS_b) \to \Aut(\P^2)$ induced by the $\Aut(\SS_b)$-action on $\P^2$.
By \cite[Lemma 4.2.5]{BFT23}, the group $\Aut(\SS_b)_{\P^2}$ is trivial, $\Autz(\SS_b) \simeq \Aut(\P^2,C_0) \simeq \PGL_2$ (since $b\geq2$), and $H$ must preserve $C_0$.
Therefore, we have $\Aut(\SS_b) \simeq  \Aut(\P^2,C_0) \simeq \PGL_2$. 
\end{proof}

\subsection{Family (e)}
Let $b \geq 1$. 
We denote by $q\colon\V_b \to \P^2$ the $\P^1$-bundle obtained from $\U_{1}^{b,2} \to \F_1$ by contracting the $(-1)$-section of $\F_1$. The existence of the $\P^1$-bundle $\V_b \to \P^2$ follows from the descent Lemma obtained in \cite[Lemma~2.3.2]{BFT23}; see \cite[Lemma 5.5.1]{BFT23} for details. 
Let us note that $\V_1 \simeq \SS_1$ (see the proof of \textit{loc.~cit.}), and so we may assume $b \geq 2$ since we already determined $\Aut(\SS_1)$ in Proposition \ref{prop: aut group of Sb}.

\begin{corollary}[of Proposition \ref{prop: Aut of Uabc}]\label{cor:Aut-Vb}
Let $b \geq 2$ and let $\psi\colon \U_{1}^{b,2} \to \V_b$ be the blow-up of the smooth rational curve $q^{-1}([0:1:0])$.
Then $\Aut(\V_b) = \psi\Aut(\U_{1}^{b,2})\psi^{-1}$ is connected.
\end{corollary}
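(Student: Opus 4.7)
The plan is to establish the equality $\Aut(\V_b) = \psi\Aut(\U_{1}^{b,2})\psi^{-1}$, from which connectedness follows immediately from Proposition~\ref{prop: Aut of Uabc}. By the universal property of the blow-up along an invariant subvariety, this equality is equivalent to the conjunction of two statements: first, every element of $\Aut(\U_{1}^{b,2})$ preserves the exceptional divisor of $\psi$; second, every element of $\Aut(\V_b)$ preserves the center $C := q^{-1}([0:1:0])$ of $\psi$. Once both are proved, each automorphism of $\U_{1}^{b,2}$ descends uniquely to an automorphism of $\V_b$, and each automorphism of $\V_b$ lifts uniquely to an automorphism of $\U_{1}^{b,2}$, giving the desired equality.

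For the first statement, the exceptional divisor of $\psi$ is the preimage in $\U_{1}^{b,2}$ of the $(-1)$-section $s_{-1}\subset \F_1$ under the structure morphism $\U_{1}^{b,2}\to \F_1$. The proof of Proposition~\ref{prop: Aut of Uabc} shows that this structure morphism is $\Aut(\U_{1}^{b,2})$-equivariant, and $s_{-1}$ is the unique irreducible curve of negative self-intersection in $\F_1$, hence preserved by all of $\Aut(\F_1)$. Combining these two facts gives the claim.

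For the second statement, I would first show that the structure morphism $q\colon \V_b \to \p^2$ is $\Aut(\V_b)$-equivariant by a Mori-cone analysis completely analogous to the ones performed in the proofs of Propositions~\ref{prop: Aut of Pb} and~\ref{prop: Aut of Uabc}: the class of a fiber of $q$ generates an extremal ray of $\NE(\V_b)$ that is singled out (for $b\geq 2$) by its intersection with $K_{\V_b}$. This yields an induced homomorphism $\Aut(\V_b) \to \Aut(\p^2)$, and it then suffices to show that its image is contained in the stabilizer of $p := [0:1:0]$. For this, I would use the description of $\Autz(\V_b)$ obtained in \cite[Section~5.5]{BFT23} to verify that $p$ is the unique $\Autz(\V_b)$-fixed closed point of $\p^2$; since $\Autz(\V_b)$ is normal in $\Aut(\V_b)$, the latter permutes the (finite) set of $\Autz(\V_b)$-fixed points in $\p^2$, and hence must fix $p$.

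The main obstacle in the plan is the last step, namely the intrinsic identification of $p$ as the unique $\Autz(\V_b)$-fixed point of $\p^2$. The rest of the argument is a formal consequence of the blow-up description of $\psi$ combined with Proposition~\ref{prop: Aut of Uabc}.
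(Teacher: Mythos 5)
Your argument is correct and reaches the same conclusion, but it is organized differently from the paper's proof. You prove both inclusions directly: the lift $\psi^{-1}\Aut(\V_b)\psi\subseteq\Aut(\U_{1}^{b,2})$ via equivariance of $q\colon\V_b\to\p^2$ and the fact that the image of $\Aut(\V_b)$ in $\Aut(\p^2)$ fixes $[0:1:0]$, and the descent $\psi\Aut(\U_{1}^{b,2})\psi^{-1}\subseteq\Aut(\V_b)$ via preservation of the exceptional divisor. The paper proves only the first inclusion (citing \cite[Lemma 5.5.1(2)]{BFT23} for the fixed point, where you instead re-derive it intrinsically as the unique $\Autz(\V_b)$-fixed point of $\p^2$ and use normality of $\Autz(\V_b)$ in $\Aut(\V_b)$ --- a legitimate, self-contained alternative) and then closes the loop with the sandwich $\psi^{-1}\Aut(\V_b)\psi\subseteq\Aut(\U_{1}^{b,2})=\Autz(\U_{1}^{b,2})=\psi^{-1}\Autz(\V_b)\psi$, using the connectedness from Proposition \ref{prop: Aut of Uabc} together with the identity $\Autz(\V_b)=\psi\Autz(\U_{1}^{b,2})\psi^{-1}$ of \cite[Lemma 5.5.1(4)]{BFT23}; this makes your descent direction logically redundant. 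If you do keep that direction, one step should be tightened: preserving the exceptional divisor $E$ set-wise is not in general sufficient to descend an automorphism along a divisorial contraction (an automorphism could a priori interchange the two rulings of $E$, turning the conjugate into a non-regular pseudo-automorphism); what is needed is preservation of the extremal ray contracted by $\psi$. Here this is immediate from the Mori-cone data in the proof of Proposition \ref{prop: Aut of Uabc}: for $\U_{1}^{b,2}$ with $b\geq 2$ the three extremal rays have pairwise distinct $K$-degrees $-2$, $b-2\geq 0$ and $-1$, so each ray is $\Aut(\U_{1}^{b,2})$-invariant and the contraction $\psi$ is equivariant; alternatively, the descent is automatic from Blanchard's lemma once $\Aut(\U_{1}^{b,2})$ is known to be connected.
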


\begin{proof}
Let $\eta\colon \F_1\to\p^2$ be the blow-up of $p_0:=[0:1:0]$. By \cite[Lemma 5.5.1]{BFT23}, there is a commutative diagram
\[\xymatrix@R=4mm@C=2cm{
    \U_{1}^{b,2} \ar[r]^{\psi} \ar[d]  & \V_b \ar[d]\\
    \F_1 \ar[r]^{\eta} & \P^2
  }\]  
where the vertical maps are the structure morphisms. By \cite[Lemma 6.4.2]{BFT22}, the structure morphism $\V_b \to \P^2$ is $\Aut(\V_b)$-equivariant. Moreover, by \cite[Lemma 5.5.1(2)]{BFT23}, $\Aut(\V_b)$ fixes the point $p_0$ in $\P^2$, and so $\Aut(\V_b)$ stabilizes the fiber $q^{-1}(p_0)$ and $\psi$ is therefore $\Aut(\V_b)$-equivariant, i.e.~$\psi^{-1} \Aut(\V_b) \psi \subseteq \Aut(\U_{1}^{b,2})$. Thus
\[
\psi^{-1} \Aut(\V_b) \psi \subseteq \Aut(\U_{1}^{b,2}) = \Autz(\U_{1}^{b,2}) =\psi \Autz(\V_b) \psi^{-1},
\]
where the connectedness of $\Aut(\U_{1}^{b,2})$ is Proposition \ref{prop: Aut of Uabc} and the last equality is \cite[Lemma 5.5.1(4)]{BFT23}.
Hence, $\Aut(\V_b) = \psi\Aut(\U_{1}^{b,2})\psi^{-1}$ is connected.
\end{proof}

\subsection{Family (f)}
Let $b \geq 2$. The variety $\W_b$ is the toric threefold defined as the quotient of $(\A^2\setminus \{0\})\times (\A^3\setminus \{0\})$ by the action of $\G_{m}^{2}$ given by
\[\begin{array}{ccc}
\G_{m}^{2}\times (\A^2\setminus \{0\})\times (\A^3\setminus \{0\}) & \to & (\A^2\setminus \{0\})\times (\A^3\setminus \{0\})\\
((\mu,\rho), (y_0,y_1\ ;\ z_0,z_1,z_2))&\mapsto& (\mu\rho^{1-2b} y_0,\mu y_1\ ;\ \rho z_0,\rho z_1,\rho^2 z_2)\end{array},\]
and we write $[y_0:y_1;z_0:z_1:z_2]$ for the class of $(y_0,y_1,z_0,z_1,z_2)$. 
The projection 
\[q\colon \W_b\to \p(1,1,2), \ \ [y_0:y_1\ ;\ z_0:z_1:z_2]\mapsto [z_0:z_1:z_2]\]
yields a Mori $\P^1$-fibration over $\P(1,1,2)$ which is a $\P^1$-bundle over $\P(1,1,2) \setminus [0:0:1]$. 
Moreover, $\W_b$ has exactly two singular points, namely $[1:0\ ;\ 0:0:1]$ and $[0:1\ ;\ 0:0:1]$, both located in the fiber over $[0:0:1]$. 
Let us recall that $\Aut(\P(1,1,2)) \simeq \Aut(\F_2)$ is connected.

\begin{remark} \label{rk: description auto of P1-fibration of Wb}
Let  $b \geq 2$. Then $\Aut(\W_b)_{\P(1,1,2)}$ is the connected algebraic group
 \[
\left \{ \begin{bmatrix} 1 &  0 \\ 
            p & \lambda \end{bmatrix} \in \GL_2(\k[z_0,z_1,z_2])\ \middle| \ \lambda \in \k^* \text{ and } p \in \k[z_0,z_1,z_2]_{2b-1} \right \},
            \]
and the action of $\Aut(\W_b)_{\P(1,1,2)}$ on $\W_b$ is as follows:
\[ 
(\lambda,p) \cdot [y_0:y_1; z_0:z_1:z_2]=[y_0: \lambda y_1+y_0 p(z_0,z_1,z_2); z_0:z_1:z_2].
\]
Moreover, the natural homomorphism $\Autz(\W_b) \to \Aut(\P(1,1,2))$ is onto. 
All these facts can be seen  directly from the global description of $\W_b$ above.
\end{remark}

\begin{lemma} \label{lem:WbPicN}
Let $b \geq 2$.
\begin{enumerate}
\item\label{WbPicN1}
The group of numerical equivalence classes of $1$-cocycles $\NS_\Q(\W_b)$ is generated by $F=(z_2=0)$ and $S=(y_0=0)$.
\item\label{WbPicN2}
 The group of numerical equivalence classes of $1$-cycles $N_1^\Q(\W_b)$ is generated by $f=(z_1=z_2=0)$ and $\ell=F \cap S=(y_0=z_2=0)$.  
\item\label{WbPicN3}
The intersection form on $\W_b$ satisfies
\[ \begin{array}{|c|cc|}
\hline
& F & S\\
\hline
\ell & 1 & \frac{1}{2}(1-2b) \\
f&  0 &  1\\
\hline\end{array} \]
\item\label{WbPicN4}
The cone of effective curves $\NE(\W_b)$ is generated by $f$ and $\ell$.
\item\label{WbPicN5}
The canonical divisor of $\W_b$ is $K_{\W_b}=-2S-\frac{1}{2}(2b+3)F$, thus
$ K_{\W_b}\cdot f=-2$ and $K_{\W_b} \cdot \ell =b-\frac{5}{2}.$
\end{enumerate}
\end{lemma}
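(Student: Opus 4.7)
The proof strategy rests on the toric description of $\W_b$ as a simplicial projective threefold (hence $\Q$-factorial) with Picard rank $2$.

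For \ref{WbPicN1} and \ref{WbPicN2}, the Picard group of a Cox quotient is identified with the character lattice of the acting torus; here $\G_m^2$ yields that $\NS_\Q(\W_b)$ has rank $2$. Each torus-invariant prime divisor has a class determined by the $\G_m^2$-weight of the corresponding variable, and all linear relations arise from monomials of weight $(0,0)$. Reading off the weights $(1,1-2b),(1,0),(0,1),(0,1),(0,2)$ of $y_0,y_1,z_0,z_1,z_2$ and exhibiting the weight-$(0,0)$ rational functions $y_0 y_1^{-1}z_0^{2b-1}$, $z_0 z_1^{-1}$, $z_2 z_0^{-2}$ yields
\[
(y_0=0)\sim(y_1=0)-(2b-1)(z_0=0),\qquad (z_0=0)\sim(z_1=0),\qquad (z_2=0)\sim 2(z_0=0),
\]
so $S=(y_0=0)$ and $F=(z_2=0)$ form a basis of $\NS_\Q(\W_b)$. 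On a $\Q$-factorial projective threefold, Poincar\'e duality gives $\dim N_1^\Q(\W_b)=2$, and the linear independence of $f$ and $\ell$ is checked a posteriori using the intersection table of \ref{WbPicN3}.

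For \ref{WbPicN3}, each intersection $D\cdot C$ with $C\not\subseteq \operatorname{Supp}(D)$ is computed in a smooth affine chart obtained by normalizing two nonzero coordinates to one via $\G_m^2$, reading off the multiplicity from the local equation of $D$ restricted to $C$. For instance, $F\cdot f=0$ follows from $F\sim 2(z_0=0)$ since $(z_0=0)\cap f=\varnothing$ in $\W_b$. When $C\subseteq\operatorname{Supp}(D)$ (e.g.\ $\ell\subset F\cap S$), one first replaces $D$ by a linearly equivalent divisor avoiding $C$ using the equivalences above, and then computes locally.

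For \ref{WbPicN4}, the structure morphism $q\colon\W_b\to\P(1,1,2)$ is a Mori $\P^1$-fibration, so the class $f$ of a fiber of $q$ spans one extremal ray of $\NE(\W_b)$. On a $\Q$-factorial projective toric variety the Mori cone is generated by the finitely many classes of torus-invariant curves, so it suffices to identify among these the one spanning the other extremal ray, which the intersection table of \ref{WbPicN3} shows to be $\ell$. Finally, for \ref{WbPicN5}, the toric formula $K_{\W_b}=-\sum_i D_i$ over torus-invariant prime divisors, expanded via the equivalences of \ref{WbPicN1}, gives the stated expression $K_{\W_b}=-2S-\tfrac{1}{2}(2b+3)F$, and the two intersection numbers $K_{\W_b}\cdot f,\ K_{\W_b}\cdot\ell$ follow by linearity from \ref{WbPicN3}.

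The main technical difficulty is that $\W_b$ has two singular points and several torus-invariant Weil divisors (notably $(z_0=0)$) are only $\Q$-Cartier, forcing careful bookkeeping of rational coefficients when moving cycles into general position via linear equivalence, and requiring certain intersections to be computed locally in smooth affine charts of $\W_b$ rather than through a global pullback to $\P(1,1,2)$.
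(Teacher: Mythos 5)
Your strategy is sound and is essentially the paper's own: the paper's proof of Lemma \ref{lem:WbPicN} simply declares parts \ref{WbPicN1}--\ref{WbPicN4} ``analogous'' to the corresponding computation for $\RR_{(m,n)}$ and proves \ref{WbPicN5} exactly as you do, via the toric formula $-K_{\W_b}=\sum_i D_i$. Your weight bookkeeping and the linear equivalences $(y_0=0)\sim(y_1=0)-(2b-1)(z_0=0)$, $(z_0=0)\sim(z_1=0)$, $(z_2=0)\sim 2(z_0=0)$ are all correct, as is the argument for \ref{WbPicN1}, \ref{WbPicN2} and the identification of $[f]$ as the extremal ray contracted by $q$.

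There is, however, a concrete problem with part \ref{WbPicN3} as you present it: your own recipe, faithfully executed for the curve $\ell=F\cap S=(y_0=z_2=0)$, does not reproduce the stated table. Since $F\sim 2(z_0=0)$ and $(z_0=0)$ meets $\ell$ transversally in the single smooth point $[0:1;0:1:0]$, your method gives $F\cdot\ell=2$; similarly $S\sim(y_1=0)-(2b-1)(z_0=0)$ with $(y_1=0)\cap\ell=\varnothing$ gives $S\cdot\ell=1-2b$, whence $K_{\W_b}\cdot\ell=2b-5$ --- each exactly twice the tabulated value. Indeed the tabulated values are impossible for this curve: $\ell=F\cap S$ avoids both singular points (which lie in $z_2\neq0$), so every Weil divisor is Cartier in a neighbourhood of $\ell$ and $D\cdot\ell\in\Z$ for every Weil divisor $D$, which already rules out $S\cdot\ell=\tfrac12(1-2b)\notin\Z$. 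The half-integers in the table are the intersection numbers of the invariant curve $(y_0=z_0=0)=S\cap(z_0=0)$, which does pass through the singular point $[0:1;0:0:1]$ (where $(z_0=0)$ is only $2$-Cartier) and whose class equals $\tfrac12[F\cap S]$ in $N_1^\Q(\W_b)$. Since the two curves span the same ray, parts \ref{WbPicN1}, \ref{WbPicN2}, \ref{WbPicN4} and the qualitative consequences of \ref{WbPicN5} are unaffected, but you cannot assert that your computation ``yields'' the table for $\ell$ as defined; carrying out the local computations you describe would have surfaced this factor of $2$, and your write-up should either record the corrected values for $F\cap S$ or carry out the computation for $S\cap(z_0=0)$ instead.
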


\begin{proof}
The proof of the statements \ref{WbPicN1}-\ref{WbPicN4} is analogous to that of \cite[Lemma 6.3.1]{BFT22} using the global description of $\W_b$ given above. For \ref{WbPicN5}, we cannot apply the adjunction formula (because $\W_b$ is singular), but since $\W_b$ is toric, we know that an anticanonical divisor of $\W_b$ is obtained by summing the five torus invariant prime divisors of $\W_b$ (see e.g. \cite[Theorem 8.2.3]{CLS11}), and the result follows. 
\end{proof}

\begin{proposition}
Let $b \geq 2$. Then $\Aut(\W_b)$ is connected.
\end{proposition}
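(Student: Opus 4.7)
The plan is to mimic the strategy used for $\PP_b$ (Proposition~\ref{prop: Aut of Pb}) and for $\U_a^{b,c}$ (Proposition~\ref{prop: Aut of Uabc}): show that the structure morphism $q\colon\W_b\to \P(1,1,2)$ is $\Aut(\W_b)$-equivariant, then apply Lemma~\ref{lem: number of connected components} to a short exact sequence whose kernel and quotient are both known to be connected.

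First, I would argue that $q$ is $\Aut(\W_b)$-equivariant. By Lemma~\ref{lem:WbPicN}\ref{WbPicN4}, $\NE(\W_b)$ has exactly two extremal rays, spanned by $f$ and $\ell$, and by Lemma~\ref{lem:WbPicN}\ref{WbPicN5}, $K_{\W_b}\cdot f=-2$ while $K_{\W_b}\cdot \ell=b-\tfrac{5}{2}$. Since $K_{\W_b}$ is $\Aut(\W_b)$-invariant and $b-\tfrac{5}{2}\neq -2$ for any integer $b$, the automorphism group cannot swap these two extremal rays and must therefore stabilize each of them individually. In particular, $\Aut(\W_b)$ stabilizes the ray generated by $f$, which is contracted precisely by the structure morphism $q\colon\W_b\to\P(1,1,2)$; hence $q$ is equivariant.

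We thus obtain a short exact sequence of algebraic groups
\[
1\to \Aut(\W_b)_{\P(1,1,2)} \to \Aut(\W_b) \to H \to 1,
\]
where $H\subseteq \Aut(\P(1,1,2))$ denotes the image. By Remark~\ref{rk: description auto of P1-fibration of Wb}, the kernel $\Aut(\W_b)_{\P(1,1,2)}$ is connected, and the induced homomorphism $\Autz(\W_b)\to\Aut(\P(1,1,2))$ is surjective; this forces $H=\Aut(\P(1,1,2))$, which is connected (as recalled just above Remark~\ref{rk: description auto of P1-fibration of Wb}, since $\Aut(\P(1,1,2))\simeq\Aut(\F_2)$). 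Applying Lemma~\ref{lem: number of connected components} to the displayed sequence then yields that $\Aut(\W_b)$ is connected.

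No step looks genuinely hard here; the only subtle point is the equivariance of $q$, which relies on the fact that the two extremal generators of $\NE(\W_b)$ are distinguished by their intersection with $K_{\W_b}$ (note that this is actually sharper than the analogous argument for $\PP_b$, where the distinction used both $\dim$ and $K$-degree). Everything else is bookkeeping with results already established in the paper.
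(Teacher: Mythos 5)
Your proposal is correct and follows essentially the same route as the paper: equivariance of $q$ via the $K$-degrees of the extremal rays from Lemma~\ref{lem:WbPicN}, surjectivity onto $\Aut(\P(1,1,2))$ and connectedness of $\Aut(\W_b)_{\P(1,1,2)}$ from Remark~\ref{rk: description auto of P1-fibration of Wb}, and then Lemma~\ref{lem: number of connected components}. You merely spell out the ray-stabilization argument that the paper leaves implicit.
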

\begin{proof}
It follows from Lemma \ref{lem:WbPicN}~\ref{WbPicN5} that the structure morphism $q\colon \W_b \to \P(1,1,2)$ is $\Aut(\W_b)$-equivariant. Moreover, the induced homomorphism $\Aut(\W_b) \to \Aut(\P(1,1,2))$ is onto and $\Aut(\W_b)_{\P(1,1,2)}$ is connected (see Remark \ref{rk: description auto of P1-fibration of Wb}).
Hence, the result follows from Lemma \ref{lem: number of connected components}.
\end{proof}

\subsection{Family (g)}\label{subsubsec: Rmn}
Let $m \geq n \geq 0$. We consider the $\P^2$-bundle over $\P^1$ defined by
\[\RR_{(m,n)}:= \P(\O_{\P^1}(-m) \oplus \O_{\P^1}(-n) \oplus \O_{\P^1});\]
it identifies with the quotient of $(\A^3\setminus \{0\})\times (\A^2\setminus \{0\})$ by the action of $\G_{m}^{2}$ given by
\[\begin{array}{ccc}
\G_{m}^{2}\times (\A^3\setminus \{0\})\times (\A^2\setminus \{0\}) & \to & (\A^3\setminus \{0\})\times (\A^2\setminus \{0\}) \\
((\lambda,\mu), (x_0,x_1,x_2,y_0,y_1))&\mapsto& 
(\lambda \mu^{-m} x_0, \lambda \mu^{-n} x_1, \lambda x_2,\mu y_0,\mu y_1)\end{array},\]
and we write $[x_0:x_1:x_2\ ;\ y_0:y_1]$ for the class of $(x_0,x_1,x_2,y_0,y_1)$. The structure morphism $q\colon \RR_{(m,n)} \to \P^1$ identifies with the projection $[x_0:x_1:x_2\ ;\ y_0:y_1]\mapsto [y_0:y_1]$.
The algebraic group $\Autz(\RR_{(m,n)})$ is described in \cite[Lemma 5.3.1]{BFT22}.

\begin{lemma}\label{lem: vertical part of Aut(Rmn)}
For each $i \in \Z$ we denote by $\k[y_0,y_1]_{i}\subseteq \k[y_0,y_1]$ the vector subspace of homogeneous  polynomials of degree $i$ (which is $\{0\}$ if $i<0$). 
Then $\Aut(\RR_{(m,n)})_{\P^2}$ is the connected algebraic group defined by
\[ \left \{ \begin{bmatrix}
 p_{1,0}& p_{2,n-m} & p_{3,-m}\\ p_{4,m-n} & p_{5,0} & p_{6,-n} \\ 
 p_{7,m} & p_{8,n} & p_{9,0}  \end{bmatrix} \in \PGL_3(\k[y_0,y_1]) \ \middle| \ \begin{array}{l}
 p_{k,i}\in \k[y_0,y_1]_{i}\\
 \mbox{for }k=1,\dots,9.\end{array} \right \}.\]
Moreover, the action of $\Aut(\RR_{(m,n)})_{\P^2}$ on $\RR_{(m,n)}$ is as follows:
\[ [x_0:x_1:x_2 \ ;\ y_0:y_1] \mapsto \left[  \begin{bmatrix}
 p_{1,0}& p_{2,n-m} & p_{3,-m}\\ p_{4,m-n} & p_{5,0} & p_{6,-n} \\ 
 p_{7,m} & p_{8,n} & p_{9,0}  \end{bmatrix} \begin{bmatrix} x_0 \\ x_1 \\ x_2 \end{bmatrix}\ ;\ y_0:y_1\right].\]
\end{lemma}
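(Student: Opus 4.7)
My approach exploits the description $\RR_{(m,n)} = \P(E)$ with $E := \O_{\P^1}(-m) \oplus \O_{\P^1}(-n) \oplus \O_{\P^1}$. For a Zariski-locally trivial projective bundle, the group of automorphisms acting trivially on the base is canonically identified with $\GL(E)/\G_m$, where $\GL(E)$ is the group scheme of vector-bundle automorphisms of $E$ and $\G_m$ sits inside as the diagonal scalar subgroup. So the problem reduces to describing $\GL(E)$ explicitly.

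An endomorphism of $E$ is a $3\times 3$ matrix whose $(i,j)$-entry lies in $\Hom_{\O_{\P^1}}(E_j, E_i) = H^0(\P^1, \O_{\P^1}(a_j - a_i))$, where $(a_1, a_2, a_3) = (-m, -n, 0)$. Since $H^0(\P^1, \O_{\P^1}(d)) = \k[y_0,y_1]_d$, with the convention that $\k[y_0,y_1]_d = 0$ for $d<0$, reading off the nine bidegrees recovers exactly the matrix shape displayed in the statement. Passing to invertible matrices and quotienting by the scalar subgroup yields the claimed description, and the induced action on $(x_0, x_1, x_2)$ is standard matrix multiplication. To cross-check the formula, I would use the GIT presentation: under the $\G_m^2$-action with weights $(\lambda\mu^{-m}, \lambda\mu^{-n}, \lambda, \mu, \mu)$ on $(x_0, x_1, x_2, y_0, y_1)$, an assignment $x_i \mapsto \sum_j M_{ij}(y_0,y_1)\, x_j$ descends to $\RR_{(m,n)}$ and preserves the base if and only if the $(y_0,y_1)$-degree of each $M_{ij}$ equals the $\mu$-weight difference between $x_j$ and $x_i$, which is precisely what the table in the lemma prescribes.

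Connectedness is then automatic: the space of matrices of the prescribed form is a finite-dimensional $\k$-vector space, hence an irreducible affine space; $\GL(E)$ is the open locus where the determinant does not vanish, which is irreducible as a nonempty open subset of an irreducible variety; and quotienting by $\G_m$ preserves irreducibility. The main point demanding slight care will be justifying that \emph{every} automorphism of $\RR_{(m,n)}$ over $\P^1$ arises from a bundle automorphism of $E$, rather than merely being defined generically; this is standard for Zariski-locally trivial projective bundles, and in our setting can also be verified by restricting to the generic fiber $\P^2_{\k(\P^1)}$, checking that the resulting element of $\PGL_3$ over the function field $\k(\P^1)$ extends to every fiber of $q$, and observing that this extendability forces the entries to be homogeneous polynomials in $y_0, y_1$ of exactly the degrees listed in the displayed matrix.
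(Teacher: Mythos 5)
Your proof is correct and follows essentially the same route as the paper, whose one-line argument likewise reads the degrees of the matrix entries off the global quotient description via trivializations over affine charts of $\P^1$; your $\GL(E)/\G_{m}$ packaging and the computation of $\Hom(\O_{\P^1}(a_j),\O_{\P^1}(a_i))=H^0(\P^1,\O_{\P^1}(a_j-a_i))$ is the structured version of exactly that check, and your connectedness argument (complement of $\det=0$ in an affine space, then quotient by scalars) is the intended one. The only point to make explicit is that $\Aut_{\P^1}(\P(E))$ a priori classifies pairs $(L,\,\phi\colon E\xrightarrow{\sim}E\otimes L)$ modulo scalars rather than just $\Aut(E)/\G_m$; here any such $L$ must be trivial because the splitting type $(-m,-n,0)$ is not invariant under a nonzero translation, and your fallback argument via the generic fibre disposes of this as well.
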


\begin{proof}
This can be seen  directly from the global description of $\RR_{(m,n)}$ above, and by using tri\-vializations on open subsets of $\P^1$ isomorphic to $\A^1$.  
\end{proof}

\begin{proposition}\label{prop:Aut-R_mn is connected}
Let $m \geq n \geq 0$. Then $\Aut(\RR_{(m,n)})$ is connected.
\end{proposition}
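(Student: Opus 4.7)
The plan is to mimic the strategy used in the analogous propositions for $\PP_b$ (Proposition~\ref{prop: Aut of Pb}) and $\W_b$, adapting it to the decomposable $\P^2$-bundle $q\colon \RR_{(m,n)} \to \P^1$. First, I would carry out a computation of the Néron--Severi group $\NS_\Q(\RR_{(m,n)})$, the cone of effective curves $\NE(\RR_{(m,n)})$, and the relevant canonical intersection numbers, in the spirit of Lemmas~\ref{lem: XabPic} and~\ref{lem:WbPicN}. Since $\rho(\RR_{(m,n)})=2$, the cone $\NE(\RR_{(m,n)})$ is spanned by two extremal classes; one of them is the class $f$ of a line in a fiber of $q$, satisfying $K_{\RR_{(m,n)}} \cdot f = -3$ by adjunction applied to the $\P^2$-fiber, while the other extremal class (corresponding to a distinguished section of $q$) pairs differently with $K_{\RR_{(m,n)}}$. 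As $\Aut(\RR_{(m,n)})$ preserves $K_{\RR_{(m,n)}}$ and permutes extremal rays, the ray $\R_{\geq 0}\, f$ is stable, so the contraction $q$ is $\Aut(\RR_{(m,n)})$-equivariant.

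With $q$ equivariant, there is a short exact sequence of algebraic groups
\[
1 \to \Aut(\RR_{(m,n)})_{\P^1} \to \Aut(\RR_{(m,n)}) \to H \to 1,
\]
where $H$ is an algebraic subgroup of $\Aut(\P^1) \simeq \PGL_2$. Next, I would check that $H = \PGL_2$ by constructing an explicit lift: the natural $\SL_2$-action on $\O_{\P^1}(-1)$ induces an $\SL_2$-action on $\O_{\P^1}(-m) \oplus \O_{\P^1}(-n) \oplus \O_{\P^1}$, and hence on its projectivization $\RR_{(m,n)}$; the kernel $\{\pm I\} \subseteq \SL_2$ acts by fiberwise scalars in $\PGL_3$ and therefore trivially on $\RR_{(m,n)}$, producing a $\PGL_2$-action lifting the $\PGL_2$-action on $\P^1$.

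Lemma~\ref{lem: vertical part of Aut(Rmn)} provides an explicit matrix presentation of $\Aut(\RR_{(m,n)})_{\P^1}$ showing that it is connected; together with the connectedness of $H = \PGL_2$, Lemma~\ref{lem: number of connected components} then yields that $\Aut(\RR_{(m,n)})$ is connected. The main step requiring care is the Mori cone analysis, which needs to be carried out uniformly in $(m,n)$ with $m \geq n \geq 0$: the boundary case $(m,n)=(0,0)$ gives $\RR_{(0,0)} \simeq \P^1 \times \P^2$, for which $\Aut = \PGL_2 \times \PGL_3$ is directly connected and the argument trivializes; when $m = n > 0$ the two $\O_{\P^1}(-m)$-summands may be interchanged, but this swap already sits inside the identity component via the $\GL_2$-block visible in the matrix of Lemma~\ref{lem: vertical part of Aut(Rmn)}; and when $m > n \geq 0$ the $\O_{\P^1}(-m)$-summand is distinguished by the bundle structure, so no extra connected component arises.
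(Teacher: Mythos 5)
Your proposal follows essentially the same route as the paper: identify the two extremal rays of $\NE(\RR_{(m,n)})$ and use $K_{\RR_{(m,n)}}\cdot f=-3$ versus $K_{\RR_{(m,n)}}\cdot \ell=m+n-2\geq -2$ to see that $q$ is $\Aut(\RR_{(m,n)})$-equivariant, then combine the surjection onto $\Aut(\P^1)$ with the connectedness of $\Aut(\RR_{(m,n)})_{\P^1}$ from Lemma~\ref{lem: vertical part of Aut(Rmn)} and conclude by Lemma~\ref{lem: number of connected components}. The paper simply cites \cite[Lemmas 5.3.1 and 6.3.1]{BFT22} for the cone computation and the surjectivity, where you propose to redo both by hand; your case discussion at the end ($(0,0)$ trivial, $m=n$ swap absorbed into the $\GL_2$-block, $m>n$ distinguished summand) is consistent with that lemma.

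One inaccuracy to fix in your surjectivity step: it is not true in general that $\{\pm I\}\subseteq \SL_2$ acts by fiberwise scalars on $\O_{\P^1}(-m)\oplus\O_{\P^1}(-n)\oplus\O_{\P^1}$. With the standard linearization, $-I$ acts on $\O_{\P^1}(-1)$ by $-1$, hence on the direct sum by $\diag((-1)^m,(-1)^n,1)$, which is scalar only when $m$ and $n$ are both even; for, say, $m$ odd and $n$ even it induces a nontrivial involution of each $\P^2$-fiber, so the $\SL_2$-action need not descend to a $\PGL_2$-action on $\RR_{(m,n)}$. This does not damage your argument, since all you need is that the image $H$ of $\Aut(\RR_{(m,n)})$ in $\Aut(\P^1)$ is all of $\PGL_2$, and the composite $\SL_2\to\Aut(\RR_{(m,n)})\to\PGL_2$ is already surjective; just drop the claim that the kernel acts trivially.
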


\begin{proof}
The proof is analogous to that of Proposition \ref{prop: Aut of Pb}.
By Lemma \cite[Lemma 6.3.1]{BFT22}, the cone of effective curves of $X=\RR_{(m,n)}$ is generated by two curves $f$ and $\ell$, and they satisfy $\mathrm{K}_X\cdot f=-3$ and $\mathrm{K}_X \cdot \ell=m+n-2\geq-2$.
It follows that the structure morphism $q\colon \RR_{(m,n)} \to \P^1$ is $\Aut(\RR_{(m,n)})$-equivariant. 
The surjectivity of the induced homomorphism $\Aut(\RR_{(m,n)}) \to \Aut(\P^1)$ is given by \cite[Lemma 5.3.1]{BFT22} and the connectedness of $\Aut(\RR_{(m,n)})_{\P^1}$ by Lemma \ref{lem: vertical part of Aut(Rmn)}. 
The connectedness of $\Aut(\RR_{(m,n)})$ then follows from Lemma \ref{lem: number of connected components}.
\end{proof}

\subsection{Family (h)}
Assume that $\k=\bk$.
Let $g\in \k[u_0,u_1]$ be a homogeneous polynomial of degree $2n$, for some $n \geq 0$. 
We denote by $\QQ_g$ the projective threefold defined by 
\[\QQ_g:=\{[x_0:x_1:x_2:x_3;u_0:u_1]\in \P(\O_{\P^1}^{\oplus 3}\oplus \O_{\P^1}(n))\mid x_0^2-x_1x_2-g(u_0,u_1)x_3^2=0\},\]
where $\P(\O_{\P^1}^{\oplus 3}\oplus \O_{\P^1}(n))$ is the quotient of $(\A^4\setminus \{0\} )\times (\A^2\setminus \{0\})$ by the action of $\G_{m}^2$ given by \[((\mu,\rho)\cdot (x_0,x_1,x_2,x_3,u_0,u_1))\mapsto (\mu x_0,\mu x_1,\mu x_2,\rho^{-n}\mu x_3,\rho u_0,\rho u_1),\]
and we denote by $\pi_g\colon\  \QQ_g\to \P^1$ the projection $ [x_0:x_1:x_2:x_3;u_0:u_1]\mapsto  [u_0:u_1].$
If $g$ is not a square, then $\pi_g\colon\  \QQ_g\to \P^1$ is a Mori quadric fibration (see \cite[\S~4.4]{BFT22} for details). In the following, we will call such a fibration an \emph{Umemura quadric fibration}. 

\begin{remark}\label{rmk:forms of Q_g if g constant}
Suppose that $n=0$. If $g=0$, then $\QQ_0\simeq \p^2\times\p^1\simeq\PP_0$ (see Section  \ref{sec 4.2.2}). If $g\neq0$, then the equation $x_0^2-x_1x_2-gx_3^2=0$ defines a smooth quadric in $\P^3$ and $\QQ_g\simeq(\p^1)^3\simeq\FF_0^{0,0}$ (see Section  \ref{subsubsec: decomposable P1 bundles}). 
Hence, from now on, we will always assume that $n \geq 1$.
\end{remark}

When $g$ is not a square and $n \geq 1$, the algebraic group $\Autz(\QQ_g)$ is described in \cite[Corollary 4.4.7]{BFT22}: it is isomorphic to $\PGL_{2} \times \G_{m}$ if $g$ has only two roots, and isomorphic to $\PGL_{2}$ if $g$ has at least three roots. 
We now compute the whole automorphism group $\Aut(\QQ_g)$.

\begin{proposition}\label{prop: aut group of Qg}
Assume that $\k=\bk$.
Let $g\in\k[u_0,u_1]$ be a homogeneous polynomial, which is not a square and is of degree  $\deg(g)=2n$ for some $n \in \N_{\geq 1}$.
Then $\Aut(\QQ_g)$ acts on $\p^1$ and $\pi_g\colon \QQ_g\to\p^1$ is $\Aut(\QQ_g)$-equivariant.
Then $\Aut(\QQ_g)$ fits into a short exact sequence
\begin{equation}\label{eq: ES for Aut(Qg)}
1 \to \Aut(\QQ_g)_{\P^1} \to \Aut(\QQ_g) \to F \to 1,
\end{equation} 
where $F$ is an algebraic subgroup of $\PGL_{2}$ and $\Aut(\QQ_g)_{\P^1} \simeq \PGL_{2} \times \Z/2\Z $ acts on $\QQ_g$ as follows:
\begin{itemize}
\item for every $\begin{bmatrix} a & b \\ c & d \end{bmatrix} \in \PGL_{2}(\k)$, we have 
\[ \begin{bmatrix} a & b \\ c & d \end{bmatrix} \cdot [x_0:x_1:x_2:x_3;u_0:u_1] = \begin{array}{l}
[(ad+bc)x_0+acx_1+bdx_2:
2abx_0+a^2x_1+b^2x_2:\\ \ 2cdx_0+c^2x_1+d^2x_2:(ad-bc)x_3;u_0:u_1]
\end{array};\]
\item the nontrivial element of $\Z/2\Z$ acts on $\QQ_{g}$ as the biregular involution  
\[[x_0:x_1:x_2:x_3;u_0:u_1]\mapsto [x_0:x_1:x_2:-x_3;u_0:u_1].\]
\end{itemize}
Moreover, the following hold:
\begin{itemize}
\item If $g$ has at least three distinct roots, then $F$ is finite. If furthermore $n$ is even, then the short exact sequence \eqref{eq: ES for Aut(Qg)} splits and $\Aut(\QQ_g) \simeq \Aut(\QQ_g)_{\P^1} \times F$.
%The same holds if $n$ is odd and $F=A_l$ with $l$ odd.
\item If $g$ has exactly two distinct roots with the same multiplicities, then $F\simeq \mathbb{G}_m\rtimes\Z/2\Z$, where the generator of $\Z/2\Z$ exchanges the two roots of $g$; otherwise, when the roots of $g$ have different multiplicities, $F \simeq \mathbb{G}_m$.
\end{itemize}
\end{proposition}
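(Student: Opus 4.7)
The plan is to establish the proposition in four steps: equivariance of $\pi_g$, computation of the kernel $\Aut(\QQ_g)_{\P^1}$, determination of the image $F\subseteq\PGL_{2}$, and the splitting when $n$ is even.

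First, since $\QQ_g$ has Picard rank two (as a Mori fibration over $\P^1$), the Mori cone $\NE(\QQ_g)$ has exactly two extremal rays. One is spanned by a fiber $f$ of $\pi_g$ with $K_{\QQ_g}\cdot f=-2$. I would compute the second extremal ray directly from the global description of $\QQ_g\subseteq\P(\O_{\P^1}^{\oplus 3}\oplus\O_{\P^1}(n))$, for instance by identifying a line contained in the divisor $(x_3=0)$ whose class spans it, and verify that its intersection with $K_{\QQ_g}$ differs from $-2$, so no element of $\Aut(\QQ_g)$ can exchange the two rays. This forces $\pi_g$ to be $\Aut(\QQ_g)$-equivariant and produces the short exact sequence with $F\subseteq\Aut(\P^1)=\PGL_{2}$.

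Next, a fiber-preserving automorphism arises from a bundle automorphism of $\O_{\P^1}^{\oplus 3}\oplus\O_{\P^1}(n)$ acting trivially on $\P^1$ and preserving the relative equation $x_0^2-x_1x_2-g(u)x_3^2=0$. Since $\Hom_{\P^1}(\O(n),\O)=0$ for $n\geq 1$, such an automorphism has block-triangular matrix $\begin{bmatrix} B & 0 \\ v & c \end{bmatrix}$ with $B\in\GL_3(\k)$ constant, $v$ a triple of degree-$n$ forms in $u_0,u_1$, and $c\in\k^*$. Absence of a term linear in $x_3$ in the defining equation forces $v=0$; preservation of the conic $x_0^2-x_1x_2$ up to scalar then forces $B$ to lie in the image of the symmetric square representation $\SL_{2}\to\SL_{3}$, which induces the embedding $\PGL_{2}\hookrightarrow\PGL_{3}$ in the statement; and after normalizing the conformal factor, $c=\pm 1$. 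This yields $\Aut(\QQ_g)_{\P^1}\simeq\PGL_{2}\times\Z/2\Z$ with the explicit action claimed.

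Third, $\varphi\in\PGL_{2}$ lies in $F$ if and only if there exists a bundle isomorphism $\varphi^*\QQ_g\simeq\QQ_g$ over $\P^1$, which translates into the condition $g\circ\tilde\varphi=\lambda g$ for some lift $\tilde\varphi\in\SL_{2}$ and some $\lambda\in\k^*$. Any such $\varphi$ must permute the unordered root set of $g$ in $\P^1$. If $g$ has at least three distinct roots, then $F$ embeds into the symmetric group on this finite set (a nontrivial element of $\PGL_2$ fixes at most two points of $\P^1$), hence is finite. If $g$ has exactly two distinct roots $p_1,p_2$, then $F$ sits inside their $\PGL_2$-stabilizer $\G_m\rtimes\Z/2\Z$; in coordinates with $p_1=[1{:}0]$, $p_2=[0{:}1]$ and $g=u_0^{m_1}u_1^{m_2}$, one checks by direct substitution that the full $\G_m$ always lifts, whereas the involution swapping the two roots lifts if and only if $m_1=m_2$.

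Finally, for the splitting when $n$ is even and $g$ has at least three distinct roots, I would construct a section $s\colon F\to\Aut(\QQ_g)$ explicitly. Since $g$ has even degree, the assignment $\varphi\mapsto\lambda(\varphi)$ is independent of the sign of $\tilde\varphi\in\SL_{2}$ and defines a character $\lambda\colon F\to\k^*$. A lift $s(\varphi)$ acts on $(x_0,x_1,x_2)$ via the symmetric square of any $\tilde\varphi\in\SL_2$ (canonical on $\PGL_{2}$) and scales $x_3$ by some $\mu(\varphi)$ satisfying $\mu(\varphi)^2=\lambda(\varphi)^{-1}$. When $n$ is even, the identification $\O_{\P^1}(n)\simeq\O_{\P^1}(n/2)^{\otimes 2}$ allows the sign ambiguity in $\mu$ to be absorbed into the weighted $\G_m$-scaling of the projective bundle, producing a canonical compatible choice across $F$. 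The main obstacle of the proof is this last verification: that the canonical square-root choice assembles into a group homomorphism, or equivalently that the $H^2(F,\Z/2\Z)$-obstruction to splitting the extension vanishes precisely when $n$ is even.
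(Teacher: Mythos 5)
Your architecture matches the paper's proof step for step: equivariance of $\pi_g$ via the two extremal rays of $\NE(\QQ_g)$ (the paper quotes $K_{\QQ_g}\cdot f=-2$ and $K_{\QQ_g}\cdot h=n-2\ge -1$ from \cite[Lemma 4.4.3]{BFT22}), identification of $\Aut(\QQ_g)_{\P^1}$ with $\PGL_2\times\Z/2\Z$ by reducing to bundle automorphisms of $\O_{\P^1}^{\oplus 3}\oplus\O_{\P^1}(n)$ (the paper cites \cite[Lemmas 4.4.4 and 4.4.5]{BFT22}; your triangular-matrix computation is a correct substitute, modulo the unproved assertion that every automorphism of $\QQ_g$ over $\P^1$ extends to the ambient bundle), and the analysis of $F$ through its action on the root set of $g$, including the two-root case by direct substitution. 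Two small caveats on the first step: for $n=1$ the inequality $K\cdot h=-1\neq -2$ only rules out a swap of the rays after one fixes primitive integral generators (or, more robustly, one argues that only the ray of $f$ contracts to a fibration); and your criterion ``$\varphi\in F$ iff $g\circ\tilde\varphi=\lambda g$'' needs the kernel computation as input for the ``only if'' direction. These are repairable.

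The genuine gap is the splitting for $n$ even, which you explicitly leave open (``the main obstacle of the proof is this last verification''). First, your section is built on the wrong coordinates: a lift of $\varphi\in F$ must act on $(u_0,u_1)$ by $\tilde\varphi$; letting it also act on $(x_0,x_1,x_2)$ by $\mathrm{Sym}^2\tilde\varphi$ merely composes with an element of the kernel and plays no role. The paper's section is much simpler: let $\tilde\varphi\in\SL_2$ act on $(u_0,u_1)$ alone and on nothing else. This is already a group action on the ambient bundle, and $-I$ acts trivially exactly when $n$ is even, because it is absorbed by the weighted scaling $\rho=-1$, which multiplies $x_3$ by $(-1)^{-n}$; no square roots ever appear and the section is tautologically a homomorphism commuting with $\Aut(\QQ_g)_{\P^1}$. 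Second, and this is where your instinct about a nonvanishing obstruction is vindicated: that argument preserves the equation of $\QQ_g$ only when $g\circ\tilde\varphi=g$ on the nose, i.e.\ $\lambda\equiv 1$ on $\widetilde F$. When $g$ is merely a semi-invariant, the rescaling $x_3\mapsto\mu x_3$ with $\mu^2=\lambda^{-1}$ is unavoidable and the class in $\H^2(F,\Z/2\Z)$ you worry about can be nonzero even for $n$ even: for $g=u_0u_1^3(u_0^4+u_1^4)$ (so $n=4$) the class $\varphi=[\mathrm{diag}(i,-i)]$ lies in $F$, every lift of $\varphi$ to $\Aut(\QQ_g)$ squares to the involution $x_3\mapsto -x_3$ (one checks that lifts centralize the kernel, so $(\Phi h)^2=\Phi^2h^2$ with $\Phi^2=\sigma$ and $h^2\in\PGL_2\times\{1\}$), and hence no lift has order $2$. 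So the $\O_{\P^1}(n)\simeq\O_{\P^1}(n/2)^{\otimes 2}$ trick cannot close your gap: the splitting requires the additional hypothesis that $g$ is an honest invariant of $\widetilde F$, which the paper's one-line construction uses implicitly. You should either add that hypothesis or restrict the splitting claim accordingly.
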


\begin{proof}
By \cite[Lemma 4.4.3]{BFT22} and its proof, the cone of effective curves on $\QQ_g$ is generated by two curves $f$ and $h$ that satisfy 
$K_{Q_g}\cdot f=-2$ and $K_{Q_g}\cdot h=n-2\geq -1$.
Since the contraction of the extremal ray generated by $f$ yields the structure morphism $\pi_g\colon Q_g\to\p^1$, we obtain that $\Aut(\QQ_g)$ acts on $\p^1$ and $\pi_g\colon \QQ_g\to\p^1$ is $\Aut(\QQ_g)$-equivariant.
The fact that $\Aut(\QQ_g)_{\P^1} \simeq \PGL_{2} \times \Z/2\Z$ follows from \cite[Lemmas 4.4.4 and 4.4.5]{BFT22}. 

Let $F:=\textrm{Im}(\Aut(\QQ_g) \to \PGL_{2})$. The identity component $F^0$ must fix each root of $g$. 
Assume first that $g$ has at least three distinct roots. Then $F^0$ must be trivial and hence $F$ is a finite subgroup of $\PGL_{2}$.
If $n$ is furthermore even, then the natural $\SL_{2}$-action on $\P(\O_{\P^1}^{\oplus 3}\oplus \O_{\P^1}(n))$ defined by 
\[
\begin{bmatrix}
a & b \\ c & d
\end{bmatrix} \cdot [x_0:x_1:x_2:x_3; u_0:u_1]:=[x_0:x_1:x_2:x_3; a u_0+bu_1:cu_0+d u_1]
\]
induces a $\PGL_{2}$-action on $\P(\O_{\P^1}^{\oplus 3}\oplus \O_{\P^1}(n))$ that restricts to an $F$-action on $\QQ_g$. This $F$-action commutes with the $\Aut(\QQ_g)_{\P^1}$-action, and thus $\Aut(\QQ_g) \simeq \Aut(\QQ_g)_{\P^1} \times F$.

Assume now that $g$ has exactly two distinct roots. Up to a linear change of coordinates, we can assume that $g=u_0^au_1^b$ for some odd $a,b \geq 1$ (because $g$ is not a square). Then a direct computation (see \cite[Example 4.4.6]{BFT22}) yields that either $F\simeq \G_m\rtimes\Z/2\Z$ when $a=b$, where the generator of $\Z/2\Z$ exchanges the two roots of $g$, or $F\simeq \G_m$ when $a \neq b$. 
\end{proof}

\begin{remark}\label{rk: every F occurs}
Every finite subgroup of $\PGL_{2}$ can occur as $F$ in Proposition \ref{prop: aut group of Qg}.
Indeed, fix $H$ a finite subgroup of $\PGL_{2}$ and denote by $\widetilde{H}$ the inverse image of $H$ in $\SL_{2}$. It suffices then to take $g \in \k[u_0,u_1]^{\widetilde{H}}$, and $g$ not contained in the invariant algebra of a finite subgroup of $\SL_{2}$ containing $\widetilde{H}$ as a strict subgroup, to have $F=H$ in Proposition \ref{prop: aut group of Qg}. 
\end{remark}

\section{Forms of decomposable projective bundles}\label{sec: Case of decomposable projective bundles}

From now on, we will indicate with a subscript the base field when there is a possible ambiguity. As before we denote by $\Gamma=\Gal(\bk/\k)$ the absolute Galois group of $\k$.

\smallskip

In this section we determine the rational  $\k$-forms of the projective bundles \hyperlink{th:D_a}{(a)}, \hyperlink{th:D_b}{(b)}, and \hyperlink{th:D_g}{(g)} of Theorem \ref{th: list of MFS corresponding to max alg subgroups of Cr3}.

\begin{proposition}\label{prop: k-forms of PPb}
The $\bk$-variety $\PP_{b,\bk}$, with $b \geq 0$, has no nontrivial $\k$-form with a $\k$-point. 
In particular, the trivial $\k$-form $\PP_{b,\k}$ is the only rational $\k$-form of $\PP_{b,\bk}$.
\end{proposition}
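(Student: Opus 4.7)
The plan is to reduce the problem to Châtelet's theorem (Proposition~\ref{prop:chatelet}) by exploiting $\Aut(\PP_b)$-equivariant morphisms out of $\PP_b$. The first step is to observe that for every $b\geq 0$ the structure morphism $q\colon\PP_b\to\p^2$ is $\Aut(\PP_b)$-equivariant: for $b=0$ this is immediate from $\Aut(\p^1\times\p^2)=\PGL_2\times\PGL_3$ acting factorwise, and for $b\geq 1$ it is part of the content of Proposition~\ref{prop: Aut of Pb} (via the analysis of the extremal rays of $\NE(\PP_b)$, or via the blow-up description $\PP_1\simeq \mathrm{Bl}_{[0:0:0:1]}\p^3$ when $b=1$). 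Given any $\k$-form $Z$ of $\PP_{b,\bk}$ with a $\k$-point, the morphism $q$ descends to a $\k$-morphism $Z\to Y$ with $Y$ a $\k$-form of $\p^2_{\bk}$; the $\k$-point of $Z$ pushes forward to a $\k$-point of $Y$, and Châtelet then forces $Y\simeq \p^2_\k$.

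From here I will argue differently depending on whether $b=0$ or $b\geq 1$. For $b=0$, the second projection $\PP_0=\p^1\times\p^2\to\p^1$ is also $\Aut(\PP_0)$-equivariant, so the same descent argument yields a $\k$-morphism $Z\to Y'$ with $Y'$ a form of $\p^1$ possessing a $\k$-point, whence $Y'\simeq\p^1_\k$. The resulting $\k$-morphism $Z\to \p^1_\k\times\p^2_\k$ is an isomorphism after base change to $\bk$, hence already over $\k$.

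For $b\geq 1$, I will instead use that $\PP_b$ has an $\Aut(\PP_b)$-invariant section $s_0\colon \p^2\to\PP_b$, namely the section with negative normal bundle $\O_{\p^2}(-b)$: for $b=1$ this is the exceptional divisor of $\PP_1=\mathrm{Bl}_{p}\p^3\to\p^3$, which is preserved by $\Aut(\p^3,p)$; for $b\geq 2$ the two natural sections of $\PP_b\to\p^2$ are distinguished from each other by their normal bundles and each is therefore $\Aut(\PP_b)$-invariant. This section descends to a section $\p^2_\k\to Z$, so $Z\simeq \P(E)$ for some rank~$2$ vector bundle $E$ on $\p^2_\k$ fitting into an exact sequence $0\to M\to E\to\O_{\p^2_\k}\to 0$ with $M$ a line bundle. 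The sequence splits because $H^1(\p^2_\k, M)=0$ for every line bundle $M$, and since $\Pic(\p^2_\k)=\Z\cdot\O(1)$ we get $E\simeq \O\oplus\O(d)$ for some $d\in\Z$. Comparing with $\P(\O\oplus\O(b))$ after base change to $\bk$ forces $d=\pm b$, and in either case $Z\simeq \PP_{b,\k}$.

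The ``in particular'' clause then follows from the general fact that every rational $\k$-variety admits a $\k$-point (pull back a $\k$-point of $\p^n_\k$ through the birational inverse, on a dense open). The main difficulty I anticipate is keeping the three geometric regimes $b=0$, $b=1$, and $b\geq 2$ organized uniformly, since the structure of $\Aut(\PP_b)$ and of its invariant sections changes noticeably across these ranges; once this bookkeeping is done, each case reduces quickly to Châtelet's theorem.
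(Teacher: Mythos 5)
Your proof is correct, but it takes a genuinely different route from the paper's for $b\geq 1$. The paper also begins by showing that $q\colon \PP_b\to\p^2$ is equivariant (via the extremal rays of $\NE(\PP_b)$) and reduces the base to Ch\^atelet, but it then finishes cohomologically: the exact sequence $1\to U\rtimes\G_{m,\bk}\to\Aut(\PP_{b,\bk})\to\Aut(\p^2_{\bk})\to 1$ gives $\H^1(\Gamma,U\rtimes\G_{m,\bk})\to\H^1(\Gamma,\Aut(\PP_{b,\bk}))\to\H^1(\Gamma,\Aut(\p^2_{\bk}))$, the forms with a $\k$-point are exactly those in the fiber over the trivial form of $\p^2_{\bk}$, i.e.\ in the image of $\H^1(\Gamma,U\rtimes\G_{m,\bk})$, and this set is a singleton because $\H^1$ of a unipotent group vanishes in characteristic zero and $\H^1(\Gamma,\G_{m,\bk})$ vanishes by Hilbert 90. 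You instead descend the distinguished negative section geometrically and classify the resulting rank-$2$ bundle on $\p^2_\k$ by hand using $H^1(\p^2,\O(d))=0$. Your argument is more self-contained and concrete, but it exploits features specific to $\P^1$-bundles over $\p^2$ admitting a canonical section; the paper's cohomological template has the advantage of applying verbatim to the other families ($\RR_{(m,n)}$, $\FF_a^{b,c}$, $\W_b$, $\U_a^{b,c}$), where no such direct vector-bundle classification is available. One small inaccuracy: for $b\geq 1$ it is not true that \emph{each} of the two natural sections is $\Aut(\PP_b)$-invariant --- the section with normal bundle $\O_{\p^2}(b)$ is moved by the unipotent radical $\bk[z_0,z_1,z_2]_b$ of $\Aut(\PP_b)_{\p^2}$; only the section with normal bundle $\O_{\p^2}(-b)$ is unique in its class and hence invariant. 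Since that is the one you actually use, the proof is unaffected. Your closing remark that a rational $\k$-variety has a $\k$-point is valid here because $\k$ has characteristic zero, hence is infinite.
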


\begin{proof}
Let us denote $X=\PP_{b,\bk}$.
If $b=0$, then $X \simeq \P_{\bk}^2 \times \P_{\bk}^1$ and it follows from Ch\^atelet's theorem that $\P_\k^2 \times \P_\k^1$ is the only $\k$-form of $X$ with a $\k$-point. We now assume that $b \geq 1$. 
Computing the cone of effective curves of $X$ (see the proof of Proposition \ref{prop: Aut of Pb}), we see that $\Gamma=\Gal(\bk/\k)$ fixes the two extremal ray $\left\langle f\right\rangle$ and $\left\langle \ell\right\rangle$ of $\NE(X)$; indeed, $K_X.f=-2$ and $K_X.\ell=b-3$, so the result is clear for $b \geq 2$, and for $b=1$ the contraction of $\left\langle f\right\rangle$ yields the structure morphism $X \to \P_{\bk}^1$ while the contraction of $\left\langle \ell\right\rangle$ yields a divisorial contraction to $\P_{\bk}^3$, and so these two contractions cannot be swapped by the $\Gamma$-action.
The $\Gamma$-action on $X$ induces therefore a $\Gamma$-action $\P_{\bk}^1$ such that the structure morphism $X \to \P_{\bk}^1$ is $\Gamma$-equivariant.

The group $\Aut(X)$ is connected by Proposition~\ref{prop: Aut of Pb}.
Then the homomorphism $\Aut(X)\to \Aut\left(\P_{\bk}^2\right)$  is onto by \cite[Lemma 4.1.2]{BFT23} and $\Aut(X)_{\P_{\bk}^2} \simeq U \rtimes \G_{m,\bk}$ by \cite[Remark 4.1.3]{BFT23}, where $U:=\bk[z_0,z_1,z_2]_b$ is a unipotent group. 
Hence, we have a $\Gamma$-equivariant short exact sequence
\[
1 \to U \rtimes \G_{m,\bk} \to \Aut(X) \to \Aut\left(\P_{\bk}^2\right) \to 1
\]
which induces an exact sequence in Galois cohomology
\[
\H^1\left(\Gamma,U \rtimes \G_{m,\bk}\right) \to \H^1\left(\Gamma,\Aut(X)\right) \to \H^1\left(\Gamma,\Aut\left(\P_{\bk}^2\right)\right).
\]

Let us note that, if $X \to Y$ is a  morphism between $\k$-varieties such that $X(\k) \neq \varnothing$, then $Y(\k) \neq \varnothing$. Thus, if moreover $Y_{\bk} \simeq \P_{\bk}^n$, we must have $Y \simeq \P_{\k}^n$ (by Ch\^atelet's theorem). Therefore, the $\k$-forms of $X$ with a $\k$-point correspond to elements of $\H^1\left(\Gamma,\Aut\left(X\right)\right)$ contained in the fiber of $\H^1\left(\Gamma,\Aut\left(X\right)\right) \to \H^1\left(\Gamma,\Aut\left(\P_{\bk}^2\right)\right)$ over the marked point of $\H^1\left(\Gamma,\Aut\left(\P_{\bk}^2\right)\right)$ (i.e.~over the point corresponding to the trivial $\k$-form of $\P_{\bk}^2$); this fiber coincides with the image of the map $\H^1\left(\Gamma,U \rtimes \G_{m,\bk}\right) \to \H^1\left(\Gamma,\Aut\left(X\right)\right)$. Hence, to prove the result, it suffices to show that $\H^1\left(\Gamma,U \rtimes \G_{m,\bk}\right)$ is a singleton.  

Using the description of the $\Aut(X)_{\P^2}$-action on $X$ given in \cite[Remark 4.1.3]{BFT23}, we see that $U$ is $\Gamma$-stable and that each element $ \gamma \in \Gamma$ acts on $\G_{m,\bk}(\bk)=\bk^*$ via the corresponding field automorphism of $\bk$. As the two corresponding sets $\H^1\left(\Gamma,U\right)$ and $\H^1\left(\Gamma,\G_{m,\bk}\right)$ are singletons (by \cite[Chp.~III, Section  2.1, Proposition 6]{Ser02} for $U$ and by Hilbert's Theorem 90 for $\G_{m,\bk}$), we obtain that $\H^1\left(\Gamma,U \rtimes \G_{m,\bk}\right)$ is also a singleton. Thus, the unique $\k$-form of $X$ with a $\k$-point is $\PP_{b,\k}$ (up to isomorphism).
\end{proof}

\begin{remark}\label{rk: real forms of Pb}
Assume furthermore that $\k=\R$. Then $\PP_{b,\C}$ admits a unique real form, up to isomorphism, if $b \geq 1$ and exactly two non-isomorphic real forms if $b=0$ (namely $\P_\R^2 \times \P_\R^1$ and $\P_\R^2 \times C$, where $C$ is the conic with no real points). Indeed, this is an easy consequence of the fact that $\P_\R^2$ is the unique real form of $\P_\C^2$, up to isomorphism.
\end{remark}

\begin{proposition}\label{prop: k forms of Rmn}
Let $m \geq n \geq 0$.
The $\bk$-variety $\RR_{(m,n),\bk}$ has no nontrivial $\k$-form with a $\k$-point. 
In particular,  the trivial $\k$-form $\RR_{(m,n),\k}$ is the only rational $\k$-form of $\RR_{(m,n),\bk}$.
\end{proposition}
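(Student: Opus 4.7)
The plan is to adapt the cohomological argument of Proposition \ref{prop: k-forms of PPb}. Since $\RR_{(0,0),\bk}\simeq\P_{\bk}^2\times\P_{\bk}^1\simeq\PP_{0,\bk}$, the case $(m,n)=(0,0)$ already follows from \emph{loc.\ cit.}, so I would assume $(m,n)\neq(0,0)$ throughout.

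The first step is to show that the structure morphism $q\colon\RR_{(m,n),\bk}\to\P_{\bk}^1$ is $\Gamma$-equivariant for an arbitrary $\k$-form. By \cite[Lemma 6.3.1]{BFT22} (used in the proof of Proposition \ref{prop:Aut-R_mn is connected}), the cone $\NE(\RR_{(m,n),\bk})$ is generated by two curves $f$ and $\ell$ with $K\cdot f=-3$ and $K\cdot\ell=m+n-2$. Since $m+n\geq 1$ gives $m+n-2\neq -3$, the $\Gamma$-action (which preserves the canonical class) fixes each of the two extremal rays, and in particular the one whose contraction is $q$. Combined with Proposition \ref{prop:Aut-R_mn is connected} and the surjectivity of $\Aut(\RR_{(m,n),\bk})\to\Aut(\P_{\bk}^1)$ from \cite[Lemma 5.3.1]{BFT22}, this yields the $\Gamma$-equivariant short exact sequence
\[
1\to\Aut(\RR_{(m,n),\bk})_{\P_{\bk}^1}\to\Aut(\RR_{(m,n),\bk})\to\Aut(\P_{\bk}^1)\to 1.
\]
A $\k$-form of $\RR_{(m,n),\bk}$ with a $\k$-point projects to a form of $\P_{\bk}^1$ with a $\k$-point, which is $\P_{\k}^1$ by Ch\^atelet's theorem (Proposition \ref{prop:chatelet}); hence its cohomology class lies in the image of $\H^1(\Gamma,\Aut(\RR_{(m,n),\bk})_{\P_{\bk}^1})\to\H^1(\Gamma,\Aut(\RR_{(m,n),\bk}))$. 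It therefore suffices to prove that $\H^1(\Gamma,\Aut(\RR_{(m,n),\bk})_{\P_{\bk}^1})$ is a singleton.

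The main (and hardest) computation is this vanishing. Using Lemma \ref{lem: vertical part of Aut(Rmn)}, I would decompose $\Aut(\RR_{(m,n),\bk})_{\P_{\bk}^1}$ as a $\Gamma$-equivariant semidirect product $U\rtimes L$, where $U$ is the unipotent radical collecting the entries of positive polynomial degree in $\bk[y_0,y_1]$ and $L$ is a Levi subgroup gathering the constant entries; these two pieces are $\Gamma$-stable since the $\Gamma$-action on $\bk[y_0,y_1]$ preserves the polynomial degree. A case check shows that $L\simeq\G_{m,\bk}^2$ when $m>n>0$, while $L\simeq\GL_{2,\bk}$ when either $m=n>0$ or $m>n=0$ (in the latter cases a $2\times 2$ unconstrained constant block joins the Levi). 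Hilbert 90 gives $\H^1(\Gamma,L)=\{*\}$ in each case, and a d\'evissage of $U$ by the subquotients $\G_{a,\bk}^{d+1}$ coming from polynomials of degree $d$ yields $\H^1(\Gamma,U)=\{*\}$ via \cite[Chp.~III, \S 2.1, Proposition 6]{Ser02}. The long exact sequence in non-abelian Galois cohomology for $1\to U\to U\rtimes L\to L\to 1$ then gives $\H^1(\Gamma,\Aut(\RR_{(m,n),\bk})_{\P_{\bk}^1})=\{*\}$, proving the first assertion. The ``in particular'' statement follows because $\mathrm{char}(\k)=0$ implies $\k$ is infinite, so any rational $\k$-form has a $\k$-point and the first assertion applies. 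The main obstacle is the Levi case analysis, whose shape changes according to whether $m=n$ and whether $n=0$, together with checking that the $\Gamma$-action restricts compatibly to $U$ and $L$ so that the classical Hilbert 90 vanishings can be applied uniformly.
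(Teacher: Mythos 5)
Your proposal is correct and follows essentially the same route as the paper's proof: reduce $(m,n)=(0,0)$ to $\PP_0$, use the intersection numbers $K\cdot f=-3$ and $K\cdot\ell=m+n-2$ to get $\Gamma$-equivariance of $q$, pass to the Galois cohomology exact sequence, and kill $\H^1(\Gamma,\Aut(\RR_{(m,n),\bk})_{\P^1})$ via the same $U\rtimes L$ decomposition with the same case split ($L\simeq\GL_{2,\bk}$ when $m=n\geq 1$ or $m>n=0$, $L\simeq\G_{m,\bk}^2$ when $m>n>0$), Hilbert 90, and the vanishing for unipotent groups.
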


\begin{proof}
Let us denote $X=\RR_{(m,n),\bk}$.
If $m=n=0$, then $X \simeq \P_{\bk}^2 \times \P_{\bk}^1 \simeq \PP_0$ and the result follows from Proposition \ref{prop: k-forms of PPb}.
From now on we assume that $(m,n) \neq (0,0)$. The rest of the proof is analogous to that of Proposition \ref{prop: k-forms of PPb}.  
By Lemma \cite[Lemma 6.3.1]{BFT22}, the cone of effective curves of $X$ is generated by two curves $f$ and $\ell$, and they satisfy $\mathrm{K}_X\cdot f=-3$ and $\mathrm{K}_X \cdot \ell=m+n-2\geq-2$. 
Moreover, the natural homomorphism $\Aut(X)\to\Aut(\p^1_{\bk})$  is onto by \cite[Lemma 5.3.1]{BFT23}.
Hence, we have a $\Gamma$-equivariant short exact sequence
\[
1\to \Aut(X)_{\P^1_{\bk}}\to \Aut(X)\to\Aut(\p^1_{\bk})\to 1
\]
which induces an exact sequence in Galois cohomology
\[
\H^1(\Gamma,\Aut(X)_{\P^1}) \to \H^1(\Gamma,\Aut(X)) \to \H^1(\Gamma,\Aut(\P_{\bk}^1 )).
\]
As explained in the proof of Proposition \ref{prop: k-forms of PPb}, 
it suffices then to show that $\H^1(\Gamma,\Aut(X)_{\P^1_{\bk}})$ is a singleton to prove the proposition.
Let us note that an element $\gamma \in \Gamma$ acts on $\Aut(X)_{\P^1} \subseteq \PGL_3(\bk[y_0,y_1])$ (described in Lemma \ref{lem: vertical part of Aut(Rmn)}) as the corresponding field automorphism of $\bk$ on each coefficient. We distinguish two cases:
\begin{itemize}
\item \emph{Case $m=n \geq 1$ or $m>n=0$.} In both cases $\Aut(X)_{\P^1} \simeq U \rtimes \GL_{2,\bk}$, where $U$ is a unipotent group stabilized by the $\Gamma$-action, and $\Gamma$ acts naturally on $\GL_{2,\bk}$. Since $\H^1(\Gamma,U)$ and $\H^1(\Gamma, \GL_{2,\bk})$ are singletons (by \cite[Chp.~III, Section  2.1, Proposition 6]{Ser02} for $U$ and by \cite[Chp.~X, Section 1, Proposition 3]{Ser04} for $\GL_{2,\bk}$), we obtain that $\H^1(\Gamma,\Aut(X)_{\P^1})$ is also a singleton.
\item \emph{Case $m>n>0$.} Then $\Aut(X)_{\P^1} \simeq U \rtimes \G_{m,\bk}^{2}$, where $U$ is a unipotent group stabilized by the $\Gamma$-action, and $\Gamma$ acts naturally on $\G_{m,\bk}^{2}$. But there again, $\H^1(\Gamma,U)$ and $\H^1(\Gamma, \G_{m,\bk}^{2})$ are singletons (by Hilbert's Theorem 90 for $\G_{m,\bk}^{2}$), and so $\H^1(\Gamma,\Aut(X)_{\P^1})$ is a singleton.
\end{itemize}
Thus, in both cases we find that $\H^1(\Gamma,\Aut(X)_{\P^1})$ is a singleton, which concludes the proof.
\end{proof}

\begin{remark}\label{rk: real forms of Rmn}
Assume furthermore that $\k=\R$. Then, using the fact that $\P_\C^1$ admits two non-isomorphic real forms, namely $\P_\R^1$ and the conic with no real points $C$, we can adapt the proof of Proposition \ref{prop: k forms of Rmn} and use the global description of $\RR_{(m,n)}$ recalled in Section  \ref{subsubsec: Rmn} to show that $\RR_{(m,n),\C}$ admits a unique real form (up to isomorphism) if $m$ or $n$ is odd, and two non-isomorphic real forms if $m$ and $n$ are even.  In the second case, the nontrivial real form has no real points and corresponds to the real structure (i.e. the antiregular involution) defined by 
\[
\RR_{(m,n),\C} \to \RR_{(m,n),\C},\ [x_0:x_1:x_2\ ; \ y_0:y_1] \mapsto [\overline{x_0}:\overline{x_1}:\overline{x_2}\ ; \ -\overline{y_1}:\overline{y_0}].
\]
\end{remark}

\begin{lemma}\label{lem: k-forms of Fa}
Let $a \geq 1$. 
The $\bk$-variety $\F_{a,\bk}$ has no nontrivial $\k$-form with a $\k$-point. 
In particular, the trivial $\k$-form $\F_{a,\k}$ is the only rational $\k$-form of $\F_{a,\bk}$.
\end{lemma}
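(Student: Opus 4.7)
The plan is to mimic the strategy used in Propositions \ref{prop: k-forms of PPb} and \ref{prop: k forms of Rmn}, exploiting the fact that the $\P^1$-bundle structure $\F_{a,\bk}\to\P^1_{\bk}$ is canonical for $a\geq 1$.

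First I would verify that the structure morphism $\F_{a,\bk}\to\P^1_{\bk}$ is $\Aut(\F_{a,\bk})$-equivariant, and consequently that the induced $\Gamma$-action on $\F_{a,\bk}$ descends to an action on $\P^1_{\bk}$. For $a\geq 2$, the two extremal rays of $\NE(\F_{a,\bk})$ are generated by a fiber (self-intersection $0$) and the negative section (self-intersection $-a$), hence they are not exchanged by any automorphism. For $a=1$, the two extremal rays are both $(-1)$-curves, but one contracts to give a $\P^1$-fibration and the other contracts divisorially onto a point of $\P^2$, so again no automorphism swaps them. In both cases the structure morphism is $\Aut(\F_{a,\bk})$-equivariant, and the natural homomorphism $\Aut(\F_{a,\bk})\to\Aut(\P^1_{\bk})$ is surjective, giving a $\Gamma$-equivariant short exact sequence
\[
1\to \Aut(\F_{a,\bk})_{\P^1_{\bk}}\to \Aut(\F_{a,\bk})\to\Aut(\P^1_{\bk})\to 1,
\]
and hence an exact sequence of pointed sets
\[
\H^1\bigl(\Gamma,\Aut(\F_{a,\bk})_{\P^1_{\bk}}\bigr)\to \H^1\bigl(\Gamma,\Aut(\F_{a,\bk})\bigr)\to \H^1\bigl(\Gamma,\Aut(\P^1_{\bk})\bigr).
\]

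Second, as in the proof of Proposition \ref{prop: k-forms of PPb}, if $Z$ is a $\k$-form of $\F_{a,\bk}$ with a $\k$-point, then the image of $Z$ in $\H^1(\Gamma,\Aut(\P^1_{\bk}))$ corresponds to a $\k$-form of $\P^1_{\bk}$ with a $\k$-point, which by Ch\^atelet's theorem (Proposition \ref{prop:chatelet}) must be $\P^1_\k$. Hence the class of $Z$ lies in the image of $\H^1(\Gamma,\Aut(\F_{a,\bk})_{\P^1_{\bk}})$, so it suffices to show this cohomology set is a singleton.

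Third, I would identify $\Aut(\F_{a,\bk})_{\P^1_{\bk}}$ explicitly: using the presentation $\F_a=\P(\O_{\P^1}\oplus\O_{\P^1}(a))$, the automorphisms of the projectivized bundle fixing the base pointwise are given by upper-triangular $2\times 2$ matrices with scalars on the diagonal and a section of $\O_{\P^1}(a)$ in the upper-right entry, modulo scalars. This yields $\Aut(\F_{a,\bk})_{\P^1_{\bk}}\simeq V\rtimes \G_{m,\bk}$, where $V$ is a unipotent group (the additive group underlying $\bk[y_0,y_1]_a$) that is $\Gamma$-stable, with $\Gamma$ acting on the coefficients via its action on $\bk$. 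Since $\H^1(\Gamma,V)$ is trivial for a unipotent group in characteristic zero (\cite[Chp.~III, Section  2.1, Proposition 6]{Ser02}) and $\H^1(\Gamma,\G_{m,\bk})$ is trivial by Hilbert's Theorem 90, an easy exact sequence argument gives that $\H^1(\Gamma,\Aut(\F_{a,\bk})_{\P^1_{\bk}})$ is a singleton.

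Putting the steps together yields that the trivial $\k$-form $\F_{a,\k}$ is the unique $\k$-form of $\F_{a,\bk}$ with a $\k$-point. For the second statement, any rational $\k$-variety has a dense set of $\k$-points, so in particular every rational $\k$-form of $\F_{a,\bk}$ has a $\k$-point and must therefore be the trivial form. The only subtle point is verifying the $\Aut$-equivariance of the structure morphism in the case $a=1$, where both extremal rays are numerically indistinguishable and must be separated by the geometric nature of their contractions.
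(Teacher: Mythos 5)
Your proposal is correct and is essentially the paper's own argument: the paper disposes of this lemma with the single line ``the proof is analogous to that of Proposition \ref{prop: k-forms of PPb}'', and the analogy you spell out (equivariance of the structure morphism, reduction to the base via Ch\^atelet, and triviality of $\H^1$ for $V\rtimes\G_{m,\bk}$ with $V$ unipotent) is exactly the intended one. One small slip: for $a=1$ the two extremal rays of $\NE(\F_{1,\bk})$ are not ``both $(-1)$-curves'' --- the fiber $f$ has $f^2=0$ and $K\cdot f=-2$ while the negative section $s$ has $s^2=-1$ and $K\cdot s=-1$, so they are already numerically distinguished; your fallback argument via the differing types of contraction is nonetheless valid.
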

\begin{proof}
The proof is analogous to that of Proposition \ref{prop: k-forms of PPb}.
\end{proof}

\begin{remark}\label{rk: real forms of Fa}
Assume furthermore that $\k=\R$. Then $\F_{a,\C}$ admits a unique real form (up to isomorphism) when $a$ is odd and exactly two non-isomorphic real forms when $a>0$ is even (see e.g.~\cite[Th\'eor\`eme 4.3]{Ben16}). 
\end{remark}

\begin{proposition}\label{prop: k-forms of Fabc with a>1}
Let $a \geq 1$ and let $b,c \in \Z$ such that $b > 0$ or $a \neq |c|$. 
The $\bk$-variety $\FF_{a,\bk}^{b,c}$ has no nontrivial $\k$-form with a $\k$-point. 
In particular,  the trivial $\k$-form $\FF_{a,\k}^{b,c}$ is the only rational $\k$-form of $\FF_{a,\bk}^{b,c}$.
\end{proposition}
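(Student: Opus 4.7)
The plan is to mimic closely the arguments of Propositions~\ref{prop: k-forms of PPb} and~\ref{prop: k forms of Rmn}, replacing the base $\P_{\bk}^{2}$ (respectively $\P_{\bk}^{1}$) by $\F_{a,\bk}$ and invoking Lemma~\ref{lem: k-forms of Fa} in place of Ch\^atelet's theorem at the descent step. Set $X := \FF_{a,\bk}^{b,c}$; under the hypothesis $a \geq 1$ and $(b > 0 \text{ or } a \neq |c|)$, Proposition~\ref{prop: aut group of Fabc} ensures that $\Aut(X)$ is connected.

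First I would verify that the structure morphism $q \colon X \to \F_{a,\bk}$ is $\Gamma$-equivariant. The intersection data in Lemma~\ref{lem: XabPic}\ref{XabPicN3}--\ref{XabPicN5} show that the class of $\ell_3$ generates an extremal ray of $\NE(X)$ that is characterised by its intersection numbers against $K_X$ and the generators of $\NS_\Q(X)$ in every admissible subregime; in particular $\R_{\geq 0} \cdot \ell_3$ is $\Gamma$-stable, and by Lemma~\ref{lem: XabPic}\ref{XabPicN6} its contraction is exactly $q$. In the degenerate product case $b = c = 0$ (still allowed since $a \geq 1 \neq |c|$), one uses instead the direct decomposition $X = \F_a \times \P^1$ and takes $q$ to be the first projection, using the product structure $\Aut(X) = \Aut(\F_a) \times \PGL_2$. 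Consequently, any $\k$-form $Z$ of $X$ fibres over a $\k$-form of $\F_{a,\bk}$; if $Z(\k) \neq \varnothing$, the base also has a $\k$-point and is therefore the trivial form $\F_{a,\k}$ by Lemma~\ref{lem: k-forms of Fa}.

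Next, by \cite[Lemma 3.1.5 and Remark 3.1.6]{BFT23} there is a $\Gamma$-equivariant short exact sequence of algebraic groups
\[
1 \longrightarrow \Aut(X)_{\F_{a,\bk}} \longrightarrow \Aut(X) \longrightarrow \Aut(\F_{a,\bk}) \longrightarrow 1,
\]
with $\Aut(X)_{\F_{a,\bk}}$ connected. Taking Galois cohomology yields
\[
\H^1(\Gamma, \Aut(X)_{\F_{a,\bk}}) \longrightarrow \H^1(\Gamma, \Aut(X)) \longrightarrow \H^1(\Gamma, \Aut(\F_{a,\bk})),
\]
and, combined with the previous step, the $\k$-forms of $X$ with a $\k$-point correspond to classes in the image of the left arrow. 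It therefore suffices to show that $\H^1(\Gamma, \Aut(X)_{\F_{a,\bk}})$ is a singleton. For this I would use the description of $\Aut(X)_{\F_{a,\bk}}$ coming from $X = \P(\O_{\F_{a,\bk}} \oplus \O_{\F_{a,\bk}}(-b s_a + c f))$: it is an extension $1 \to U \to \Aut(X)_{\F_{a,\bk}} \to T \to 1$ of a split torus $T$ (either $\G_{m,\bk}$ or $\G_{m,\bk}^2$ depending on the case) by a unipotent $\k$-vector group $U$ on whose coordinates $\Gamma$ acts in the usual coordinate-wise way. Hilbert~90 gives $\H^1(\Gamma, T) = \{\ast\}$ and \cite[Chp.~III, Section~2.1, Proposition~6]{Ser02} gives $\H^1(\Gamma, U) = \{\ast\}$; the induced long exact sequence in cohomology then forces $\H^1(\Gamma, \Aut(X)_{\F_{a,\bk}}) = \{\ast\}$.

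The main technical point I expect to have to spell out carefully is the precise shape of $\Aut(X)_{\F_{a,\bk}}$ in each admissible subregime of $(a,b,c)$ using \cite[Section~3.1]{BFT23}, in order to confirm that it is indeed a semidirect product of a split torus by a unipotent vector group carrying the standard coordinate-wise Galois action (the sign and cases depending on whether $-b s_a + c f$ is effective, anti-effective, or neither). Once this case-by-case bookkeeping is in place, the cohomological vanishing at the last step is entirely formal and a verbatim repetition of the corresponding arguments in Propositions~\ref{prop: k-forms of PPb} and~\ref{prop: k forms of Rmn}.
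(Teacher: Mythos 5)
Your proposal is correct and follows essentially the same route as the paper's proof: $\Gamma$-equivariance of $q\colon X\to\F_{a,\bk}$ via the extremal rays of $\NE(X)$, reduction of the base to the trivial form via Lemma~\ref{lem: k-forms of Fa}, the separate treatment of $b=c=0$ through the product structure, and the vanishing of $\H^1(\Gamma,\Aut(X)_{\F_a})$ from the decomposition $U\rtimes\G_{m,\bk}$ (the paper cites \cite[Remark 3.1.6]{BFT23} directly for this, so no case-by-case bookkeeping on the torus is needed — it is always $\G_{m,\bk}$, not $\G_{m,\bk}^2$).
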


\begin{proof}
Let us denote $X=\FF_{a,\bk}^{b,c}$.
Let us first assume that $b=c=0$. Then $X \simeq \P_{\bk}^1 \times \F_a$ and the result follows from Ch\^atelet's theorem combined with Lemma \ref{lem: k-forms of Fa}.

We now assume that $b > 0$ or $c \neq 0$.
The proof is then analogous to that of Proposition \ref{prop: k-forms of PPb}. Indeed, since $a \neq |c|$, Lemma \ref{lem: XabPic} implies that $\Gamma$ fixes the extremal rays of $\NE(X)$, and so 
we have a $\Gamma$-equivariant short exact sequence
\[
1 \to \Aut(X)_{\F_a} \to \Aut(X) \to \Aut\left(\F_{a,\bk}\right) \to 1
\]
which induces an exact sequence in Galois cohomology
\[
\H^1\left(\Gamma,\Aut(X)_{\F_a}\right) \to \H^1\left(\Gamma,\Aut(X)\right) \to \H^1\left(\Gamma,\Aut\left(\F_{a,\bk}\right)\right).
\]
Hence, since $\F_{a,\bk}$ has no nontrivial $\k$-form with a $\k$-point (by Lemma \ref{lem: k-forms of Fa}),
it suffices to show that $\H^1(\Gamma,\Aut(X)_{\F_a})$ is a singleton to prove the proposition. By \cite[Remark 3.1.6]{BFT23}, we have that $\Aut(X)_{\F_a} \simeq U \rtimes \G_{m,\bk}$, where $U$ is a $\Gamma$-stable unipotent subgroup and $\Gamma$ acts naturally on $\G_{m,\bk}(\bk)=\bk^*$, so $\H^1(\Gamma,\Aut(X)_{\F_a})$ is a singleton, which concludes the proof.
\end{proof}

\begin{remark}\label{rk: non rational k-form for FFabc}
Assume furthermore that $\k=\R$.  If $a$ or $c$ is odd, then the trivial real form $\FF_{a,\R}^{b,c}$ is the unique real form of $\FF_{a,\C}^{b,c}$ (up to isomorphism). If $a\,(\geq 1)$ and $c$ are even, then $\FF_{a,\C}^{b,c}$ has exactly two non-isomorphic real forms: the trivial real form $\FF_{a,\R}^{b,c}$ and a second one (with no real points) corresponding to the following real structure: 
\[
\FF_{a,\C}^{b,c} \to \FF_{a,\C}^{b,c},\ [x_0:x_1;y_0:y_1;z_0:z_1] \mapsto [\overline{x_0}:\overline{x_1};\overline{y_0}: \overline{y_1};-\overline{z_1}:\overline{z_0}].
\]
This follows from Remark \ref{rk: real forms of Fa} and the global description of the $\FF_{a}^{b,c}$ given in Section  \ref{subsubsec: decomposable P1 bundles}.
\end{remark}

\smallskip

It remains to determine the rational $\k$-forms of the $\bk$-variety $\FF_{a,\bk}^{b,c}$ when $a=0$. 
Until the end of Section  \ref{sec: Case of decomposable projective bundles}, it is assumed for simplicity that $\k=\R$.

\begin{lemma}{\emph{(see \cite[Proposition 1.2]{Rus02} for the case $n=2$)}} \label{lem: real forms of (P1)n}\\
Denote by $C$ the real conic with no real points (i.e.~the nontrivial real form of $\P_\C^1$).
Up to isomorphism, the real forms of the complex variety $(\P_\C^1)^n$, with $n \geq 1$, are the following: 
\[
Z_{p,q,r}:=(\fR_{\C/\R}(\P_\C^1))^p \times (\P_\R^1)^q \times C^r, \ \text{with}\ p,q,r \in \N_{\geq 0} \ \text{satisfying}\ 2p+q+r=n.
\]
Moreover, either $r=0$ and then $Z_{p,q,r}$ is rational or $r>0$ and then $Z_{p,q,r}$ has no real points.
The identity component of the automorphism group of $Z_{p,q,r}$ is given by
\[
\Autz(Z_{p,q,r}) \simeq (\fR_{\C/\R}(\PGL_{2,\C}))^p \times (\PGL_{2,\R})^q \times (\SO_{3,\R})^r.
\]
\end{lemma}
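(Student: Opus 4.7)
The plan is to classify real forms of $(\P_\C^1)^n$ via Galois cohomology: isomorphism classes of real forms are in bijection with $H^1(\Gal(\C/\R),\Aut_\C((\P_\C^1)^n))$, and by the classical computation of automorphisms of a product of projective lines $\Aut_\C((\P_\C^1)^n) = \PGL_{2,\C}^n \rtimes S_n$, with $S_n$ permuting the factors. First I would observe that any real structure $\sigma$ on $(\P_\C^1)^n$ has the form
\[
\sigma(x_1,\ldots,x_n) = (\sigma_1(x_{\pi(1)}), \ldots, \sigma_n(x_{\pi(n)}))
\]
for some $\pi \in S_n$ and antiregular maps $\sigma_i \colon \P_\C^1 \to \P_\C^1$. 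The condition $\sigma^2=\id$ then forces $\pi$ to be an involution, $\sigma_i$ to be a real structure on $\P_\C^1$ for each $\pi$-fixed index $i$, and $\sigma_j = \sigma_i^{-1}$ on each $2$-cycle $\{i,j\}$ of $\pi$.

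Next, I would reduce to canonical form by conjugation inside $\Aut_\C$. Conjugating by $S_n$, I may assume $\pi = (1,2)(3,4)\cdots(2p-1,2p)$ fixes $2p+1,\ldots,n$. On each 2-cycle $\{i,j\}$, conjugation by a well-chosen element of $\PGL_{2,\C}^2$ brings $\sigma\big|_{\{i,j\}}$ into the standard swap $(x,y)\mapsto(\bar y,\bar x)$, whose corresponding real form is precisely the Weil restriction $\fR_{\C/\R}(\P_\C^1)$. On each $\pi$-fixed coordinate, $\sigma_i$ is a real structure on $\P_\C^1$, and it is classical (this is the invoked case $n=1$, and is also the content of Russo's Proposition~1.2 for $n=2$) that up to $\PGL_{2,\C}$-conjugation there are exactly two such: complex conjugation (giving $\P_\R^1$) and the antipodal involution (giving $C$). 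Thus every real form of $(\P_\C^1)^n$ is isomorphic to some $Z_{p,q,r}$ with $2p+q+r=n$.

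To show the $Z_{p,q,r}$ are pairwise non-isomorphic and to derive the remaining assertions, I would use the real locus. Since $\fR_{\C/\R}(\P_\C^1)(\R) \simeq \P_\C^1(\C) \simeq \mathbb{S}^2$, $\P_\R^1(\R)\simeq \mathbb{S}^1$, and $C(\R)=\varnothing$, we get
\[
Z_{p,q,r}(\R) \simeq (\mathbb{S}^2)^p \times (\mathbb{S}^1)^q \text{ if } r=0, \qquad Z_{p,q,r}(\R) = \varnothing \text{ if } r>0.
\]
This gives the dichotomy: when $r=0$, $Z_{p,q,r}$ is rational as a product of rational real varieties; when $r>0$, the real locus is empty, in particular $Z_{p,q,r}$ is not rational. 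The topology of the real locus also recovers $(p,q)$ when $r=0$, and the empty-versus-nonempty dichotomy separates the $r=0$ from the $r>0$ cases, so it remains to distinguish different $(p,q,r)$ with $r>0$, which is handled by computing $\Autz$.

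The computation of $\Autz(Z_{p,q,r})$ is then immediate: it is the real form of $\Autz((\P_\C^1)^n) = \PGL_{2,\C}^n$ induced by conjugation with the normalized real structure, and this decomposes as a product indexed by the $\pi$-orbits, each orbit contributing $\fR_{\C/\R}(\PGL_{2,\C})$ for a $2$-cycle, $\PGL_{2,\R}$ for a fixed point of $\P_\R^1$-type, and $\SO_{3,\R}$ for a fixed point of $C$-type. Since $\fR_{\C/\R}(\PGL_{2,\C})$, $\PGL_{2,\R}$, and $\SO_{3,\R}$ are pairwise non-isomorphic real simple groups (a complex, a non-compact, and a compact real form of $\PGL_{2,\C}$ respectively), the isomorphism type of $\Autz(Z_{p,q,r})$ recovers $(p,q,r)$ completely, finishing the classification. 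The main technical point I expect is the reduction step in the second paragraph: namely, checking that the cocycle-normalization in the semidirect product $\PGL_{2,\C}^n \rtimes S_n$ decouples over the orbits of $\pi$, so that each orbit can be handled independently by the $n=1$ and $n=2$ cases.
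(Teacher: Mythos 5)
Your proposal is correct, but it takes a genuinely different route from the paper. The paper's proof is a two-line reduction: it identifies $\Aut_\C((\P_\C^1)^n)\simeq \Aut_{\mathrm{gr}}((\SL_{2,\C})^n)$ as $\Gamma$-groups, so that $\H^1(\Gamma,\Aut((\P_\C^1)^n))$ coincides with the set of real forms of the algebraic group $(\SL_2)^n$, and then cites \cite[Lemma 1.7 and Example 1.8]{MJT18} for that classification; rationality/real points and $\Autz(Z_{p,q,r})$ are then read off exactly as you do. You instead compute the cohomology of $\PGL_{2,\C}^n\rtimes S_n$ by hand: normalize the permutation part of the antiregular involution to a product of disjoint transpositions, decouple over the orbits, and reduce each orbit to the $n=1$ case (two classes) or to the standard swap (Weil restriction). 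Your normalization steps all check out (in particular the conjugation $(g,h)=(1,\bar a)$ straightening a $2$-cycle, and the verification that $\sigma^2=\id$ forces $\pi^2=\id$ with $\sigma_j=\sigma_i^{-1}$ on transpositions), and your separation argument — real locus to split off $r=0$, then uniqueness of the simple-factor decomposition of the adjoint semisimple group $\Autz(Z_{p,q,r})$ to recover $(p,q,r)$ — is a clean substitute for the injectivity part of the cited group-theoretic classification. What each approach buys: the paper's version is shorter and slots into its systematic use of form classifications of semisimple groups; yours is self-contained, makes the cocycle normalization in the semidirect product explicit (the one technical point you rightly flag, and which does work because $\PGL_{2,\C}^n$-conjugation preserves $\pi$ and acts blockwise), and avoids any external reference beyond $\H^1(\Gamma,\PGL_{2,\C})=\Z/2\Z$.
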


\begin{proof}
Since $\Aut((\P_\C^1)^n) \simeq \Aut_{\mathrm{gr}}((\SL_{2,\C})^n)$ as $\Gamma$-groups (endowed with the canonical $\Gamma$-action on both sides), there is a bijection between $\H^1(\Gamma,\Aut((\P_\C^1)^n)$ and $\H^1(\Gamma,\Aut_{\mathrm{gr}}((\SL_2)^n)$. The latter set, which parametrizes the real forms (up to isomorphism) of $(\SL_2)^n$ in the category of real algebraic groups is described in \cite[Lemma 1.7 and Example 1.8]{MJT18}; it corresponds to the set of non-isomorphic real forms $Z_{p,q,r}$ defined in the statement of the lemma. 
The claim concerning the rationality and the real points of $Z_{p,q,r}$ follows from the facts that the surface $\fR_{\C/\R}(\P_\C^1)$ is rational while $C$ has no real points. The description of $\Autz(Z_{p,q,r})$ is straightforward since $Z_{p,q,r}$ is a product of projective varieties (see \cite[Corollary 4.2.7]{BSU13}) and we have $\Autz(\fR_{\C/\R}(\P_\C^1)) \simeq \fR_{\C/\R}(\PGL_{2,\C})$ and $\Aut(C) \simeq \SO_{3,\R}$.   
\end{proof}

\begin{lemma}\label{lem: two real forms for F0bc}
Let $\Gamma=\Gal(\C/\R)$ act on $\P_\C^1 \times \P_\C^1$ via $([y_0:y_1],[z_0:z_1]) \mapsto ([\overline{z_0}:\overline{z_1}],[\overline{y_0}:\overline{y_1}])$. Then 
\[\Pic(\fR_{\C/\R}(\P_\C^1)) \simeq \Pic(\P_\C^1 \times \P_\C^1)^\Gamma \simeq \Z[\ell_1+\ell_2],\]
where $\ell_1$ resp. $\ell_2$, is the class of the curve $y_1=0$ resp. of $z_1=0$, in $\Pic(\P_\C^1 \times \P_\C^1) \simeq \Z[\ell_1,\ell_2]$.
Moreover, for every $b \in \Z$, the rank $2$ vector bundle $\O_{\P_\C^1 \times \P_\C^1}(b \ell_1) \oplus \O_{\P_\C^1 \times \P_\C^1}(b \ell_2)$ is $\Gamma$-linearized $(\Gamma$ exchanges the two factors$)$ and corresponds therefore to a rank $2$ vector bundle on $\fR_{\C/\R}(\P_\C^1)$ that we will denote by $\widetilde{\mathcal{H}_b} \to \fR_{\C/\R}(\P_\C^1)$.
\end{lemma}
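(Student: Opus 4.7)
The plan is to first identify $\fR_{\C/\R}(\P_\C^1)$ explicitly as the descent of $\P_\C^1 \times \P_\C^1$ equipped with the swap involution, then compute the Picard group by Galois descent, and finally produce the descent datum on $\O(b\ell_1)\oplus\O(b\ell_2)$ by hand.

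First I would recall that by the universal property of the Weil restriction, $\fR_{\C/\R}(\P_\C^1)_{\C} \simeq \P_\C^1 \times_\C \P_\C^1$, and the real structure on the right-hand side is precisely the antiholomorphic involution $\sigma\colon ([y_0:y_1],[z_0:z_1])\mapsto ([\overline{z_0}:\overline{z_1}],[\overline{y_0}:\overline{y_1}])$ given in the statement. In particular, $\fR_{\C/\R}(\P_\C^1)$ has real points (for instance, the diagonal $\Delta_\C\subseteq\P_\C^1\times\P_\C^1$ is $\Gamma$-stable and hence descends to a real curve inside $\fR_{\C/\R}(\P_\C^1)$, which is itself $\simeq \P_\R^1$).

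Next I would invoke the Hochschild--Serre spectral sequence associated with the field extension $\R\hookrightarrow\C$: for a smooth projective geometrically integral $\R$-variety $X$ with $X(\R)\neq\varnothing$, one has a short exact sequence
\[
0 \to \H^1(\Gamma,\C^*)\to \Pic(X)\to \Pic(X_\C)^{\Gamma}\to 0
\]
(the kernel vanishing by Hilbert 90 and the cokernel, a priori in $\Br(\R)$, vanishing because the existence of a real point splits the obstruction). Applied to $X=\fR_{\C/\R}(\P_\C^1)$ this yields $\Pic(\fR_{\C/\R}(\P_\C^1))\iso\Pic(\P_\C^1\times\P_\C^1)^{\Gamma}$. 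The action of the generator of $\Gamma$ on $\Pic(\P_\C^1\times\P_\C^1)=\Z\ell_1\oplus\Z\ell_2$ swaps $\ell_1$ and $\ell_2$ because $\sigma$ exchanges the two rulings, so the invariant subgroup is the diagonal $\Z[\ell_1+\ell_2]$, as claimed.

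Finally, for the rank two bundle: set $\mathcal{E}:=\O_{\P_\C^1\times\P_\C^1}(b\ell_1)\oplus \O_{\P_\C^1\times\P_\C^1}(b\ell_2)$. Since $\sigma^*\O(b\ell_1)\iso \O(b\ell_2)$ and $\sigma^*\O(b\ell_2)\iso\O(b\ell_1)$, there is a natural antilinear involution on $\mathcal{E}$ obtained by combining $\sigma$ with the swap of the two summands; concretely, on sections it sends $(s_1,s_2)\in \O(b\ell_1)\oplus\O(b\ell_2)$ to $(\sigma^*s_2,\sigma^*s_1)$. A direct check shows this is a $\Gamma$-linearization (an involutive $\sigma$-semilinear lift of $\sigma$), i.e.~a descent datum on $\mathcal{E}$. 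By effectivity of descent for coherent sheaves along a finite Galois covering, there exists a unique (up to unique isomorphism) rank two vector bundle $\widetilde{\mathcal{H}_b}$ on $\fR_{\C/\R}(\P_\C^1)$ whose pullback to $\P_\C^1\times\P_\C^1$ reproduces $(\mathcal{E},\text{swap})$. The only nontrivial point in the argument is the vanishing of the Brauer obstruction in the Hochschild--Serre sequence, which is handled here simply by exhibiting a real point of $\fR_{\C/\R}(\P_\C^1)$.
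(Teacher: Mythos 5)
Your proof is correct and takes essentially the same route as the paper: the paper simply cites \cite[Tag 0CDT]{SP24} for the isomorphism $\Pic(\fR_{\C/\R}(\P_\C^1)) \simeq \Pic(\P_\C^1 \times \P_\C^1)^{\Gamma}$ (whose proof is exactly your Hochschild--Serre plus real-point argument) and then observes that the nontrivial element of $\Gamma$ swaps $\ell_1$ and $\ell_2$, which is what your explicit swap linearization of $\O(b\ell_1)\oplus\O(b\ell_2)$ makes precise. No gaps.
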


\begin{proof}
The first isomorphism is \cite[\href{https://stacks.math.columbia.edu/tag/0CDT}{Tag 0CDT}]{SP24}. The second isomorphism and the second statement follow from the fact that the nontrivial element of $\Gamma$ swaps $\ell_1$ and $\ell_2$.
\end{proof}

\begin{definition}\label{def: Gb and Hb}
Let $b \geq 0$ and keep the notation of Lemma \ref{lem: two real forms for F0bc}.
We denote
\[\mathcal{G}_b:=\P\left(\O_{\fR_{\C/\R}(\P_\C^1)} \oplus \O_{\fR_{\C/\R}(\P_\C^1)}(-b (\ell_1+ \ell_2))\right)\ \ \text{and}\ \  \mathcal{H}_b:=\P\left(\widetilde{\mathcal{H}_b}\right)\] which are real forms of $\FF_{0,\C}^{b,-b}$ and $\FF_{0,\C}^{b,b}$ respectively.
Let us note that $\mathcal G_0\simeq \mathcal H_0\simeq\p^1_\R\times\fR_{\C/\R}(\p_\R^1)$.
\end{definition}

\begin{proposition}\label{prop: k-forms of Fabc with a=0}
Let $a=0$ and let $b,c \in \Z$ such that $b \geq 0$ and $(b,c) \neq (0,0)$.
\begin{itemize}
\item If $b \neq |c|$, then $\FF_{0,\C}^{b,c}$ has no nontrivial real form with a real point.
\item If $b=|c|$, then $\FF_{0,\C}^{b,c}$ has exactly two non-isomorphic real forms with a real point, namely the trivial real form $\FF_{0,\R}^{b,c}$  and the real form $\mathcal{G}_b$ if $b=-c$ resp. $\mathcal{H}_b$ if $b=c$ $($with the notation of Definition \ref{def: Gb and Hb}$)$, corresponding to the real structure
\begin{equation}\label{eq: real structures for Gb and Hb}
\theta\colon[x_0:x_1;y_0:y_1;z_0:z_1] \mapsto 
\left\{
    \begin{array}{ll}
           \left[\overline{x_0}:\overline{x_1};\overline{z_0}:\overline{z_1};\overline{y_0}:\overline{y_1}\right]  &  \text{ if } b=-c ; \text{ and} \\
         \left[\overline{x_1}: \overline{x_0};\overline{z_0}:\overline{z_1};\overline{y_0}:\overline{y_1}\right] & \text{ if } b=c.
    \end{array}
\right.
\end{equation}
\end{itemize}
Moreover, if $X_0$ is a real form of $\FF_{0,\C}^{b,c}$, then either $X_0$ is rational or $X_0$ has no real points.
\end{proposition}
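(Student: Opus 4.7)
The plan is to use the equivariant structure morphism $q\colon X:=\FF_{0,\C}^{b,c}\to\F_{0,\C}=\P_\C^1\times\P_\C^1$ to descend real structures from $X$ to its base, and then lift them back through Galois cohomology. By Lemma~\ref{lem: XabPic} and Proposition~\ref{prop: aut group of Fabc}, the morphism $q$ is $\Aut(X)$-equivariant; moreover, the action preserves the two natural projections $\F_{0,\C}\to\P_\C^1$ individually when $b\neq|c|$, whereas for $b=|c|$ the nontrivial component of $\Aut(X)$ exchanges them. Hence any real structure on $X$ descends to one on $\F_{0,\C}$, and if the former has a real point so does the latter; among the real forms of $\F_{0,\C}$, only $\P_\R^1\times\P_\R^1$ and $\fR_{\C/\R}(\P_\C^1)$ have real points.

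For the trivial real form of the base, the lifts to real structures on $X$ are classified by $\H^1(\Gamma,\Aut(X)_{\F_{0,\C}})$ endowed with the canonical $\Gamma$-action. By \cite[Remark~3.1.6]{BFT23}, this group is $U\rtimes\G_{m,\C}$ with $U$ unipotent, and standard vanishing ($\H^1(\Gamma,U)=\{1\}$ by \cite[Chp.~III, Section~2.1, Proposition~6]{Ser02} and $\H^1(\Gamma,\G_{m,\C})=\{1\}$ by Hilbert~90) gives exactly one lift, namely the trivial real form $\FF_{0,\R}^{b,c}$. For the Weil restriction form $\fR_{\C/\R}(\P_\C^1)$, a lift requires $\Aut(X)$ to contain an element inducing the swap of factors of $\F_{0,\C}$, which by Proposition~\ref{prop: aut group of Fabc} happens if and only if $b=|c|$; in that case, the explicit involutions $\theta$ of~\eqref{eq: real structures for Gb and Hb} exhibit real forms $\mathcal G_b$ (when $b=-c$) and $\mathcal H_b$ (when $b=c$), and a direct fixed-point computation on a fiber of $q$ above a real point of $\fR_{\C/\R}(\P_\C^1)$ shows that both of these forms have real points.

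The key technical step is to prove that there are no further real forms with a real point over the base $\fR_{\C/\R}(\P_\C^1)$. Such lifts are classified by $\H^1(\Gamma_\theta,\Aut(X)_{\F_{0,\C}})$, where $\Gamma_\theta$ acts by conjugation by $\theta$. A short computation shows that this conjugation sends $\lambda\in\G_{m,\C}$ to $\bar\lambda^{-1}$, so the $\G_{m,\C}$-factor becomes the compact unit circle $\SSS$, whose first Galois cohomology has order two. This yields exactly one further lift, and I will show by computing the twisted involution on a real fiber that in this lift the generic real fiber is the real conic without real points, so the lift itself has no real points. This is where the bulk of the care is required.

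Finally, for the rationality claim, $\mathcal G_b=\P(\O\oplus\O(-b(\ell_1+\ell_2)))$ has the tautological section given by the $\O$-summand, so it is birational to $\P_\R^1\times\fR_{\C/\R}(\P_\C^1)$, which is rational. For $\mathcal H_b$, although $\widetilde{\mathcal H_b}$ has no obvious rank-one sub-bundle, it trivializes Zariski-locally (being a vector bundle), and so $\mathcal H_b$ is likewise birational to $\P_\R^1\times\fR_{\C/\R}(\P_\C^1)$ and hence rational. All other real forms of $X$ either coincide with those just listed or have no real points, establishing the dichotomy of the final sentence.
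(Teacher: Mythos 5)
Your overall strategy is the same as the paper's: descend any real structure along the $\Aut(X)$-equivariant morphism $q\colon X\to\F_{0,\C}$, note that only $\P_\R^1\times\P_\R^1$ and $\fR_{\C/\R}(\P_\C^1)$ can occur as base when $X$ has a real point, and classify the lifts over each base form by $\H^1$ of $\Aut(X)_{\F_0}\simeq U\rtimes\G_{m,\C}$ with the appropriate twisted $\Gamma$-action. Your rationality argument for $\mathcal G_b$ and $\mathcal H_b$ (Zariski-local triviality of the projectivized rank-$2$ bundle over the rational surface $\fR_{\C/\R}(\P_\C^1)$) differs from the paper's, which instead observes that $\theta$ makes $X$ a real toric variety with a dense orbit isomorphic to a real torus; both are valid.

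However, there is a genuine error in your key technical step. You assert that conjugation by $\theta$ sends $\lambda\in\G_{m,\C}\subseteq\Aut(X)_{\F_0}$ to $\bar\lambda^{-1}$, and conclude uniformly that there is ``exactly one further lift'' over $\fR_{\C/\R}(\P_\C^1)$. This is only correct when $b=c$: there $\theta$ exchanges $x_0$ and $x_1$, hence inverts the fiberwise torus, $\H^1$ of the twisted $\G_{m,\C}$ (the norm-one torus) has order two, and the extra class is the real structure $[x_0:x_1;y;z]\mapsto[-\overline{x_1}:\overline{x_0};\overline{z};\overline{y}]$, which indeed acts antipodally on the fibers over real points of the base and so has empty real locus. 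But when $b=-c$ the involution $\theta$ fixes the coordinates $x_0,x_1$, so conjugation acts on $\G_{m,\C}$ by $\lambda\mapsto\bar\lambda$, Hilbert 90 gives that $\H^1(\Gamma_\theta,\Aut(X)_{\F_0})$ is a singleton, and there is \emph{no} further lift at all. Your uniform computation is therefore wrong in half the cases. The final count of real forms with a real point happens to be unaffected (your phantom extra lift would have had no real points anyway), but as written the cohomological bookkeeping does not match the actual twisted action, and the ``short computation'' you defer would not produce what you claim for $b=-c$. You should split the two cases and record that the twisted action on $\G_{m,\C}$ is $\lambda\mapsto\bar\lambda$ for $b=-c$ and $\lambda\mapsto\bar\lambda^{-1}$ for $b=c$, exactly as the distinction between $\mathcal G_b$ and $\mathcal H_b$ suggests.
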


\begin{proof}
Let us denote $X=\FF_{0,\C}^{b,c}$.
For any descent datum $\Gamma \times X \to X$, it follows from Lemma \ref{lem: XabPic} and Proposition \ref{prop: aut group of Fabc} that we have a $\Gamma$-equivariant short exact sequence
\[
1 \to \Aut(X)_{\F_0} \to \Aut(X) \to H \to 1,
\]
where $H=\Aut(\F_{0,\C})$ if $b=|c| \neq 0$ and $H=\Autz(\F_{0,\C})$ otherwise, which
yields an exact sequence in Galois cohomology 
\[
\H^1\left(\Gamma,\Aut(X)_{\F_0}\right) \to \H^1\left(\Gamma,\Aut(X)\right) \to \H^1(\Gamma,H).
\] 
Let us note that $\H^1(\Gamma,\Aut(\F_{0,\C}))$ parametrizes the equivalence classes of all real structures on $\F_0$ while $\H^1(\Gamma,\Autz(\F_{0,\C}))$ parametrizes the equivalence classes of real structures on $\F_{0,\C}$ that do not exchange the two rulings. 
By Lemma \ref{lem: real forms of (P1)n}, a rational real form of $X$ is therefore mapped to $\P_\R^1 \times \P_\R^1$ if $b \neq |c|$, and to $\P_\R^1 \times \P_\R^1$ or $\fR_{\C/\R}(\P_\C^1)$ if $b = |c|$.
Also, by \cite[Remark 3.1.6]{BFT23}, $\Aut(X)_{\F_0} \simeq U \rtimes \G_{m,\C}$, where $U$ is a unipotent subgroup stabilized by the $\Gamma$-action and $\Gamma$ acts on $\G_{m,\C}$ via $t \mapsto \overline{t}$ or $t \mapsto \overline{t}^{-1}$. 
In the first case, $\H^1(\Gamma,\Aut(X)_{\F_0})$ is a singleton, while in the second case it has two elements. 

If $b \neq |c|$ and $\Gamma$ acts canonically on $X$ (i.e.~via complex conjugation), then $\Gamma$ acts on $\G_{m,\C} \subseteq \Aut(X)_{\F_0})$ via $t \mapsto \overline{t}$, and so $\H^1(\Gamma,\Aut(X)_{\F_0})$ is a singleton, which implies that $\H^1(\Gamma,\Aut(X))$ is also a singleton.

Assume now that $b=|c|$. The canonical $\Gamma$-action on $X$ induces the canonical $\Gamma$-action on $\F_{0,\C}=\P_\C^1 \times \P_\C^1$, and then again $\H^1(\Gamma,\Aut(X)_{\F_0})$ is a singleton.
On the other hand, if we let $\Gamma$ act on $X$ via the real structure $\theta$ defined in the statement, then the induced $\Gamma$-action on $\F_{0,\C}$ corresponds to the real form $\fR_{\C/\R}(\P_\C^1)$ and one can check that $\H^1(\Gamma,\Aut(X)_{\F_0})$ has one element if $b=-c$ and two elements if $b=c$ (because in the first case $\Gamma$ acts on $\G_{m,\C} \subseteq \Aut(X)_{\F_0}$ via $t \mapsto \overline{t}$ while in the second case $\Gamma$ acts via $t \mapsto \overline{t}^{-1}$).

In the latter case, these two elements correspond to the real structures $\theta$ and 
\[
\theta'\colon [x_0:x_1;y_0:y_1;z_0:z_1] \mapsto [-\overline{x_1}: \overline{x_0};\overline{z_0}:\overline{z_1};\overline{y_0}:\overline{y_1}].
\]
The real locus of $\theta'$ is empty, and so $\theta'$ corresponds to a real form of $X$ with no real points. 
On the other hand, $\theta$ corresponds to a real toric threefold with a dense open orbit isomorphic to $\G_{m,\R} \times \fR_{\C/\R}(\G_{m,\C})$ if $b=-c$ resp. to $\SSS \times \fR_{\C/\R}(\G_{m,\C})$ if $b=c$, and is therefore rational. This finishes the proof.
\end{proof}

\begin{remark}\label{rk: auto group of Gb and Hb}
Using \eqref{eq: real structures for Gb and Hb} in Proposition \ref{prop: k-forms of Fabc with a=0} and the results from Section  \ref{subsubsec: decomposable P1 bundles}, we can easily describe the real algebraic groups $\Autz(\mathcal{G}_b)$ and $\Autz(\mathcal{H}_b)$.
\begin{itemize}
\item We have $\mathcal{G}_0=\P_\R^1 \times \fR_{\C/\R}(\P_\C^1)$, and so $\Autz(\mathcal{G}_0)=\PGL_{2,\R} \times \fR_{\C/\R}(\PGL_{2,\C})$.
\item Let $b \geq 1$. Then $\Autz(\mathcal{G}_b)$ fits into an exact sequence
\[
1 \to \left(\G_{a,\R}^{(b+1)^2} \rtimes \G_{m,\R}\right) \to \Autz(\mathcal{G}_b) \to \fR_{\C/\R}(\PGL_{2,\C}) \to 1.
\]
\item Let $b \geq 1$. Then $\Autz(\mathcal{H}_b)$ fits into an exact sequence
\[
1 \to \SSS \to \Autz(\mathcal{H}_b) \to \fR_{\C/\R}(\PGL_{2,\C}) \to 1.
\] 
\end{itemize}
\end{remark}

\section{Forms of singular \texorpdfstring{$\P^1$}{P1}-fibrations over \texorpdfstring{$\P(1,1,2)$}{P(1,1,2)}}

\begin{proposition}\label{prop: k-forms of Wb}
Let $b \geq 2$.
The trivial $\k$-form $\W_{b,\k}$ is the only rational $\k$-form of the $\bk$-variety $\W_{b,\bk}$.
\end{proposition}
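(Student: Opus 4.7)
The plan is to follow the strategy of the proofs of Propositions~\ref{prop: k-forms of PPb}, \ref{prop: k forms of Rmn}, and \ref{prop: k-forms of Fabc with a>1}: show that the structure morphism of $\W_{b,\bk}$ is $\Gamma$-equivariant for any descent datum, then pass to Galois cohomology. The technical novelty here compared to the previous propositions is that the base $\P(1,1,2)$ is singular, so I will need to descend the analysis of its $\k$-forms to those of its minimal resolution $\F_2$.

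First, Lemma~\ref{lem:WbPicN}\ref{WbPicN4}\&\ref{WbPicN5} shows that $\NE(\W_{b,\bk})$ is generated by classes $f,\ell$ with $K_{\W_b}\cdot f=-2$ and $K_{\W_b}\cdot \ell = b-\tfrac{5}{2}\geq -\tfrac{1}{2}$ (since $b\geq 2$), so the two extremal rays are distinguished by intersection with the canonical class and are both $\Gamma$-invariant. In particular, the structure morphism $q\colon \W_{b,\bk}\to \P(1,1,2)_{\bk}$ (the contraction of the ray spanned by $f$) is $\Gamma$-equivariant for any descent datum. Combined with the surjectivity of $\Aut(\W_b)\to \Aut(\P(1,1,2))$ (Remark~\ref{rk: description auto of P1-fibration of Wb}) and the connectedness of $\Aut(\W_b)$, we obtain a $\Gamma$-equivariant short exact sequence
\[
1 \to \Aut(\W_b)_{\P(1,1,2)} \to \Aut(\W_b) \to \Aut(\P(1,1,2)) \to 1,
\]
whence an exact sequence of pointed sets
\[
\H^1(\Gamma,\Aut(\W_b)_{\P(1,1,2)}) \to \H^1(\Gamma,\Aut(\W_b)) \to \H^1(\Gamma,\Aut(\P(1,1,2))).
\]

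Next, I claim that any rational $\k$-form $X_0$ of $\W_{b,\bk}$ induces the trivial $\k$-form $Y_0 \simeq \P(1,1,2)_{\k}$ on the base. Indeed, $X_0(\k)$ is Zariski dense in $X_0$, and since the singular locus of $Y_0$ consists of a single $\k$-point whose fiber under $q$ is $1$-dimensional, the map $X_0(\k) \to Y_0(\k)$ cannot have image inside the singular locus (otherwise $X_0(\k)$ would sit in a closed subvariety of dimension at most $1$). So $Y_0$ has a smooth $\k$-point, which lifts to a $\k$-point of the minimal resolution $\tilde Y_0$; but $\tilde Y_0$ is a $\k$-form of $\F_{2,\bk}$, so Lemma~\ref{lem: k-forms of Fa} forces $\tilde Y_0\simeq \F_{2,\k}$ and therefore $Y_0\simeq \P(1,1,2)_{\k}$. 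Hence the class of $X_0$ in $\H^1(\Gamma,\Aut(\W_b))$ lies in the image of $\H^1(\Gamma,\Aut(\W_b)_{\P(1,1,2)})$, where $\Gamma$ acts via the canonical descent datum on $\P(1,1,2)$.

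Finally, by Remark~\ref{rk: description auto of P1-fibration of Wb}, $\Aut(\W_b)_{\P(1,1,2)}\simeq U\rtimes \G_{m,\bk}$, where $U$ is the unipotent group of polynomials of degree $2b-1$, and the canonical $\Gamma$-action acts coefficient-wise on $U$ and via $t\mapsto \bar t$ on $\G_{m,\bk}$. Applying Galois cohomology to $1\to U\to \Aut(\W_b)_{\P(1,1,2)}\to \G_{m,\bk}\to 1$, together with $\H^1(\Gamma,U)=1$ (by \cite[Chp.~III, Section~2.1, Proposition 6]{Ser02}) and Hilbert's Theorem~90, yields $\H^1(\Gamma,\Aut(\W_b)_{\P(1,1,2)})=1$, so $X_0\simeq \W_{b,\k}$. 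I expect the main obstacle to be the rationality propagation from $X_0$ to $Y_0$; once that is in hand, the rest is completely parallel to the earlier propositions.
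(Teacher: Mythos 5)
Your proposal is correct and follows essentially the same route as the paper: $\Gamma$-equivariance of the structure morphism via the intersection numbers of Lemma~\ref{lem:WbPicN}, the exact sequence in Galois cohomology, reduction of the base to Lemma~\ref{lem: k-forms of Fa} through the minimal resolution $\F_2\to\P(1,1,2)$, and vanishing of $\H^1(\Gamma,U\rtimes\G_{m,\bk})$. The only (harmless) variation is in handling the base: the paper sets up a bijection between descent data on $\F_{2,\bk}$ and on $\P(1,1,2)_{\bk}$ and observes that a nontrivial form of $\P(1,1,2)_{\bk}$ has exactly one $\k$-point, whereas you use density of $X_0(\k)$ to produce a smooth $\k$-point of the base and lift it to the resolution.
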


\begin{proof}
Given a descent datum $\Gamma \times \F_{2,\bk} \to \F_{2,\bk}$, there exists a unique descent datum $\Gamma \times \P(1,1,2)_{\bk} \to \P(1,1,2)_{\bk}$ that makes $\Gamma$-equivariant the contraction morphism $\F_{2,\bk} \to \P(1,1,2)_{\bk}$, and the converse is also true since the singular point of $\P(1,1,2)_{\bk}$ is necessarily fixed for any $\Gamma$-action. This implies that the trivial $\k$-form $\P(1,1,2)_\k$ is the unique (up to isomorphism) rational $\k$-form of $\P(1,1,2)_{\bk}$ since the trivial $\k$-form $\F_{2,\k}$ is the unique (up to isomorphism) rational $\k$-form of $\F_{2,\bk}$ (see Lemma \ref{lem: k-forms of Fa}). 
Moreover, Lemma~\ref{lem: k-forms of Fa} also implies that, if $Y$ is a nontrivial $\k$-form of $\P(1,1,2)_{\bk}$, then $Y(\k)$ is a singleton (corresponding to the singular point of $\P(1,1,2)_{\bk}$).

The proof of the lemma is then analogous to that of Proposition \ref{prop: k-forms of PPb}. Indeed, denoting $X=\W_{b,\bk}$, Lemma \ref{lem:WbPicN} and Remark \ref{rk: description auto of P1-fibration of Wb} imply that we have a $\Gamma$-equivariant short exact sequence
\[
1 \to \Aut(X)_{\P(1,1,2)} \to \Aut(X) \to \Aut\left(\P(1,1,2)_{\bk}\right) \to 1
\]
which induces an exact sequence in Galois cohomology
\[
\H^1\left(\Gamma,\Aut(X)_{\P(1,1,2)}\right) \to \H^1(\Gamma,\Aut(X)) \to \H^1\left(\Gamma,\Aut\left(\P(1,1,2)_{\bk}\right)\right).
\]
Hence, since the trivial $\k$-form $\P(1,1,2)_\k$ is the unique $\k$-form of $\P(1,1,2)_{\bk}$ with more than one $\k$-point (up to isomorphism), it suffices to show that $\H^1(\Gamma,\Aut(X)_{\P(1,1,2)})$ is a singleton to prove the proposition. 
By Remark \ref{rk: description auto of P1-fibration of Wb}, we have that $\Aut(X)_{\P(1,1,2)} \simeq U \rtimes \G_{m,\bk}$, where $U$ is a $\Gamma$-stable unipotent subgroup and $\Gamma$ acts naturally on $\G_{m,\bk}$, so $\H^1(\Gamma,\Aut(X)_{\P(1,1,2)})$ is a singleton, which concludes the proof.
\end{proof}

\begin{remark}
Any $\k$-form of $\W_{b,\k}$ as $\k$-points; indeed, the two singular points of $\W_{b,\k}$ are both fixed by any $\Gamma$-action.
\end{remark}

\section{Forms of the \texorpdfstring{$\P^1$}{P1}-bundles \texorpdfstring{$\U_{a}^{b,c} \to \F_a$}{Uabc} and \texorpdfstring{$\V_b \to \P^2$}{Vb}}\label{sec: real forms of Uab}
In this section we determine the rational  $\k$-forms of the $\P^1$-bundles $\U_{a,\bk}^{b,c} \to \F_{a,\bk}$ (case \hyperlink{th:D_c}{(c)}) and $\V_{b,\bk} \to \P_{\bk}^2$ (case \hyperlink{th:D_e}{(e)}).

\begin{proposition}\label{prop: k-forms of Umemura bundles}
The $\bk$-variety $\U_{a,\bk}^{b,c}$ has no nontrivial $\k$-form with a $\k$-point. 
In particular, the trivial $\k$-form $\U_{a,\k}^{b,c}$ is the only rational $\k$-form of $\U_{a,\bk}^{b,c}$.
\end{proposition}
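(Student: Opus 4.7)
The approach follows the same pattern as the proofs of Propositions~\ref{prop: k-forms of PPb}, \ref{prop: k forms of Rmn}, and \ref{prop: k-forms of Wb}. The first step is to observe that the structure morphism $q\colon \U_{a,\bk}^{b,c} \to \F_{a,\bk}$ is $\Aut(\U_{a,\bk}^{b,c})$-equivariant. This was already established in the proof of Proposition~\ref{prop: Aut of Uabc}: the fiber class $f$ of $q$ is the unique extremal ray of $\NE(\U_{a,\bk}^{b,c})$ satisfying $K \cdot f = -2$, for every admissible triple $(a,b,c)$. In particular, any descent datum on $\U_{a,\bk}^{b,c}$ descends via $q$ to a descent datum on $\F_{a,\bk}$ making $q$ a $\Gamma$-equivariant morphism.

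Next, I would combine this with the surjectivity of $\Aut(\U_{a,\bk}^{b,c}) \to \Aut(\F_{a,\bk})$ (which holds because $a\geq 1$, using \cite[Lemma~3.6.3]{BFT23} and the connectedness of $\Aut(\U_{a,\bk}^{b,c})$ established in Proposition~\ref{prop: Aut of Uabc}) to obtain a $\Gamma$-equivariant short exact sequence
\[
1 \to \Aut(\U_{a,\bk}^{b,c})_{\F_{a,\bk}} \to \Aut(\U_{a,\bk}^{b,c}) \to \Aut(\F_{a,\bk}) \to 1,
\]
yielding the Galois cohomology sequence
\[
\H^1(\Gamma, \Aut(\U_{a,\bk}^{b,c})_{\F_{a,\bk}}) \to \H^1(\Gamma, \Aut(\U_{a,\bk}^{b,c})) \to \H^1(\Gamma, \Aut(\F_{a,\bk})).
\]
If $Y$ is a $\k$-form of $\U_{a,\bk}^{b,c}$ with $Y(\k) \neq \varnothing$, then its image under $q$ is a $\k$-form of $\F_{a,\bk}$ carrying a $\k$-point, hence equals the trivial form $\F_{a,\k}$ by Lemma~\ref{lem: k-forms of Fa}. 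So the class of $Y$ in $\H^1(\Gamma, \Aut(\U_{a,\bk}^{b,c}))$ lies in the fiber over the marked point, which is the image of $\H^1(\Gamma, \Aut(\U_{a,\bk}^{b,c})_{\F_{a,\bk}})$. It therefore suffices to show that this pointed set is a singleton.

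By \cite[Remark~3.6.4]{BFT23}, one has a decomposition $\Aut(\U_{a,\bk}^{b,c})_{\F_{a,\bk}} \simeq U \rtimes \G_{m,\bk}$ with $U$ a $\Gamma$-stable unipotent group, and $\Gamma$ acting on $\G_{m,\bk}$ in the standard way (via the field automorphisms of $\bk$). Since $\H^1(\Gamma, U) = \{*\}$ by \cite[Chp.~III, Section~2.1, Proposition~6]{Ser02} and $\H^1(\Gamma, \G_{m,\bk}) = \{*\}$ by Hilbert's Theorem~90, the standard five-term argument gives $\H^1(\Gamma, \Aut(\U_{a,\bk}^{b,c})_{\F_{a,\bk}}) = \{*\}$, which yields the first claim. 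For the second claim, any rational $\k$-variety of positive dimension over a field of characteristic zero carries a $\k$-point, so rationality of a $\k$-form forces it to be the trivial one. The only mildly delicate point — and therefore the main potential obstacle — is verifying that the induced $\Gamma$-action on the torus factor $\G_{m,\bk}$ is the canonical one (rather than twisted by the inverse); this is automatic here because $q$ is $\Gamma$-equivariant and the splitting $\G_{m,\bk} \hookrightarrow \Aut(\U_{a,\bk}^{b,c})_{\F_{a,\bk}}$ is canonically determined by the $\P^1$-bundle structure over $\F_{a,\bk}$.
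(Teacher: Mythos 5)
Your proposal follows exactly the paper's route: reduce to the fibre of $\H^1(\Gamma,\Aut(X))\to\H^1(\Gamma,\Aut(\F_{a,\bk}))$ over the trivial class via the $\Gamma$-equivariant exact sequence, use Lemma~\ref{lem: k-forms of Fa} to handle the base, and kill $\H^1(\Gamma,\Aut(X)_{\F_a})$. The one inaccuracy is your description of the vertical automorphism group: by \cite[Remark~3.6.4]{BFT23} the group $\Aut(\U_{a}^{b,c})_{\F_a}$ is \emph{unipotent}, not of the form $U\rtimes\G_{m,\bk}$ (you have imported the structure from the decomposable case, \cite[Remark~3.1.6]{BFT23}, used in Proposition~\ref{prop: k-forms of Fabc with a>1}). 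This does not break your argument --- $\H^1$ would vanish either way --- but it makes the ``delicate point'' you raise at the end about the $\Gamma$-action on the torus factor moot: there is no torus factor, and the vanishing of $\H^1$ for a unipotent group in characteristic zero is immediate, which is precisely why the paper's proof is a one-liner.
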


\begin{proof}
Let us denote $X=\U_{a,\bk}^{b,c}$.
The proof is analogous to that of Proposition \ref{prop: k-forms of Fabc with a>1}.
The fact that $\H^1(\Gamma,\Aut(X)_{\F_a})$ is a singleton is an immediate consequence of the fact that the group $\Aut(X)_{\F_a}$ is unipotent (see \cite[Remark 3.6.4]{BFT23}).
\end{proof}

\begin{corollary}\label{cor: k-forms of Vb}
The $\bk$-variety $\V_{b,\bk}$, with $b \geq 2$, has no nontrivial $\k$-form with a $\k$-point.
In particular, the trivial $\k$-form $\V_{b,\k}$ is the only rational $\k$-form of $\V_{b,\bk}$.
\end{corollary}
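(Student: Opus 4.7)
My plan is to mirror the proof of Proposition \ref{prop: k-forms of PPb}, transferring the Galois-cohomological computation from $\U_{1,\bk}^{b,2}$ via the equivariant blow-up $\psi\colon \U_{1,\bk}^{b,2}\to \V_{b,\bk}$ furnished by Corollary \ref{cor:Aut-Vb}. The commutative diagram
\[\xymatrix@R=4mm@C=2cm{\U_{1,\bk}^{b,2}\ar[r]^{\psi}\ar[d] & \V_{b,\bk}\ar[d]\\ \F_{1,\bk}\ar[r]^{\eta} & \P^2_{\bk}}\]
together with the equality $\Aut(\V_{b,\bk}) = \psi\,\Aut(\U_{1,\bk}^{b,2})\,\psi^{-1}$ from Corollary \ref{cor:Aut-Vb} and the fact that $\Aut(\V_{b,\bk})$ fixes $p_0 := [0:1:0] \in \P^2_{\bk}$ (see \cite[Lemma 5.5.1(2)]{BFT23}), identifies the natural homomorphism $\Aut(\V_{b,\bk}) \to \Aut(\P^2_{\bk}, p_0)$ with the surjective homomorphism $\Aut(\U_{1,\bk}^{b,2}) \to \Aut(\F_{1,\bk})$ of Proposition \ref{prop: Aut of Uabc}, whose kernel $\Aut(\U_{1,\bk}^{b,2})_{\F_1}$ is unipotent by \cite[Remark 3.6.4]{BFT23}.

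I would then consider the resulting $\Gamma$-equivariant short exact sequence
\[1 \to \Aut(\V_{b,\bk})_{\P^2} \to \Aut(\V_{b,\bk}) \to \Aut(\P^2_{\bk}, p_0) \to 1\]
and the associated exact sequence of pointed sets in Galois cohomology
\[
H^1\bigl(\Gamma, \Aut(\V_{b,\bk})_{\P^2}\bigr) \to H^1\bigl(\Gamma, \Aut(\V_{b,\bk})\bigr) \to H^1\bigl(\Gamma, \Aut(\P^2_{\bk}, p_0)\bigr).
\]
The first set is a singleton since its argument is unipotent and $\ch(\k)=0$ (see \cite[Chp.~III, Section 2.1, Proposition 6]{Ser02}). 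For any $\k$-form $V$ of $\V_{b,\bk}$ with a $\k$-point, its image in $H^1(\Gamma, \Aut(\P^2_{\bk}, p_0))$ is represented by a pair $(Y,q)$ where $Y$ is the induced $\k$-form of $\P^2$ and $q\in Y(\k)$ is the descent of $p_0$ (which is $\Gamma$-fixed by the preceding paragraph); by Ch\^atelet's theorem (Proposition \ref{prop:chatelet}) we have $Y \simeq \P^2_\k$, and since $\Aut(\P^2_\k)$ acts transitively on $\P^2_\k(\k)$, the pair $(Y,q)$ is equivalent to $(\P^2_\k, [0:1:0])$, i.e.~the image is the marked point.

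By the exactness of the pointed sequence, any such form $V$ lies in the image of the singleton set $H^1(\Gamma, \Aut(\V_{b,\bk})_{\P^2})$, hence corresponds to the marked point of $H^1(\Gamma, \Aut(\V_{b,\bk}))$. This proves that the trivial $\k$-form $\V_{b,\k}$ is the only $\k$-form of $\V_{b,\bk}$ with a $\k$-point, and the ``in particular'' statement follows since a rational $\k$-form always has $\k$-points. The main technical point to verify carefully is that the short exact sequence above is $\Gamma$-equivariant with the quotient surjecting onto the stabilizer $\Aut(\P^2_{\bk}, p_0)$ (and not a proper subgroup of $\Aut(\P^2_{\bk})$) with unipotent kernel; all of these facts are transported from the analogous statement for $\U_{1,\bk}^{b,2}$ via the commutative diagram of Corollary \ref{cor:Aut-Vb}.
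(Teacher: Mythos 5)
Your proposal is correct, but it takes a genuinely different route from the paper's proof. The paper does not run the Galois-cohomology machine on $\Aut(\V_{b,\bk})$ at all: instead it uses the fact that, for \emph{any} descent datum on $\V_{b,\bk}$, the whole diagram of Corollary \ref{cor:Aut-Vb} is $\Gamma$-equivariant and therefore descends to a diagram of $\k$-varieties $X_0 \to Y_0$, $Z_0 \to W_0$; a $\k$-point on $Y_0$ gives one on $W_0$, whence $W_0 \simeq \P_\k^2$ by Ch\^atelet, then Zariski-density of $\P_\k^2(\k)$ and birationality of $Z_0 \to W_0$ give $Z_0(\k)\neq\varnothing$, so $Z_0 \simeq \F_{1,\k}$ by Lemma \ref{lem: k-forms of Fa} and $X_0 \simeq \U_{1,\k}^{b,2}$ by (the proof of) Proposition \ref{prop: k-forms of Umemura bundles}, and the bijection between forms of $\U_{1,\bk}^{b,2}$ and of $\V_{b,\bk}$ induced by $\psi$ concludes. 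Your argument instead works directly with the short exact sequence $1 \to \Aut(\V_{b,\bk})_{\P^2} \to \Aut(\V_{b,\bk}) \to \Aut(\P^2_{\bk},p_0) \to 1$; the two nontrivial inputs — surjectivity onto the point stabilizer and unipotence of the kernel — are exactly the ones you transport from $\Aut(\U_{1,\bk}^{b,2}) \to \Aut(\F_{1,\bk})$ via Corollary \ref{cor:Aut-Vb}, and your treatment of the quotient (the class of the pair $(Y,q)$ is trivial because $q$ is a rational point, $Y\simeq\P^2_\k$ by Ch\^atelet, and $\PGL_3(\k)$ acts transitively on $\P^2(\k)$) is sound. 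What your route buys is in fact slightly more than the statement: since $p_0$ is fixed by every automorphism of $\V_{b,\bk}$ and is defined over $\k$, it descends to a $\k$-point of the base for \emph{every} twisted form, so $\H^1(\Gamma,\Aut(\P^2_{\bk},p_0))$ is a singleton and your argument shows that $\V_{b,\bk}$ has no nontrivial $\k$-form at all, with or without a $\k$-point; the paper's hypothesis $Y_0(\k)\neq\varnothing$ is used only to produce a rational point on the base, which your argument gets for free. Both proofs ultimately rest on the same structural facts about $\Aut(\U_{1}^{b,2})$, so neither is more elementary, but yours is more self-contained at the level of $\V_b$ while the paper's reduces to the previously established case of the Umemura bundle.
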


\begin{proof}
Let us denote $Y=\V_{b,\bk}$ and $X=\U_{1,\bk}^{b,2} $.
Let $\psi\colon X \to Y$ be the blow-up of the smooth rational curve ${q'}^{-1}([0:1:0])$ in $Y$. The cone of effective curves of $Y$ is generated by two curves $f'$ and $s'$ that satisfy $K_{Y} \cdot f'=-2$ and $K_{Y} \cdot s'=b-3 \geq -1$ (see \cite[Lemma 6.4.2]{BFT22}), and so the structure morphism $Y \to \P_{\bk}^2$ is $\Gamma$-equivariant for any $\Gamma$-action on $Y$. Similarly, it follows from  \cite[Lemma 6.4.1]{BFT22} that, for any $\Gamma$-action on $X$, the structure morphism $X \to \F_{1,\bk}$ and the divisorial contraction $\psi$ are $\Gamma$-equivariant.

Consider the canonical $\Gamma$-actions on $X$ and $Y$.
Since $\psi \Aut(X) \psi^{-1}=\Aut(Y)$ (by Corollary \ref{cor:Aut-Vb}), we have an isomorphism of $\Gamma$-groups $\Aut(Y) \simeq \Aut(X),\ \varphi \mapsto \varphi':= \psi^{-1} \circ \varphi \circ \psi$. This isomorphism induces a natural bijection of pointed sets between the $\Gamma$-actions on $X$ and on $Y$, resp. between the $\k$-forms of $X$ and of $Y$. 

Consider now a $\Gamma$-action on $Y$ corresponding to a $\k$-form $Y_0$ of $Y$ with a $\k$-point. This $\Gamma$-action lifts uniquely to $X$  and we have a $\Gamma$-equivariant commutative diagram
\[\xymatrix@R=4mm@C=2cm{
X \ar[r]^{\psi} \ar[d]  & Y \ar[d] \\
   \F_{1,\bk} \ar[r] & \P_{\bk}^2
  }\]  
which corresponds, in the category of $\k$-varieties, to the commutative diagram
\[\xymatrix@R=4mm@C=2cm{
X_0 \ar[r] \ar[d]  & Y_0 \ar[d] \\
   Z_0 \ar[r] & W_0
  }\]      
By assumption, $Y_0(\k) \neq \varnothing$, and thus $W_0(\k) \neq \varnothing$. In fact $W_0 \simeq \P_\k^2$ by Ch\^atelet's theorem, and so $W_0(\k)$ is Zariski dense in $W_0$. Since $Z_0 \to W_0$ is birational, $Z_0(\k) \neq \varnothing$, and so necessarily, we have that $Z_0 \simeq \F_{1,\k}$ (by Lemma \ref{lem: k-forms of Fa}) and thus $X_0$ is isomorphic to $\U_{1,\k}^{b,2}$ (see the proof of Proposition \ref{prop: k-forms of Umemura bundles}). Hence, the trivial $\k$-form $\V_{b,\k}$ is the only $\k$-form of $Y$ with a $\k$-point. 
\end{proof}

\section{Real forms of Schwarzenberger \texorpdfstring{$\P^1$}{P1}-bundles}\label{sec: Sb P1-bundles}
In this section we determine all the real forms of the Schwarzenberger $\P^1$-bundles $\SS_{b,\C} \to \P_\C^2$ (case \hyperlink{th:D_d}{(d)}).

\begin{proposition}\label{prop: real forms of S1}
The complex flag variety $\SS_{1,\C} \simeq \PGL_{3,\C}/B$, where $B$ is a Borel subgroup of $\PGL_{3,\C}$, has three non-isomorphic real forms:
\begin{itemize}
\item The trivial real form $\SS_{1,\R}$, which is rational, and such that $\Autz(\SS_{1,\R}) \simeq \PGL_{3,\R}$.
\item A nontrivial real form $\widetilde{\SS}_{1,\R}$, which is also rational, and such that  $\Autz(\widetilde{\SS}_{1,\R}) \simeq \PSU(1,2)$.
\item A nontrivial real form $\widehat{\SS}_{1,\R}$, which has no real points, and such that  $\Autz(\widehat{\SS}_{1,\R}) \simeq \PSU(3)$.
\end{itemize}
\end{proposition}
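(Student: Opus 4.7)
The plan is to classify the real forms of $\SS_{1,\C}$ via Galois cohomology. By Proposition \ref{prop: aut group of Sb}, we have $\Aut(\SS_{1,\C}) \simeq \PGL_{3,\C} \rtimes \Z/2\Z$, where the nontrivial element of $\Z/2\Z$ acts on $\PGL_{3,\C}$ by the outer automorphism $g \mapsto (g^{-1})^{t}$. The isomorphism classes of real forms of $\SS_{1,\C}$ are in bijection with $\H^1(\Gamma, \Aut(\SS_{1,\C}))$, and I will analyse this set via the short exact sequence
\[
1 \to \PGL_{3,\C} \to \Aut(\SS_{1,\C}) \to \Z/2\Z \to 1,
\]
which yields an exact sequence of pointed sets $\H^1(\Gamma, \PGL_{3,\C}) \to \H^1(\Gamma, \Aut(\SS_{1,\C})) \to \H^1(\Gamma, \Z/2\Z) = \Z/2\Z$.

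First I would show that $\H^1(\Gamma, \PGL_{3,\C})$ is a singleton, since it classifies central simple $\R$-algebras of degree $3$ and the only such algebra is $M_3(\R)$ (there is no cubic division algebra over $\R$). Hence the only real form coming from an inner cocycle is the trivial one, and $\Autz(\SS_{1,\R}) \simeq \PGL_{3,\R}$ as the Galois-fixed part of $\Autz(\SS_{1,\C}) = \PGL_{3,\C}$.

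Next I would analyse outer cocycles $c(\sigma) = (g, 1)$. The cocycle identity $c(\sigma) \cdot \sigma(c(\sigma)) = 1$ reduces, after expanding the semidirect product law, to $g = \bar{g}^{t}$, meaning that $g$ represents a non-degenerate Hermitian form on $\C^3$; the coboundary relation $g \sim h^{-1} g (\bar h^{t})^{-1}$ for $h \in \PGL_{3,\C}$ coincides with the equivalence of Hermitian forms by change of basis and nonzero complex scaling. By Sylvester's law of inertia (and the fact that scaling by $-1$ identifies signatures $(p,q)$ and $(q,p)$), there are exactly two such classes, of signatures $(1,2)$ and $(3,0)$, yielding two non-trivial outer forms which I would denote $\widetilde{\SS}_{1,\R}$ and $\widehat{\SS}_{1,\R}$. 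In total this gives three real forms. For the identity component of the automorphism group of a twisted form, I would compute the fixed points of $\PGL_{3,\C}$ under the twisted Galois action $h \mapsto g (\bar h^{-1})^{t} g^{-1}$; the fixed condition $h g h^{*} = g$ gives the projective unitary group $\PSU(p,q)$ of $g$, so $\Autz(\widetilde{\SS}_{1,\R}) \simeq \PSU(1,2)$ and $\Autz(\widehat{\SS}_{1,\R}) \simeq \PSU(3)$.

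It remains to address rationality and real points. The trivial form $\SS_{1,\R} \simeq \P(T_{\P_{\R}^{2}})$ is a Zariski-locally trivial $\p^1$-bundle over $\p^2_{\R}$ and is therefore rational. For $\widetilde{\SS}_{1,\R}$, the group $\PSU(1,2)$ is quasi-split over $\R$, so it admits a Borel subgroup defined over $\R$, and the big Bruhat cell in $\widetilde{\SS}_{1,\R}$ is $\R$-isomorphic to $\A^{3}_{\R}$; this gives rationality and in particular the existence of real points. For $\widehat{\SS}_{1,\R}$, I would describe the real structure explicitly: on complex flags in $\C^{3}$ it is $(V_1 \subset V_2) \mapsto (V_2^{\perp_h}, V_1^{\perp_h})$ where $h$ is the positive definite Hermitian form; a fixed flag forces $V_1 \subset V_1^{\perp_h}$, i.e.~$V_1$ isotropic for $h$, which is impossible, so $\widehat{\SS}_{1,\R}(\R) = \varnothing$. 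The main obstacle is the careful bookkeeping of Galois cohomology to identify outer cocycles with equivalence classes of Hermitian forms and to compute correctly the twisted automorphism groups; once this dictionary is in place, the rest follows from the classical theory of unitary groups and the Bruhat decomposition.
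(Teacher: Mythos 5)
Your proposal is correct and follows the same overall strategy as the paper (compute $\H^1(\Gamma,\Aut(\SS_{1,\C}))$ using $\Aut(\SS_{1,\C})\simeq\PGL_{3,\C}\rtimes\Z/2\Z$), but the two key sub-steps are carried out differently. The paper identifies $\H^1(\Gamma,\Aut(\SS_{1,\C}))$ with $\H^1(\Gamma,\Aut_{\mathrm{gr}}(\PGL_{3,\C}))$ and simply cites the known classification of the three real forms of $\PGL_{3,\C}$ as an algebraic group (Knapp, and \cite{MJT18} for the explicit real structures), whereas you recompute this set by hand: the inner part via $\H^1(\Gamma,\PGL_{3,\C})=\{*\}$ (no degree-$3$ central division algebra over $\R$) and the outer part via the dictionary between outer cocycles and Hermitian forms, Sylvester's law giving the two signatures $(1,2)$ and $(3,0)$. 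This is a sound and more self-contained argument; the one point you should make explicit is that cocycles must be compared modulo coboundaries by \emph{all} of $\Aut(\SS_{1,\C})$, not only by elements of $\PGL_{3,\C}$ — a short computation shows the extra coboundaries from the non-identity component replace the Hermitian matrix $g$ by one congruent to $\overline{g^{-1}}$ and hence preserve the (unordered) signature, so they create no further identifications, but this needs to be said. For rationality of $\widetilde{\SS}_{1,\R}$ the paper argues that the outer $\Gamma$-action swaps the two extremal rays, so $\Pic(\widetilde{\SS}_{1,\R})\simeq\Z$, and invokes \cite[Theorem 1.2(ii)]{KP23b}; your alternative — identifying $\widetilde{\SS}_{1,\R}$ with the variety of Borel subgroups of the quasi-split group $\PSU(1,2)$ and exhibiting the big Bruhat cell as an open $\A_\R^3$ — is more elementary and simultaneously produces real points, at the cost of having to justify that the twisted form of $G/B$ by a cocycle in $\Aut_{\mathrm{gr}}(G)$ is the variety of Borel subgroups of the twisted group. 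Your treatment of $\widehat{\SS}_{1,\R}$ (a real flag would force an $h$-isotropic line for a definite Hermitian form) matches the paper's Example \ref{ex: real structures for S1} exactly.
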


\begin{proof}
Since $\SS_{1,\C} \simeq \PGL_{3,\C}/B$ is a flag variety, \cite[Theorem 1]{Dem77} implies that 
\[
\Aut_\C(\SS_{1,\C}) \simeq \Aut_{\mathrm{gr}}(\PGL_{3,\C})  \simeq \PGL_{3,\C} \rtimes \Z/2\Z
\] 
as $\Gamma$-groups (for the canonical $\Gamma$-action on $\SS_{1,\C}$ and the natural $\Gamma$-action on $\PGL_{3,\C} \rtimes \Z/2\Z$ induced by the complex conjugation coefficientwise on $\PGL_{3,\C}$), from which we deduce that the set $\H^1(\Gamma,\Aut_\C(\SS_{1,\C})) \simeq \H^1(\Gamma,\Aut_{\mathrm{gr}}(\PGL_{3,\C}))$ has three elements (corresponding to the three non-isomorphic real forms of the complex algebraic group $\PGL_{3,\C}$ in the category of real algebraic groups; see e.g.~\cite[Section V\!I.10]{Kna02}). An explicit computation (e.g.~using \cite[Examples 1.12 and 2.13]{MJT18}) yields that $\SS_{1,\C}$ has two non-isomorphic real forms with real points, say $\SS_{1,\R}$ and $\widetilde{\SS}_{1,\R}$, such that $\Autz(\SS_{1,\R}) \simeq \PGL_{3,\R}$ and  $\Autz(\widetilde{\SS}_{1,\R}) \simeq \PSU(1,2)$, and another real form with no real points, say $\widehat{\SS}_{1,\R}$, such that $\Autz(\widehat{\SS}_{1,\R}) \simeq \PSU(3)$. Moreover, $\Pic(\widetilde{\SS}_{1,\R}) \simeq \Pic(\SS_{1,\C})^\Gamma \simeq \Z$ since the corresponding $\Gamma$-action on $\SS_{1,\C}$ swaps the two extremal rays of $\NE(\SS_{1,\C})$. Thus \cite[Theorem 1.2~(ii)]{KP23b} implies that $\widetilde{\SS}_{1,\R}$ is rational, which finishes the proof. 
\end{proof}

\begin{example}\label{ex: real structures for S1}
Identifying 
\[
\SS_{1,\C} \simeq \left\{\left([x_0:x_1:x_2],[y_0:y_1:y_2]\right)\in \P_\C^2 \times \P_\C^2 \  \middle| \ \sum_i x_iy_i=0\right\},
\]
we can write down the real structures on $\SS_{1,\C}$ corresponding to the three non-isomorphic real forms given in Proposition \ref{prop: real forms of S1}. 
\begin{itemize}
\item The restriction of the canonical real structure on $\P_\C^2 \times \P_\C^2$ corresponds to the trivial real form $\SS_{1,\R}$.
\item The restriction of the real structure
$([x_0:x_1:x_2],[y_0:y_1:y_2]) \mapsto ([-\overline{y_0}:\overline{y_1}:\overline{y_2}],[-\overline{x_0}:\overline{x_1}:\overline{x_2}])$ corresponds to the nontrivial rational real form $\widetilde{\SS}_{1,\R}$. Indeed, the real locus is non-empty and the $\Gamma$-action swaps the two extremal rays of $\NE(\SS_{1,\C})$ corresponding to the two projections onto each $\P_\C^2$ factor.
\item The restriction of the real structure
$([x_0:x_1:x_2],[y_0:y_1:y_2]) \mapsto ([\overline{y_0}:\overline{y_1}:\overline{y_2}],[\overline{x_0}:\overline{x_1}:\overline{x_2}])$ corresponds to the real form with no real points $\widehat{\SS}_{1,\R}$. Indeed, the real locus is empty.\end{itemize}
\end{example}

\begin{lemma}{\emph{(see \cite[Proposition 8.1(i)]{FJSVV22})}}\label{lem:FJSVV22} 
Let $\pi\colon X\to \P_\k^2$ be a conic fibration such that $X(\k)\neq\varnothing$ and $\pi_{\bk}\colon X_{\bk}\to \P_{\bk}^2$ is a $\P^1$-bundle. Then $X$ is rational.
\end{lemma}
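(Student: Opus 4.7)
The plan is to exhibit $\pi$ as a Brauer--Severi scheme over $\P_\k^2$ whose Brauer class is killed by the hypothesis $X(\k)\neq\varnothing$, and then to read off rationality from the resulting projective bundle structure. Because $\pi_{\bk}$ is a $\P^1$-bundle, every geometric fiber of $\pi$ is smooth, so $\pi$ is a Brauer--Severi scheme of relative dimension $1$, that is, the projectivization of a sheaf of Azumaya algebras $\mathcal{A}$ of degree $2$ on $\P_\k^2$, with class $[\mathcal{A}]\in \Br(\P_\k^2)$.

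The next step is to invoke the classical fact that the pullback map $\Br(\k)\to \Br(\P_\k^n)$ is an isomorphism for every $n\geq 0$ --- an inverse being given by restriction along any $\k$-rational point. Picking $p\in X(\k)$ and setting $q:=\pi(p)\in \P_\k^2(\k)$, the fiber $\pi^{-1}(q)$ is a smooth $\k$-form of $\P_{\bk}^1$, i.e.~a smooth conic, that contains the $\k$-point $p$; hence $\pi^{-1}(q)\iso \P_\k^1$ and its Brauer class in $\Br(\k)$ is trivial. The restriction of $[\mathcal{A}]$ at $q$ is precisely this trivial class, so through the identification $\Br(\P_\k^2)\iso\Br(\k)$ we deduce $[\mathcal{A}]=0$. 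Therefore $X\iso \P_{\P_\k^2}(E)$ for some rank $2$ vector bundle $E$ on $\P_\k^2$, and since $E$ is Zariski-locally trivial, there exists a nonempty open $U\subseteq \P_\k^2$ with $X|_U\iso U\times_\k \P_\k^1$; thus $X$ is $\k$-birational to $\P_\k^2\times_\k \P_\k^1$, which is $\k$-rational.

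The only nontrivial ingredient is the cohomological identification $\Br(\P_\k^n)\iso \Br(\k)$ in characteristic zero, which I consider classical; once this is granted, no step should present any genuine obstacle. As an alternative route, one could instead try to construct a $\k$-section of $\pi$ by blowing up the fiber $\pi^{-1}(q)\iso \P_\k^1$ in $X$ and analyzing the induced morphism onto the pencil of lines through $q$ in $\P_\k^2$, but the Brauer--Severi argument above is much cleaner.
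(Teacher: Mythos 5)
Your argument is correct. The paper does not prove this lemma itself but simply cites \cite[Proposition 8.1(i)]{FJSVV22}, and your proof is the standard one underlying that reference: the absence of degenerate fibers makes $\pi$ a Brauer--Severi scheme, the identification $\Br(\P_\k^2)\simeq\Br(\k)$ (via evaluation at a rational point) together with the $\k$-point on the fiber over $\pi(p)$ kills the class, and the resulting Zariski-local triviality of $\P(E)\to\P_\k^2$ gives rationality. All the ingredients you flag as classical (in particular $\Br(\P_\k^n)\simeq\Br(\k)$ in characteristic zero) are indeed standard, so there is no gap.
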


\begin{proposition}\label{prop: real forms of Sb}
Let $b\geq 2$. The complex threefold $\SS_{b,\C}$ has two non-isomorphic real forms: 
\begin{itemize}
\item The trivial real form $\SS_{b,\R}$, which is rational, and such that $\Aut(\SS_{b,\R}) \simeq \PGL_{2,\R}$.
\item A nontrivial real form $\widetilde{\SS}_{b,\R}$, which is rational if $b$ is odd and has no real points if $b$ is even, and such that  $\Aut(\widetilde{\SS}_{b,\R}) \simeq \SO_{3,\R}$.
\end{itemize}
\end{proposition}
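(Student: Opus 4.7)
By Proposition~\ref{prop: aut group of Sb}, $\Aut(\SS_{b,\C})\simeq \PGL_{2,\C}$ for $b\geq 2$, so the real forms of $\SS_{b,\C}$ are classified by $\H^1(\Gamma,\PGL_{2,\C})\simeq \Br(\R)[2]\simeq \Z/2\Z$, yielding exactly two non-isomorphic real forms. One is the trivial form $\SS_{b,\R}$, a $\P^1$-bundle over $\P^2_\R$ (hence rational) with automorphism group the split form $\PGL_{2,\R}$; the other, $\widetilde{\SS}_{b,\R}$, has automorphism group the anisotropic real form $\PGL_1(\mathbb{H})\simeq \SO_{3,\R}$.

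Because the structure morphism $q\colon \SS_{b,\C}\to \P^2_\C$ is $\Aut(\SS_{b,\C})$-equivariant (Proposition~\ref{prop: aut group of Sb}) and $\P^2_\C$ has no nontrivial real form (since $\H^1(\Gamma,\PGL_{3,\C})=\Br(\R)[3]=0$, cf.\ Proposition~\ref{prop:chatelet}), the map $q$ descends to a smooth conic fibration $\widetilde{q}\colon \widetilde{\SS}_{b,\R}\to \P^2_\R$. This exhibits $\widetilde{\SS}_{b,\R}$ as a rank-one Severi--Brauer scheme over $\P^2_\R$, classified by a class in $\Br(\P^2_\R)[2]\simeq \Br(\R)[2]$. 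This class is trivial if and only if $\widetilde{q}$ is a genuine $\P^1$-bundle, and its value can be detected by the real isomorphism type of the fiber over any real point of $\P^2_\R$.

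The main step is this fiber computation. I would take $g=\bigl[\begin{smallmatrix}0&-1\\1&0\end{smallmatrix}\bigr]$ as a representative of the nontrivial class in $\H^1(\Gamma,\PGL_{2,\C})$, so that the twisted real structure on $\SS_{b,\C}$ is $\theta=g\circ\overline{(\cdot)}$. Via the symmetric square representation, $g$ sends $[X:Y:Z]$ to $[Z:-Y:X]$, so $p_0:=[0:1:0]$ is $\theta$-fixed and does not lie on $C_0$; its two $\kappa$-preimages $([0:1],[1:0])$ and $([1:0],[0:1])$ are exchanged by $\theta$. Lifting $\theta$ to the tautological bundle $\O_{\P^1_\C}(-1)\hookrightarrow \O_{\P^1}\otimes\C^2$ via the antilinear map $(a,b)\mapsto(-\bar b,\bar a)$ on $\C^2$ (whose square equals $-1$) and taking the $(b+1)$-th tensor power to obtain a lift on $\O(-b-1)$, a direct computation shows that the induced antilinear involution on the fiber $V=\O(-b-1)|_{0}\oplus \O(-b-1)|_{\infty}$ of $\kappa_*\O(-b-1,0)$ at $p_0$ satisfies $\tilde\theta^2=(-1)^{b+1}$ and reads $[x:y]\mapsto [\bar y:(-1)^{b+1}\bar x]$ in homogeneous coordinates. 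Projectivizing, $\P(V)_\R\simeq \P^1_\R$ (real points exist) when $b$ is odd, and $\P(V)_\R\simeq C$ (the anisotropic conic, no real points) when $b$ is even.

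If $b$ is odd, the fiber of $\widetilde{q}$ over $p_0$ is thus $\P^1_\R$, so $\widetilde{\SS}_{b,\R}(\R)\neq\varnothing$ and Lemma~\ref{lem:FJSVV22} yields rationality. If $b$ is even, the Brauer class of $\widetilde{q}$ in $\Br(\P^2_\R)\simeq \Br(\R)$ equals $[\mathbb{H}]\neq 0$, so every fiber over a real point is an anisotropic conic without real points, giving $\widetilde{\SS}_{b,\R}(\R)=\varnothing$. The main technical obstacle is the sign bookkeeping in the tensor-power lift of $\theta$, where the parity of $b+1$ ultimately controls the real isomorphism type of the fiber.
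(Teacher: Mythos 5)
Your proposal is correct, and it reaches the two delicate conclusions (the automorphism group of the twisted form, and the parity criterion for real points) by a route that differs from the paper's at the key step. Both arguments begin identically: the $\Gamma$-equivariant isomorphism $\Aut(\SS_{b,\C})\simeq\Aut(\P_\C^2,C_0)\simeq\PGL_{2,\C}$ reduces the classification to $\H^1(\Gamma,\PGL_{2,\C})\simeq\Z/2\Z$, and both invoke Lemma~\ref{lem:FJSVV22} to pass from the existence of a real point to rationality. Where the paper then cites \cite[Corollary 5.17 and Appendix C]{MJT20} — general results on real forms of almost homogeneous $\SL_2$-threefolds — to obtain both $\Aut(\widetilde{\SS}_{b,\R})\simeq\SO_{3,\R}$ and the statement that $\widetilde{\SS}_{b,\R}(\R)\neq\varnothing$ iff $b$ is odd, you instead get the automorphism group by twisting $\PGL_{2,\C}$ along the cocycle (giving $\PGL_1(\mathbb{H})\simeq\SO_{3,\R}$) and settle the real-point question by a hands-on fiber computation: you lift the antipodal involution to the tautological bundle, take the $(b+1)$-th tensor power, and read off that the induced antilinear involution on the rank-two fiber of $\kappa_*\O(-b-1,0)$ over the real point $[0:1:0]$ squares to $(-1)^{b+1}$, so the fiber of the descended conic bundle is $\P_\R^1$ or the anisotropic conic according to the parity of $b$; the constancy of the Brauer class on $\P_\R^2$ then upgrades the single-fiber computation to emptiness of the whole real locus when $b$ is even. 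Your sign $(-1)^{b+1}$ matches the relation $A(s+t,st)\,A(\tfrac{-(s+t)}{st},\tfrac{1}{st})=(-1)^{b-1}I_2$ that the paper records in Example~\ref{ex: real structures for Sb}, but the paper uses that example only as a supplementary explicit description rather than as the engine of the proof. The trade-off is clear: your argument is self-contained and makes the parity phenomenon transparent as the sign of the square of a real versus quaternionic structure on a two-dimensional fiber, at the cost of some bundle-lifting bookkeeping (and you should note explicitly, as the paper does, that the identification $\Aut(\SS_{b,\C})\simeq\PGL_{2,\C}$ is $\Gamma$-equivariant — automatic here since $\SS_b$ and the isomorphism are defined over $\Q$); the paper's citation of \cite{MJT20} is shorter and delivers the automorphism group and the real-point criterion in one stroke.
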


\begin{proof}
It follows from \cite[Lemma 5.3.9]{BFT22}, which describes the cone of effective curves and the intersection form on $X=\SS_{b,\C}$, and Proposition \ref{prop: aut group of Sb} that we have the following $\Gamma$-equivariant isomorphisms
\[
\Aut_\C(X) \simeq  \Aut(\P^2_\C,C_0) \simeq \Aut(C_0).
\]
These isomorphisms induce a bijection $\H^1(\Gamma,\Aut(X)) \simeq \H^1(\Gamma,\Aut(C_0))$, and so $X$ has exactly two non-isomorphic real forms: the trivial real form $\SS_{b,\R}$, which is rational and satisfies $\Autz(\SS_{b,\R}) \simeq \PGL_{2,\R}$, and a nontrivial real form that we denote by $\widetilde{\SS}_{b,\R}$. 

Recall that $X$ is an almost homogeneous $\SL_{2,\C}$-threefold (see \cite[Remark 4.2.7]{BFT23}). It follows from \cite[Corollary 5.17 and Appendix C]{MJT20} that $\Aut(\widetilde{\SS}_{b,\R}) \simeq \SO_{3,\R}$ and that $\widetilde{\SS}_{b,\R}(\R) \neq \varnothing$ if and only if $b$ is odd. Moreover, since any $\Gamma$-action on $X$ fixes the extremal rays of $\NE(X)$, there is a conic fibration structure $\widetilde{\SS}_{b,\R} \to \P_\R^2$ whose complexification yields the $\P^1$-bundle structure $X \to \P_\C^2$, and so $\widetilde{\SS}_{b,\R}$ is rational if and only if $\widetilde{\SS}_{b,\R}(\R) \neq \varnothing$ by Lemma \ref{lem:FJSVV22}. This concludes the proof.
\end{proof}

An explicit description of the threefold $\widetilde{\SS}_{b,\R}$ is given in the following example. It is also shown that, when $b$ is odd, $\widetilde{\SS}_{b,\R} \to \P_\R^2$ is a $\P^1$-bundle.

\begin{example}\label{ex: real structures for Sb}
Let $b \geq 2$.
Denote by $U_0,U_1\subset \p_\C^2$ the two open subsets 
\[U_0= \{[X:Y:Z] | X \neq 0\} \simeq \A_\C^2,\quad U_1= \{[X:Y:Z] | Z \neq 0\} \simeq \A_\C^2.\] 
By \cite[Lemma 4.2.1]{BFT22}, the restriction of $\SS_{b,\C}$ on $\P_\C^2 \setminus \{ [0:1:0] \}$ is obtained by gluing $\p_\C^1 \times U_0$ and $\p_\C^1 \times U_1$ along $\p_\C^1 \times (U_0 \cap U_1)$ via the isomorphism given by
\[
\begin{array}{rcccccc}
\theta \colon & \p_\C^1 \times U_0 &\dasharrow &\p_\C^1 \times U_1 \\
&\left(\begin{bmatrix} x_0 \\ x_1 \end{bmatrix} ,[1:u:v] \right) & \mapsto & \left( \hat{A}(u,v) \begin{bmatrix} x_0 \\ x_1 \end{bmatrix},\left[\dfrac{1}{v}:\dfrac{u}{v} :1\right]\right),\end{array}
\]
where $A(u,v) \in \GL_2(\C[u,v,\frac{1}{v}])$ is uniquely defined by the equality
\[ 
A(s+t,st)=\dfrac{1}{s-t}\begin{bmatrix} s^{b}-t^{b} & st(s^{b-1}-t^{b-1}) \\
 s^{b+1}-t^{b+1}& st(s^{b}-t^{b})\end{bmatrix}
 \]
and $\hat{A}(u,v)$ is the image of $A(u,v)$ in $\PGL_2(\C[u,v,\frac{1}{v}])$. 
One can check that 
\begin{equation}\label{eq: relation for A}
A(s+t,st)\,A\left(\frac{-(s+t)}{st},\frac{1}{st}\right)=(-1)^{b-1}I_2\  \text{ in }\GL_2(\C[u,v,\frac{1}{v}]),
\end{equation} 
and thus
$\hat{A}(s+t,st)\hat{A}(\frac{-(s+t)}{st},\frac{1}{st})=\hat{I_2}$ in $\PGL_2(\C[u,v,\frac{1}{v}])$.

The antiregular map
\[
\begin{array}{rcccccc}
\gamma \colon & \p^1 \times U_1 &\rightarrow &\p^1 \times U_0 \\
&\left(\begin{bmatrix} x_0 \\ x_1 \end{bmatrix} ,[p:q:1] \right) & \mapsto & \left( \begin{bmatrix} \overline{x_0} \\ \overline{x_1} \end{bmatrix},\left[1:-\overline{q}:\overline{p}\right]\right),\end{array}
\]
gives rise to a real structure $\widetilde{\gamma}$ on $\SS_{b,\C}$. Indeed, one can check (using \eqref{eq: relation for A}) that $(\theta \circ \gamma)_{|\p^1 \times (U_0 \cap U_1)}$ is an involution  and then notice that any antiregular involution defined on the inverse image of $\P_\C^2 \setminus \{[0:1:0]\}$ via the structure morphism $\SS_{b,\C} \to \P_\C^2$ extends uniquely to $\SS_{b,\C}$. 
Let us note that the real structure defined by  $\widetilde{\gamma}$ is not equivalent to the canonical real structure since the $\Gamma$-action on $C_0 \subset \P_\C^2$ induced by $\widetilde{\gamma}$, which is given by $[X:Y:Z] \mapsto [\overline{Z}:-\overline{Y}:\overline{X}]$, corresponds to the conic with no real points. 

If moreover $b$ is odd, then \eqref{eq: relation for A} implies that the real structure $\widetilde{\gamma}$ on $\SS_{b,\C}$ is induced by a $\Gamma$-linearization of a rank $2$ vector bundle $\mathcal{E}_b \to \P_\C^2$, whose transition function is given by 
\[
\begin{array}{rcccccc}
\widetilde{\theta} \colon & \A^2_\C \times U_0 &\dasharrow &\A_\C^2 \times U_1 \\
&\left(\begin{pmatrix} x_0 \\ x_1 \end{pmatrix} ,[1:u:v] \right) & \mapsto & \left( A(u,v) \begin{pmatrix} x_0 \\ x_1 \end{pmatrix},\left[\dfrac{1}{v}:\dfrac{u}{v} :1\right]\right),\end{array}
\]
such that $\P(\mathcal{E}_b) \simeq \SS_{b,\C}$ as $\P^1$-bundles over $\P_\C^2$. In particular, $\widetilde{\SS}_{b,\R} \to \P_\R^2$ is a $\P^1$-bundle and we recover the fact that $\widetilde{\SS}_{b,\R}$ is rational. 
\end{example}

\section{Real forms of Umemura quadric fibrations}\label{sec: real forms of Umemura quadric fibrations}
In this section we determine all the real forms of the Umemura quadric fibrations $\QQ_{g,\C} \to \P_\C^1$ (case \hyperlink{th:D_h}{(h)}), where $g \in \C[u_0,u_1]$ is a homogeneous polynomial of degree $2n \geq 2$. The two main results of this section are Proposition \ref{prop: existence real forms Qg} and Theorem \ref{th: real forms of Qg}.

\subsection{Existence of real forms for $\QQ_{g,\C}$} 
Let us start by determining for which $g \in \C[u_0,u_1]$ the complex threefold $\QQ_{g,\C}$ admits real forms.

\begin{lemma}\label{lem: iso classes for Qg}
Let $g,g' \in \C[u_0,u_1]$ be two homogeneous polynomials of degree $2n\geq 2$. 
Then $\QQ_{g,\C} \simeq \QQ_{g',\C}$ if and only if there exist $\lambda \in \C^*$ and $\varphi=\begin{bmatrix}
a & b \\ c & d
\end{bmatrix} \in \SL_{2}(\C)$ such that $g'=\lambda (g \circ \varphi)$.
\end{lemma}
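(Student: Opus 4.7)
The forward direction is an explicit construction. Given $\varphi = \begin{bmatrix} a & b \\ c & d \end{bmatrix} \in \SL_{2}(\C)$ and $\lambda \in \C^*$, I would fix $\mu \in \C^*$ with $\mu^2 = \lambda$ and write down the morphism
\[
\Phi \colon \QQ_{\lambda(g \circ \varphi)} \longrightarrow \QQ_g, \quad [x_0:x_1:x_2:x_3;\, u_0:u_1] \longmapsto [x_0:x_1:x_2:\mu x_3;\, au_0+bu_1:cu_0+du_1].
\]
A direct substitution check shows that $\Phi$ is compatible with the $\G_m^2$-action and pulls the defining equation of $\QQ_g$ back to that of $\QQ_{\lambda(g\circ\varphi)}$, so $\Phi$ is an isomorphism.

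For the converse, let $\Phi \colon \QQ_g \to \QQ_{g'}$ be an isomorphism. The plan is to reduce first to an isomorphism over $\id_{\P^1_\C}$ and then show that the two polynomials are proportional. Since $g$ and $g'$ are not squares (the implicit setting of Umemura quadric fibrations), $\pi_g$ and $\pi_{g'}$ are Mori fibrations and their projections to $\P^1_\C$ are intrinsic. Thus $\Phi$ descends to some $\varphi \in \Aut(\P^1_\C) = \PGL_{2}(\C)$; lifting $\varphi$ to $\tilde\varphi \in \SL_{2}(\C)$ and composing $\Phi$ with the inverse of the isomorphism constructed above for the data $(\tilde\varphi, 1)$ applied to $g'$, I obtain an isomorphism $\Theta \colon \QQ_g \to \QQ_h$ over $\id_{\P^1_\C}$, where $h := g' \circ \tilde\varphi$. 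Establishing the relation $h = \lambda g$ then yields $g' = \lambda(g \circ \tilde\varphi^{-1})$ with $\tilde\varphi^{-1} \in \SL_{2}(\C)$, completing the proof.

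The crux, and the main obstacle, is this reduced claim: any isomorphism $\QQ_g \simeq \QQ_h$ over $\id_{\P^1_\C}$ forces $h$ to be a scalar multiple of $g$. My plan combines a generic-fiber computation with a local invariant at the zeros of $g$. Restricting $\Theta$ to the generic fiber yields a projective equivalence of smooth quadric surfaces over $\C(u)$ whose defining quadratic forms are $\langle 1,1,1,-g\rangle$ and $\langle 1,1,1,-h\rangle$ after diagonalising $x_0^2 - x_1 x_2$ over $\C$; comparing discriminants modulo squares (noting that scaling one form by $c \in \C(u)^*$ changes the discriminant by $c^4$, a square) forces $h/g$ to be a square in $\C(u)^*$, so that $\mathrm{div}(h) - \mathrm{div}(g)$ is a $2$-divisible divisor on $\P^1_\C$. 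To upgrade this to equality of divisors, I would use the local analytic structure of $\QQ_g$ at the vertex of a cone fiber: in the affine chart $x_3 \neq 0$, with $g(u) = u^k \cdot (\mathrm{unit})$ locally at a zero of order $k = m_{u_0}(g) \geq 2$, the equation becomes $x_0^2 - x_1 x_2 - u^k \cdot (\mathrm{unit}) = 0$, an isolated hypersurface singularity whose Jacobian ideal is $(x_0, x_1, x_2, u^{k-1})$ and whose Milnor number is therefore $k-1$; this distinguishes distinct values of $k$ as analytic invariants. Hence $\Theta$ preserves $m_{u_0}(g)$ at every $u_0 \in \P^1_\C$, yielding $\mathrm{div}(g) = \mathrm{div}(h)$, and since both polynomials have degree $2n$, the desired relation $h = \lambda g$ follows.
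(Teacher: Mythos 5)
Your proof is correct, and it takes a genuinely different route from the paper's. The paper's argument passes through the hyperelliptic curve $(u_2^2=g(u_0,u_1))$ in $\P(1,1,n)_\C$: it quotes \cite[Lemma 4.4.5]{BFT22} to say that an isomorphism $\QQ_{g,\C}\simeq\QQ_{g',\C}$ forces these curves to be isomorphic, and then classifies the automorphisms of $\P(1,1,n)_\C$ carrying one such curve to the other (with a separate plane-conic argument for $n=1$); you instead reduce to an isomorphism over $\id_{\P^1_\C}$ and recover $\mathrm{div}(g)$ from fibrewise and local data, which is self-contained and avoids the external lemma. Three remarks. (1) Your Milnor-number step already does all the work: the fibre over $p$ is a cone iff $m_p(g)\geq 1$, the threefold is singular at the vertex of that cone iff $m_p(g)\geq 2$, and the Tjurina/Milnor number $m_p(g)-1$ of the quasi-homogeneous germ $x_0^2-x_1x_2-u^{m_p(g)}=0$ is an invariant of the abstract analytic germ; so $\mathrm{div}(g)=\mathrm{div}(h)$ follows at once and the generic-fibre discriminant comparison, while correct, is redundant. (2) The assertion that the $\P^1$-fibration is intrinsic should be pinned down by the cone-of-curves computation already used in Proposition \ref{prop: aut group of Qg}: the two extremal rays of $\NE(\QQ_{g,\C})$ have $K$-degrees $-2$ and $n-2$, which are distinct for all $n\geq 1$, so no isomorphism can exchange them. (3) You assume $g,g'$ are not squares; the lemma as stated omits this hypothesis, but the paper's own proof needs it equally (otherwise $u_2^2=g$ does not define an irreducible hyperelliptic curve), so this is a shared, and in context harmless, restriction rather than a defect of your argument.
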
 

\begin{proof}
The ``if'' part is clear; indeed, the automorphism of $\P(\O_{\P^1}^{\oplus 3}\oplus \O_{\P^1}(n))$ given by
\[
[x_0:x_1:x_2:x_3;u_0:u_1] \mapsto [x_0:x_1:x_2:\sqrt{\lambda}x_3;au_0+bu_1:cu_0+du_1]
\]
induces an isomorphism $\QQ_{g',\C} \simeq \QQ_{g,\C}$.
 
Let us now assume that $\QQ_{g,\C} \simeq \QQ_{g',\C}$. Then the hyperelliptic curves $C$ and $C'$, defined respectively by the equations $g(u_0,u_1)-u_2^2=0$ and $g'(u_0,u_1)-u_2^2=0$ in the weighted projective space $\P(1,1,n)_\C$, are isomorphic (see \cite[Lemma 4.4.5]{BFT22} and its proof). 

If $n \geq 2$, any automorphism of $\P(1,1,n)_\C$ can be written
\[ [u_0:u_1:u_2] \mapsto [au_0+bu_1:cu_0+du_1: \frac{1}{\sqrt{\lambda}} u_2+p(u_0,u_1)],\]
where $\varphi:=\begin{bmatrix}
a &b \\ c & d
\end{bmatrix} \in \SL_2(\C)$, $\lambda \in \C^*$, and $p \in \C[u_0,u_1]$ is a homogeneous polynomial of degree $n$. But such an automorphism of $\P(1,1,n)_\C$ maps $C'$ to $C$ if and only if $p=0$ and $g'=\lambda(g \circ \varphi)$, which proves the claim for $n \geq 2$.

If $n=1$, then $C$ and $C'$ are plane conics. As their anticanonical divisor induces an embedding into $\p^2_\C$, any isomorphism $C'\simeq C$ extends to an automorphism $\alpha$ of $\p_\C^2$. 
Since the equations of $C$ and $C'$ do not contain any $u_0u_2$ and $u_1u_2$ terms, $\alpha$ must be of the form 
\[
 \mathrm{PGL}_3(\C)\ni\alpha=\begin{bmatrix}a&b&0\\ c&d&0\\ 0&0&\mu\end{bmatrix}
\quad\text{with}\quad
\varphi:=\begin{bmatrix}
a & b \\ c & d
\end{bmatrix}\in\SL_2(\C).
\]
Then
\[
\left(g'(u_0,u_1)-u_2^2=0\right)=C'=\alpha^{-1}(C)=\left(g(\varphi(u_0,u_1))-(\mu u_2)^2=0\right)
=\left(\frac{1}{\mu^2}g(\varphi(u_0,u_1))-u_2^2=0\right),
\]
which proves the claim for $n=1$.
\end{proof}

The proof of the next result is inspired by the proof of \cite[Proposition 3.11]{BBP23}, where Blanc-Bot-Poloni determined when the so-called \textit{Danielewski surfaces} admit real forms.

\begin{proposition}\label{prop: existence real forms Qg}
Let $g \in \C[u_0,u_1]$ be a homogeneous polynomial of degree $2n \geq 2$.
The following conditions are equivalent:
\begin{enumerate}
\item\label{item a 4.42} The complex threefold $\QQ_{g,\C}$ admits a real form.
\item\label{item b 4.42} There exist $\lambda \in \C^*$ and $\varphi=\begin{bmatrix}
a & b \\ c & d
\end{bmatrix} \in \SL_{2}(\C)$ such that $\lambda (g \circ \varphi) \in \R[u_0,u_1]$.
\item\label{item c 4.42} There exists $g' \in \R[u_0,u_1]$ such that $\QQ_{g,\C} \simeq \QQ_{g',\C}$.
\end{enumerate}
Moreover, for $g \in \R[u_0,u_1]$, a canonical real structure on $\QQ_{g,\C}$ is given by 
\begin{equation}\label{eq: canonical real structure on Qg}
 \mu_1\colon  [x_0:x_1:x_2:x_3;u_0:u_1] \mapsto [\overline{x_0}:\overline{x_1}:\overline{x_2}:\overline{x_3};\overline{u_0}:\overline{u_1}].
 \end{equation}
\end{proposition}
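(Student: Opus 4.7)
\smallskip

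The strategy is to exploit Lemma~\ref{lem: iso classes for Qg} together with the description of $\Aut(\QQ_{g,\C})$ from Proposition~\ref{prop: aut group of Qg} to translate the existence of an antiregular involution into a concrete reality condition on $g$, and then show that this condition always brings $g$ into $\R[u_0,u_1]$ after a suitable $\SL_2(\C)$-substitution and rescaling.

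The equivalence \ref{item b 4.42} $\Leftrightarrow$ \ref{item c 4.42} is immediate from Lemma~\ref{lem: iso classes for Qg}. The implication \ref{item c 4.42} $\Rightarrow$ \ref{item a 4.42} is also direct: when $g'\in\R[u_0,u_1]$, the formula $\mu_1$ of \eqref{eq: canonical real structure on Qg} manifestly defines an antiregular involution of $\QQ_{g',\C}$ (coordinatewise complex conjugation preserves the equation precisely because $g'$ has real coefficients), which transports via the chosen isomorphism $\QQ_{g,\C}\simeq\QQ_{g',\C}$ to a real form of $\QQ_{g,\C}$.

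The core of the proof is the implication \ref{item a 4.42} $\Rightarrow$ \ref{item b 4.42}. Given a real structure $\sigma$ on $\QQ_{g,\C}$, Proposition~\ref{prop: aut group of Qg} makes $\pi_g$ equivariant, so $\sigma$ descends to an antiregular involution $\tau$ on $\P_\C^1$ realizing one of the two real forms of $\P_\C^1$ (namely $\P_\R^1$ or the nontrivial conic $C$). Using Lemma~\ref{lem: iso classes for Qg} to replace $g$ by $\lambda(g\circ\varphi)$ without changing the isomorphism class of $\QQ_{g,\C}$, we may normalize $\tau$ to be either $[u_0:u_1]\mapsto[\overline{u_0}:\overline{u_1}]$ (Case~A) or $[u_0:u_1]\mapsto[-\overline{u_1}:\overline{u_0}]$ (Case~B). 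The description $\Aut(\QQ_g)_{\P^1}\simeq\PGL_{2}\times\Z/2\Z$ of Proposition~\ref{prop: aut group of Qg} then forces $\sigma$ to act on the fibers by an element preserving the quadric form $x_0^2-x_1x_2$ together with a multiplicative scalar on $x_3$. Imposing that $\sigma$ preserves the defining equation $x_0^2-x_1x_2-g(u_0,u_1)x_3^2=0$ translates into a reality relation $\overline{g}=\mu\,g$ in Case~A and $\overline{g}(u_0,u_1)=\mu\,g(-u_1,u_0)$ in Case~B for some $\mu\in\C^*$; applying the relation twice and using that $\deg(g)$ is even forces $|\mu|=1$ in both cases.

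Case~A immediately yields $\lambda g\in\R[u_0,u_1]$ for a suitable unit $\lambda\in\C^*$ (with $\lambda^2=\mu^{-1}$), establishing \ref{item b 4.42} with $\varphi=I$. The main obstacle is Case~B, where one must untwist the circle-type reality condition by an additional $\SL_2(\C)$-substitution: after absorbing a phase into $g$ to reduce the relation to $\overline{g}(u_0,u_1)=g(-u_1,u_0)$, the plan is to diagonalize the matrix $\begin{bmatrix}0&-1\\1&0\end{bmatrix}$ over $\C$, using for instance $\psi=\tfrac{1}{\sqrt{-2i}}\begin{bmatrix}1&1\\i&-i\end{bmatrix}\in\SL_2(\C)$ (possibly preceded by a diagonal rescaling aligning the phases of the $u_0^2$ and $u_1^2$ coefficients, whose moduli are forced to be equal by the reality relation). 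A direct computation then shows that $2i\cdot(g\circ\psi)\in\R[u_0,u_1]$, which yields \ref{item b 4.42} and finishes the proof.
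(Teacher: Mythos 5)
Your reduction of \ref{item a 4.42} $\Rightarrow$ \ref{item b 4.42} to the two functional equations is reasonable in spirit (Case~A is fine, and matches what one gets from the paper's route via the induced isomorphism $\QQ_{g,\C}\simeq\QQ_{\overline{g},\C}$ and Lemma~\ref{lem: iso classes for Qg}), but your resolution of Case~B is wrong, and Case~B is precisely where the content of the implication lies. The substitution $\psi=\tfrac{1}{\sqrt{-2i}}\begin{bmatrix}1&1\\ i&-i\end{bmatrix}$ does diagonalize $R=\begin{bmatrix}0&-1\\1&0\end{bmatrix}$, but the relation you must transform is antiholomorphic: setting $h=g\circ\psi$, one has $\overline{h}=\overline{g}\circ\overline{\psi}=g\circ R\circ\overline{\psi}=h\circ(\psi^{-1}R\overline{\psi})$, and since complex conjugation swaps the two eigenvectors $(1,\pm i)$ of $R$, a direct computation gives $\psi^{-1}R\overline{\psi}=\tfrac{is}{\overline{s}}\,R$ with $s=\sqrt{-2i}$ — a unimodular scalar times $R$ again, not a diagonal matrix. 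So the relation for $h$ is still of rotation type, and the claim that $2i(g\circ\psi)\in\R[u_0,u_1]$ is equivalent to the extra condition that $g(u_1,u_0)$ be a scalar multiple of $g(u_0,u_1)$, which is not implied by the Case~B relation. For example $g=iu_0^4+(-1+i)u_0^3u_1+(1+i)u_0u_1^3-iu_1^4$ satisfies $\overline{g}(u_0,u_1)=g(-u_1,u_0)$ but is not swap-symmetric up to scalar, so your recipe fails for it (your worked examples such as $u_0^2+iu_0u_1+u_1^2$ happen to be symmetric, which hides the problem). The obstruction is structural: the antipodal involution $[u_0:u_1]\mapsto[-\overline{u_1}:\overline{u_0}]$ and the standard conjugation represent the two distinct classes in $\H^1(\Gamma,\PGL_2(\C))$ (the anisotropic conic versus $\P_\R^1$), so no $\SL_2(\C)$-substitution, diagonal rescaling included, can convert the Case~B relation into a Case~A relation. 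Any correct treatment of Case~B must use finer information about $g$ than the functional equation alone.

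This is exactly what the paper's proof does, and it is a genuinely different argument: it never splits into cases according to the induced antiregular involution on $\P_\C^1$. Instead it first normalizes $g$ (non-monomial case) to the shape $g=u_0^ru_1^{2n-r}+\sum_{s\le r-2}c_su_0^su_1^{2n-s}$, observes that a real structure induces a biregular isomorphism $\QQ_{g,\C}\simeq\QQ_{\overline{g},\C}$, applies Lemma~\ref{lem: iso classes for Qg} to get $\overline{g}=\lambda(g\circ\varphi)$, and then extracts from the coefficient identities in this normal form that $\varphi$ is diagonal with $|a|=1$, after which an explicit unimodular rescaling of the variables makes $g$ real. Two smaller points: your appeal to Proposition~\ref{prop: aut group of Qg} to say that $\sigma$ ``acts on the fibers by an element preserving $x_0^2-x_1x_2$'' should be routed through the regular isomorphism $\QQ_{g,\C}\simeq\QQ_{\overline{g},\C}$ (an antiregular map is not an automorphism, and that proposition moreover assumes $g$ is not a square, which the statement here does not); and the monomial case of $g$ needs to be disposed of separately, as the paper does at the outset.
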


\begin{proof}
If $g \in \R[u_0,u_1]$, then it is clear that $\mu_1$ defines a real structure on $\QQ_{g,\C}$.
Hence, \ref{item c 4.42} implies \ref{item a 4.42}. The equivalence \ref{item b 4.42} $\Leftrightarrow$ \ref{item c 4.42} is Lemma \ref{lem: iso classes for Qg}.
Let us prove \ref{item a 4.42} $\Rightarrow$ \ref{item b 4.42}.

Since \ref{item b 4.42} is easily verified when $g$ is a monomial, we may assume that $g$ is not a monomial. 
Replacing $g$ by some $\lambda (g \circ \varphi)$ (see Lemma \ref{lem: iso classes for Qg}), we may further assume that 
\begin{equation}\label{eq: reduced form for g}
g(u_0,u_1)=u_0^r u_1^{2n-r} + \sum_{s \leq r-2} c_s u_0^s u_1^{2n-s},\ \ \text{for some $r \in \{2,\ldots,2n\}$, and with $c_s \in \C$.}
\end{equation}
Assume that $\QQ_{g,\C}$ admits a real form, i.e.~that there exists a real structure $\mu\colon \QQ_{g,\C} \to \QQ_{g,\C}$. 
Then $\mu$ induces an isomorphism of complex threefolds $\theta\colon \QQ_{g,\C} \simeq \QQ_{\overline{g},\C}=\QQ_{g,\C} \times_{\Spec(\C)} \Spec(\C)$, where $\Spec(\C) \to \Spec(\C)$ is the morphism $\Spec(z \mapsto \overline{z})$  and $\overline{g} \in \C[u_0,u_1]$ is the homogeneous polynomial of degree $2n$ obtained from $g$ by taking the complex conjugate of each coefficient. Indeed, we have the following commutative diagram (whose square is Cartesian):
\[\xymatrix@R=4mm@C=2cm{
   \QQ_{g,\C} \ar@/^/[rrd]^{\mu} \ar@/_/[rdd] \ar@{.>}[rd]^{\theta} \\
    &\QQ_{\overline{g},\C}  \ar[d] \ar[r]  & \QQ_{g,\C}  \ar[d] \\
  &{\Spec(\C)} \ar[r]^{\Spec(z \mapsto \overline{z})} & {\Spec(\C)} }
\]
According to Lemma \ref{lem: iso classes for Qg}, there exist $\lambda \in \C^*$ and $\varphi=\begin{bmatrix}
a & b \\ c & d
\end{bmatrix} \in \SL_{2}(\C)$ such that $\overline{g}=\lambda (g \circ \varphi)$.
Using \eqref{eq: reduced form for g}, we see that necessarily $b=c=0$,  $\lambda=(a^rd^{2n-r})^{-1}=a^{2(n-r)}$ and $|a|=1$ (because if $c_{s_0} \neq 0$, then we must have $\overline{c_{s_0}}=\lambda a^{2(s_0-n)} c_s=a^{2(s_0-r)}c_{s_0}$, hence $|a|=1$ by taking the modulus on both sides of the equality).
Let $\alpha \in \C^*$ be such that $\alpha^2=a$; in particular, $|\alpha|=1$. Let $g'(u_0,u_1):=\alpha^{2(n-r)}g(\alpha u_0,\alpha^{-1}u_1)$. 
Let us check that $g' \in \R[u_0,u_1]$, which will conclude the proof:
{\small
\[
\overline{g'}(u_0,u_1)=\alpha^{2(r-n)}\overline{g}(\alpha^{-1} u_0,\alpha u_1)=\alpha^{2(r-n)}\lambda {(g \circ \varphi)}(\alpha^{-1} u_0,\alpha u_1)=\alpha^{2(n-r)}g(\alpha u_0,\alpha^{-1}u_1)=g'(u_0,u_1).
\]}
\end{proof}

\subsection{Classification of the real forms of $\QQ_{g,\C}$ through Galois cohomology}\label{subsec: real forms Qg via Galois cohomology}

From now on, and without loss of generality (see Proposition \ref{prop: existence real forms Qg}), we consider real forms of $\QQ_{g,\C}$ only when $g \in \R[u_0,u_1]$. As before, we will denote by $\QQ_{g,\R}$ the trivial real form of $\QQ_{g,\C}$. Our goal, in the rest of Section \ref{sec: real forms of Umemura quadric fibrations}, is to determine the other real forms of $\QQ_{g,\C}$ using Galois cohomology.

\begin{proposition}\label{propo: first four real forms of Qg}
Let $g \in \R[u_0,u_1]$ be a homogeneous polynomial, which is not a square and is of degree $2n$ for some $n \in \N_{\geq 1}$.
Let $\Gamma$ act on $\Aut(\QQ_{g,\C})$ by $\mu_1$-conjugation, where $\mu_1\colon \QQ_{g,\C} \to \QQ_{g,\C}$ is the canonical real structure defined by \eqref{eq: canonical real structure on Qg}. Then the short exact sequence \eqref{eq: ES for Aut(Qg)} of Proposition \ref{prop: aut group of Qg} induces an exact sequence in Galois cohomology 
\[
\H^1(\Gamma,\Aut(\QQ_{g,\C})_{\P^1}) \to \H^1(\Gamma,\Aut(\QQ_{g,\C})) \to \H^1(\Gamma,F).
\] 
The set $\H^1(\Gamma,\Aut(\QQ_{g,\C})_{\P^1})$ parametrizes the four real forms of $\QQ_{g,\C}$ defined as the hypersurfaces in $\P(\O_{\P^1}^{\oplus 3}\oplus \O_{\P^1}(n))$ given by the equations
\begin{itemize}
\item $\QQ_{g,\R}\colon\ x_0^2-x_1x_2-g(u_0,u_1)x_3^2=0$ (the trivial real form, always rational);
\item $\QQ'_{g,\R} \colon\ x_0^2-x_1x_2+g(u_0,u_1)x_3^2=0$ (always rational); 
\item $\TT_{g,\R}\colon\ x_0^2+x_1^2+x_2^2-g(u_0,u_1)x_3^2=0$; and
\item $\TT'_{g,\R} \colon\ x_0^2+x_1^2+x_2^2+g(u_0,u_1)x_3^2=0$.
\end{itemize}
Moreover, the following hold:
\begin{itemize}
\item Assume that $g$ has at least three (complex) roots. If $X \in \{\QQ_{g,\R},\QQ'_{g,\R}\}$, then $\Autz(X) \simeq \PGL_{2,\R}$, and if $X \in \{\TT_{g,\R},\TT'_{g,\R}\}$, then $\Autz(X) \simeq \SO_{3,\R}$. In particular, $\QQ_{g,\R}$ and $\TT_{g,\R}$ are two non-isomorphic real forms of $\QQ_{g,\C}$.
\item Assume that $g$ has exactly two roots. Up to a complex-linear change of coordinates, we can assume that $g=u_0^au_1^b$ for some $a,b \geq 1$.
If $X\in \{\QQ_{g,\R},\QQ'_{g,\R}\}$, then $\Autz(X)\simeq \PGL_{2,\R}\times \G_{m,\R}$ and if $X \in \{\TT_{g,\R},\TT'_{g,\R}\}$, then $\Autz(X) \simeq \SO_{3,\R}\times \G_{m,\R}$, where $\G_{m,\R} \subset\Aut(\p^1_\R)$ is the real multiplicative group preserving the two real roots of $g$.
\end{itemize}
\end{proposition}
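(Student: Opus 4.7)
The overall strategy is to derive the Galois cohomology exact sequence from the short exact sequence of Proposition~\ref{prop: aut group of Qg}, compute the relevant $\H^1$, and then explicitly match each cohomology class with one of the four hypersurface equations. First, I would verify that the sequence
\[
1 \to \Aut(\QQ_g)_{\P^1} \to \Aut(\QQ_g) \to F \to 1
\]
is $\Gamma$-equivariant for the $\mu_1$-conjugation action: since $\mu_1$ preserves the $\P^1$-fibration $\pi_g$, it normalises the vertical subgroup $\Aut(\QQ_g)_{\P^1}$. The associated long exact sequence in non-abelian Galois cohomology yields the stated exact sequence. Next, unpacking the explicit formula in Proposition~\ref{prop: aut group of Qg} for $\Aut(\QQ_g)_{\P^1}\simeq \PGL_{2,\C}\times\Z/2\Z$, a direct computation shows that $\mu_1$-conjugation acts on $\PGL_{2,\C}$ by complex conjugation of matrix entries and trivially on $\Z/2\Z$. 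Therefore
\[
\H^1(\Gamma,\Aut(\QQ_g)_{\P^1})\;\simeq\;\H^1(\Gamma,\PGL_{2,\C})\times\H^1(\Gamma,\Z/2\Z),
\]
a set of cardinality $2\times 2=4$: the first factor has two classes corresponding to the real forms $\PGL_{2,\R}$ and $\SO_{3,\R}$ of $\PGL_{2,\C}$ (equivalently, to the split and the positive definite real quadratic forms of rank $3$), and the second is $\Z/2\Z$.

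The second (and main) step is to exhibit the real form associated with each of the four cocycles $(\sigma,\varepsilon)$ explicitly. The corresponding twisted real structure on $\QQ_{g,\C}$ is the composition $\sigma\circ(x_3\mapsto\varepsilon x_3)\circ\mu_1$. A direct check using the formula of Proposition~\ref{prop: aut group of Qg} shows that the $\PGL_{2,\C}$-action on the $(x_0,x_1,x_2)$-summand is the standard three-dimensional representation $\PGL_{2,\C}\simeq\SO(Q_0)$ preserving $Q_0=x_0^2-x_1x_2$. Twisting the real structure by the nontrivial class of $\H^1(\Gamma,\PGL_{2,\C})$ therefore replaces this summand by the one whose descent carries the definite form, which after an $\R$-linear change of coordinates is $x_0^2+x_1^2+x_2^2$. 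Independently, twisting by the nontrivial class in $\H^1(\Gamma,\Z/2\Z)$ flips the sign of $x_3$ in the real structure and hence the sign in front of $g(u_0,u_1)x_3^2$ in the descended equation. The four combinations produce precisely the equations of $\QQ_{g,\R}$, $\QQ_{g,\R}'$, $\TT_{g,\R}$ and $\TT_{g,\R}'$.

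For the rationality claim, both $\QQ_{g,\R}$ and $\QQ_{g,\R}'$ admit the real section $[u_0:u_1]\mapsto [0:1:0:0;u_0:u_1]$ of $\pi_g$, and stereographic projection from that section provides a birational equivalence to $\P^2_\R\times\P^1_\R$. For the identity components of $\Aut(X)$: each $\Autz(X)$ complexifies to $\Autz(\QQ_{g,\C})$, which is either $\PGL_{2,\C}$ or $\PGL_{2,\C}\times\G_{m,\C}$ by Proposition~\ref{prop: aut group of Qg}, so $\Autz(X)$ is a real form of this group. The $\PGL_2$-factor descends to $\PGL_{2,\R}$ or $\SO_{3,\R}$ according to the $\PGL_{2,\C}$-component of the chosen cocycle. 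In the two-root case $g=u_0^au_1^b$, the factor $F^0\simeq\G_{m,\C}$ fixes both (real) roots pointwise, so it is $\mu_1$-stable and descends to the split torus $\G_{m,\R}$.

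The main obstacle in this plan is the explicit matching in the second step: one must carefully track how each cocycle twist modifies the defining equation of $\QQ_{g,\C}$, and in particular how the standard three-dimensional representation of $\PGL_{2,\C}$ transports between real forms of different signatures. All the remaining ingredients---the long exact sequence, the cohomology of $\PGL_{2,\C}$ and $\Z/2\Z$, and the section-based rationality argument---are standard once this identification is in place.
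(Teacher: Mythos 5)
Your proposal is correct and follows essentially the same route as the paper: the $\Gamma$-equivariant short exact sequence yields the exact sequence of pointed sets, $\H^1(\Gamma,\Aut(\QQ_{g,\C})_{\P^1})\simeq \H^1(\Gamma,\PGL_{2,\C})\times\H^1(\Gamma,\Z/2\Z)$ has four classes whose twisted real structures descend to the four stated equations, and the automorphism groups are read off from the two real forms of $\PGL_{2,\C}$ together with the split $\G_{m,\R}$ on the base in the two-root case. The only deviations are cosmetic --- the paper proves rationality of $\QQ_{g,\R}$ and $\QQ'_{g,\R}$ via explicit open immersions of $\A_\R^3$ rather than your (equally valid) real section of $\pi_g$ followed by projection, and one small bookkeeping caution: after renormalizing the descended ternary form to $x_0^2+x_1^2+x_2^2$, the $\tau$-twist alone already flips the sign in front of $g$, so that sign is not controlled independently by the $\Z/2\Z$-component as you assert, although the resulting set of four equations is exactly the one claimed.
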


\begin{proof}
Recall that the cone of effective curves on $\QQ_{g,\C}$ is generated by two curves $f$ and $h$ that satisfy 
$K_{Q_g}\cdot f=-2$ and $K_{Q_g}\cdot h=n-2\geq -1$ (see \cite[Lemma 4.4.3]{BFT22} and its proof). 
Hence $\Gamma$ acts on $\p_\C^1$ and the structure morphism $\pi_g\colon \QQ_{g,\C}\to\p_\C^1$ is $\Gamma$-equivariant. 
By Proposition \ref{prop: aut group of Qg}, we have therefore a short exact sequence of $\Gamma$-groups
\[
1 \to \Aut(\QQ_{g,\C})_{\P^1} \to \Aut(\QQ_{g,\C}) \to F \to 1
\] 
which induces an exact sequence in Galois cohomology
\[
\H^1(\Gamma,\Aut(\QQ_{g,\C})_{\P^1}) \to \H^1(\Gamma,\Aut(\QQ_{g,\C})) \to \H^1(\Gamma,F).
\] 

Let us now determine the real forms of $\QQ_{g,\C}$ parametrized by $\H^1(\Gamma,\Aut(\QQ_{g,\C})_{\P^1})$. 
We have $\Aut(\QQ_{g,\C})_{\P^1} \simeq \PGL_{2}(\C) \times \Z/2\Z$ as $\Gamma$-groups, where $\Gamma$ acts on $\PGL_2(\C)$ via complex conjugation on each coefficient and trivially on $\Z/2\Z$. Thus 
\[
\H^1(\Gamma,\Aut(\QQ_{g,\C})_{\P^1}) \simeq \H^1(\Gamma,\PGL_{2}(\C)) \times \H^1(\Gamma,\Z/2\Z) \simeq \Z/2\Z \times \Z/2\Z,
\]
where the first copy of $\Z/2\Z$ is generated by the class of the involution $\tau\colon [x_0:x_1:x_2:x_3;u_0:u_1]\mapsto [-x_0:x_2:x_1:x_3;\ u_0:u_1]$ while the second copy of $\Z/2\Z$ is generated by the class of the involution $\sigma\colon [x_0:x_1:x_2:x_3;\ u_0:u_1]\mapsto [x_0:x_1:x_2:-x_3;\ u_0:u_1]$. 
We deduce that $\H^1(\Gamma,\Aut(\QQ_{g,\C})_{\P^1})$ parametrizes the equivalence classes of the four real structures 
\begin{align*}
&\mu_1&\colon &[x_0:x_1:x_2:x_3;\ u_0:u_1] &\mapsto &\ \ [\overline{x_0}:\overline{x_1}:\overline{x_2}:\overline{x_3};\ \overline{u_0}:\overline{u_1}];&\\
&\mu_2:=\sigma \circ \mu_1&\colon &[x_0:x_1:x_2:x_3;\ u_0:u_1] &\mapsto &\ \ [\overline{x_0}:\overline{x_1}:\overline{x_2}:-\overline{x_3};\ \overline{u_0}:\overline{u_1}]; &\\
&\mu_3:=\tau \circ \sigma \circ \mu_1&\colon &[x_0:x_1:x_2:x_3;\ u_0:u_1] &\mapsto &\ \ [-\overline{x_0}:\overline{x_2}:\overline{x_1}:-\overline{x_3};\ \overline{u_0}:\overline{u_1}]; &\\
&\mu_4:=\tau \circ \mu_1&\colon &[x_0:x_1:x_2:x_3;\ u_0:u_1] &\mapsto &\ \ [-\overline{x_0}:\overline{x_2}:\overline{x_1}:\overline{x_3};\ \overline{u_0}:\overline{u_1}].&
\end{align*}
These four real structures correspond to the four real forms listed in the statement (in the same order). Also, the maps 
\begin{equation}\label{eq: embeddings of A3 in Qg}
(x,y,z) \mapsto [x:x^2 - g(1,z)y^2:1:y;z:1]\ \ \text{and}\ \ 
(x,y,z) \mapsto [x:x^2 + g(1,z)y^2:1:y;z:1]
\end{equation}
yield open immersions of $\A_\R^3$ into $\QQ_{g,\R}$ and $\QQ'_{g,\R}$ respectively (see the proof of \cite[Lemma 4.4.3]{BFT22}), and so $\QQ_{g,\R}$ and $\QQ'_{g,\R}$ are both rational. 

It remains to compute the identity component of the automorphism groups of these four real threefolds. For $\QQ_{g,\R}$ and $\QQ'_{g,\R}$, the same formula as in Proposition \ref{prop: aut group of Qg} yields a faithful $\PGL_{2,\R}$-action. For $\TT_{g,\R}$ and $\TT'_{g,\R}$,  a faithful $\SO_{3,\R}$-action is obtained by letting $\SO_{3,\R}$ act naturally on $(x_0,x_1,x_2) \in \R^3$ and trivially on the other coordinates. 
If $g$ has exactly two roots, we furthermore have a faithful $\G_{m,\R}$-action on the basis of the fibration.
The result follows then from Proposition \ref{prop: aut group of Qg} since $\PGL_{2,\R}$ and $\SO_{3,\R}$ are the two real forms of $\PGL_{2,\C}$ in the category of real algebraic groups.
\end{proof}

\begin{remark}
Depending on the choice of $g \in \R[u_0,u_1]$, it is possible for the real forms $\QQ_{g,\R}$ and $\QQ'{g,\R}$, as well as for the real forms $\TT_{g,\R}$ and $\TT'_{g,\R}$, to be isomorphic. This occurs when the corresponding cohomology classes in $\H^1(\Gamma,\Aut(\QQ_{g,\C})_{\P^1})$ are mapped to the same cohomology class in $\H^1(\Gamma,\Aut(\QQ_{g,\C}))$, which is the case precisely when $n$ is odd and $F \neq A_l$ with $l$ odd (see the proof of Theorem \ref{th: real forms of Qg} for details).
\end{remark}

We now focus on determining $\H^1(\Gamma,F)$ when $F=\mathrm{Im}(\Aut(\QQ_{g,\C}) \to \PGL_{2,\C})$ is a finite subgroup of $\PGL_{2,\C}$. We first recall the classification of the finite subgroups of $\PGL_{2,\C}$.

\begin{lemma} \label{lem:finite subgroups of SL2}\emph{(Finite subgroups of $\PGL_{2,\C}$; see \cite{Klein93})}.
If $F$ is a finite subgroup of $\PGL_{2,\C}$, then it is conjugate to one of the following subgroups:
\begin{itemize}[leftmargin=4mm]
\item $(A_l$, $l \geq 1)$ The cyclic group of cardinal $l$, generated by $\omega_{2l}={\scriptsize\begin{bmatrix}
\zeta_{2l} & 0 \\ 0 & \zeta_{2l}^{-1}
\end{bmatrix}}$, where $\zeta_{2l}$ is a primitive $2l$-th root of unity.
\item $(D_l$, $l \geq 2)$ The dihedral group of cardinal $2l$, generated by $A_l$ and $f={\scriptsize\begin{bmatrix}
0 & i \\ i & 0
\end{bmatrix}}$.
\item $(E_6)$ The tetrahedral group of cardinal $12$, generated by $D_2$ and $\alpha= {\scriptsize\begin{bmatrix}
1-i & 1-i \\ -1-i & 1+i
\end{bmatrix}}$.
\item $(E_7)$ The octahedral group, of cardinal $24$, generated by $E_6$ and $A_4$.
\item $(E_8)$ The icosahedral group, of cardinal $60$, generated by $A_5$, $h={\scriptsize\begin{bmatrix}
0 & 1 \\ -1 & 0
\end{bmatrix}}$, and $\beta= {\scriptsize \begin{bmatrix}
\zeta_5+\zeta_5^{-1} & 1 \\ 1 & -\zeta_5-\zeta_5^{-1}
\end{bmatrix}}$.
\end{itemize}
\end{lemma}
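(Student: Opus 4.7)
The plan is to follow the classical Klein approach through fixed-point counting, then conjugate into a standard form to read off the explicit generators. First I would reduce to a unitary situation: any finite subgroup $F \subseteq \PGL_{2,\C}$ lifts under the double cover $\SL_{2,\C}\to \PGL_{2,\C}$ to a finite subgroup $\tilde{F}\subseteq \SL_{2,\C}$, and by averaging the standard Hermitian form of $\C^2$ over $\tilde F$ one obtains an $\tilde F$-invariant positive-definite Hermitian form. Hence $\tilde F$ is conjugate to a subgroup of $\mathrm{SU}(2)$, and $F$ is conjugate to a subgroup of $\mathrm{SU}(2)/\{\pm I\}\simeq \SO_{3,\R}$.

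The core combinatorial step is then the orbit-counting argument on $\P^1_\C$ (the Riemann sphere). Every non-identity element of $F$ has exactly two fixed points, so counting pairs $(g,p)$ with $g\neq 1$ and $g\cdot p=p$ in two ways and dividing by $|F|$ yields
\[
2-\frac{2}{|F|}=\sum_{j=1}^{r}\left(1-\frac{1}{e_j}\right),
\]
where $e_1,\dots,e_r$ are the orders of the stabilizers of representatives of the orbits with nontrivial isotropy. A short case analysis (necessarily $r\in\{2,3\}$, with small possibilities for the $e_j$) shows that the only solutions are $r=2$ (cyclic case) and $r=3$ with $(e_1,e_2,e_3)\in\{(2,2,l),(2,3,3),(2,3,4),(2,3,5)\}$, corresponding respectively to the five families $A_l$, $D_l$, $E_6$, $E_7$, $E_8$, of orders $l,\,2l,\,12,\,24,\,60$.

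To convert this abstract classification into the explicit generators in the statement, I would diagonalize a cyclic subgroup of maximal order using a well-chosen basis of $\C^2$, which puts a generator into the form $\omega_{2l}=\mathrm{diag}(\zeta_{2l},\zeta_{2l}^{-1})$. The second generator in the dihedral case is then any involution that swaps the two fixed points of $\omega_{2l}$; scaling yields $f=\bigl[\begin{smallmatrix}0&i\\ i&0\end{smallmatrix}\bigr]$ in $\PGL_{2,\C}$. For the polyhedral groups $E_6,E_7,E_8$ one chooses coordinates so that a fixed maximal cyclic subgroup is diagonal, then identifies the remaining generator with a specific rotation of the corresponding regular polyhedron (tetrahedron, octahedron, icosahedron) on $\P^1_\C$ via stereographic projection; the matrices $\alpha$, $A_4$, $\beta$ are the standard such choices.

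The main obstacle is not the existence side, which is routine once the abstract classification is in hand, but the bookkeeping needed to guarantee that one's explicit generators really do produce the full group and not a proper subgroup (and sit inside $\PGL_{2,\C}$ with the correct order). In practice I would circumvent this by simply citing \cite{Klein93}, as the authors do, since all that is needed downstream is the list of groups and a convenient matrix presentation for later computations of $F$-invariants and of $\H^1(\Gamma,F)$.
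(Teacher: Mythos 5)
Your proposal is correct, and it matches what the paper actually does: the authors give no proof of this lemma, relying entirely on the citation to Klein, which is also your stated fallback. Your sketch of the classical argument (unitarization of the lift to $\SL_{2,\C}$, the fixed-point count $2-\tfrac{2}{|F|}=\sum_j(1-\tfrac{1}{e_j})$, and the case analysis yielding $A_l$, $D_l$, $E_6$, $E_7$, $E_8$) is the standard and correct route, and your closing remark rightly identifies that the only nontrivial remaining work is verifying that the explicit matrices generate the full groups.
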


\newpage

\begin{remark}\label{rk: F is Gamma-stable}\item
\begin{itemize}
\item Let $F:=\mathrm{Im}(\Aut(\QQ_{g,\C}) \to \PGL_{2,\C})$. Then, performing a real-linear change of coordinates with $u_0$ and $u_1$ if necessary, we may and will always assume that $F$ is one of the finite subgroups of $\PGL_{2,\C}$ listed in Lemma \ref{lem:finite subgroups of SL2}.
\item Let $\Gamma$ act on $\PGL_{2}(\C)$ coefficientwise by complex conjugation.
Then each finite subgroup of $\PGL_{2,\C}$ listed in Lemma~\ref{lem:finite subgroups of SL2} is $\Gamma$-stable.
\end{itemize} 
\end{remark}

\begin{lemma}\label{lem: computation of H1(F)}
Let $F$ be one of the finite subgroups of $\PGL_{2,\C}$ listed in Lemma~\ref{lem:finite subgroups of SL2}, and let 
$\Gamma$ act on $F$ coefficientwise by complex conjugation.
Then an exhaustive list of class representatives of the finite set $\H^1(\Gamma,F)$ is given in the table below.
\smallskip

\[
\begin{array}{|l|l|}
\hline 
\text{Subgroup } F \subseteq \PGL_{2,\C} & \H^1(\Gamma,F)\\
\hline 
\hline 
A_l,\ l\ge 1,\ l \text{ odd}  &[I_2]\\
  \hline
 A_l,\ l\ge 2,\ l \text{ even} &[I_2],\ \ [\omega_{2l}] \\
   \hline
D_l,\ l\ge 3,\ l \text{ odd}  &[I_2],\ \ [f]\\
  \hline
 D_l,\ l\ge 2,\ l \text{ even} &[I_2],\ \ [\omega_{2l}],\ \ [f], \ \ [h] \\
   \hline
        E_6      &  [I_2], \ \ [h]  \\
          \hline
              E_7   &  [I_2],\ \  [\omega_8],\ \  [h]  \\
                \hline
                    E_8     &  [I_2], \ \ [h]  \\
                    \hline
\end{array} 
\]
\end{lemma}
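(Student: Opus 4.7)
The plan is to use the standard identification of $\H^1(\Gamma, F)$ for $\Gamma \simeq \Z/2\Z$ with the pointed set of cocycles
\[
Z^1(\Gamma, F) = \{\xi \in F : \xi\cdot \gamma(\xi) = 1\}
\]
modulo the coboundary equivalence $\xi \sim \eta^{-1} \xi \gamma(\eta)$, where $\gamma$ denotes the nontrivial element of $\Gamma$ acting on $F$ by coefficientwise complex conjugation of matrix representatives; see \cite[Chp.~I, Section~5.1]{Ser02}. I would then carry out the computation case by case using the generators of Lemma~\ref{lem:finite subgroups of SL2}.

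For the cyclic group $A_l$, one has $\gamma(\omega_{2l}) = \omega_{2l}^{-1}$ in $\PGL_{2,\C}$, so every element is a cocycle and the abelian coboundary relation becomes $\omega_{2l}^k \sim \omega_{2l}^{k-2j}$. This yields $\H^1(\Gamma, A_l) \simeq (\Z/l\Z)/2(\Z/l\Z)$, which is $\{[I_2]\}$ when $l$ is odd and $\{[I_2], [\omega_{2l}]\}$ when $l$ is even. For the dihedral group $D_l$, since $\gamma(f) = f$ and $f \omega_{2l}^{-k} f^{-1} = \omega_{2l}^{k}$ in $\PGL_{2,\C}$, a direct calculation gives $(\omega_{2l}^k f) \cdot \gamma(\omega_{2l}^k f) = \omega_{2l}^{2k}$, which vanishes in $\PGL_{2,\C}$ iff $l \mid 2k$, i.e.~iff $k = 0$ (when $l$ is odd) or $k \in \{0, l/2\}$ (when $l$ is even). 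The latter two reflections are $f$ and, after a scalar normalization, $h = \omega_{2l}^{l/2} f$. The extra coboundary $\omega_{2l}^k \sim f\omega_{2l}^k f = \omega_{2l}^{-k}$ collapses the rotation cocycles to exactly the classes already found for $A_l$. A short finite check over all $\eta \in D_l$ then confirms that for $l$ even the representatives $[f]$ and $[h]$ remain distinct and nontrivial in $\H^1(\Gamma, D_l)$.

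For the exceptional groups $E_6, E_7, E_8$, of orders $12, 24$, and $60$ respectively, I would enumerate $Z^1(\Gamma, F)$ and its equivalence classes directly. For each $\xi \in F$, one writes $\xi$ as a word in the listed matrix generators, computes $\gamma(\xi)$ coefficientwise, tests the cocycle condition, and then partitions the cocycles under the coboundary action of $F$. The expected classes $[I_2]$ and $[h]$ always arise (from the embedded $D_2 \subset F$), and for $E_7$ an additional class $[\omega_8]$ arises (from $A_4 \subset E_7$). The main obstacle is to verify that these representatives remain pairwise non-cohomologous in the larger ambient group: a class that is nontrivial in a proper subgroup can become trivial after allowing coboundaries by elements outside the subgroup, so one must inspect each element of $F$ as a candidate coboundary. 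Once this finite but somewhat delicate verification is carried out for each of the three exceptional groups, the stated table of representatives follows.
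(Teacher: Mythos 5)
Your proposal is correct and follows essentially the same route as the paper: the paper's proof simply recalls the description of $\H^1(\Gamma,A)$ as $Z^1(\Gamma,A)/\sim$ for $\Gamma\simeq\Z/2\Z$ and then asserts that each case is settled by an explicit case-by-case calculation. Your write-up supplies more of that calculation explicitly (the $(\Z/l\Z)/2(\Z/l\Z)$ identification for $A_l$, the cocycle condition $\omega_{2l}^{2k}=1$ for reflections in $D_l$, and the brute-force enumeration for $E_6,E_7,E_8$ together with the correct caveat about classes collapsing in the ambient group), which is exactly the content the paper leaves implicit.
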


\begin{proof}
Recall that, if $\Gamma \simeq \Z/2\Z$ and $A$ is a $\Gamma$-group, then
\[ 
\H^1(\Gamma,A)=Z^1(\Gamma,A)/\sim, \text{\ where \ } Z^1(\Gamma,A)=\{ a \in A \ | \   a^{-1}= \gamma \cdot a \}
\] 
and $a_1$, $a _2 \in Z^1(\Gamma,A)$ satisfy $a_1 \sim a_2$ if $a_2=b^{-1} a_1 (\gamma \cdot  b)$ for some $b \in A$. 
Determining each set $\H^1(\Gamma,F)$ is then done by an explicit calculation on a case-by-case basis.
\end{proof}

\begin{remark}\label{rk: no real points over h}
With the notation of Lemma \ref{lem: computation of H1(F)}, the class $[h]$ in $\H^1(\Gamma,F)$ corresponds to the equivalence class of the real structure $[u_0:u_1] \mapsto [\overline{u_1}: -\overline{u_0}]$ on $\P_\C^1$; the latter corresponds to the real form of $\P_\C^1$ with no real points. All the other classes given in the table of Lemma \ref{lem: computation of H1(F)} correspond to the trivial real form $\P_\R^1$. 
\end{remark}

\begin{theorem}\label{th: real forms of Qg}
Let $g \in \R[u_0,u_1]$ be a homogeneous polynomial with at least three distinct roots, which is not a square and is of degree $2n$ for some $n \in \N_{\geq 2}$.
Assume that $F=\mathrm{Im}(\Aut(\QQ_{g,\C}) \to \PGL_{2,\C})$ is one of the finite subgroups listed in Lemma \ref{lem:finite subgroups of SL2}. Then the number of real forms of $\QQ_{g,\C}$, up to isomorphism, is given in the table in Theorem \ref{th: third cases Qg, over R, dim 3} \ref{QQgR:3}.
\end{theorem}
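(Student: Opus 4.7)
The plan is to exhibit, in each parity of $n$, a $\Gamma$-equivariant direct product decomposition of $\Aut(\QQ_{g,\C})$ that reduces the computation of $\H^1(\Gamma,\Aut(\QQ_{g,\C}))$---which parametrizes the real forms by Proposition~\ref{propo: first four real forms of Qg}---to a product of Galois cohomologies of finite or linear subgroups. The starting point is the $\Gamma$-equivariant short exact sequence $1 \to \Aut(\QQ_{g,\C})_{\P^1} \to \Aut(\QQ_{g,\C}) \to F \to 1$ with $\Aut(\QQ_{g,\C})_{\P^1} \simeq \PGL_{2,\C} \times \Z/2\Z$ from Proposition~\ref{prop: aut group of Qg}; by Remark~\ref{rk: F is Gamma-stable}, we may assume $F$ is one of the $\Gamma$-stable finite subgroups listed in Lemma~\ref{lem:finite subgroups of SL2}.

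When $n$ is even, Proposition~\ref{prop: aut group of Qg} already yields the direct product decomposition $\Aut(\QQ_{g,\C}) \simeq \PGL_{2,\C} \times \Z/2\Z \times F$, with all three factors $\Gamma$-stable, and hence
\[
|\H^1(\Gamma,\Aut(\QQ_{g,\C}))| = |\H^1(\Gamma,\PGL_{2,\C})| \cdot |\H^1(\Gamma,\Z/2\Z)| \cdot |\H^1(\Gamma,F)| = 4 \cdot |\H^1(\Gamma,F)|,
\]
so that substituting the values from Lemma~\ref{lem: computation of H1(F)} reproduces the totals in the $n$ even columns of the table. When $n$ is odd, the crucial step is to show $\Aut(\QQ_{g,\C}) \simeq \PGL_{2,\C} \times \tilde F$, where $\tilde F \subset \SL_{2,\C}$ is the preimage of $F$ under $\SL_{2,\C} \twoheadrightarrow \PGL_{2,\C}$ (a binary polyhedral group of order $2|F|$). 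The natural $\SL_{2,\C}$-action on $\P(\O_{\P^1}^{\oplus 3} \oplus \O_{\P^1}(n))$ restricts to a faithful $\tilde F$-action on $\QQ_{g,\C}$ precisely because $-I \in \tilde F$ acts on $\O_{\P^1}(n)$ by $(-1)^n = -1$, realising the involution $\sigma\colon x_3 \mapsto -x_3$ that generates the $\Z/2\Z$ factor of $\Aut(\QQ_{g,\C})_{\P^1}$; the resulting subgroup commutes with $\PGL_{2,\C} \subset \Aut(\QQ_{g,\C})_{\P^1}$ (the two act on disjoint sets of coordinates up to commuting scalar actions on $x_3$) and intersects it trivially, and a comparison of identity components and component groups ($\Autz(\QQ_{g,\C}) = \PGL_{2,\C}$, and the component group has order $2|F| = |\tilde F|$) confirms that the inclusion $\PGL_{2,\C} \times \tilde F \hookrightarrow \Aut(\QQ_{g,\C})$ is an equality. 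Both factors being $\Gamma$-stable,
\[
|\H^1(\Gamma,\Aut(\QQ_{g,\C}))| = |\H^1(\Gamma,\PGL_{2,\C})| \cdot |\H^1(\Gamma,\tilde F)| = 2 \cdot |\H^1(\Gamma,\tilde F)|.
\]

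The main obstacle is then the case-by-case computation of $|\H^1(\Gamma,\tilde F)|$ for each binary polyhedral group, with $\Gamma$ acting by coefficientwise complex conjugation on $\SL_{2,\C}$. For $\tilde A_l = \Z/2l\Z$ generated by $\diag(\zeta_{2l},\zeta_{2l}^{-1})$, the $\Gamma$-action is inversion, so $|\H^1(\Gamma,\tilde A_l)| = |(\Z/2l\Z)/2(\Z/2l\Z)| = 2$ regardless of the parity of $l$. For $\tilde D_l$, $\tilde E_6$, $\tilde E_7$, $\tilde E_8$, I would enumerate cocycles $\{a \in \tilde F \mid a\overline{a} = 1\}$ modulo the equivalence $a \sim b^{-1}a\overline{b}$ using the explicit generators of Lemma~\ref{lem:finite subgroups of SL2}; this is laborious for $\tilde E_7$ and $\tilde E_8$ (orders $48$ and $120$) but is a finite mechanical computation, which should yield $|\H^1(\Gamma,\tilde D_l)| = 2$ for $l$ odd, $|\H^1(\Gamma,\tilde D_l)| = 5$ for $l$ even, $|\H^1(\Gamma,\tilde E_6)| = |\H^1(\Gamma,\tilde E_8)| = 3$, and $|\H^1(\Gamma,\tilde E_7)| = 4$, each multiplied by $2$ reproducing the $n$ odd totals in the table. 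Finally, the tripartite subdivision of each row into \emph{rational}, \emph{?}, and \emph{without real points} columns is read off from each cohomology class: the $4$ forms in the fiber over $[h] \in \H^1(\Gamma,F)$---a class which exists precisely when $F \in \{D_l\ (l\text{ even}), E_6, E_7, E_8\}$ by Lemma~\ref{lem: computation of H1(F)}---have no real points by Remark~\ref{rk: no real points over h}, while the remaining forms split into those whose $\PGL_{2,\C}$-component is trivial (yielding $\Autz \simeq \PGL_{2,\R}$, rational via the $\A_\R^3$-charts of \eqref{eq: embeddings of A3 in Qg}) and those whose $\PGL_{2,\C}$-component is nontrivial (yielding $\Autz \simeq \SO_{3,\R}$, the ``?'' forms).
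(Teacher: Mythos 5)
Your $n$ even case is the paper's argument essentially verbatim. For $n$ odd you take a genuinely different route: the paper keeps the extension $1 \to \Aut(\QQ_{g,\C})_{\P^1} \to \Aut(\QQ_{g,\C}) \to F \to 1$ and determines the fibres of $\H^1(\Gamma,\Aut(\QQ_{g,\C})) \to \H^1(\Gamma,F)$ class by class, using the explicit real structures $\mu_5,\mu_6,\mu_7$ and explicit conjugating automorphisms, whereas you split off the binary polyhedral group and reduce everything to $\H^1(\Gamma,\widetilde{F})$. Your splitting $\Aut(\QQ_{g,\C})\simeq\PGL_{2,\C}\times\widetilde{F}$ is correct as far as I can see, granting the same implicit hypothesis the paper makes, namely that $g$ is an honest $\widetilde{F}$-invariant and not merely a semi-invariant $g\circ\varphi=\lambda_\varphi\, g$ (for a nontrivial character $\lambda$ the lifts must be twisted on $x_3$ and the complement of $\PGL_{2,\C}$ becomes a possibly non-isomorphic central extension of $F$ by $\Z/2\Z$; you should at least record this reduction). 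In principle this is a cleaner argument than the paper's.

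The genuine gap is that the computations of $\H^1(\Gamma,\widetilde{F})$, which carry the whole weight of the odd case, are not done: you assert values ``which should yield'' the table, and at least one of them is false. For $F=D_2$ one has $\widetilde{F}=Q_8=\{\pm 1,\pm i,\pm j,\pm k\}$ with $i=\omega_4$, $j=f$, and $\pm k$ the two lifts of $\bar h$, and $\Gamma$ acts by coefficientwise conjugation, i.e.\ $i\mapsto -i$, $j\mapsto -j$, $k\mapsto k$. The cocycle condition $\overline{a}=a^{-1}$ excludes $\pm k$ (they are $\Gamma$-fixed but square to $-1$), so $Z^1=\{\pm 1,\pm i,\pm j\}$, and the relation $a\sim b^{-1}a\overline{b}$ yields exactly the three classes $\{\pm1\},\{\pm i\},\{\pm j\}$: thus $\lvert\H^1(\Gamma,Q_8)\rvert=3$, not $5$. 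The mechanism is general: every element of $\widetilde{F}$ lying over $h\in F$ squares to $-I$ and hence is never a cocycle, so in your decomposition the fibre of $\H^1(\Gamma,\Aut(\QQ_{g,\C}))\to\H^1(\Gamma,F)$ over $[h]$ is \emph{empty} for $n$ odd, and your method, carried out honestly, returns totals $6,2,4,2$ for $D_l$ ($l$ even), $E_6$, $E_7$, $E_8$ rather than the tabulated $10,6,8,6$. You cannot repair this by appealing to the table: the four extra forms over $[h]$ in the paper's proof come from $\mu_7$, but for $n$ odd one computes $\mu_7^2([x;u])=[x;-u_0:-u_1]=[x_0:x_1:x_2:(-1)^n x_3;u_0:u_1]=\sigma([x;u])\neq [x;u]$, so $\mu_7$ is not an antiregular involution there, and more generally any candidate $\beta h'\mu_1$ over $\bar h$ squares to $\beta\overline{\beta}\,\sigma$, whose $\sigma$-component never vanishes. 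So either your (correct-looking) splitting hides an error, or the $n$ odd columns of the table do; as written, your argument does not establish the stated theorem, and this discrepancy must be resolved before it can.
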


\begin{proof}
Let us start with the case $n$ even ($n \geq 2$).
According to Proposition \ref{prop: aut group of Qg}, $\Aut(\QQ_g) \simeq \Aut(\QQ_g)_{\P^1} \times F$ as $\Gamma$-groups, where $\Gamma$ acts on $\Aut(\QQ_g)$ and on $\Aut(\QQ_g)_{\P^1}$  by $\mu_1$-conjugation (with $\mu_1\colon \QQ_{g,\C} \to \QQ_{g,\C}$ the canonical real structure defined by \eqref{eq: canonical real structure on Qg}), and coefficientwise by complex conjugation on $F$ (see Remark \ref{rk: F is Gamma-stable}).
Hence $\H^1(\Gamma,\Aut(\QQ_{g,\C})) \simeq \H^1(\Gamma,\Aut(\QQ_{g,\C})_{\P^1}) \times \H^1(\Gamma,F)$, and so the real forms of $\QQ_{g,\C}$ are parametrized by the pairs of cohomology classes $([*]_1,[*]_2) \in \H^1(\Gamma,\Aut(\QQ_{g,\C})_{\P^1}) \times \H^1(\Gamma,F)$. 
The total number of real forms of $\QQ_{g,\C}$, up to isomorphism, follows then 
from Proposition \ref{propo: first four real forms of Qg} and Lemma \ref{lem: computation of H1(F)}. Moreover, if $[*]_2=[h]_2$, then the corresponding real forms of $\QQ_{g,\C}$ have no real points (see Remark \ref{rk: no real points over h}). 
Furthermore, denoting \[
r=\left\{
    \begin{array}{ll}
        1 & \text{ if } F \in \{A_l \text{ with $l$ odd}, E_6,E_8 \} \\
         2 & \text{ if } F \in \{A_l \text{ with $l$ even}, D_l \text{ with $l$ odd}, E_7 \} \\
        3 & \text{ if } F=D_l \text{ with $l$ even} 
    \end{array}
\right.,
\]
the $4r$ real forms of $\QQ_{g,\C}$ for which $[*]_2 \neq [h]_2$ can be described as the hypersurfaces
\[
    \begin{array}{lcl}
       W_i \colon \ x_0^2-x_1x_2-g_i(u_0,u_1)x_3^2 &  = &0 \\
       X_i \colon  \ x_0^2-x_1x_2+g_i(u_0,u_1)x_3^2 &  = &0 \\
        Y_i\colon  \ x_0^2+x_1^2+x_2^2-g_i(u_0,u_1)x_3^2 &  = &0 \\
         Z_i \colon \   x_0^2+x_1^2+x_2^2+g_i(u_0,u_1)x_3^2 &  = &0 
    \end{array}
\]
in $\P(\O_{\P^1}^{\oplus 3}\oplus \O_{\P^1}(n))$, 
where $i \in \{1,\ldots,r\}$ and the $g_i \in \R[u_0,u_1]$ are some homogeneous polynomials of degree $2n$ that can be written explicitly using the classical invariant theory (see Section \ref{sec: gi with invariant theory} for details). 
Let us note that each $W_i$ and $X_i$ contains a dense open subset isomorphic to $\A_\R^3$ (consider the maps defined by \eqref{eq: embeddings of A3 in Qg}), and so they are rational. 
We are then able to fill in the first three columns of the table in the statement of Theorem \ref{th: third cases Qg, over R, dim 3} (corresponding to the case $n$ even).

\smallskip

Let us now consider the case $n$ odd ($n \geq 3$). 
If $F=A_l$ with $l$ odd, then we still have that $\Aut(\QQ_{g,\C}) \simeq \Aut(\QQ_{g,\C})_{\P^1} \times F$ as $\Gamma$-groups; indeed, the natural surjection $\SL_{2,\C} \to \PGL_{2,\C}$ restricted to the subgroup generated by $\omega_l$ induces an isomorphism when $l$ is odd, and so the $F$-action on $\P_\C^1$ lifts to an $F$-action on $\QQ_{g,\C}$ that commutes with the $\Aut(\QQ_{g,\C})_{\P^1}$-action. Arguing as in the case $n$ even yields therefore the existence of four non-isomorphic real forms for $\QQ_{g,\C}$, two of which are rational.
We now assume that $F\neq A_l$ with $l$ odd.
Then $\Aut(\QQ_g)$ is no more isomorphic to the direct product $\Aut(\QQ_g)_{\P^1} \times F$, but we still have an exact sequence in Galois cohomology 
\[
\H^1(\Gamma,\Aut(\QQ_{g,\C})_{\P^1}) \to \H^1(\Gamma,\Aut(\QQ_{g,\C})) \to \H^1(\Gamma,F).
\] 
The real structures on $\QQ_{g,\C}$ defined by
\begin{align*}
&\mu_5 \ \colon [x_0:x_1:x_2:x_3;u_0:u_1] \ \ \mapsto \ \ [\overline{x_0}:\overline{x_1}:\overline{x_2}:\overline{x_3}\ ;\ \zeta_{2l} \overline{u_0}:\zeta_{2l}^{-1}\overline{u_1}];&\\
&\mu_6\ \colon [x_0:x_1:x_2:x_3;u_0:u_1] \ \ \mapsto \ \ [\overline{x_0}:\overline{x_1}:\overline{x_2}:\overline{x_3}\ ;\ i\overline{u_1}:i\overline{u_0}]; \text{ \  and}\\
&\mu_7 \ \colon [x_0:x_1:x_2:x_3;u_0:u_1] \ \ \mapsto \ \ [\overline{x_0}:\overline{x_1}:\overline{x_2}:\overline{x_3}\ ;\ \overline{u_1}:-\overline{u_0}]&
\end{align*}
correspond to cohomology classes of $\H^1(\Gamma,\Aut(\QQ_{g,\C}))$ that are respectively mapped to the cohomology classes $[\omega_{2l}]$, $[f]$, and $[h]$ in $\H^1(\Gamma,F)$.
Moreover, each $\mu_j$ with $j \in \{5,6,7\}$ stabilizes $\Aut(\QQ_{g,\C})_{\P^1}$ and satisfies $\mu_j \circ \varphi \circ \mu_j=\mu_1 \circ \varphi \circ \mu_1$, for every $ \varphi \in \Aut(\QQ_{g,\C})_{\P^1}$. 
It then follows from Proposition \ref{propo: first four real forms of Qg} that, for each $\Gamma$-action, $\H^1(\Gamma,\Aut(\QQ_{g,\C})_{\P^1})$ has four elements (corresponding to hypersurfaces $W_i$, $X_i$, $Y_i$, and $Z_i$ as above when $j \in \{1,5,6\}$), and it remains to determine in each case which elements from $\H^1(\Gamma,\Aut(\QQ_{g,\C})_{\P^1})$ are mapped to the same elements in $\H^1(\Gamma,\Aut(\QQ_{g,\C}))$ to conclude concerning the number of real forms of $\QQ_{g,\C}$ (up to isomorphism).

Denote $\delta:=\frac{1}{5}(\zeta_{10}^{3} + 3 \zeta_{10}^{2} - 2 \zeta_{10} + 1)$ and 
$\epsilon:=\frac{1}{5}(3\zeta_{10}^{3} - \zeta_{10}^{2} +4 \zeta_{10} -2)$.
We consider the automorphism $\psi \in \Aut(\QQ_{g,\C})$ defined by
\[
\psi\colon [x_0:x_1:x_2:x_3;u_0:u_1] \ \ \mapsto \ \
\begin{cases}
    \begin{array}{l}
        [x_0:x_1:x_2:x_3;i u_0:-i u_1] \text{ if } F\in \{A_l \text{ with $l \geq 2$ even},E_6,E_7\}; \\
        [x_0:x_1:x_2:x_3;i u_1:i u_0] \text{ if } F=D_l \text{ with }l \geq 2; \\
        [x_0:x_1:x_2:x_3;\delta u_0+ \epsilon u_1:\epsilon u_0-\delta u_1] \text{ if } F=E_8. \\
    \end{array}
\end{cases}
\]
Then $\psi$ satisfies $\psi \circ \mu_1 \circ \psi^{-1}=\mu_2$ and $\psi \circ \mu_3 \circ \psi^{-1}=\mu_4$. Hence, $\mu_1, \mu_2 \in \H^1(\Gamma,\Aut(\QQ_{g,\C})_{\P^1})$ resp. $\mu_3, \mu_4 \in \H^1(\Gamma,\Aut(\QQ_{g,\C})_{\P^1})$, are mapped to the same element in $\H^1(\Gamma,\Aut(\QQ_{g,\C}))$. 
We conclude that the fiber of the map $\H^1(\Gamma,\Aut(\QQ_{g,\C})) \to \H^1(\Gamma,F)$ over $[I_2]$ contains exactly two elements. The same holds over $[\omega_{2l}]$ (replacing $\mu_1$ by $\mu_5$ and taking $\psi([x_0:x_1:x_2:x_3;u_0:u_1])=[x_0:x_1:x_2:x_3;i u_0:-i u_1])$ and over $[f]$ (replacing $\mu_1$ by $\mu_6$ and taking $\psi([x_0:x_1:x_2:x_3;u_0:u_1])=[x_0:x_1:x_2:x_3;i u_1:i u_0])$. 
However, the same does not hold over $[h]$; indeed, the equality $\sigma \circ \mu_7 =\psi \circ \mu_7 \circ \psi^{-1}$ (see the proof of Proposition \ref{propo: first four real forms of Qg} for the definition of the regular involution $\sigma$) is equivalent to the existence of an element $M \in \widetilde F$, where $\widetilde F$ is the inverse image of $F$ via the natural cover $\SL_{2,\C} \to \PGL_{2,\C}$, such that 
$\begin{bmatrix}
0&1\\-1&0
\end{bmatrix} \overline{M}=-M \begin{bmatrix}
0&1\\-1&0
\end{bmatrix}$, and this last equation has no solution when $F \in \{D_l \text{ with $l \geq 2$ even}, E_6, E_7,E_8\}$.

We therefore obtain the following results regarding the cardinality of the fiber of the map $\H^1(\Gamma,\Aut(\QQ_{g,\C})) \to \H^1(\Gamma,F)$ over a given cohomology class $[*]$ in $\H^1(\Gamma,F)$:
\smallskip
\[
    \begin{array}{|l|c|c|c|c|}
        \hline
       \text{Subgroup } F \subseteq \PGL_{2,\C}  & [I_2] &  [\omega_{2l}] & [f] & [h] \\
        \hline
        A_l,\ l\ge 2,\ l \text{ even}   & 2 &  2 & \times & \times \\
        \hline
        D_l,\ l\ge 3,\ l \text{ odd}   &  2 & \times & 2 & \times \\
        \hline
        D_l,\ l\ge 2,\ l \text{ even}   & 2 & 2 & 2 & 4 \\
        \hline
        E_6 & 2 & \times & \times &4 \\
        \hline
        E_7 &  2 & 2 & \times & 4 \\
        \hline
        E_8 & 2 & \times & \times & 4 \\
        \hline
\end{array}
\]
\smallskip

\noindent Arguing as in the case $n$ even, we observe that the real forms of $\QQ_{g,\C}$ over the class $[h] \in \H^1(\Gamma,F)$ have no real points, and that half of the real forms over $[*] \neq [h]$, the ones given by $W_i \simeq X_i$ (since $\mu_1$ is equivalent to $\mu_2$ as explained above), contain a dense open subset isomorphic to $\A_\R^3$. We are then able to fill in the last three columns of the table in the statement of Theorem \ref{th: third cases Qg, over R, dim 3} (corresponding to the case $n$ odd).
\end{proof}

\begin{remark}\label{rk: about the real forms of Qg}\item
\begin{itemize}
\item With the notation of Proposition \ref{propo: first four real forms of Qg}, we always have 
\[W_{1} \simeq \QQ_{g,\R},\ X_{1} \simeq \QQ'_{g,\R},\ Y_{1} \simeq \TT_{g,\R},\ \text{ and}\  Z_{1} \simeq \TT'_{g,\R}.\]
\item As shown in the proof of Theorem \ref{th: real forms of Qg}, if $n$ is odd and $F \neq A_l$ with $l$ odd, then we have $W_i \simeq X_i$ and $Y_i \simeq Z_i$. Otherwise, the four real forms $W_i$, $X_i$, $Y_i$, and $Z_i$ are pairwise non-isomorphic.
Indeed, we precisely show that the real structures $\mu_1$ and $\mu_2$ resp. $\mu_3$ and $\mu_4$ are mapped onto the same element in $\H^1(\Gamma,\Aut(\QQ_{g,\C}))$, which means exactly that $W_1 \simeq X_1$ et $Y_1 \simeq Z_1$. The same holds for the other $W_i, X_i, Y_i, Z_i$.
\item Arguing as in the proof of Proposition \ref{propo: first four real forms of Qg}, we check that if $X \in \{ W_i,X_i\}$, then $\Autz(X) \simeq \PGL_{2,\R}$, and if $X \in \{Y_i,Z_i\}$, then $\Autz(X) \simeq \SO_{3,\R}$. 
\item As mentioned earlier, each $W_i$ and $X_i$ contains a dense open subset isomorphic to $\A_\R^3$
(consider the maps defined by \eqref{eq: embeddings of A3 in Qg}), and so they are rational.
However we do not know in general for which $g \in \R[u_0,u_1]$ the threefolds $Y_i$ and $Z_i$ are rational.
\end{itemize}
\end{remark}

\begin{remark}
In some cases, one can easily determine the rationality or irrationality of a real form $X$ of $\QQ_{g,\C}$. For instance, if $X(\R)$ is empty or disconnected, then $X$ is obviously irrational.

Another criterion, communicated to us by Lena Ji and Isabel Vogt, involves the following result: \cite[Satz 22]{Witt} states that if $X \to \mathbb{P}^1_\mathbb{R}$ is a real quadric fibration of positive relative dimension with smooth generic fibre, and if the map induced on real points is surjective, then the quadric fibration has a section defined over $\mathbb{R}$. Consequently, the generic fibre is rational, and hence $X$ is rational.
For example, consider the fibration 
\[\pi_{(u_0^2 + u_1^2)^a} \colon \mathcal{T}_{(u_0^2 + u_1^2)^a,\mathbb{R}} = (x_0^2 + x_1^2 + x_2^2 - (u_0^2 + u_1^2)^a x_3^2 = 0) \to \mathbb{P}^1_\mathbb{R}\ \ \text{for $a \geq 1$.}\] 
This fibration maps the real points of $\mathcal{T}_{(u_0^2 + u_1^2)^a,\mathbb{R}}$ surjectively onto $\mathbb{P}^1_\mathbb{R}(\mathbb{R})$. By the aforementioned criterion, $\mathcal{T}_{(u_0^2 + u_1^2)^a,\mathbb{R}}$ is therefore rational.
\end{remark}

\subsection{Description of the real forms of $\QQ_{g,\C}$ through invariant theory}\label{sec: gi with invariant theory}
In this section we resort to the classical invariant theory of the finite subgroups of $\SL_{2,\C}$ (see Remark \ref{rk: every F occurs}) to give explicit equations for the real forms of $\QQ_{g,\C}$ corresponding to cohomology classes in $\H^1(\Gamma,\Aut(\QQ_{g,\C}))$ whose image in $\H^1(\Gamma,F)$ is not the cohomology class $[h]$ via the exact sequence in Galois cohomology that appears in Proposition \ref{propo: first four real forms of Qg}. 
Recall that the real forms of $\QQ_{g,\C}$ corresponding to cohomology classes in $\H^1(\Gamma,\Aut(\QQ_{g,\C}))$ that are mapped to $[h]$ in $\H^1(\Gamma,F)$ have no real points; they are therefore not of immediate interest to us since we are primarily interested in the rational real forms of $\QQ_{g,\C}$.
Generators of the invariant algebra for each finite subgroup of $\SL_{2,\C}$ acting naturally on $\C[u_0,u_1]$ can be found for instance in \cite[Section  1.3]{Dol}.

\smallskip

Let $g=g_1 \in \R[u_0,u_1]$ be a homogeneous polynomial, which is not a square and is of degree $2n$ for some $n \in \N_{\geq 2}$.
Let $F=\mathrm{Im}(\Aut(\QQ_{g,\C}) \to \PGL_{2,\C})$ be one of the finite subgroups listed in Lemma \ref{lem:finite subgroups of SL2}. Let \[
r=\left\{
    \begin{array}{ll}
        1 & \text{ if } F \in \{A_l \text{ with $l$ odd}, E_6,E_8 \} \\
         2 & \text{ if } F \in \{A_l \text{ with $l$ even}, D_l \text{ with $l$ odd}, E_7 \} \\
        3 & \text{ if } F=D_l \text{ with $l$ even} 
    \end{array}
\right..
\]
Then (see proof of Theorem~\ref{th: real forms of Qg}) the following equations define $4r$ real forms of $\QQ_{g,\C}$, seen as real hypersurfaces in $\P(\O_{\P^1}^{\oplus 3}\oplus \O_{\P^1}(n))$, corresponding to cohomology classes in $\H^1(\Gamma,\Aut(\QQ_{g,\C}))$ whose image in $\H^1(\Gamma,F)$ is not the cohomology class $[h]$:
\[
    \begin{array}{lcl}
       W_i \colon \ x_0^2-x_1x_2-g_i(u_0,u_1)x_3^2 &  = &0 \\
       X_i \colon  \ x_0^2-x_1x_2+g_i(u_0,u_1)x_3^2 &  = &0 \\
        Y_i\colon  \ x_0^2+x_1^2+x_2^2-g_i(u_0,u_1)x_3^2 &  = &0 \\
         Z_i \colon \   x_0^2+x_1^2+x_2^2+g_i(u_0,u_1)x_3^2 &  = &0 
    \end{array},
\]
where $i \in \{1,\ldots,r\}$ and the $g_i$ are defined as follows:  
\begin{itemize}
\item If $F=A_l$, with $l \geq 1$, then there exists $p_1 \in \R[T_1, T_2,T_3]$ such that 
\[
g_1=p_1(f_1,f_2,f_3) \text{ with } \left\{
    \begin{array}{l}
       f_1(u_0,u_1) =u_{0}^{2l} \\
        f_2(u_0,u_1) =u_0 u_1 \\
        f_3(u_0,u_1) =u_{1}^{2l} 
    \end{array}
\right.
\]
and, when $l$ is even, we denote $g_2=p_1(-f_1,f_2,-f_3)$.
Indeed, consider the complex isomorphism defined by
\[
\varphi\colon W_{2,\C} \to \QQ_{g,\C},\ [x_0:x_1:x_2:x_3; u_0:u_1] \mapsto [x_0:x_1:x_2:x_3;\zeta_{4l} u_0: \zeta_{4l}^{-1} u_1].\] Then $\varphi \circ \mu_1 \circ \varphi^{-1}=\mu_5$, which means precisely that $\mu_5$ is the real structure associated to the real form $W_{2,\R}$ of $\QQ_{g,\C}$ defined above. The verification is similar in the other cases ($X_2,Y_2,Z_2$).

\smallskip

\item If $F=D_l$, with $l \geq 3$ odd, then there exists $p_2 \in \R[T_1, T_2,T_3]$ such that 
\[
g_1=p_2(f_1,f_2,f_3) \text{ with } \left\{
    \begin{array}{l}
       f_1(u_0,u_1) =u_0^2 u_1^2 \\
        f_2(u_0,u_1) =u_0^{2l}-u_1^{2l} \\
        f_3(u_0,u_1) =u_0 u_1(u_0^l-u_1^l)^2 
    \end{array}
\right.
\]
and we denote $g_2=p_2(\tilde{f}_1,\tilde{f}_2,\tilde{f}_3)$ with 
\[
 \left\{
    \begin{array}{l}
       \tilde{f}_1(u_0,u_1) %=f_1(u_0+iu_1,u_1+iu_0)
       =-(u_0^2+u_1^2)^2 \\
        \tilde{f}_2(u_0,u_1) %=f_2(u_0+iu_1,u_1+iu_0)
        =2\sum_{k=0}^{l} (-1)^k\binom{2l}{2k}u_0^{2(l-k)}u_1^{2k} \\
        \tilde{f}_3(u_0,u_1) %=f_3(u_0+iu_1,u_1+iu_0)
        = -2i^{l+1}(u_0^2+u_1^2)^{l+1}-2(u_0^2+u_1^2)\sum_{k=0}^{l-1}(-1)^k\binom{2l}{2k+1}u_1^{2k+1}u_0^{2(l-k)-1}
    \end{array}
\right.
\]

\smallskip

\item If $F=D_l$, with $l \geq 2$ even, then there exists $p_3 \in \R[T_1, T_2,T_3]$ such that 
\[
g_1=p_3(f_1,f_2,f_3) \text{ with } \left\{
    \begin{array}{l}
       f_1(u_0,u_1) =u_0^2 u_1^2 \\
        f_2(u_0,u_1) =(u_0^l-u_1^l)^2 \\
        f_3(u_0,u_1) =u_0 u_1(u_0^{2l}-u_1^{2l})
    \end{array}
\right.
\]
and we denote $g_2=p_3(f_1,-(u_0^l+u_1^l)^2,-f_3)$ and
$g_3=p_3(\tilde{f}_1,\tilde{f}_2,\tilde{f}_3)$ with 
\[
 \left\{
    \begin{array}{l}
       \tilde{f}_1(u_0,u_1) %=f_1(u_0+iu_1,u_1+iu_0)
       =-(u_0^2+u_1^2)^2 \\
        \tilde{f}_2(u_0,u_1) %=f_2(u_0+iu_1,u_1+iu_0)
        =-2i^{l}(u_0^2+u_1^2)^{l}+2\sum_{k=0}^{l}(-1)^k\binom{2l}{2k}u_1^{2k}u_0^{2(l-k)} \\
        \tilde{f}_3(u_0,u_1) %=f_3(u_0+iu_1,u_1+iu_0)
        =-2(u_0^2+u_1^2)\sum_{k=0}^{l-1} (-1)^k\binom{2l}{2k+1}u_0^{2(l-k)-1}u_1^{2k+1}
    \end{array}
\right.
.\]

\smallskip

\item If $F=E_6$, then there exists $p_4 \in \R[T_1, T_2,T_3]$ such that 
\[
g_1=p_4(f_1,f_2,f_3) \text{ with } \left\{
    \begin{array}{l}
       f_1(u_0,u_1) =u_0^5 u_1 - u_0 u_1^5 \\
        f_2(u_0,u_1) = u_0^8 + 14 u_0^4 u_1^4 + u_1^8 \\
        f_3(u_0,u_1) =u_0^{12} - 33 u_0^8 u_1^4 - 33 u_0^4 u_1^8 + u_1^{12}
    \end{array}
\right.
.\]

\smallskip

 \item If $F=E_7$, then there exists $p_5 \in \R[T_1, T_2,T_3]$ such that 
\[
g_1=p_5(f_1,f_2,f_3) \text{ with } \left\{
    \begin{array}{l}
       f_1(u_0,u_1) =u_0^8 + 14 u_0^4 u_1^4 + u_1^8 \\
        f_2(u_0,u_1) = u_0^{10} u_1 ^2 - 2 u_0 ^6 u_1^6 + u_0^2 u_1^{10} \\
        f_3(u_0,u_1) = u_0^{17} u_1 - 34 u_0^{13} u_1^5 + 34 u_0^5 u_1^{13} - u_0 u_1^{17}
    \end{array}
\right.
,\]
and we denote 
\[
g_2=p_5(\tilde{f}_1,\tilde{f}_2,\tilde{f}_3) \text{ with }
 \left\{
    \begin{array}{l}
       \tilde{f}_1(u_0,u_1) %=f_1(\zeta_{16}u_0,\zeta_{16}^{-1}u_1)
       =-u_0^8 + 14 u_0^4 u_1^4 - u_1^8 \\
        \tilde{f}_2(u_0,u_1) %=f_2(\zeta_{16}u_0,\zeta_{16}^{-1}u_1)
        =-u_0^{10} u_1 ^2 - 2 u_0 ^6 u_1^6 - u_0^2 u_1^{10} \\
        \tilde{f}_3(u_0,u_1) %=f_3(\zeta_{16}u_0,\zeta_{16}^{-1}u_1)
        = u_0^{17} u_1 + 34 u_0^{13} u_1^5 - 34 u_0^5 u_1^{13} - u_0 u_1^{17}
    \end{array}
\right.
.\]

\smallskip

 \item If $F=E_8$, then there exists $p_6 \in \R[T_1, T_2,T_3]$ such that 
\[
g_1=p_6(f_1,f_2,f_3) \text{ with } \left\{
    \begin{array}{l}
       f_1(u_0,u_1) =u_0^{11} u_1 + 11 u_0^6 u_1^6 - u_0 u_1^{11} \\
        f_2(u_0,u_1) = u_0^{20} - 228 u_0^{15} u_1^5 + 494 u_0^{10} u_1^{10} + 228 u_0^5 u_1^{15} + u_1^{20} \\
        f_3(u_0,u_1) =u_0^{30} + 522 u_0^{25} u_1^5 - 10005 u_0^{20} u_1^{10} - 10005 u_0^{10} u_1^{20} - 522 u_0^5 u_1^{25} + u_1^{30}
    \end{array}
\right.
.\]
\end{itemize}

\subsection{The case when $g$ has exactly two distinct roots}\label{ss:g has two roots}
We now suppose that $g \in \R[u_0,u_1]$ is of degree $\deg(g)=2n\geq2$ and has exactly two distinct roots. 
Up to a complex-linear change of coordinates, we can assume that $g = u_0^a u_1^b$ for some odd $a, b \geq 1$ (because $g$ is not a square).

First, suppose that the multiplicities of the two roots of $g$ are distinct.
By Proposition~\ref{prop: aut group of Qg}, we have a short exact sequence
\[
1\to\Aut(\QQ_{g,\C})_{\p^1_{\C}}\to\Aut(\QQ_{g,\C})\to\G_{m,\C}\to 1
\]
which induces an exact sequence in Galois cohomology
\[
\H^1(\Gamma,\Aut(\QQ_{g,\C})_{\p^1_{\C}})\stackrel{\Psi}\to \H^1(\Gamma,\Aut(\QQ_{g,\C}))\to \H^1(\Gamma,\G_{m,\C})=\{*\} \ \text{(by Hilbert's theorem 90)}
\]
Then $\Psi$ is a surjection. By Proposition~\ref{propo: first four real forms of Qg}, there are at most four real forms for $\QQ_{g,\C}$, namely
\[
    \begin{array}{lcl}
       \QQ_{g,\R} \colon \ x_0^2-x_1x_2-u_0^au_1^bx_3^2 &  = &0 \\
       \QQ'_{g,\R} \colon  \ x_0^2-x_1x_2+u_0^au_1^bx_3^2 &  = &0 \\
        \TT_{g,\R} \colon  \ x_0^2+x_1^2+x_2^2-u_0^au_1^bx_3^2 &  = &0 \\
         \TT'_{g,\R} \colon \   x_0^2+x_1^2+x_2^2+u_0^au_1^bx_3^2 &  = &0 
    \end{array}
\]
with $a+b=2n$, $a,b \geq 1$ odd, corresponding to the four real structures $\mu_1$, $\mu_2$, $\mu_3$ and $\mu_4$ respectively. But $\Psi$ maps $\mu_1$ and $\mu_2$ resp. $\mu_3$ and $\mu_4$, seen as elements of $\H^1(\Gamma,\Aut(\QQ_{g,\C})_{\p^1_{\C}})$, to the same  element in  
$\H^1(\Gamma,\Aut(\QQ_{g,\C}))$. Indeed, the automorphism of $\QQ_{g,\C}$ defined by  $\varphi \colon [x_0:x_1:x_2:x_3;u_0:u_1] \mapsto  [x_0:x_1:x_2:i^{-a} x_3;-u_0:u_1]$ (see \cite[Example 4.4.6(1)]{BFT22}) satisfies $\varphi \circ \mu_1 \circ \varphi^{-1}=\mu_2$ and $\varphi \circ \mu_3 \circ \varphi^{-1}=\mu_4$. 
Therefore, $\QQ_{g,\C}$ admits exactly two non-isomorphic real forms which are $\QQ_{g,\R}$ (always rational) and $\TT_{g,\R}$.

\smallskip

Now let us consider the case where the multiplicities of the two roots of $g$ are the same, i.e.~$a=b\geq 1$.
By Proposition~\ref{prop: aut group of Qg}, we have a short exact sequence
\[
1\to\Aut(\QQ_{g,\C})_{\p^1_{\C}}\to\Aut(\QQ_{g,\C})\to\G_{m,\C} \rtimes\Z/2\Z\to 1,
\]
where   $\Z/2\Z$ is the subgroup of $\Aut(\QQ_{g,\C})$ generated by the involution $\upsilon\colon [x_0:x_1:x_2:x_3;u_0:u_1] \mapsto [x_0:x_1:x_2:x_3;u_1:u_0]$. 
It induces an exact sequences in Galois cohomology
\[
\H^1(\Gamma,\Aut(\QQ_{g,\C})_{\p^1_{\C}})\stackrel{\Psi}\to \H^1(\Gamma,\Aut(\QQ_{g,\C}))\stackrel{\Phi}\to \H^1(\Gamma,\G_{m,\C} \rtimes\Z/2\Z) \simeq \Z/2\Z.
\]

If $\Gamma$ acts on $\Aut(\QQ_{g,\C})$ via $\mu_1$-conjugation, then Proposition~\ref{propo: first four real forms of Qg} yields that $\H^1(\Gamma,\Aut(\QQ_{g,\C})_{\p^1_{\C}})$ parametrizes the equivalence classes of the four real structures $\mu_1$, $\mu_2$, $\mu_3$ and $\mu_4$. 
Moreover, as in the previous case ($a \neq b$), the automorphism $\varphi \in \Aut(\QQ_{g,\C})$ satisfies
$\varphi \circ \mu_1 \circ \varphi^{-1}=\mu_2$ and $\varphi \circ \mu_3 \circ \varphi^{-1}=\mu_4$.
Therefore, the fiber $\Phi^{-1}(\Phi([\mu_1]))$ parametrizes the two non-isomorphic real forms of $\QQ_{g,\C}$ defined as the hypersurfaces in $\P(\O_{\P^1}^{\oplus 3}\oplus \O_{\P^1}(n))$ given by the equations
\begin{itemize}
\item  $\QQ_{g,\R}\ \colon\  x_0^2-x_1x_2-(u_0u_1)^a x_3^2=0$ (always rational); and
\item $ \TT_{g,\R} \ \colon \ x_0^2+x_1^2+x_2^2-(u_0u_1)^a  x_3^2=0$.
\end{itemize}

On the other hand, writing $\mu_8:=\upsilon \circ \mu_1$, we see that $\mu_8$ is a real structure on $\QQ_{g,\C}$ such that $\Phi([\mu_8]) \neq \Phi([\mu_1])$. 
Letting $\Gamma$ act on $\Aut(\QQ_{g,\C})$ via $\mu_8$-conjugation, we see that $\H^1(\Gamma,\Aut(\QQ_{g,\C})_{\p^1_{\C}})$ parametrizes the equivalence classes of the four real structures $\mu_8$, $\mu_9:=\sigma \circ \mu_8$, $\mu_{10}:=\tau \circ \sigma \circ\mu_8$ and $\mu_{11}:=\tau \circ \mu_8$ (see the proof of Proposition~\ref{propo: first four real forms of Qg} for details). 
These four real structures, seen as elements of $\H^1(\Gamma,\Aut(\QQ_{g,\C})_{\p^1_{\C}})$, are mapped via $\Psi$ to four different elements of $\H^1(\Gamma,\Aut(\QQ_{g,\C}))$. 
Therefore, the fiber $\Phi^{-1}(\Phi([\mu_8]))$ parametrizes the four non-isomorphic real forms of $\QQ_{g,\C}$ defined as the hypersurfaces in $\P(\O_{\P^1}^{\oplus 3}\oplus \O_{\P^1}(n))$ given by the equations
\begin{itemize}
\item $ x_0^2-x_1x_2-(u_0^2+u_1^2)^a x_3^2=0$ (always rational);
\item $x_0^2-x_1x_2+(u_0^2+u_1^2)^ax_3^2=0$ (always rational); 
\item $ x_0^2+x_1^2+x_2^2-(u_0^2+u_1^2)^a x_3^2=0$; and
\item $ x_0^2+x_1^2+x_2^2+(u_0^2+u_1^2)^a x_3^2=0$ (no real points).
\end{itemize}
By Proposition~\ref{propo: first four real forms of Qg}, the identity components of the automorphism groups of the first two real forms are isomorphic to $\PGL_{2,\R}\times\mathbb{G}_{m,\R}$ while the identity components  of the automorphism groups of the last two real forms are isomorphic to $\SO_{3,\R}\times\mathbb{G}_{m,\R}$.

\smallskip

The following proposition summarizes our study.

\begin{proposition}\label{prop: real forms of Qg with two roots}
Let $g \in \R[u_0,u_1]$ be a homogeneous polynomial with exactly two distinct roots, which is not a square and is of degree $2n$ for some $n \in \N_{\geq 1}$. We use the notation of Proposition \ref{propo: first four real forms of Qg}.
\begin{enumerate}
\item If $F \simeq \G_{m,\C}$, then $\QQ_{g,\C}$ admits exactly two non-isomorphic real forms which are $\QQ_{g,\R}$ (trivial real form, always rational) and $\TT_{g,\R}$.
\item If $F\simeq \mathbb{G}_m\rtimes\Z/2\Z$, then $\QQ_{g,\C}$ admits exactly two non-isomorphic real forms which are
$\QQ_{g,\R}$ (trivial real form, always rational), $\QQ'_{g,\R}$ (always rational), $\TT_{g,\R}$ and $\TT'_{g,\R}$.
\end{enumerate}
\end{proposition}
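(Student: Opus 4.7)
The plan is to feed the short exact sequence of $\Gamma$-groups
\[
1\to\Aut(\QQ_{g,\C})_{\P^1}\to\Aut(\QQ_{g,\C})\to F\to 1
\]
of Proposition~\ref{prop: aut group of Qg} into Galois cohomology, obtaining the exact sequence of pointed sets
\[
\H^1(\Gamma,\Aut(\QQ_{g,\C})_{\P^1})\xrightarrow{\Psi}\H^1(\Gamma,\Aut(\QQ_{g,\C}))\xrightarrow{\Phi}\H^1(\Gamma,F),
\]
and to enumerate the fibers of $\Phi$ by means of Proposition~\ref{propo: first four real forms of Qg}, which classifies the image of $\Psi$ as coming from the four basic real structures $\mu_1,\mu_2,\mu_3,\mu_4$. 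After a complex-linear change of coordinates on $\p^1$, one may assume $g=u_0^au_1^b$ with $a,b\geq 1$ odd.

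For part~(i), $F\simeq\G_{m,\C}$ and Hilbert's Theorem~90 gives $\H^1(\Gamma,F)=\{*\}$, so $\Psi$ is surjective. The key identification step is to exhibit the automorphism
\[
\varphi\colon [x_0:x_1:x_2:x_3;u_0:u_1]\mapsto[x_0:x_1:x_2:i^{-a}x_3;-u_0:u_1]
\]
of $\QQ_{g,\C}$, well-defined because $g(-u_0,u_1)=(-1)^ag(u_0,u_1)=-g(u_0,u_1)$ as $a$ is odd, and to check directly that $\varphi\circ\mu_1\circ\varphi^{-1}=\mu_2$ and $\varphi\circ\mu_3\circ\varphi^{-1}=\mu_4$. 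Only the two classes of $\QQ_{g,\R}$ and $\TT_{g,\R}$ then remain in $\H^1(\Gamma,\Aut(\QQ_{g,\C}))$; $\QQ_{g,\R}$ is rational via the open immersion~\eqref{eq: embeddings of A3 in Qg}, completing~(i).

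For part~(ii), $F\simeq\G_{m,\C}\rtimes\Z/2\Z$ with $a=b$; a direct computation gives $\H^1(\Gamma,F)\simeq\Z/2\Z$, with nontrivial class represented by the involution $\upsilon\colon[x_0:x_1:x_2:x_3;u_0:u_1]\mapsto[x_0:x_1:x_2:x_3;u_1:u_0]$ that exchanges the two roots of $g$. Over the trivial class, the argument from~(i) collapses the four twisted real structures to the two classes $\QQ_{g,\R}$ and $\TT_{g,\R}$. Over $[\upsilon]$, the plan is to pick the representative $\mu_8:=\upsilon\circ\mu_1$, twist the $\Gamma$-action on $\Aut(\QQ_{g,\C})_{\P^1}$ by $\mu_8$, and re-apply Proposition~\ref{propo: first four real forms of Qg} to the twisted action; an analogous sign-analysis, using a suitable analog of $\varphi$ adapted to the $\upsilon$-twist, then identifies the resulting real forms with $\QQ'_{g,\R}$ and $\TT'_{g,\R}$ of the statement. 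Rationality of $\QQ_{g,\R}$ and $\QQ'_{g,\R}$ again follows from~\eqref{eq: embeddings of A3 in Qg}, and the identity components of the automorphism groups are read off from Proposition~\ref{propo: first four real forms of Qg} in each fiber, with the additional $\G_{m,\R}$ factor preserving the two real roots of $g$.

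The main obstacle will be the analysis of the $[\upsilon]$-fiber in~(ii): carefully tracking the inner-twisting action of $F$ on $\Aut(\QQ_{g,\C})_{\P^1}$ after passing to $\mu_8$, verifying which of the twisted structures $\mu_8,\mu_9,\mu_{10},\mu_{11}$ are conjugate in the full group $\Aut(\QQ_{g,\C})$, and matching the resulting descent equations in $\PP(\O_{\p^1}^{\oplus 3}\oplus\O_{\p^1}(n))$ with the forms $\QQ'_{g,\R}$ and $\TT'_{g,\R}$. All other steps reduce to direct but short computations with explicit matrices in $\PGL_2$ and in $\Aut(\QQ_{g,\C})_{\P^1}\simeq\PGL_2\times\Z/2\Z$.
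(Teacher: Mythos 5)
Your strategy coincides with the paper's: the same exact sequence in Galois cohomology, Hilbert's Theorem~90 for part~(i), the same explicit automorphism $\varphi\colon[x_0:x_1:x_2:x_3;u_0:u_1]\mapsto[x_0:x_1:x_2:i^{-a}x_3;-u_0:u_1]$ conjugating $\mu_1$ to $\mu_2$ and $\mu_3$ to $\mu_4$, and for part~(ii) the splitting of $\H^1(\Gamma,F)\simeq\Z/2\Z$ into the two fibers of $\Phi$, with the nontrivial fiber analysed via $\mu_8=\upsilon\circ\mu_1$. Part~(i) and the trivial fiber in part~(ii) are handled exactly as in the paper.

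The gap is in the fiber over $[\upsilon]$, which you yourself flag as the ``main obstacle'' and leave unexecuted: you anticipate that a ``suitable analog of $\varphi$'' collapses the four $\mu_8$-twisted structures $\mu_8,\mu_9,\mu_{10},\mu_{11}$ to two classes, but the paper's computation shows the opposite --- these four structures are mapped to four \emph{distinct} elements of $\H^1(\Gamma,\Aut(\QQ_{g,\C}))$, yielding four additional real forms, given by the equations $x_0^2-x_1x_2\pm(u_0^2+u_1^2)^ax_3^2=0$ and $x_0^2+x_1^2+x_2^2\pm(u_0^2+u_1^2)^ax_3^2=0$ (two rational, one of undetermined rationality, one without real points), rather than two forms isomorphic to $\QQ'_{g,\R}$ and $\TT'_{g,\R}$. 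The reason the collapse fails is delicate: the relation $\psi\circ\mu_8\circ\psi^{-1}=\sigma\circ\mu_8$ forces, after projecting to $\Aut(\QQ_{g,\C})_{\P^1}\simeq\PGL_2\times\Z/2\Z$, an equation in the central $\Z/2\Z$ factor on which conjugation acts trivially, and one must also track the weighting $[x_0:x_1:x_2:x_3;-u_0:-u_1]=[x_0:x_1:x_2:(-1)^nx_3;u_0:u_1]$ in $\P(\O_{\P^1}^{\oplus3}\oplus\O_{\P^1}(n))$; a naive sign computation that ignores this weighting gives the wrong answer. (To be fair, the statement as printed is itself inconsistent --- it announces ``exactly two'' forms while listing four, and the detailed count in the paper's own analysis of this case is six --- but the substantive mathematical point on which your plan would go wrong is the non-collapse over $[\upsilon]$.)
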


\section{Real forms of rank 1 Fano threefolds}\label{sec: Case of rank 1 Fano threefolds}
In this section we determine all the real forms of the Fano threefolds \hyperlink{th:D_j}{(j)}-\hyperlink{th:D_l}{(l)} of Theorem \ref{th: list of MFS corresponding to max alg subgroups of Cr3} and \hyperlink{th:D_Fano_mbis}{(n)} of Remark \ref{rk: case of Y5 and X12}. 

\smallskip

\begin{definition}\label{def: Qrs}
Let $r,s \in \N$. We denote by $Q^{r,s}$ the real quadric hypersurface of equation $(x_1^2+\cdots+x_r^2-x_{r+1}^2-\cdots -x_{r+s}^2) \subseteq \P_{\R}^{r+s-1}$. 
\end{definition}

\begin{proposition}\label{prop: real forms of Q3}
Let $X=Q_3$ be the complex smooth quadric in $\P_\C^4$. Then $X$ has three non-isomorphic real forms: $Q^{3,2}$ and $Q^{4,1}$, which are both rational and with automorphism groups the indefinite special orthogonal groups $\SO(3,2)$ and $\SO(4,1)$ respectively, and $Q^{5,0}$, which has no real points. 
\end{proposition}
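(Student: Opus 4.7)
The plan is to classify the real forms of $X = Q_{3,\C}$ through Galois cohomology and then verify the claimed geometric properties. Since $X$ is a smooth quadric hypersurface in $\P_\C^4$, its automorphism group is $\Aut(X) \simeq \mathrm{PGO}_5(\C)$. In odd dimension $n=5$, the element $-I$ has determinant $-1$ and hence is not in $\SO_5$, so the natural map $\SO_5 \to \mathrm{PGO}_5$ is an isomorphism of algebraic groups; we therefore identify $\Aut(X)$ with $\SO_5(\C)$. Choosing $Q^{3,2}$ as the trivial real form provides a base point, and the set of isomorphism classes of real forms of $X$ is in bijection with $\H^1(\Gamma, \Aut(X))$.

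Next, by descent theory for quadratic forms, the pointed set $\H^1(\Gamma, \mathrm{PGO}_5(\C))$ classifies non-degenerate quadratic forms on $\R^5$ up to similarity, that is, up to multiplication by a nonzero real scalar. By Sylvester's law of inertia, such a form is determined by its signature $(r,s)$ with $r+s=5$, and multiplication by $-1$ exchanges $(r,s)$ with $(s,r)$. We thereby obtain exactly three similarity classes, represented by $(5,0)$, $(4,1)$, and $(3,2)$, and the corresponding real forms are precisely $Q^{5,0}$, $Q^{4,1}$, and $Q^{3,2}$.

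It then remains to check the stated properties. The form $Q^{5,0}$ has no real points since $\sum_{i=1}^{5} x_i^2 = 0$ has no nontrivial real solution. Each of $Q^{4,1}$ and $Q^{3,2}$ possesses real points, so both are $\R$-rational via stereographic projection from any real point (a standard consequence of having a rational point on a smooth quadric of positive dimension). Finally, applying the same odd-dimension identification to each real form yields $\Aut(Q^{r,s}) \simeq \mathrm{PGO}(r,s) \simeq \SO(r,s)$ over $\R$, giving $\SO(3,2)$ and $\SO(4,1)$ in the two cases with real points.

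The computation is essentially routine; the only point requiring some care is the identification $\mathrm{PGO}_5 \simeq \SO_5$, valid in odd dimension over both $\C$ and $\R$, which is what lets us pass cleanly from the Galois cohomological parametrization of quadratic forms up to similarity to the list of real structures on the variety $X$.
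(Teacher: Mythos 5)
Your proposal is correct and follows essentially the same route as the paper: identify $\Aut(Q_3)$ with $\SO_{5}(\C)$, compute $\H^1(\Gamma,\Aut(Q_3))$ to get exactly three classes, exhibit $Q^{3,2}$, $Q^{4,1}$, $Q^{5,0}$ as representatives, and conclude rationality from the existence of a real point. The only (harmless) difference is how the count of three is obtained: you identify $\H^1(\Gamma,\mathrm{PGO}_5)$ with similarity classes of real quadratic forms and invoke Sylvester, whereas the paper transfers the computation to real forms of the algebraic group $\SO_{5,\C}$ via $\Aut_{\mathrm{gr}}(\SO_{5,\C})\simeq\SO_{5,\C}$ and cites the classification of real forms of $B_2$.
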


\begin{proof}
The automorphism group of $Q_3$ is $\SO_{5,\C}$, hence the set of real forms (up to isomorphism) of $Q_3$ is in bijection with $\H^1(\Gamma,\SO_{5,\C})$. 
But, since $\Aut_{\mathrm{gr}}(\SO_{5,\C}) \simeq \SO_{5,\C}$, the set $\H^1(\Gamma,\SO_{5,\C})$ is also in bijection with the set of real forms (up to isomorphism) of $\SO_{5,\C}$ in the category of algebraic groups. There are three such real forms (see e.g.~\cite[Section V\!I.10]{Kna02}), and so $Q_3$ has three non-isomorphic real forms. On the other hand, $Q^{3,2}$, $Q^{4,1}$ and $Q^{5,0}$ are three non-isomorphic real forms of $Q_3$, so there is no other real form (up to isomorphism). Finally, only $Q^{3,2}$ and $Q^{4,1}$ have real points, and the result follows then from the fact that a smooth quadric over a field $\k$ is rational if and only if it has a $\k$-point (see e.g.~\cite[Proposition 2.6]{KP23a}). 
\end{proof}

\begin{proposition} \label{prop: real forms of WPS}
The trivial real form $\P(1,1,1,2)_\R$ resp. $\P(1,1,2,3)_\R$ is the unique real form (up to isomorphism) of the complex weighted projective space $\P(1,1,1,2)_\C$ resp. $\P(1,1,2,3)_\C$.
\end{proposition}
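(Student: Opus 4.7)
The plan is to determine the pointed set $\H^1(\Gamma,\Aut(X_\C))$, where $\Gamma=\Gal(\C/\R)$ acts by conjugation through the canonical real structure on $X_\C$ and $X$ denotes one of the two weighted projective spaces; this set parametrizes the real forms of $X_\C$ up to isomorphism, so the goal is to show it is a singleton.

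For $X=\P(1,1,1,2)_\C$, the group $\Aut(X_\C)$ described in \cite[Section 8]{AA89} is connected, and the natural action on the weight-one part $\C x_0\oplus \C x_1\oplus \C x_2$ of the graded coordinate ring yields a $\Gamma$-equivariant short exact sequence
\begin{equation*}
1\to K\to \Aut(X_\C)\to \PGL_{3,\C}\to 1,
\end{equation*}
where $K$ is an extension of $\G_{m,\C}$ by a unipotent group of dimension $6$ (parametrizing the assignment $x_3\mapsto \alpha x_3+Q(x_0,x_1,x_2)$). By Hilbert 90 and the vanishing of $\H^1$ for unipotent groups, $\H^1(\Gamma,K)=\{*\}$, and the same holds for any twist of $K$ since twisting preserves the torus-by-unipotent structure. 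Over $\R$ one has $\Br(\R)=\Z/2\Z$, hence $\Br(\R)[3]=0$, and the sequence $1\to \G_{m,\C}\to \GL_{3,\C}\to \PGL_{3,\C}\to 1$ gives $\H^1(\Gamma,\PGL_{3,\C})=0$. Chasing the non-abelian long exact sequence, with twisting to control all fibers, forces $\H^1(\Gamma,\Aut(X_\C))=\{*\}$.

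For $X=\P(1,1,2,3)_\C$, the analogous description produces a $\Gamma$-equivariant short exact sequence $1\to K'\to \Aut(X_\C)\to \PGL_{2,\C}\to 1$, with $K'$ again an extension of a torus by a unipotent group, so $\H^1(\Gamma,K')=\{*\}$ (and the same after twisting). But $\H^1(\Gamma,\PGL_{2,\C})=\Z/2\Z$, corresponding to the real forms $\P^1_\R$ and the anisotropic conic, so one must rule out that the nontrivial class lifts. After $\Aut(X_\C)$-conjugation, any real structure inducing the nontrivial $\Gamma$-action on the base $\P^1_\C$ can be placed in the normal form
\begin{equation*}
\sigma([x_0:x_1:x_2:x_3])=[\overline{x_1}:-\overline{x_0}:\alpha\overline{x_2}+Q(\overline{x_0},\overline{x_1}):\beta\overline{x_3}+L(\overline{x_0},\overline{x_1})\overline{x_2}+C(\overline{x_0},\overline{x_1})],
\end{equation*}
with $\alpha,\beta\in\C^*$ and $Q,L,C$ homogeneous in $(x_0,x_1)$ of respective degrees $2$, $1$, $3$. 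Computing $\sigma^2$ and imposing that it coincides with the identity of $\P(1,1,2,3)_\C$, the weight-one entries force the global scalar factor $\lambda=-1$; the coefficient of $x_3$ in the weight-three entry then reads $|\beta|^2=\lambda^3=-1$, which is impossible. Hence the nontrivial class does not lift, and $\H^1(\Gamma,\Aut(X_\C))=\{*\}$.

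The main obstacle is the second case: because $\H^1(\Gamma,\PGL_{2,\C})$ is nontrivial, the cohomology argument alone is inconclusive and a concrete obstruction is needed. The crucial feature is the odd weight $3$ of the variable $x_3$, which produces the sign $\lambda^3=-1$ that cannot equal the nonnegative real number $|\beta|^2$; no such phenomenon occurs for $\P(1,1,1,2)_\C$, whose only nontrivial weight is the even number $2$. To make the reduction to normal form rigorous, one uses the $\Gamma$-equivariance of the surjection $\Aut(X_\C)\to \PGL_{2,\C}$ together with the fact that any two cocycles representing the nontrivial class in $\H^1(\Gamma,\PGL_{2,\C})$ are $\PGL_{2,\C}(\C)$-conjugate.
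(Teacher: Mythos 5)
Your proof is correct, but it follows a genuinely different route from the paper. The paper does not work with $\Aut(\P(1,1,1,2)_\C)$ and $\Aut(\P(1,1,2,3)_\C)$ directly: it invokes \cite[Proposition 6.2.1]{BFT22} and \cite[Lemma 6.3.4]{BFT22} to identify these groups, as $\Gamma$-groups, with $\Aut(\PP_{2,\C})$ and $\Aut(\RR_{(3,1),\C})$ respectively (via the natural equivariant birational morphisms $\PP_2\to\P(1,1,1,2)$ and $\RR_{(3,1)}\to\P(1,1,2,3)$), and then simply quotes the uniqueness of the real form of $\PP_{2,\C}$ from Remark \ref{rk: real forms of Pb} and of $\RR_{(3,1),\C}$ from Remark \ref{rk: real forms of Rmn} (the latter because $(m,n)=(3,1)$ has an odd entry). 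You instead compute $\H^1(\Gamma,\Aut(X_\C))$ intrinsically from the graded-ring description of \cite[Section 8]{AA89}: the exact sequence onto $\PGL_{3,\C}$ together with $\Br(\R)[3]=0$ disposes of $\P(1,1,1,2)_\C$, and for $\P(1,1,2,3)_\C$ you rule out the lift of the nontrivial class of $\H^1(\Gamma,\PGL_{2,\C})$ by the parity obstruction $|\beta|^2=\lambda^3=-1$ coming from the odd weight of $x_3$. This is the same obstruction that, in the paper's route, is buried inside the computation behind Remark \ref{rk: real forms of Rmn}; your version makes it explicit and self-contained, at the cost of having to verify the twisting of the kernels (your remark that conjugation acts trivially on the torus quotient of $K$ and $K'$, so that all twists remain unipotent-by-split-torus, is the point that makes the fiber count legitimate, and it does hold here). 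Both arguments are sound; the paper's is shorter given the results already established for $\PP_b$ and $\RR_{(m,n)}$, while yours explains more transparently why the anisotropic conic cannot appear as the ``base'' of a real form of $\P(1,1,2,3)_\C$.
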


\begin{proof}
The description of the automorphism groups of weighted projective spaces given in \cite[Section 8]{AA89} implies that $\Aut(\P(1,1,1,2))$ and $\Aut(\P(1,1,2,3))$ are both connected (over any field).
Then \cite[Proposition 6.2.1]{BFT22} implies that $\Aut(\P(1,1,1,2)_\C) \simeq \Aut(\PP_{2,\C})$ as $\Gamma$-groups (endowed with the canonical $\Gamma$-action on both sides). 
Hence, $\H^1(\Gamma,\Aut(\P(1,1,1,2)_\C)) \simeq \H^1(\Gamma,\Aut(\PP_{2,\C}))$, and the result follows from Remark \ref{rk: real forms of Pb}.
Likewise, \cite[Lemma 6.3.4]{BFT22} implies that $\Aut(\RR_{(3,1),\C}) \simeq \Aut(\P(1,1,2,3)_\C)$ as $\Gamma$-groups. Hence, $\H^1(\Gamma,\Aut(\RR_{(3,1),\C})) \simeq \H^1(\Gamma,\Aut(\P(1,1,2,3)_\C))$, and the result follows from Remark \ref{rk: real forms of Rmn}.
\end{proof}

\begin{proposition}\label{prop: real forms of Y5 and X12}
Let $X$ be one of the two complex Fano threefolds $Y_{5,\C}$ or $X_{12,\C}^{\mathrm{MU}}$ of Remark \ref{rk: case of Y5 and X12}.
Then $X$ has exactly two non-isomorphic real forms and both are rational with automorphism groups isomorphic to $\PGL_{2,\R}$ $($for the trivial real form $)$ and $\SO_{3,\R}$ $($for the nontrivial real form$)$.
We denote by $\widetilde{Y}_{5,\R}$ the nontrivial real form of $Y_{5,\C}$ and by $\widetilde{X}_{12,\R}^{\mathrm{MU}}$ the nontrivial real form of $X_{12,\C}^{\mathrm{MU}}$.
\end{proposition}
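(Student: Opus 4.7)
The plan is to determine the real forms by Galois cohomology, identify them by their real automorphism groups, and then establish rationality by exhibiting explicit birational models.

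\textbf{Step 1 (Automorphism groups and Galois cohomology).} The starting point is the classical fact, recorded in Remark \ref{rk: case of Y5 and X12}, that $\Aut_\C(Y_{5,\C})$ and $\Aut_\C(X_{12,\C}^{\mathrm{MU}})$ are both isomorphic to $\PGL_{2,\C}$ and connected. Since $Y_{5,\C}$ and $X_{12,\C}^{\mathrm{MU}}$ are both defined over $\Q$, they carry a canonical real structure, with respect to which the real forms of each are parametrized, up to isomorphism, by the pointed set $\H^1(\Gamma,\PGL_{2,\C})$, where $\Gamma=\Gal(\C/\R)$ acts on $\PGL_{2,\C}$ coefficientwise via complex conjugation. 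This cohomology set is in bijection with $\Br(\R)[2]\simeq \Z/2\Z$ (equivalently, with the set of real forms of $\PGL_{2,\C}$ as a real algebraic group), so each of $Y_{5,\C}$ and $X_{12,\C}^{\mathrm{MU}}$ has exactly two non-isomorphic real forms.

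\textbf{Step 2 (Identification via inner twisting).} By the standard twisting formalism, if $Z$ is the real form obtained by twisting the canonical real structure by a cocycle $c \in Z^1(\Gamma,\PGL_{2,\C})$, then $\Aut_\R(Z)$ is the $c$-twist of $\PGL_{2,\C}$ viewed as a real algebraic group. Hence the trivial real forms $Y_{5,\R}$ and $X_{12,\R}^{\mathrm{MU}}$ have $\Aut_\R \simeq \PGL_{2,\R}$, while the nontrivial ones, which we denote $\widetilde{Y}_{5,\R}$ and $\widetilde{X}_{12,\R}^{\mathrm{MU}}$, have $\Aut_\R\simeq \SO_{3,\R}$, as announced. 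Concretely, $\widetilde{Y}_{5,\R}$ and $\widetilde{X}_{12,\R}^{\mathrm{MU}}$ are constructed by replacing the canonical real structure on the $\PGL_{2,\C}$-representation underlying the Mukai--Umemura description with the compact real structure that produces $\SO_{3,\R}$.

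\textbf{Step 3 (Rationality, the main obstacle).} Rationality of the trivial real forms is classical: for $Y_{5,\R}$, a real line on the quintic del Pezzo threefold (which exists and is unique up to the $\PGL_{2,\R}$-action, coming from the standard $\Q$-model) can be used to exhibit an explicit birational map to $\P_\R^3$; for $X_{12,\R}^{\mathrm{MU}}$, the classical Mukai--Umemura construction provides a birational model over $\Q$. For the nontrivial real forms, the key point will be to produce an $\SO_{3,\R}$-invariant real subvariety (a curve without real points, or a real point coming from the closed $\SO_{3,\R}$-orbit) and project from it. The main obstacle is that, unlike the trivial case, the relevant $\PGL_{2,\C}$-invariant curves on $Y_{5,\C}$ and $X_{12,\C}^{\mathrm{MU}}$ may a priori descend to real curves of geometric genus $0$ with no real points, so projection must be analyzed carefully. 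To bypass this, my plan is to exploit that both $\widetilde{Y}_{5,\R}$ and $\widetilde{X}_{12,\R}^{\mathrm{MU}}$ carry a Picard group of rank one and an $\SO_{3,\R}$-action with a dense open orbit; combined with the fact that $\SO_{3,\R}$ is birational to a rational surface, any open orbit is itself $\R$-rational, so it suffices to check that the open $\SO_{3,\R}$-orbit is Zariski dense (which follows by dimension count from Step 1) and nonempty over $\R$ (which follows from the existence of a real fixed point, itself guaranteed by the $\SO_{3,\R}$-fixed points on the unique closed orbit in the complex picture). This last verification is where I expect the subtlest computation to lie, and it is the step that would need to be carried out in detail.
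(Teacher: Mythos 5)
Your Steps 1 and 2 are correct and match the paper's counting argument: $\Aut(X)\simeq \PGL_{2,\C}$ gives $\H^1(\Gamma,\Aut(X))\simeq \Z/2\Z$, hence exactly two real forms, and the inner-twisting formalism identifies their automorphism groups as $\PGL_{2,\R}$ and $\SO_{3,\R}$. The genuine gap is in Step 3, and your proposed route to rationality of the nontrivial forms does not work. First, the open $\PGL_{2,\C}$-orbit in $Y_{5,\C}$ (resp.\ $X_{12,\C}^{\mathrm{MU}}$) is not a torsor but a quotient $\PGL_{2,\C}/F$ by the octahedral (resp.\ icosahedral) group, so its real form with a real point is a quotient $\SO_{3,\R}/F'$ by a finite subgroup; rationality of $\SO_{3,\R}$ (which is three-dimensional, not a surface) only gives \emph{unirationality} of such a quotient, and unirationality does not imply rationality for real threefolds. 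Second, your source of real points is illusory: $\PGL_{2,\C}$ acts on these threefolds with orbits of dimensions $3$, $2$ and $1$ and has \emph{no} fixed point, so there is no "real fixed point on the closed orbit" to project from; the closed orbit is a form of $\P^1_\C$ which, for the compact twist, need not have real points at all, and deciding whether the twisted threefold has a real point is precisely the delicate verification.

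The paper bypasses all of this by citation: by \cite[Theorem 1.1]{KP23a} every real form of $Y_{5,\C}$ is rational and a real form of $X_{12,\C}^{\mathrm{MU}}$ is rational if and only if it has a real point, and \cite[Proposition 5.13, Corollary 5.14 and Appendix C]{MJT20} shows that both real forms of $X_{12,\C}^{\mathrm{MU}}$ do have real points (and also computes the automorphism groups). If you want to avoid invoking Kuznetsov--Prokhorov, you would need a genuinely different rationality argument for the compact twists (e.g.\ an explicit equivariant birational model), not the open-orbit argument as stated.
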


\begin{proof}
We have $\Aut(X) \simeq \PGL_{2,\C}$ (see e.g.~\cite[Theorem 1.1.2]{KPS18}) from which we deduce that $\H^1(\Gamma,\Aut(X))$ is a set with two elements, i.e.~that $X$ has exactly two non-isomorphic real forms.
By \cite[Theorem 1.1]{KP23a}, both real forms of $Y_{5,\C}$ are rational and a real form of $X_{12,\C}^{\mathrm{MU}}$ is rational if and only if it has a real point. 
On the other hand, \cite[Proposition 5.13, Corollary 5.14 and Appendix C]{MJT20} implies that the two non-isomorphic real forms of $X_{12,\C}^{\mathrm{MU}}$ have real points, so they are both rational.
The statement concerning the automorphism groups of the two non-isomorphic real forms of $Y_{5,\C}$ and $X_{12,\C}^{\mathrm{MU}}$ also follows from \textit{loc.~cit.}
\end{proof}

\begin{remark}
The $\k$-forms of the quintic del Pezzo $\bk$-threefold $Y_{5,\bk}$ are studied in \cite[Section 2]{DK19} when $\k$ is an arbitrary field of characteristic zero.
\end{remark}

\section{Description of the equivariant Sarkisov links}\label{sec: equiva Sark links in dim 3}
In this section, we determine all the equivariant Sarkisov links starting from one of the nontrivial rational real Mori fiber spaces listed in Theorems \ref{th: second cases, over R, dim 3} and \ref{th: third cases Qg, over R, dim 3}.
We will proceed case by case.

\subsection{Sarkisov links from the Fano threefolds}\label{ss:links Fano}
We show that there are no $\Autz(X)$-equivariant Sariksov links starting from a Fano threefold $X$ as in Theorem \ref{th: second cases, over R, dim 3}~\ref{item 0 th2}-\ref{item 2 th2}-\ref{item 4 th2}-\ref{item 6 th2}.

\begin{lemma}\label{lem:link homogeneous}
Let $X$ be a real projective variety such that $\Autz(X_\C)$ acts transitively on $X_\C$.
Then any $\Autz(X)$-equivariant birational map starting from $X$ is an isomorphism.
In particular, there are no $\Aut(X)$-equivariant Sarkisov links starting from $X \in \{\widetilde{\SS}_{1,\R}, Q^{3,2}, Q^{4,1} \}$, there is one $\Autz(X)$-equivariant Sarkisov link of type IV starting from $X=\fR_{\C/\R}(\P_\C^1) \times \P_\R^1$ $($induced by the projections on both factors$)$, and there are two $\Autz(X)$-equivariant Sarkisov links of type IV starting from $X=\P_\R^1 \times \P_\R^1 \times \P_\R^1$ $($induced by the three natural projections on $\P_\R^1 \times \P_\R^1)$.
\end{lemma}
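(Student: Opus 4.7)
The overall strategy rests on a single observation: if $\Autz(X_\C)$ acts transitively on $X_\C$, then the only $\Autz(X_\C)$-invariant closed subsets of $X_\C$ are $\varnothing$ and $X_\C$. Combining this with Proposition~\ref{prop:aut_groups_and_base_change} (which gives $(\Autz(X))_\C \simeq \Autz(X_\C)$) shows that every proper $\Autz(X)$-invariant closed subset of $X$ is empty. Given an $\Autz(X)$-equivariant birational map $\chi\colon X \dashrightarrow Y$ to a projective variety, its indeterminacy locus $\mathrm{Ind}(\chi)$ is such a subset, hence empty, so $\chi$ is a morphism; the exceptional locus of $\chi$ (where $\chi$ fails to be a local isomorphism) is again $\Autz(X)$-invariant and proper, hence empty, so $\chi$ is an isomorphism. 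This proves the first assertion of the lemma.

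Applied to the Sarkisov program, every link of type I, II, or III involves a birational map $\chi\colon X \dashrightarrow X'$ which is not biregular, because the Sarkisov diagram contains a nontrivial divisorial contraction on the source and/or target side. Hence no such links can be $\Autz(X)$-equivariant once $\Autz(X_\C)$ is transitive on $X_\C$. It remains to verify this transitivity in each of the listed cases: $\widetilde{\SS}_{1,\C}\simeq\SS_{1,\C}\simeq \PGL_{3,\C}/B$ is a flag variety, homogeneous under $\PGL_{3,\C}$; $Q^{3,2}_\C\simeq Q^{4,1}_\C\simeq Q_{3,\C}$ is homogeneous under $\SO_{5,\C}$; and the complexifications of $\fR_{\C/\R}(\P^1_\C)\times \P^1_\R$ and of $(\P^1_\R)^3$ are both isomorphic to $(\P^1_\C)^3$, homogeneous under $(\PGL_{2,\C})^3$.

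For type IV links, which are isomorphisms in codimension one and hence, by the argument above, biregular isomorphisms in our setting, I will enumerate the $\Autz(X)$-equivariant Mori fibration structures on $X$. For $\widetilde{\SS}_{1,\R}$, $Q^{3,2}$, and $Q^{4,1}$, the real Picard rank equals $1$ (as recalled in the proof of Proposition~\ref{prop: real forms of S1} for $\widetilde{\SS}_{1,\R}$, and obvious for the quadrics), so the only Mori fibration has base $\Spec(\R)$ and no type IV link exists. For $\fR_{\C/\R}(\P^1_\C)\times \P^1_\R$, the two natural projections give two $\Autz(X)$-equivariant Mori fibrations, yielding exactly one type IV link; for $(\P^1_\R)^3$, the three natural projections to $(\P^1_\R)^2$ give three such fibrations, so fixing one as the starting structure produces exactly two type IV links. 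The main delicate step is to verify that these projections exhaust all $\Autz(X)$-equivariant Mori fibration structures on the two product cases; this will follow from a short analysis of the $(\PGL_{2,\C})^3$-equivariant extremal contractions on $(\P^1_\C)^3$, using that the acting group is a direct product and that its faithful quotients acting on surfaces of Picard rank two must still be a product of $\PGL_2$-factors.
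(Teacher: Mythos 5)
Your proposal is correct and follows essentially the same route as the paper: homogeneity of $X_\C$ under $\Autz(X_\C)\simeq(\Autz(X))_\C$ forces the base locus and the contracted locus of any equivariant birational map to be empty, hence the map is an isomorphism; the Fano cases are then settled by Picard rank $1$, and the type IV links for the two products are counted by enumerating the extremal contractions (two, resp.\ three) of the cone of effective curves.
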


\begin{proof}
Let $\varphi\colon X \dashrightarrow X'$ be an $\Autz(X)$-equivariant birational map starting from $X$. Then the birational map $\varphi_\C \colon X_\C \dashrightarrow X_\C'$ (which is both $\Gamma$-equivariant, for the natural $\Gamma$-actions on $X_\C$ and $X_\C'$, and $\Autz(X_\C)$-equivariant) is an isomorphism. Indeed, since $X_\C$ is $\Autz(X_\C)$-homogeneous, the birational map $\varphi_\C$ has no base-locus and does not contract any subvariety; it is therefore an isomorphism between $X_\C$ and $\varphi_\C(X_\C)$, and since $X$ is projective we must have $\varphi_\C(X_\C)=X'_\C$.
Hence, $X \simeq X'$ and $\varphi$ is an $\Autz(X)$-equivariant isomorphism. 
The second part of the lemma follows from the fact that $\widetilde{\SS}_{1,\R}$, $Q^{3,2}$, $Q^{4,1}$ are Fano threefolds of Picard rank $1$, and by considering the cone of effective curves for $\fR_{\C/\R}(\P_\C^1) \times \P_\R^1$ (two contractions) and for $\P_\R^1 \times \P_\R^1 \times \P_\R^1$ (three contractions).
\end{proof}

\begin{proposition}\label{prop:Y5-X12-no-links}
Assume that the base field $\k$ is a characteristic zero field.
Let $X$ be a $\k$-form of one the Fano $\bk$-threefolds $Y_{5,\bk}$ or $X_{12,\bk}^{\rm{MU}}$.
Then there are no $\Aut(X)$-equivariant Sarkisov links starting from $X$.
\end{proposition}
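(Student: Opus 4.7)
The plan is to reduce, via Lemma~\ref{lem:no links over bk}, to the algebraically closed case, i.e. to showing there are no $\Aut_{\bk}(X_{\bk})$-equivariant Sarkisov links of type I, II, or III starting from $X_{\bk} \in \{Y_{5,\bk}, X_{12,\bk}^{\rm{MU}}\}$. Both are smooth $\Q$-factorial terminal Fano threefolds of Picard rank~$1$, so the Mori fibration structure is $X_{\bk} \to \Spec(\bk)$, and the hypothesis of Lemma~\ref{lem:no links over bk} is satisfied. The Picard-rank constraints built into the Sarkisov diagrams of Figure~\ref{fig:SarkisovTypes1} immediately exclude types III and IV: a type~III link from $X_{\bk}$ would force $\rho(X_2) = \rho(X_{\bk}) - 1 = 0$, while a nontrivial type~IV link would provide a small modification of $X_{\bk}$ carrying a distinct Mori fibration structure, which does not exist for a $\Q$-factorial terminal Fano threefold of Picard rank~$1$. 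Hence only types I and II are to be ruled out.

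Any type I or II link begins with an $\Aut_{\bk}(X_{\bk})$-equivariant $K$-negative divisorial contraction $\eta \colon Y \to X_{\bk}$. By Remark~\ref{rk: case of Y5 and X12}, $\Aut_{\bk}(X_{\bk}) \simeq \PGL_{2,\bk}$, so the center of $\eta$ is a $\PGL_{2,\bk}$-invariant closed subvariety of $X_{\bk}$ of dimension $0$ or $1$. The orbit structure of the $\PGL_{2,\bk}$-action on $Y_{5,\bk}$ and on $X_{12,\bk}^{\rm{MU}}$ is classical (see~\cite{MU83}, and for $Y_5$ also \cite[Section~5]{KPS18}): in each case the action is almost transitive with no fixed closed points, and the unique proper closed $\PGL_{2,\bk}$-invariant subvariety is the closed orbit, which is a smooth rational curve $C$. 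Hence the center of $\eta$ is forced to be $C$.

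It remains to verify that the blowup $\eta \colon Y \to X_{\bk}$ along $C$ does not extend to a full Sarkisov diagram. This is the main technical step: one performs the $2$-rays game on $Y$ (see Remark~\ref{rmk:sarkisov-2-rays-game}). One extremal contraction is $\eta$ itself; the analysis of the other extremal ray relies on the explicit computation of the normal bundle of $C$ inside each of $Y_{5,\bk}$ and $X_{12,\bk}^{\rm{MU}}$, which is known from Mukai--Umemura and Umemura's classification work. The outcome of this computation is that either the second extremal ray of $\overline{\NE}(Y)$ is not $K$-negative, or the corresponding contraction again produces $X_{\bk}$ (thereby inducing only a $\PGL_{2,\bk}$-equivariant automorphism rather than a nontrivial Sarkisov link). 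The principal obstacle is precisely this explicit normal-bundle computation; once it is performed, Lemma~\ref{lem:no links over bk} combined with the previous paragraph yields that no $\Aut(X)$-equivariant Sarkisov link starts from $X$, concluding the proof.
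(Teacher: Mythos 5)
Your reduction to $\bk$ via Lemma~\ref{lem:no links over bk}, the exclusion of types III and IV, and the identification of the center of the divisorial contraction with the closed $\PGL_{2,\bk}$-orbit $\ell=\SL_2\cdot x^n$ all match the paper's argument. (One small inaccuracy: $\ell$ is not the unique proper closed invariant subvariety --- the closure $S=\overline{\SL_2\cdot x^{n-1}y}$ is a $2$-dimensional one --- but since there are no fixed points and $\ell$ is the only invariant curve, your conclusion that the center is $\ell$ stands.)

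The genuine gap is the final step, which you explicitly defer: you assert that a normal-bundle computation shows ``either the second extremal ray is not $K$-negative, or the corresponding contraction again produces $X_{\bk}$,'' but you neither perform the computation nor justify the dichotomy. The second alternative would not even conclude the proof: a type II link $X\dashrightarrow X$ contracting a divisor back onto $X$ is still a nontrivial Sarkisov link in general (compare Geiser/Bertini involutions), so producing $X_{\bk}$ again does not reduce to an automorphism. What the paper actually does is the following. First, the type I (fibration) alternative for the second ray is excluded by citing \cite[Theorem E]{BFT22} (no equivariant map to a conic or del Pezzo fibration), and small contractions are excluded by Mori's theorem since $Y$ is smooth; so the second ray, if $K$-negative, would contract a divisor. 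The only $\Autz(Y)$-invariant divisors are the exceptional divisor $E$ of $\pi$ and the strict transform $\tilde S$ of $S$, so the contracted divisor would be $\tilde S$. Then one computes, for a ruling $f=\phi(\{p\}\times\P^1)\subset S$: the very ample divisor $D$ giving $X\hookrightarrow\P(R_n)$ satisfies $D\cdot f=1$, and since $\Pic(X)\simeq\Z$ with $-K_X$ ample this forces $-K_X\cdot f=1$; combined with $E\cdot\tilde f=2$ from \cite[Lemma 3.4]{MU83} one gets $K_Y\cdot\tilde f=K_X\cdot f+E\cdot\tilde f=1>0$, so $[\tilde f]$ is not a $K_Y$-negative ray and $\tilde S$ cannot be contracted. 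This intersection computation (not literally a normal-bundle computation, though the input $E\cdot\tilde f=2$ comes from Mukai--Umemura) is the heart of the proof and is what your proposal leaves out.
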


\begin{proof}
Recall that $\Aut(Y_{5,\k})\simeq\PGL_{2,\k}\simeq\Aut(X_{12,\k}^{\rm{MU}})$ are connected. 
We will prove that there are no $\Aut(X_{\bk})$-equivariant Sarkisov links starting from $X_{\bk}$ when $\k=\bk$. The claim will then follow from Lemma~\ref{lem:no links over bk} and the fact that there are no (equivariant) Sarkisov links of type IV starting from a Fano variety. 

So, suppose that $\k=\bk$.
First, notice that by the description of Sarkisov links (see Figure~\ref{fig:Sarkisov links} in Section  \ref{sec: Mori fibrations and Sarkisov links}), any $\Aut(X)$-equivariant Sarkisov link $\chi\colon X\rat X'$ starting from the Fano threefold $X$ is a link of type I or II (because $\rho(X)=1$). This means it is of the following form
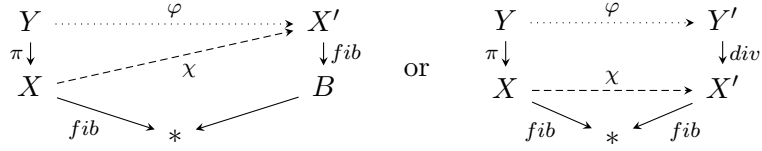
\begin{figure}[h!]
\[
{
\def\arraystretch{2.2}
\begin{array}{cc}
\begin{tikzcd}[ampersand replacement=\&,column sep=1.3cm,row sep=0.16cm]
Y\ar[dd,"\pi",swap]\ar[rr,dotted,"\varphi"] \&\& X' \ar[dd,"fib"] \\ \\
X\ar[uurr,"\chi",dashed,swap]  \ar[dr,"fib",swap] \&  \& B \ar[dl] \\
\& \ast \&
\end{tikzcd}
& \ \text{or}\ \ \ \ \ 
\begin{tikzcd}[ampersand replacement=\&,column sep=.8cm,row sep=0.16cm]
 Y\ar[dd,"\pi",swap]\ar[rr,dotted,"\varphi"]\& \& Y'\ar[dd,"div"]\\ \\
X \ar[rr,"\chi",dashed] \ar[dr,"fib",swap] \&  \& X' \ar[dl,"fib"] \\
\&  \ast \&
\end{tikzcd}
\end{array}
}
\]\caption{The possible forms of $\chi$.}\label{fig:links for Fanos}
\end{figure}

\noindent where $\pi$ and $div$ are divisorial contractions, $\ast$ is a point, $fib$ is a Mori fibration and $\varphi$ is an isomorphism in co-dimension $1$.

Let us recall the construction of $X$ given in \cite{MU83}. 
Denote by $R_n$ the vector space of homogeneous polynomials in $\k[x,y]$ of degree $n$. The group $\SL_2=\mathrm{SL}_{2,\k}$ acts naturally on $R_n$ and hence on $\p(R_n)$. Then $X$ is defined as the closure of the orbit $\SL_{2} \cdot f_n$ for some $f_n\in R_n$ with $n=6$ if $X=Y_5$ and $n=12$ if $X=X_{12}^{\mathrm{MU}}$ \cite[Lemma 3.3]{MU83}. The threefold $X$ contains the surface $S:=\overline{\SL_{2}\cdot x^{n-1}y}=\SL_{2} \cdot x^{n-1}y\sqcup \SL_{2}\cdot x^n$, which is singular along the rational curve $\ell:=\SL_{2} \cdot x^n$ \cite[Lemma 1.6]{MU83}. 
Furthermore, $X=\SL_{2}\cdot f_n\sqcup S$  \cite[Lemma 1.5]{MU83}. 
Moreover, $S$ is the image of an $\SL_{2}$-equivariant morphism $\phi\colon \p^1_{\k}\times\p^1_{\k}\to S$ given by a linear system of bidegree $(n-1,1)$ on $\p^1_{\k}\times\p^1_{\k}$, where $\SL_{2}$ acts diagonally on $\p^1_{\k}\times\p^1_{\k}$ \cite[Lemma 1.6]{MU83}. 
In particular, $\ell \simeq \P_{\k}^1$ is the image of the diagonal in $\p^1_{\k}\times\p^1_{\k}$. 

Notice that the $\Aut(X)$-orbits of $X$ are $X \setminus S$ and $S\setminus \ell$ and $\ell$, of dimension $3$, $2$ and $1$ respectively, and so each of them is preserved by the $\Gamma$-action on $X$ (corresponding to the $\k$-form $X$) by Remark~\ref{rk:Gamma respects aut-action}. 
In particular, the link $\chi$ has indeterminacy locus $\ell$. 
Since $\k=\bk$ and $X$ and $\ell$ are smooth, $\pi$ is a blow-up along $\ell$ on an open subset of $X$ \cite[Lemma 2.2.8]{BFT22}. Since $\ell$ is an $\Aut(X)$-orbit and $\pi$ is $\Aut(X)$-equivariant, $\pi$ is the blow-up along $\ell$.
%\cite[Theorem 1.2.5]{Kol91}. 
%\cite[Theorem 3.3 and Corollary 3.4(3.4.1)]{Mor82}. 
It follows that $Y$ is smooth and $\rho(Y)=2$. 
In particular, the cone of effective curves has dimension two and hence $\NS(Y)$ has to extremal rays; one induces the contraction $\pi$. Suppose that the other extremal ray induces a contraction $\eta\colon Y\to Z$. If $\eta$ is a fibration, then $\chi$ is of type I,  
but according to \cite[Theorem E]{BFT22} there is no $\Autz(Y_{\bk})$-equivariant birational map from $Y$ to a conic fibration or to a del Pezzo fibration. Hence $\chi$ is of type II, and $\eta$ is a divisorial contraction or a small contraction. 
By \cite[Theorem 3.3]{Mor82} (see also \cite[Theorem 1.2.5]{Kol91}), and since $Y$ is smooth, the exceptional locus of $\eta$ is a divisor. In particular, $\eta$ is a divisorial contraction. By Remark~\ref{rmk:sarkisov-2-rays-game}, $\varphi$ is an isomorphism and so we can set $Y'=Y$, $Z=X'$, and $\eta=div\circ\varphi$ in the right-hand side of Figure~\ref{fig:links for Fanos}.

Denote by $\tilde S$ the strict transform of $S$ in $Y$, and by $E$ the exceptional divisor of $\pi$. These are the only two $\Autz(Y)$-invariant divisors in $Y$, and so $\eta$ must contract $\tilde S$. 
Consider the curve $f:=\phi(\{p\}\times\p^1_{\k})\subset S$. 
Then the morphism $\phi_{|\{p\}\times\p^1_{\k}}\colon \{p\}\times\p^1_{\k}\to f$ is of degree $1$ and $\mathcal O_{\p(R_n)(1)}\cdot f=1$ \cite[Corollary 1.7]{MU83}. 
Moreover, the proof of \cite[Lemma 3.4]{MU83} implies that $\tilde S$ is a smooth ruled surface over $\eta(\tilde \ell) \simeq \P_{\k}^{1}$ with general rational fiber the $\tilde f$ which are the inverse images of $f$ by the morphism $\tilde S \to S$.

The embedding $X_{\bk}\hookrightarrow\p(R_n)$ is given by a very ample divisor $D$ and $D\cdot f=1$. 
 Since $\Pic(X)\simeq\Z$ \cite[Corollary 1.9]{MU83} and since $-K_{X}$ is ample, we can write $a(-K_X)=D$ for some $a\in\Q_{>0}$. 
 Then $1=D\cdot f=a(-K_X)\cdot f$, which implies that $\frac{1}{-a}=K_X\cdot f\in\Z_{<0}$ (since $X_{\bk}$ is Fano by \cite[Lemma 3.3.]{MU83}). It follows that $a=1$ and thus $-K_X\cdot f=1$. Also, $E\cdot\tilde f=2$ \cite[Lemma 3.4]{MU83}. 
We have $K_Y=\pi^*K_X+E$, since $X$ and $\ell$ are smooth, and thus
\[
K_Y\cdot \tilde f=(\pi^*K_X+E)\cdot\tilde f=K_X\cdot f+E\cdot\tilde f=-1+2=1>0.
\]
This contradicts the assumption that $\eta$ contracts $\tilde S$, i.e. that $[\tilde f]$ is a $K_Y$-negative extremal ray in $\NE(Y)$. 
Hence, there are no $\Aut(X)$-equivariant Sarkisov links starting from $X$ when $\k=\bk$, which concludes the proof.
\end{proof}

\subsection{Sarkisov links from $\mathcal{G}_b$ and $\mathcal{H}_b$}
In this section we work over $\k=\R$. 
We list the $\Autz(X)$-equivariant Sarkisov links starting from $X$ when $X \simeq \mathcal G_b$ or $\mathcal H_b$, which are the nontrivial rational real forms of $\FF_{0,\C}^{b,c}$ that appear in Theorem \ref{th: second cases, over R, dim 3}~\ref{item 1 th2}.
If $b=0$, then $\mathcal G_0\simeq \mathcal H_0\simeq\p^1_\R\times\fR_{\C/\R}(\p_\R^1)$, which is already treated in Lemma \ref{lem:link homogeneous}, and so we will assume that $b \geq 1$.

\begin{lemma}\label{lem:links G_b}\item
\begin{enumerate}
\item\label{links G_b:2} If $b\geq2$, then there are no $\Autz(\mathcal G_b)$-equivariant Sarkisov links starting from $\mathcal G_b$.
\item\label{links G_b:1} If $b=1$, the only $\Aut(\mathcal G_1)$-equivariant link starting from $\mathcal G_1$ is a divisorial contraction $\psi \colon \mathcal G_1\to  Z$, where $ Z$ is isomorphic to the singular quadric hypersurface $Q^{1,3}$ in $\P_\R^4$.  
\item \label{links G_b:3} The only Sarkisov link starting from $Z \simeq Q^{1,3}$ is the blow-up of the singular point $\psi\colon \mathcal G_1\to Z$. Moreover, $\psi  \Aut(\mathcal G_1) \psi^{-1} =\Aut(Q^{1,3})$.
\end{enumerate} 
\end{lemma}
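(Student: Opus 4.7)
The plan splits naturally into the three parts of the lemma, built around a common analysis of the cone of effective curves and the $\Autz$-orbit structure on $\mathcal G_{b,\C} \simeq \FF_{0,\C}^{b,-b}$.

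For part \ref{links G_b:2} ($b \geq 2$), I would start from Lemma~\ref{lem: XabPic}, which gives $\NE(\mathcal G_{b,\C}) = \langle \ell_1, \ell_2, \ell_3\rangle$ with $K \cdot \ell_3 = -2$ and $K \cdot \ell_1 = K \cdot \ell_2 = b-2 \geq 0$. The real structure defining $\mathcal G_b$ (see \eqref{eq: real structures for Gb and Hb}) swaps $y_i \leftrightarrow z_i$ and therefore interchanges $\ell_1$ with $\ell_2$, so $\NE(\mathcal G_b)$ is spanned by $[\ell_3]$ and $[\ell_1 + \ell_2]$, giving $\rho(\mathcal G_b) = 2$. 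Only $[\ell_3]$ is $K$-negative, yielding the Mori fibration $\mathcal G_b \to \fR_{\C/\R}(\P_\C^1)$; this immediately rules out $\Autz(\mathcal G_b)$-equivariant Sarkisov links of type III or IV, since no second Mori fibration or $K$-negative divisorial contraction is available over $\R$. For types I and II one needs to blow up an $\Autz(\mathcal G_b)$-invariant smooth curve or point in $\mathcal G_b$. Using Remark~\ref{rk: auto group of Gb and Hb} and the description of $\Autz(\FF_{0,\C}^{b,-b})$ from Section~\ref{subsubsec: decomposable P1 bundles}, the non-open $\Autz(\mathcal G_{b,\C})$-orbits are exactly the two sections $H_{x_0}$ and $H_{x_1}$, on each of which the image in $\Aut(\F_0)$ acts transitively; these are divisors, so no valid center exists, and part \ref{links G_b:2} follows.

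For part \ref{links G_b:1} ($b=1$), the same cone calculation gives $K \cdot \ell_1 = K \cdot \ell_2 = -1$, so all three rays are now $K$-negative. Again $\Gamma$ swaps $\ell_1$ and $\ell_2$, so $\NE(\mathcal G_1)$ has extremal rays $[\ell_3]$ and $[\ell_1 + \ell_2]$, both $K$-negative. Over $\C$ the divisor $H_{x_0} \simeq \F_0$ is ruled by both $\ell_1$- and $\ell_2$-curves (the two rulings of $\P_\C^1 \times \P_\C^1$), so contracting the ray $[\ell_1 + \ell_2]$ collapses $H_{x_0}$ entirely to one point and produces an $\Aut(\mathcal G_1)$-equivariant divisorial contraction $\psi\colon \mathcal G_1 \to Z$ over $\R$. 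To identify $Z$, I use that $\mathcal G_{1,\C} = \P(\O \oplus \O(-1,-1)) \to \P_\C^1 \times \P_\C^1$ is precisely the blow-up of the vertex of the projective cone in $\P_\C^4$ over the Segre quadric $\P_\C^1 \times \P_\C^1 \hookrightarrow \P_\C^3$. Writing Segre coordinates $w_{ij} = y_i z_j$, the real structure acts by $w_{ij} \leftrightarrow \overline{w_{ji}}$, and diagonalizing the quadratic form $w_{00} w_{11} - w_{01} w_{10}$ in $\Gamma$-invariant real coordinates yields a real quadric of rank $4$ and signature $(1,3)$ on $\P_\R^4$ (with the apex coordinate unconstrained), that is, $Z \simeq Q^{1,3}$. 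The uniqueness of $\psi$ follows because $\mathcal G_1$ has only two extremal rays, and the $\Autz$-orbit analysis from part \ref{links G_b:2} again forbids any blow-up-based link starting from $\mathcal G_1$.

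For part \ref{links G_b:3}, the central point is $\rho(Q^{1,3}) = 1$ over $\R$: over $\C$ one has $\Pic(Q_\C^{1,3}) = \Z^2$, generated by the two families of planes through the vertex coming from the two rulings of the Segre base, and $\Gamma$ swaps these families, leaving only the hyperplane class $\Gamma$-invariant. Hence $Q^{1,3}$ is a $\Q$-factorial terminal Fano threefold of Picard rank $1$ over $\R$ and is a Mori fiber space over $\Spec \R$. Any Sarkisov link starting from it is consequently of type I or II and begins with an $\Aut(Q^{1,3})$-equivariant divisorial contraction $Y \to Q^{1,3}$. Since the smooth locus of $Q_\C^{1,3}$ is a single $\Aut(Q_\C^{1,3})$-orbit (the affine cone minus its vertex is homogeneous under $\PGL_{2,\C} \times \PGL_{2,\C} \times \G_{m,\C}$ up to the swap involution), the only $\Aut(Q^{1,3})$-invariant subvariety available as a center is the singular vertex; blowing it up reproduces $\mathcal G_1$, so the unique Sarkisov link from $Q^{1,3}$ is $\psi^{-1}$, realized as a type I link through the Mori fibration $\mathcal G_1 \to \fR_{\C/\R}(\P_\C^1)$ (type II is ruled out since the two extremal rays of $\mathcal G_1$ are both of relative dimension $\geq 1$). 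The identity $\psi \Aut(\mathcal G_1) \psi^{-1} = \Aut(Q^{1,3})$ is then clear: equivariance of $\psi$ gives the inclusion $\subseteq$, while conversely every element of $\Aut(Q^{1,3})$ fixes the vertex and therefore lifts canonically through the blow-up.

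The main obstacle will be the concrete identification $Z \simeq Q^{1,3}$, which requires tracking the real structure of $\mathcal G_1$ onto the Segre cone model and computing the signature of the resulting rank-$4$ real quadratic form; a close second is verifying $\Q$-factoriality of the singular Fano $Q^{1,3}$ over $\R$, which fails over $\C$ and only holds after taking $\Gamma$-invariants on the class group.
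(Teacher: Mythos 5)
Your proposal is correct and follows essentially the same route as the paper's proof: the cone-of-curves data from Lemma~\ref{lem: XabPic} combined with the $\Autz$-orbit analysis to exclude links of type I/II and identify which extremal rays are contractible, the Segre-cone model with the induced real structure to realize $Z$ as a real quadric of signature $(1,3)$, and the two-orbit structure of $Q^{1,3}_\C$ (vertex plus open complement) together with equivariance of the vertex blow-up for part~(c). One small inaccuracy: by \cite[Section 3.1]{BFT23} the only non-open $\Autz(\mathcal G_{b,\C})$-orbit is $H_{x_0}=(x_0=0)$ --- the section $(x_1=0)$ lies in the open orbit, being moved by the unipotent translations $x_1\mapsto x_1+px_0$ with $p\in H^0(\F_0,\O(b,b))$ --- but since the non-open locus is a divisor in either case, your conclusion that no invariant center of codimension at least two exists is unaffected.
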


\begin{proof}
Recall that $\mathcal{G}_b$ is the real form of $\FF_{0,\C}^{b,-b}$ corresponding to the real structure
\[\theta\colon \ \FF_{0,\C}^{b,-b} \to \FF_{0,\C}^{b,-b},\  \left[x_0:x_1 ;y_0:y_1;z_0:z_1\right]\mapsto \left[\overline{x_0}: \overline{x_1};\overline{z_0}:\overline{z_1};\overline{y_0}:\overline{y_1}\right].
\]
With the notation of Lemma \ref{lem: XabPic}, the cone $\NE(\mathcal G_b)$ of effective curves on $\mathcal G_b$ is generated by the two extremal rays $[\ell_1+\ell_2]$ and $[\ell_3]$, and we have $K_{\mathcal{G}_b} \cdot [\ell_1+\ell_2]=2(b-2)$ and $K_{\mathcal{G}_b} \cdot [\ell_3]=-2$.

We now assume that $b \geq 1$.
By \cite[Section 3.1]{BFT23}, the group $\Autz(\FF_{0,\C}^{b,-b})$ acts on $\FF_{0,\C}^{b,-b}$ with two orbits, namely the surface $H_0:=(x_0=0)$ and the open orbit $\FF_{0,\C}^{b,-b} \setminus H_0$. 
We deduce that $\mathcal{G}_b$ is the union of two $\Autz(\mathcal{G}_b)$-orbits, namely the divisor $H_0$ and its complement. 
It follows that the only possible equivariant Sarkisov links starting from $\mathcal G_b$ are links of type III and IV.
But if $b \geq 2$, we have seen that we can only contract the ray $[\ell_3]$ which yields the $\P^1$-fibration $\mathcal G_b\to\fR_{\C/\R}(\p^1_\C)$. Hence, for $b \geq 2$, there are no $\Autz(\mathcal G_b)$-equivariant Sarkisov links starting from $\mathcal G_b$. This proves \ref{links G_b:2}.

It remains to study the case $b=1$. The contraction of the extremal ray $[\ell_1+\ell_2]$ corresponds to the $\Autz(\mathcal G_1)$-equivariant birational morphism $\psi\colon \mathcal G_1 \to Z$ given by
\[
\mathcal G_{1,\C} \simeq \FF_{0,\C}^{1,-1} \to Z_\C \subset \P_\C^4, \ [x_0:x_1;y_0:y_1;z_0:z_1] \mapsto [x_0 y_0z_0:x_0 y_0z_1:x_0 y_1z_0:x_0 y_1z_1:x_1],
\]
where the real structure on $\P_\C^4$ is defined by $\mu([a:b:c:d:e])=[\overline{a}:\overline{c}:\overline{b}:\overline{d}:\overline{e}]$ 
and $Z_\C$ is the singular quadric hypersurface of equation $ad-bc=0$ in $\P_\C^4$. The pair $(Z_\C, \mu_{|Z_\C})$ corresponds to the real quadric hypersurface in $\P_\R^4$ defined as the zero-locus of the real quadratic form $4ad-b^2-c^2$ of signature $(1,3)$. This proves \ref{links G_b:1}.

Finally, since the blow-up of the singular point of $Z \simeq Q^{1,3}$ is $\Aut(Q^{1,3})$-equivariant, we obtain that $\psi  \Aut(\mathcal G_1) \psi^{-1} =\Aut(Q^{1,3})$. The fact that $\psi$ is the unique Sarkisov link starting from $Q^{1,3}$ is a consequence of the fact that $Q_\C^{1,3}$ is the union of two $\Aut(Q_\C^{1,3})$-orbits: the singular point and its open complement. 
This proves \ref{links G_b:3} and concludes the proof of the lemma.
\end{proof}

\begin{remark}\label{rk: case of Q13}
Let us note that the real Fano threefold $Z \simeq Q^{1,3}$ is $\Q$-factorial while the complex Fano threefold $Z_\C$ is not (indeed, $Q^{1,3}=(w^2-x^2-y^2-z^2=0)$ and for instance $(w-x=y+iz=0)$ is not a $\Q$-Cartier divisor, see also \cite[Definition 3.1]{Kol99})
and that is why $Z_\C$ does not appear in the list of complex Mori fiber spaces given by Theorem \ref{th: list of MFS corresponding to max alg subgroups of Cr3}.
\end{remark}

\begin{lemma}\label{lem:links H_b}
Let $b \geq 2$. There are no $\Autz(\mathcal{H}_b)$-equivariant Sarkisov links starting from $\mathcal{H}_b$.
\end{lemma}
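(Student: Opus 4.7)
The plan is to analyze the cone of effective curves $\NE(\mathcal{H}_b)$ directly in order to rule out nontrivial Sarkisov links of type IV, and then to invoke Lemma~\ref{lem:no links over bk} to reduce the question for links of types I, II and III to the analogous question over $\C$ for $\FF_{0,\C}^{b,b}\to \F_{0,\C}$.

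First I would compute the induced involution $\theta^*$ of the real structure $\theta$ from Proposition~\ref{prop: k-forms of Fabc with a=0} on $N_1(\FF_{0,\C}^{b,b})$. Set-theoretically, $\theta$ swaps $H_{y_0}$ with $H_{z_0}$ and sends $H_{x_0}$ to $H_{x_1}$; using the intersection table of Lemma~\ref{lem: XabPic} to compute $H_{x_1}\cdot \ell_i$ for $i=1,2,3$, one deduces the identity $[H_{x_1}]=[H_{x_0}]+b[H_{y_0}]-b[H_{z_0}]$ in $\NS(\FF_{0,\C}^{b,b})$. Dualizing against the intersection pairing then yields that $\theta^*$ swaps $[\ell_2]$ and $[\ell_4]$ while fixing $[\ell_3]$. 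In particular $\rho(\mathcal{H}_b)=2$, and the two extremal rays of $\NE(\mathcal{H}_b)$ are $\langle [\ell_3]\rangle$ and $\langle [\ell_2+\ell_4]\rangle$, with $K$-degrees $-2$ and $2(b-2)\ge 0$ respectively. For $b\ge 2$, only $\langle [\ell_3]\rangle$ is $K$-negative, so $\mathcal{H}_b$ admits a unique Mori fibration structure (namely the one to $\fR_{\C/\R}(\P_\C^1)$), which precludes any nontrivial equivariant Sarkisov link of type IV.

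For links of types I, II and III, I would invoke Lemma~\ref{lem:no links over bk} and reduce to proving that no $\Autz(\FF_{0,\C}^{b,b})$-equivariant Sarkisov link of these types starts from $\FF_{0,\C}^{b,b}\to\F_{0,\C}$. Over $\C$, the three extremal rays $\langle [\ell_2]\rangle, \langle [\ell_3]\rangle, \langle [\ell_4]\rangle$ of $\NE(\FF_{0,\C}^{b,b})$ have $K$-degrees $b-2,-2,b-2$, so $\ell_3$ is again the unique $K$-negative extremal ray, corresponding to the Mori fibration to $\F_{0,\C}$. Moreover, by \cite[Section~3.1]{BFT23} the natural morphism $\Autz(\FF_{0,\C}^{b,b})\to \Autz(\F_{0,\C})\simeq \PGL_{2,\C}\times \PGL_{2,\C}$ is surjective, so $\Autz(\FF_{0,\C}^{b,b})$ acts transitively on the base $\F_{0,\C}$; hence every $\Autz(\FF_{0,\C}^{b,b})$-invariant proper closed subvariety of $\FF_{0,\C}^{b,b}$ must surject onto $\F_{0,\C}$, and therefore has dimension at least $2$. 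In particular there are no $\Autz$-invariant smooth curves or points to blow up, which combined with the uniqueness of the $K$-negative extremal ray (forbidding nontrivial flips and leaving no divisorial $K$-negative extremal contraction besides the fibration itself) rules out equivariant links of types I, II and III over $\C$.

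The main technical point is the identification $[H_{x_1}]=[H_{x_0}]+b[H_{y_0}]-b[H_{z_0}]$, which is needed because, in contrast to the equalities $[H_{y_0}]=[H_{y_1}]$ and $[H_{z_0}]=[H_{z_1}]$, the two sections $H_{x_0}$ and $H_{x_1}$ of the $\P^1$-bundle $\FF_{0,\C}^{b,b}\to\F_{0,\C}$ are not linearly equivalent for $b\ne 0$ and this asymmetry is what makes the $\theta^*$-action on $N_1$ nontrivial. Once this identification is established, the rest of the proof reduces to standard Mori-cone and orbit-theoretic arguments, paralleling the approach used in the proof of Lemma~\ref{lem:links G_b}.
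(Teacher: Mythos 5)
Your proof is correct, and it even fills in a detail the paper leaves implicit: the identity $[H_{x_1}]=[H_{x_0}]+b[H_{y_0}]-b[H_{z_0}]$, from which one reads off that $\theta$ swaps $[\ell_2]$ and $[\ell_4]$ and fixes $[\ell_3]$, so that $\NE(\mathcal{H}_b)$ is generated by $[\ell_2+\ell_4]$ and $[\ell_3]$ with $K$-degrees $2(b-2)\geq 0$ and $-2$; the paper simply asserts this cone description. Where you genuinely diverge is in excluding links of types I, II and III. The paper stays over $\R$: by \cite[Section 3.1]{BFT23} the group $\Autz(\FF_{0,\C}^{b,b})$ has exactly three orbits, the divisors $H_0=(x_0=0)$ and $H_1=(x_1=0)$ and their open complement, and since $\theta$ swaps $H_0$ and $H_1$ the real form $\mathcal{H}_b$ has only two orbits, a divisor and its complement; hence there is no invariant center of codimension at least $2$ for an equivariant divisorial extraction, which kills types I and II at once, while the cone computation kills types III and IV. You instead invoke Lemma~\ref{lem:no links over bk} to pass to $\C$ and replace the explicit orbit list by the surjectivity of $\Autz(\FF_{0,\C}^{b,b})\to\Autz(\F_{0,\C})$, so that every invariant closed subvariety dominates the base and has dimension at least $2$. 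Both mechanisms are sound and rest on the same two facts (a unique $K$-negative extremal ray, and no invariant subvariety of codimension at least $2$); the paper's route is slightly more economical here, whereas yours is exactly the reduction the paper deploys for the Fano threefolds in Proposition~\ref{prop:Y5-X12-no-links} and has the merit of not requiring knowledge of the orbit structure of the real form itself.
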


\begin{proof}
Recall that $\mathcal{H}_b$ is the real form of $\FF_{0,\C}^{b,b}$ corresponding to the real structure
\[\theta\colon \ \FF_{0,\C}^{b,b} \to \FF_{0,\C}^{b,b},\  \left[x_0:x_1 ;y_0:y_1;z_0:z_1\right]\mapsto \left[\overline{x_1}: \overline{x_0};\overline{z_0}:\overline{z_1};\overline{y_0}:\overline{y_1}\right].
\]
By \cite[Section 3.1]{BFT23}, the group $\Autz(\FF_{0,\C}^{b,b})$ acts on $\FF_{0,\C}^{b,b}$ with three orbits, namely the surfaces $H_0:=(x_0=0)$ and $H_1=(x_1=0)$, and the open orbit $\FF_{0,\C}^{b,b} \setminus(H_0\cup H_1)$. Since $\theta$ swaps $H_0$ and $H_1$, we deduce that $\mathcal{H}_b$ is the union of two $\Autz(\mathcal{H}_b)$-orbits, namely the divisor $H_0+H_1$ and its complement. 
It follows that the only possible equivariant Sarkisov links starting from $\mathcal H_b$ are links of type III and IV. 
With the notation of Lemma \ref{lem: XabPic}, the cone $\NE(\mathcal H_b)$ of effective curves on $\mathcal H_b$ is generated by the two extremal rays $[\ell_2+\ell_4]$ and $[\ell_3]$.
Since $K_{\mathcal H_b} \cdot (\ell_2+\ell_4)=2(b-2) \geq 0$ (because $b \geq 2$), we can only contract the ray $[\ell_3]$ which yields the $\P^1$-fibration $\mathcal H_b\to\fR_{\C/\R}(\p^1_\C)$.
\end{proof}

\begin{remark}\label{rk: cas of H1}
The morphism 
\[
\FF_{0,\C}^{1,1} \to \P_{\C}^{3},\  [x_0:x_1;y_0:y_1;z_0:z_1] \mapsto [x_0y_0:x_0y_1:x_1z_0:x_1 z_1]
\] 
induces an $\Autz(\mathcal{H}_1)$-equivariant birational morphism $\delta$ from $\mathcal{H}_1$ to $\P_\R^3$. It follows that $\Autz(\mathcal{H}_1)$ is not a maximal connected algebraic subgroup of $\Bir(\P_\R^3)$ since $\delta \Autz(\mathcal{H}_1) \delta^{-1} \subsetneq \Aut(\P_\R^3)=\PGL_{4,\R}$ (indeed, by Remark \ref{rk: auto group of Gb and Hb}, we have $\dim(\Autz(\mathcal{H}_1))=7$).
\end{remark}

 \subsection{Sarkisov links from $\tilde{\mathcal S}_{b,\R}$. }
In this section we work over $\k=\R$ and consider the $\Autz(\tilde{\mathcal S}_{b,\R})$-equivariant Sarkisov links starting from $\tilde{\mathcal S}_{b,\R}$.
We recall that an explicit description of the real threefold $\tilde{\mathcal S}_{b,\R}$ is given at the end of Section \ref{sec: Sb P1-bundles}; in particular, it is shown that $\tilde{\mathcal S}_{b,\R}$ is rational if and only if $b$ is odd, in which case the natural conic fibration $\pi\colon\tilde{\mathcal S}_{b,\R} \to \P_\R^2$ is a $\P^1$-bundle.

\begin{proposition}\label{prop:links Sb}
Let $b\geq3$ be an odd integer. 
There is a birational involution $\varphi\colon \tilde{\mathcal S}_{b,\R} \dashrightarrow \tilde{\mathcal S}_{b,\R}$, which is a type II equivariant link such that $\varphi \Autz(\tilde{\mathcal S}_{b,\R}) \varphi^{-1}=\Autz(\tilde{\mathcal S}_{b,\R})$. Moreover, $\varphi$ is the unique $($up to automorphism$)$ equivariant Sarkisov 
link starting from $\tilde{\mathcal S}_{b,\R}$.
\end{proposition}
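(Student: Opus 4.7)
I construct $\varphi$ as an elementary transformation along an $\Autz(\tilde{\mathcal S}_{b,\R})$-invariant curve contained in the preimage of the pointless invariant conic, then establish equivariance and uniqueness via an analysis of $\SO_{3,\R}$-orbits.

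\emph{Construction of $\varphi$.} By Example \ref{ex: real structures for Sb}, the morphism $\pi\colon \tilde{\mathcal S}_{b,\R}\to \P_\R^2$ is a $\P^1$-bundle and $\SO_{3,\R}=\Autz(\tilde{\mathcal S}_{b,\R})$ acts equivariantly, the induced action on $\P_\R^2$ preserving the pointless real conic $C_0^\R$ (the real form of the $\PGL_{2,\C}$-invariant conic $C_0\subset \P_\C^2$ determined by the twisted real structure). The preimage $T:=\pi^{-1}(C_0^\R)$ is an $\SO_{3,\R}$-invariant ruled surface over $C_0^\R$ without real points. After base change, $T_\C\subset \SS_{b,\C}$ is a Hirzebruch surface, whose unique section of minimal self-intersection is $\PGL_{2,\C}$-invariant and therefore $\Gamma$-invariant for the twisted Galois action; it descends to an $\SO_{3,\R}$-invariant curve $\tilde\sigma\subset T$ isomorphic to $C_0^\R$. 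Let $\eta\colon Y\to \tilde{\mathcal S}_{b,\R}$ be the blow-up of $\tilde\sigma$. An intersection number computation on the complexification (in the spirit of Lemma~\ref{lem: XabPic}, using the description of $\SS_{b,\C}$ from Section \ref{sec: Sb P1-bundles}) shows that the strict transform $\tilde T$ of $T$ is a $K_Y$-negative extremal divisor; contracting it via $\eta'\colon Y\to X'$ yields a Mori fibration over $\P_\R^2$. A Chern class computation for the elementary modification along the divisor $C_0^\R$ (using that $b$ is odd) shows $X'\simeq \tilde{\mathcal S}_{b,\R}$ as Mori fibrations over $\P_\R^2$. Hence $\varphi:=\eta'\circ \eta^{-1}$ is a Sarkisov link of type \II, and it is an involution since the elementary transformation along a single invariant section of a $\P^1$-bundle is involutive.

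\emph{Equivariance.} Every ingredient ($\tilde\sigma$, $T$, and the Mori structure on $X'$) is preserved by $\Autz(\tilde{\mathcal S}_{b,\R})$, so $\eta$ and $\eta'$ are $\Autz$-equivariant; it follows that $\varphi\,\Autz(\tilde{\mathcal S}_{b,\R})\,\varphi^{-1}=\Autz(\tilde{\mathcal S}_{b,\R})$.

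\emph{Uniqueness.} Let $\chi\colon \tilde{\mathcal S}_{b,\R}\dashrightarrow X''$ be any $\Autz$-equivariant Sarkisov link. Since $\rho(\tilde{\mathcal S}_{b,\R})=2$, the cone $\overline{\NE}(\tilde{\mathcal S}_{b,\R})$ has two extremal rays. One is generated by a fiber of $\pi$; the other is generated by a curve $c$ with $K\cdot c=b-3\geq 0$ by [BFT22, Lemma 5.3.9] applied to $\tilde{\mathcal S}_{b,\C}\simeq \SS_{b,\C}$. Therefore, no $\Autz$-equivariant link of type \III\ or \IV\ can start from $\tilde{\mathcal S}_{b,\R}$, and $\chi$ must be of type \I\ or \II. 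This forces $\chi$ to begin with an $\Autz$-equivariant divisorial contraction $\eta_0\colon Y_0\to \tilde{\mathcal S}_{b,\R}$ centered on an $\Autz$-invariant subvariety of dimension $\leq 1$. An enumeration of $\SO_{3,\R}$-orbits on $\tilde{\mathcal S}_{b,\R}$ (the open complement of $T$; the locus $T\setminus \tilde\sigma$; and the curve $\tilde\sigma$), combined with Lemma~\ref{lem:contractions} to lift an $\Aut$-invariant blow-up over $\bk$, singles out $\tilde\sigma$ as the only admissible center. Hence $\chi=\varphi$ up to isomorphism of Mori fibrations.

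\emph{Main obstacle.} The hardest part will be the uniqueness step: one must rule out divisorial contractions centered at any $\Autz$-invariant zero-dimensional subscheme (in particular any $\Gamma$-orbit of complex points sitting in the open $\SO_{3,\R}$-orbit of $\tilde{\mathcal S}_{b,\R}$), and justify that the only candidate invariant curve admitting a Sarkisov-type divisorial contraction is $\tilde\sigma$. A secondary technical point is to verify rigorously that the elementary modification that defines $\varphi$ really returns to $\tilde{\mathcal S}_{b,\R}$ rather than to a non-isomorphic real $\P^1$-bundle, where the parity of $b$ plays a key role.
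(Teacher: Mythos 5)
Your construction of $\varphi$ and the overall strategy coincide with the paper's: blow up the unique $\Autz$-invariant curve sitting over the invariant pointless conic, contract the strict transform of the preimage of that conic, and for uniqueness use the cone of curves to exclude links of type \III/\IV\ and the orbit structure to pin down the center. Two points, however, need attention.

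First, a minor inaccuracy: the orbit decomposition of $\tilde{\mathcal S}_{b,\C}$ under $\Autz(\tilde{\mathcal S}_{b,\C})\simeq\PGL_{2,\C}$ has \emph{four} orbits, not three. Besides $\gamma_{0,\C}$, $\pi^{-1}(C_{0,\C})\setminus\gamma_{0,\C}$ and the open orbit, there is the invariant surface $E\setminus\gamma_{0,\C}$, where $E$ is the image of the $(-b)$-sections of the two $\F_b$-bundle structures on the fiber product with the double cover $\P^1\times\P^1\to\P^2$. This does not damage your argument about admissible centers (which must be invariant and of dimension $\le 1$, and the only invariant curve is $\gamma_0$; there are no fixed points over $\C$, so the zero-dimensional case you single out as the ``hardest part'' is in fact immediate), but the enumeration as stated is wrong.

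Second, and more seriously, identifying $\gamma_0$ as the only admissible center does not by itself give $\chi=\varphi$. A link of type \I\ or \II\ starts with the inverse of a divisorial contraction $\eta_0\colon Y_0\to\tilde{\mathcal S}_{b,\R}$ whose exceptional divisor $H$ is irreducible over $\R$ but whose complexification $H_\C$ may split into two $\Gamma$-conjugate components, each contracted onto $\gamma_{0,\C}$; this is a genuinely different extraction from the blow-up of $\gamma_0$ and must be excluded. The paper's proof spends its final paragraph on exactly this: it factors $\eta_{0,\C}=\eta_2\circ\eta_1$ using Lemma~\ref{lem:contractions}, observes that $\eta_2$ is then the blow-up of $\gamma_{0,\C}$, and derives a contradiction because the images of the two components under $\eta_1$ would have to be tangent along the strict transform of $D=\pi^{-1}(C_{0,\C})$, which a local computation rules out. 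Your proposal cites Lemma~\ref{lem:contractions} only ``to lift an $\Aut$-invariant blow-up over $\bk$'' and never engages with the reducible case, so the uniqueness statement is not yet proved.
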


\begin{proof}
This is proved similarly to \cite[Proposition 6.2.2]{BFT22}. 
We work mostly over $\C$ and keep track at each step of the $\Gamma$-action.
Consider the Cartesian square
\[\xymatrix@R=4mm@C=2cm{
    \hat{\mathcal S}_{b,\C} \ar[r]^{\varepsilon} \ar[d]_q  &  \tilde{\mathcal S}_{b,\C} \ar[d]^\pi  \\
   \p^1_\C\times\p^1_\C \ar[r]^{\kappa} & \p^2_\C 
  }\] 
where $\kappa$ is the double cover defined in Section \ref{sec: Family (d)}.
We recall from \cite[Lemma 4.2.4 (2)]{BFT23} that  $\tilde{\mathcal S}_{b,\R}$ contains a unique curve invariant by $\Autz(\tilde{\mathcal S}_{b,\R})$, which is isomorphic to the real form of $\P_\C^1$ with no real points and that we denote by $\gamma_0$.
Let $D:=\pi^{-1}(C_{0,\C})$. For $i=1,2$, let $pr_i\colon\p^1_\C\times\p^1_\C\to\p^1_\C$ be the $i$-th projection. Then $pr_i\circ q\colon \hat{\mathcal S}_{b,\C}\to\p^1$ is a $\F_{b,\C}$-bundle and we set $S_i\subset \hat{\mathcal S}_{b,\C}$ to be the union of the $(-b)$-sections. The intersection $T=S_1\cap S_2$ is a curve isomorphic to the diagonal $\Delta$ of $\p^1_\C\times\p^1_\C$. 
Set $E:=\varepsilon(S_1)=\varepsilon(S_2)$. According to \cite[Lemma 5.3.9(1)]{BFT22}, the group $\Autz(\tilde S_{b,\C}) \simeq \PGL_{2,\C}$ acts on $\tilde S_{b,\C}$ with four orbits, namely the curve $\gamma_{0,\C}=\varepsilon(T)\subset D$, the two surfaces $D \setminus \gamma_{0,\C}$ and $E\setminus \gamma_{0,\C}$, and $\tilde S_{b,\C}\setminus(D \cup E)$. 
Moreover, by \cite[Lemma 5.3.9(3)\&(6)]{BFT22}, the cone of effective curves of $\tilde{\mathcal S}_{b,\C}$ is generated by the class of a general fiber $f$ of $\pi$ and by the class of a curve $s_1\subset E$ such that $K \cdot s_1=b-3\geq0$, and the two extremal rays $[f]$ and $[s_1]$ are $\Gamma$-invariant.

It follows that an $\Autz(\tilde{\mathcal S}_{b,\R})$-equivariant link $\chi\colon \tilde{\mathcal S}_{b,\R}\rat Y$ is not of type III or IV. So, it is of type I or II, and hence it starts with the inverse of a birational contraction $\eta\colon Z\to \tilde{\mathcal S}_{b,\R}$. Since $\tilde{\mathcal S}_{b,\R}$ has no invariant points, $\eta$ contracts a surface $H$ onto the curve $\gamma_0$. Since $\rho(Z/\tilde{\mathcal S}_{b,\R})=1$, the surface $H$ is irreducible. So, $H_{\C}$ has either one or two irreducible components that are exchanged by the $\Gamma$-action on $\tilde{\mathcal S}_{b,\C}$. 

Suppose that $H$ is geometrically irreducible. Then $\eta_\C$ is the blow-up of $\gamma_{0,\C}$ (see \cite[Lemma 2.2.8]{BFT22}). 
We can then contract $\Autz(\tilde{\mathcal S}_{b,\C})$-equivariantly the strict transform of $D$ and obtain a birational involution $ \tilde{\mathcal S}_{b,\C}\rat \tilde{\mathcal S}_{b,\C}$, which is a link of type II, as explained in \cite[Lemma 5.6.2]{BFT23}. 
Since both $\gamma_{0,\C}$ and $D$ are $\Gamma$-invariant, this defines an $\Autz(\tilde{\mathcal S}_{b,\R})$-equivariant link $ \tilde{\mathcal S}_{b,\R}\rat \tilde{\mathcal S}_{b,\R}$ of type II.

Suppose now that $H_{\C}$ has two irreducible components $H_1$ and $H_2$. 
Let $\tilde D\subset Z$ be the strict transform of the surface $D\subset\tilde S_b$.
 Since $\tilde D_{\C}$ is $\Gamma$-stable and $\Gamma$ switches $H_1,H_2$, both components $H_1,H_2$ have to intersect $\tilde D_{\C}$. 
By Lemma~\ref{lem:contractions}, we can write $\eta_{\C}=\eta_2\circ\eta_1$, where $\eta_1\colon Z_\C\to U$ contracts $H_1$ and $\eta_2\colon U\to\tilde S_b$ contracts $\eta_1(H_2)$. They are both $\Autz(\tilde S_{b,\C})$-equivariant by Blanchard's lemma (see \cite[Proposition~4.2.1]{BSU13}). 
Since both $H_1$ and $\tilde D_\C$ intersect, the surfaces $\eta_1(H_2)$ and $\eta_1(\tilde D_{\C})$ must be tangent. 
However, $\eta_2$ is the blow-up of $\tilde S_b$ in $\gamma_{0,\C}$. Indeed, since $\gamma_{0,\C}$ and $\tilde S_{b,\C}$ are smooth, $\eta_2$ is the blow-up of $\gamma_{0,\C}$ on an open subset of $\tilde S_{b,\C}$ \cite[Lemma 2.2.8]{BFT22}. Since $\eta_2$ is $\Autz(\tilde S_{b,\C})$-equivariant, it is the blow-up of $\gamma_{0,\C}$. 
Then $\eta_1(\tilde D_\C)$ is the strict transform of $D_\C$ by $\eta_2$. We can compute that it is not tangent to the exceptional divisor $\eta_1(H_2)$ of $\eta_2$. 
\end{proof}

\subsection{Sarkisov links from quadric bundles over $\p^1_\R$}\label{sec: Sarkisov links for Qg}
In this section we work over the field $\k=\R$ and determine the $\Autz(X)$-equivariant Sarkisov links starting from $X$ when $X$ is a real form of the Umemura quadric bundle $\QQ_{g,\C} \to \P_\C^1$.

Let $g\in\R[u_0,u_1]$ be a homogeneous polynomial, which is not a square and is of degree $2n$ for some $n \in \N_{\geq 1}$. 
Recall that the real forms of $\pi_\C\colon \QQ_{g,\C} \to \P_\C^1$ were determined in Sections \ref{subsec: real forms Qg via Galois cohomology} and \ref{ss:g has two roots}. The real forms of $\pi_\C\colon \QQ_{g,\C} \to \P_\C^1$ considered here are exclusively those of the forms $\pi\colon U_g \to \P_\R^1$. (This limitation is not overly restrictive, as our primary focus lies on the rational real forms of $\QQ_{g,\C}$.)
When we talk about the roots of $g$, we always mean the \emph{complex} roots of $g$, even if $g\in\R[u_0,u_1]$.

\begin{proposition}[{see \cite[Lemma 6.6.2]{BFT22}} when $\k=\C$]\label{prop: first links from Qg}
Let $h\in\R[u_0,u_1]$ be a polynomial, irreducible over $\R$.
Let $\pi\colon U_g \to \P_\R^1$ be a real form of $\pi_\C\colon \QQ_{g,\C} \to \P_\C^1$.
Then the birational map defined by
\[
\psi_h\colon U_g\rat U_{gh^2},\ [x_0:x_1:x_2:x_3;u_0:u_1]\mapsto[hx_0:hx_1:hx_2:x_3;u_0:u_1]
\] 
is a Sarkisov link of type II.
Moreover, $\psi_h^{-1} \Autz(U_{gh^2}) \psi_h \subseteq \Autz(U_g)$ with equality if and only if either $g$ has at least three roots or $gh^2$ has exactly two roots.
If furthermore $g$ has at least three distinct roots, then there are no other equivariant Sarkisov links starting from $U_g$.
\end{proposition}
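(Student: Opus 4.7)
The plan is to adapt the proof of the complex analogue \cite[Lemma 6.6.2]{BFT22} to the real setting, being careful about the Galois action and exploiting the hypothesis that $h$ is irreducible over $\R$. I would start by verifying that $\psi_h$ is a Sarkisov link of type II over $\P^1_\R$ by giving a concrete resolution. An explicit local computation shows that the base locus of $\psi_h$ is the $\R$-irreducible curve $C_h := (h=x_3=0)\subset U_g$, contained in the preimage by $\pi$ of the closed point of $\P^1_\R$ defined by $h$ (of residue field $\R$ or $\C$ according to $\deg h$), and similarly the base locus of $\psi_h^{-1}$ is $C_h':=(h=x_0=x_1=x_2=0)\subset U_{gh^2}$. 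Blowing up $C_h$ in $U_g$ yields a smooth threefold $Y$ with $\rho(Y/\P^1_\R)=2$, and the $\R$-irreducibility of $h$ ensures that the strict transform of the $\R$-irreducible divisor $(h=0)\subset U_g$ is itself $\R$-irreducible and can be contracted onto $C_h'$, producing $U_{gh^2}$. This factorisation makes $\psi_h$ into a genuine Sarkisov link of type II defined over $\R$.

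For the automorphism group statement, I would use the semidirect decomposition of $\Aut(\QQ_g)_{\P^1}\simeq \PGL_2\times\Z/2\Z$ given in Proposition \ref{prop: aut group of Qg}. Because its action on $(x_0,x_1,x_2)$ is linear and it acts as $\pm 1$ on $x_3$, conjugation by $\psi_h$ (which rescales $x_0,x_1,x_2$ by $h$, depending only on $u_0,u_1$) is a well-defined isomorphism $\Aut(U_{gh^2})_{\P^1}\simeq \Aut(U_g)_{\P^1}$. The content is then to compare the images in $F\subseteq\PGL_2$ acting on $\P^1$. When $g$ has at least three distinct roots, $F_g$ and $F_{gh^2}$ are both finite, hence $\Autz(U_g)=\Aut(U_g)_{\P^1}^0=\psi_h^{-1}\Aut(U_{gh^2})_{\P^1}^0\psi_h=\psi_h^{-1}\Autz(U_{gh^2})\psi_h$. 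When $g$ has exactly two roots and $gh^2$ also has exactly two roots (equivalently the roots of $h$ lie in the root set of $g$), the common $\G_m$ fixing those two roots lifts to both sides and is preserved by $\psi_h$-conjugation, giving equality again. Otherwise $g$ has exactly two roots but $gh^2$ has at least three, so $F_{gh^2}^0=1$ while $F_g^0=\G_m$, yielding strict containment.

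Finally, for uniqueness when $g$ has at least three distinct roots, I would argue that any $\Autz(U_g)$-equivariant Sarkisov link $\chi\colon U_g \dashrightarrow X'$ must be of type II over $\P^1_\R$, and then show that the only such links are the $\psi_h$. Using Theorem \ref{thm:G-equiv SP} together with the description of the cone of effective curves of $\QQ_{g,\C}$ recalled in the proof of Proposition \ref{prop: aut group of Qg}, the structure morphism $\pi_\C$ is the only Mori fibration on $(U_g)_\C$; in particular Lemma \ref{lem:no links over bk} rules out equivariant Sarkisov links of type I or IV over $\bk$, and of type III in the absence of an invariant divisor to contract. Thus $\chi$ is of type II and lies over an automorphism of $\P^1_\R$ that must preserve the set of roots of $g$; since this set has at least three points, only the identity is possible, so $\chi$ is a $\P^1_\R$-birational map. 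An equivariant two-ray game over $\P^1_\R$ at a closed point $p\in\P^1_\R$ then either blows up a smooth vertical curve in a singular fibre (whose singular locus is a closed point of $\P^1_\R$ defined by a power of an $\R$-irreducible $h$) and contracts its complementary divisor, giving $\psi_h$, or is the identity. The main obstacle will be ensuring that the equivariance forces the indeterminacy to sit over a single closed point, ruling out composite transformations; this will follow from the relative Picard rank constraint $\rho(Y/\P^1_\R)=2$ that characterises Sarkisov links of type II.
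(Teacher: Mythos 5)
Your treatment of the first two assertions matches the paper's proof in substance: the explicit blow-up/contraction factorisation of $\psi_h$ along the $\R$-irreducible curve $(h=x_3=0)$, and the comparison of $\Autz(U_g)$ with $\psi_h^{-1}\Autz(U_{gh^2})\psi_h$ via the vertical part $\Aut(\QQ_g)_{\P^1}\simeq\PGL_2\times\Z/2\Z$ together with the finiteness (or not) of the image $F\subseteq\PGL_2$, is exactly the content the paper delegates to a case-by-case verification based on Proposition \ref{prop: aut group of Qg} and Section \ref{sec: real forms of Umemura quadric fibrations}; your analysis of when the extra $\G_m$ survives conjugation is correct.

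Two points in your uniqueness argument need repair. First, Lemma \ref{lem:no links over bk} is not applicable: its hypothesis is that there are \emph{no} equivariant links of type I, II or III starting from $(U_g)_{\bk}$, and that fails here precisely because the $\psi_h$ themselves are type II links over $\bk$. The paper instead rules out types III and IV directly from the cone of curves: $\NE(\QQ_{g,\C})$ is generated by the class of a fibre curve $f$ with $K\cdot f=-2$ and a second class with $K$-intersection $n-2\geq 0$; both rays are $\Gamma$-stable but only the first is contractible, so $\pi$ is the unique extremal contraction and no link of type III or IV can start from $U_g$. Second, your enumeration of possible centres for the divisorial extraction is incomplete: you only allow a curve in a \emph{singular} fibre, whereas the links $\psi_h$ with $h\nmid g$ --- the generic case in the statement --- have centre the $\Autz$-invariant diagonal curve $\Delta_p\subset\pi_\C^{-1}(p)\simeq\P_\C^1\times\P_\C^1$ of a \emph{smooth} fibre (a single such curve, or a $\Gamma$-conjugate pair, according as $\deg h=1$ or $2$). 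The paper's argument is that, $F$ being finite, $\Autz(U_g)$ acts trivially on $\P_\R^1$, so the centre is a union of at most one-dimensional $\Autz((U_g)_\C)$-orbit closures permuted by $\Gamma$; the orbit decomposition of the fibres then gives the complete list (diagonal of a smooth fibre; diagonal or singular point of a singular fibre), and each case is checked to yield a $\psi_h$. Your sketch recovers the correct conclusion only once this case analysis is carried out in full.
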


\begin{proof}
A description of the real forms of $\QQ_{g,\C}$ together with the identity components of their automorphism groups 
was obtained in Section \ref{sec: real forms of Umemura quadric fibrations}. 
Using this description, a case-by-case verification yields that $\psi_h^{-1}$ is $\Autz(U_{gh^2})$-equivariant, i.e.~that $\psi_h^{-1} \Autz(U_{gh^2}) \psi_h \subseteq \Autz(U_g)$ with equality if and only if either $g$ has at least three roots or $gh^2$ has exactly two roots (see also \cite[Lemma 6.6.2]{BFT22}). 
As $\psi_h$ furthermore decomposes as a  blow-up of some invariant curve followed by the contraction of an invariant divisor (see \cite[Lemma 6.6.2]{BFT22}), it is a Sarkisov link of type II. 

It remains to check that if $g$ has at least three distinct roots, then there are no other equivariant Sarkisov links starting from $U_g$.
Let us show first that there are not $\Autz(U_g)$-equivariant links of type III and IV starting from $U_g$. 
A link of type III or IV corresponds to the contraction of the two extremal rays of $U_g$ (see Remark~\ref{rmk:sarkisov-2-rays-game}). 
Denote respectively by $H,F \subset \QQ_{g,\C}$ the hypersurfaces $(x_3=0)$ and $(u_1=0)$.
Since $g$ is not a square, \cite[Lemma 4.4.3]{BFT22} gives that the cone of curves $\NE(\QQ_{g,\C})$ is generated by $[f]$ and $[h]$, where $f=F \cap H$ and $h=(x_0=x_1=x_3=0)$, and we have $K_{\QQ_{g,\C}} \cdot f=-2$ and $K_{\QQ_{g,\C}} \cdot h=n-2 \geq 0$ because $n\geq2$ and $g$ is not a square (see \cite[Lemma 4.4.3]{BFT22} and its proof). 
In particular, both extremal rays are $\Gamma$-stable, but only the one generated by $[f]$ yields a contraction. Therefore, there are no $\Autz(U_g)$-equivariant links of type III and IV starting from $U_g$. 
 
Before continuing, let us recall the orbit decomposition of $\QQ_{g,\C}$ for the action of $\Autz(\QQ_{g,\C})\simeq\PGL_{2,\C}$ (see \cite[Lemma 6.6.1]{BFT22}): 
\begin{itemize}
\item If $p\in\p^1_\C$ is a root of $g$, then the singular quadric $\pi^{-1}_{\C}(p)$ consists of three orbits: a fixed point $q=(x_0=x_1=x_2=0)\cap\pi^{-1}_{\C}(p)$, the curve $\Delta_p:=(x_3=0)\cap \pi^{-1}_{\C}(p)$, and the surface $\pi^{-1}_{\C}(p)\setminus(\Delta_p\cup\{q\})$. 
\item If $p\in\p^1_\C$ is not a root of $g$, then the smooth quadric $\pi^{-1}_{\C}(p)\simeq \p^1_\C\times\p^1_\C$ consists of two orbits, namely the diagonal curve $\Delta_p\subset\pi^{-1}_{\C}(p)$ and the surface $\pi_{\C}^{-1}(p)\setminus\Delta_p$. 
\end{itemize} 
 
\smallskip 
 
Suppose now that there is a link $\chi\colon U_g\rat X'$ of type I or II; see Figure~\ref{fig:links for Ug}.
\begin{figure}[h!]
\[
{
\def\arraystretch{2.2}
\begin{array}{cc}
\begin{tikzcd}[ampersand replacement=\&,column sep=1.3cm,row sep=0.16cm]
Y\ar[dd,"\eta",swap]\ar[rr,dotted,"\varphi"] \&\& X' \ar[dd,"fib"] \\ \\
U_g\ar[uurr,"\chi",dashed,swap]  \ar[dr,"fib",swap] \&  \& B \ar[dl] \\
\& \ast \&
\end{tikzcd}
& \ \text{or}\ \ \ \ \ 
\begin{tikzcd}[ampersand replacement=\&,column sep=.8cm,row sep=0.16cm]
 Y\ar[dd,"\eta",swap]\ar[rr,dotted,"\varphi"]\& \& Y'\ar[dd,"div"]\\ \\
U_g \ar[rr,"\chi",dashed] \ar[dr,"fib",swap] \&  \& X' \ar[dl,"fib"] \\
\&  \ast \&
\end{tikzcd}
\end{array}
}
\]\caption{The possible forms for a link $\chi$ of type I or II.}\label{fig:links for Ug}
\end{figure}
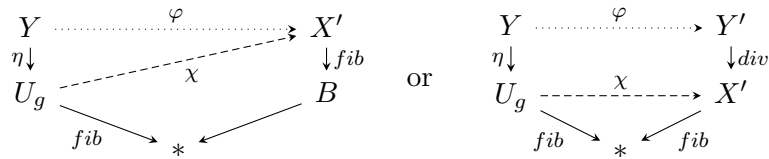 
Since $g$ has at least three distinct roots, the group $\Autz(U_g)$ acts trivially on $\p^1_\R$ and, by the description of the $\Autz(Q_{g,\C})$-orbits above combined with Remark~\ref{rk:Gamma respects aut-action}, the center of $\eta$ is contained in a fiber $F$ of the structure morphism $\pi\colon U_g\to\p^1_\R$.
Hence, we have the following possibilities for $\eta$:
\begin{itemize}
\item If $F_\C$ is irreducible, then the center of $\eta$ is $\Delta_{\pi(F)}$ if $F$ is smooth and it is $\Delta_{\pi(F)}$ or the fixed point $F_{sing}$ if $F$ is singular. 
We then obtain the link of type II from the statement with $\deg(h)=1$ analogously to \cite[Proposition 6.6.5]{BFT22}.
\item If $F_\C$ is reducible, we have $F_\C=F_1\cup F_2$ and $F_2=\overline{F_1}$. The center of $\eta$ has two geometrical components $C_i\subset F_i$, for $i \in \{1,2\}$, and $C_2=\overline{C_1}$. Then $C_i$ is $\Delta_{\pi_{\C}(F_i)}$ if $F$ is smooth  and it is $\Delta_{\pi_\C(F_i)}$ or $(F_i)_{sing}$ if $F$ is singular. We then obtain the link of type II from the statement with $\deg(h)=2$ analogously to \cite[Proposition 6.6.5]{BFT22}.
\end{itemize}
\end{proof}

\begin{corollary}\label{cor: exactly two roots}
Assume that $g$ has exactly two distinct roots. 
Let $\pi\colon U_g \to \P_\R^1$ be a real form of $\pi_\C\colon \QQ_{g,\C} \to \P_\C^1$.
There exists a birational map $\varphi\colon U_g \dashrightarrow Q_0$, where $Q_0 \subset \P_\R^4$ is a smooth real quadric hypersurface, such that $\varphi \Autz(U_g) \varphi^{-1} \subsetneq \Autz(Q_0)$.
\end{corollary}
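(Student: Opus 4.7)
The plan is to construct $\varphi$ as a composition $\varphi=\varphi_2\circ\varphi_1$, where $\varphi_1$ reduces the degree of $g$ down to $2$ via a chain of type~\II Sarkisov links from Proposition~\ref{prop: first links from Qg}, and $\varphi_2$ is a birational contraction from the resulting quadric fibration onto a smooth quadric hypersurface $Q_0\subset\P_\R^4$.

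First I would observe that, since $g$ has exactly two distinct complex roots and is not a square, the two multiplicities are both odd (they have the same parity and cannot both be even). After a real-linear change of coordinates on $(u_0,u_1)$, the polynomial $g$ takes one of two normal forms: $g=u_0^a u_1^b$ with $a,b\geq 1$ both odd (real roots), or $g=p^n$ with $n\geq 1$ odd and $p\in\R[u_0,u_1]$ an $\R$-irreducible quadratic (complex conjugate roots). In either case I would write $g=g_0\tilde{h}^2$ with $\deg g_0=2$: take $g_0=u_0 u_1$ and $\tilde{h}=u_0^{(a-1)/2}u_1^{(b-1)/2}$ in the first case, or $g_0=p$ and $\tilde{h}=p^{(n-1)/2}$ in the second. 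Factoring $\tilde{h}$ as a product of $\R$-irreducible polynomials (with multiplicity) and applying the corresponding inverse Sarkisov links $\psi_{h_i}^{-1}$ from Proposition~\ref{prop: first links from Qg} in succession produces a birational map $\varphi_1\colon U_g\rat U_{g_0}$, where $U_{g_0}$ is the real form of $\QQ_{g_0,\C}$ corresponding to $U_g$. Crucially, at each step both the source and target polynomials have exactly two roots, so the equality clause of Proposition~\ref{prop: first links from Qg} yields $\varphi_1\Autz(U_g)\varphi_1^{-1}=\Autz(U_{g_0})$.

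Next I would consider the map
\[
\varphi_2\colon U_{g_0}\rat Q_0,\qquad [x_0:x_1:x_2:x_3;u_0:u_1]\mapsto[x_0:x_1:x_2:u_0 x_3:u_1 x_3],
\]
where $Q_0\subset\P_\R^4$ is the quadric obtained from the defining equation of $U_{g_0}$ by the substitution $y_i:=u_i x_3$ (for instance $x_0^2-x_1 x_2-y_0 y_1=0$ when $U_{g_0}=\QQ_{g_0,\R}$ with $g_0=u_0 u_1$, and similarly for the other real forms appearing in Section~\ref{ss:g has two roots}). This map is well-defined on $\P(\O_{\P^1}^{\oplus 3}\oplus\O_{\P^1}(1))$, is an isomorphism on the open subset $\{x_3\neq 0\}$, and contracts the divisor $(x_3=0)$ onto the smooth conic $Q_0\cap\{y_0=y_1=0\}$. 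Since $g_0$ has two distinct roots, a direct gradient computation shows that $Q_0$ is smooth. I would then verify the equivariance of $\varphi_2$: the $\PGL_2$ factor of $\Autz(U_{g_0})$ fixes $(u_0,u_1)$ and scales $x_3$ by the determinant, so it descends to $Q_0$ via the symmetric-square representation on $(x_0,x_1,x_2)$ and trivially (projectively) on $(y_0,y_1)$; the $\G_m$ factor (the stabilizer of the two roots of $g_0$ in $\Autz(\P^1)$) acts only on $(u_0,u_1)$ and descends to a linear action on $(y_0,y_1)$. Consequently $\varphi_2\Autz(U_{g_0})\varphi_2^{-1}\subseteq\Autz(Q_0)$.

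Setting $\varphi:=\varphi_2\circ\varphi_1$, the strict inclusion $\varphi\Autz(U_g)\varphi^{-1}\subsetneq\Autz(Q_0)$ then follows from a dimension count: $\dim\Autz(U_{g_0})=4$ by Proposition~\ref{prop: aut group of Qg}, whereas $\Autz(Q_0)$ is a real form of $\SO_{5,\C}$ and hence of dimension $10$. The main technical subtlety in executing this plan will be the equivariance verification for $\varphi_2$, since one must track carefully how the quadratic dependence of the $\PGL_2$-action on the matrix entries interacts with the projective substitution $y_i=u_i x_3$ and descends to a well-defined $\PGL_2$-action on $Q_0\subset\P_\R^4$; once this is in place, the remaining steps combine routinely with Proposition~\ref{prop: first links from Qg} and the dimension comparison to conclude.
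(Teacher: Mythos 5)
Your proposal is correct and follows essentially the same route as the paper: reduce to a degree-$2$ polynomial via a chain of the type~\II links of Proposition~\ref{prop: first links from Qg} (using the equality clause, since every intermediate polynomial has exactly two roots), then contract via $[x_0:x_1:x_2:x_3;u_0:u_1]\mapsto[x_0:x_1:x_2:u_0x_3:u_1x_3]$ onto a smooth quadric in $\P_\R^4$. The only differences are cosmetic: the paper normalizes $g$ by a \emph{complex}-linear change of coordinates to $u_0^au_1^b$ and then checks that each of the six real structures from Section~\ref{ss:g has two roots} descends along the contraction, whereas you keep real normal forms throughout so that descent is automatic; and the paper gets strictness from $\Aut(Q,C)\subsetneq\Autz(Q)$ while you use the equally valid dimension count $4<10$.
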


\begin{proof}
Up to a complex-linear change of coordinates, we can assume that $g=u_0^au_1^b$ with $a,b \geq 1$ odd and $a+b=2n$.
By Proposition \ref{prop: first links from Qg}, there exists a birational map $\psi\colon U_g \dashrightarrow U_{u_0u_1}$ such that $\psi \Autz(U_g) \psi^{-1}=\Autz(U_{u_0u_1})$. 

We now assume that $g=u_0u_1$. According to \cite[Example 4.4.6~(2)]{BFT22},
the morphism $\QQ_{g,\C}\to \p_\C^{4}$ given by $[x_0:x_1:x_2:x_3;u_0:u_1]\mapsto [x_0:x_1:x_2:x_3u_{0}:x_{3}u_{1}]$ is the blow-up of the plane $P\subseteq \p_\C^{4}$ given by $P=\{[x_0:x_1:x_2:x_3:x_{4}]\in \p_\C^{4}\mid x_{3}=x_{4}=0\}$, so  $\QQ_{g,\C}$  is the blow-up of the smooth complex quadric hypersurface $Q=\{[x_0:x_1:x_2:x_3:x_{4}]\in \p_\C^{4}\mid x_0^2-x_1x_2-x_{3}x_{4}=0\}$ along the smooth conic $C=P\cap Q$. This
yields a birational map $\psi\colon \QQ_{g,\C} \dashrightarrow Q$ such that $\psi \Autz(\QQ_{g,\C}) \psi^{-1}=\Aut(Q,C):=\{f\in \Aut(Q)\mid f(C)=C\} \subsetneq \Autz(Q)$.

On the other hand, the real structures on $\QQ_{g,\C}$ when $g=u_0u_1$ were determined in Section \ref{ss:g has two roots}: up to equivalence, there are six real structures on $\QQ_{g,\C}$, namely $\mu_1$, $\mu_3$, $\mu_8$, $\mu_9$, $\mu_{10}$ and $\mu_{11}$. Letting $\Gamma$ act on $\QQ_{g,\C}$ via one of these six real structures, we see that there is in each case an induced $\Gamma$-action on $Q$ making the birational map $\psi\colon \QQ_{g,\C} \dashrightarrow Q$ $\Gamma$-equivariant. 
Therefore, there exists a real birational map $\psi_0 \colon U_g \dashrightarrow Q_0$, where $Q_0 \subset \P_\R^4$ is a smooth real quadric hypersurface, such that $\psi_0 \Autz(U_g) \psi_0^{-1} \subsetneq \Autz(Q_0)$.
\end{proof}

\section{Proofs of our main results in the \texorpdfstring{$3$}{3}-dimensional case}\label{sec: proof of Theorem list of some real MFS}

\begin{proof}[Proof of Theorem \ref{th: first cases, over k, dim 3}]
The base field $\k$ is assumed to be arbitrary of characteristic zero.
\begin{itemize}
\item If $X$ belongs to the family $\hypertarget{tth:D_a}{(a)}$ with parameter $a \geq 1$, then the result is Proposition \ref{prop: k-forms of Fabc with a>1}.
\item If $X$ belongs to the family $\hypertarget{tth:D_b}{(b)}$, then the result is Proposition \ref{prop: k-forms of PPb}.
\item If $X$ belongs to the family $\hypertarget{tth:D_c}{(c)}$, then the result is Proposition \ref{prop: k-forms of Umemura bundles}.
\item If $X$ belongs to the family $\hypertarget{tth:D_e}{(e)}$, then the result is Corollary \ref{cor: k-forms of Vb}.
\item If $X$ belongs to the family $\hypertarget{tth:D_f}{(f)}$, then the result is Proposition \ref{prop: k-forms of Wb}.
\item If $X$ belongs to the family $\hypertarget{tth:D_g}{(g)}$, then the result is Proposition \ref{prop: k forms of Rmn}.
\item If $X$ belongs to the family $\hypertarget{tth:D_i}{(i)}$, then the result is Ch\^atelet's theorem (Proposition \ref{prop:chatelet}).
\end{itemize}
\end{proof}

\vspace{-2mm}

\begin{proof}[Proof of Theorem \ref{th: second cases, over R, dim 3}]\item
\begin{itemize}
\item If $X=(\P_\C^1)^3$, then the result is Lemma \ref{lem: real forms of (P1)n}.
\item If $X=\FF_0^{b,c}$ with $b \geq 0$ and $(b,c)\neq(0,0)$, then the result is Proposition \ref{prop: k-forms of Fabc with a=0}.
\item If $X=\SS_{1,\C}$, then the result is Proposition \ref{prop: real forms of S1}.
\item If $X=\SS_{b,\C}$ with $b \geq 2$, then the result is Proposition \ref{prop: real forms of Sb}.
\item If $X =Q_3$, then the result is Proposition \ref{prop: real forms of Q3}.
\item If $X=\P(1,1,1,2)_\C$ or $\P(1,1,2,3)_\C$, then the result is Proposition \ref{prop: real forms of WPS}.
\item If $X$ is one of the complex Fano threefolds $Y_{5,\C}$ or $X_{12,\C}^{\mathrm{MU}}$, then the result is Proposition \ref{prop: real forms of Y5 and X12}.
\end{itemize}
\end{proof}

\vspace{-3mm}

\begin{proof}[Proof of Theorem \ref{th: third cases Qg, over R, dim 3}]
Part \ref{QQgR:1} is Proposition \ref{prop: existence real forms Qg}, part \ref{QQgR:2} is Proposition \ref{prop: aut group of Qg},  part \ref{QQgR:3} is Theorem \ref{th: real forms of Qg} and Proposition \ref{prop: real forms of Qg with two roots}, and part \ref{QQgR:4} follows from Proposition \ref{propo: first four real forms of Qg}. 
\end{proof}

\begin{proof}[Proof of Corollary \ref{cor: max subrgoups Cr3 real}]
The first part of the statement is Corollary \ref{cor:maximality-extension} with $\k_1=\R$ and $\k_2=\C$. The second part of the statement is a direct consequence of Theorems \ref{th: first cases, over k, dim 3}, \ref{th: second cases, over R, dim 3}, and \ref{th: third cases Qg, over R, dim 3}.
\end{proof}

\vspace{-3mm}

\begin{proof}[Proof of Theorem \ref{th: eq Sarkisov links between real MFS}]\item
\begin{itemize}
\item \ref{th: eq Sarkisov links between real MFS i} is Lemma \ref{lem:link homogeneous} and Proposition \ref{prop:Y5-X12-no-links}.
\item \ref{th: eq Sarkisov links between real MFS ii} is Lemma \ref{lem:link homogeneous}.
\item \ref{th: eq Sarkisov links between real MFS iii} is Lemma \ref{lem:links G_b} \ref{links G_b:1}-\ref{links G_b:3}.
\item \ref{th: eq Sarkisov links between real MFS iv} is Lemma \ref{lem:links G_b} \ref{links G_b:2}.
\item \ref{th: eq Sarkisov links between real MFS v} is Remark \ref{rk: cas of H1}.
\item \ref{th: eq Sarkisov links between real MFS vi} is Lemma \ref{lem:links H_b}.
\item \ref{th: eq Sarkisov links between real MFS vii} is Proposition \ref{prop:links Sb}.
\item \ref{th: eq Sarkisov links between real MFS viii} is Proposition \ref{prop: first links from Qg}.
\end{itemize}
\end{proof}

%%%%%%%%%%%%%%%%%%%%%%%%%%%%%%%%%%%%%%%%
%%%%%%%%%%%%%%%% REFERENCES %%%%%%%%%%%%%%%%
%%%%%%%%%%%%%%%%%%%%%%%%%%%%%%%%%%%%%%%%

\bibliographystyle{abbrv}
\bibliography{biblio}

\end{document}